%%%%%%%%%%%%%%%%%%%%%%%%%%%%%%%%%%%%%%%%%%%%%%%%%
%% Part of Niles Johnson's latex setup
%% This document is in the public domain (2015)
%%%%%%%%%%%%%%%%%%%%%%%%%%%%%%%%%%%%%%%%%%%%%%%%%

%\documentclass[11pt, draft]{amsart}
\documentclass[11pt]{amsart}
\usepackage{fouriernc} % Fourier fonts instead of Computer Modern

\usepackage{thmtools}
\usepackage{thm-restate}

\usepackage{Definitions}    % macros

\usepackage{PageSetup}      % general setup
\usepackage{Environments}   % thm, prop, etc.
\usepackage{array}
\usepackage{enumerate}

\usepackage{setspace}
%\linespread{1.5}

\usepackage{xr}
\externaldocument{coherence_main}

\usepackage{afterpage}
\usepackage{morefloats}
\usepackage{mathtools}
\usepackage{lscape}
\usepackage{multirow}
\usepackage{enumitem}
\usepackage{graphicx}
\usepackage{wasysym}
\usepackage{scalerel}
\usepackage{stmaryrd}

\usepackage{tabularx}% for better table options?

%%%I feel the need for thicker underlines
\usepackage{soul}
\setul{}{1.5pt}

\theoremstyle{theorem}

\colorlet{mylight}{green!40!white}
\colorlet{mymed}{red!60!white}
\colorlet{mydark}{blue!80!white}
\colorlet{mylightfill}{green!20!white}
\colorlet{mymedfill}{red!30!white}
\colorlet{mydarkfill}{blue!40!white}

\colorlet{mydarkred}{red!80!black}
\colorlet{mydarkblue}{blue!70!black}
\colorlet{mydarkgreen}{green!60!black}

\usepackage[all]{xy}
%%\UseAllTwocells
%\usepackage{hyperref}
%\usepackage{subcaption}
%\usepackage{pdflscape}
%
%
\usepackage{tikz, tikz-3dplot, tikz-cd}
\usetikzlibrary{arrows.meta}
\tikzset{>={Latex[width=3mm,length=3mm]}}
\usetikzlibrary{decorations.markings}
\usetikzlibrary{decorations.pathreplacing,angles,quotes}

\usetikzlibrary{backgrounds}
\tdplotsetmaincoords{70}{120}
\usetikzlibrary{patterns}

\DeclareMathOperator{\tr}{tr}

%this is to reduce the size of the parentheses in superscripts
  %This is for products in superscripts so the size makes it more clear it is a superscript
 %this is the basic command for the thick product.  It is set up so it lines up correctly in all the ways it is displayed.

%% changing the formatting of little coherences 

\newcommand{\fib}[3]{{#1}^{\times _{#2}{#3}}}
\newcommand{\ful}[2]{\Psi^{#2}({#1})}
\newcommand{\fful}[3]{\Psi^{#2}_{#3}({#1})}
\newcommand{\xto}{\xrightarrow}
%\DeclareMathOperator{\ob}{Ob}
 %Twisting functor on bicategories
%Twisting functor on shadows
%Twisting objects
%Twisting objects

\newcommand{\fibra}{\mathrm{fib}}

\newcommand{\tiff}{if and only if }

% Cary added this
\providecommand{\Ex}{\cE\mathrm{x}}
\DeclareMathOperator{\THH}{\textup{THH}}
\DeclareMathOperator{\TR}{\textup{TR}}

\def\hto{\xslashedrightarrow{}}
\def\calBi#1#2{\ensuremath{\mathscr{#1}\!/\!_{\mathbf{#2}}}}
\def\bicat#1#2{\ensuremath{\mathcal #1 \!/\!_{#2}}}
\newcommand{\Ft}{\ensuremath{F_{\mathrm{tr}}}}

\usepackage{tikz}
\usepackage{subcaption}

%Cary's macros
\newcommand{\R}{\mathbb{R}}
\newcommand{\Z}{\mathbb{Z}}
\newcommand{\ra}{\longrightarrow}
\newcommand{\Sph}{\mathbb{S}}

\DeclareMathOperator{\barsmash}{\overline{\wedge}}

\newcommand{\mc}{\mathcal}
\newcommand{\simar}{\overset\sim\to}

\DeclareMathOperator{\End}{\textup{End}}
%end Cary's macros

\newcommand{\bcl}[3]{\left[\begin{gathered}{#1}\xto{#2}{#3}\end{gathered}\right]}
\newcommand{\bcr}[3]{\left[\begin{gathered}{#3}\xleftarrow{#2}{#1}\end{gathered}\right]}

\makeatletter
\def\slashedarrowfill@#1#2#3#4#5{%
  $\m@th\thickmuskip0mu\medmuskip\thickmuskip\thinmuskip\thickmuskip
   \relax#5#1\mkern-7mu%
   \cleaders\hbox{$#5\mkern-2mu#2\mkern-2mu$}\hfill
   \mathclap{#3}\mathclap{#2}%
   \cleaders\hbox{$#5\mkern-2mu#2\mkern-2mu$}\hfill
   \mkern-7mu#4$%
}
\def\rightslashedarrowfill@{%
  \slashedarrowfill@\relbar\relbar\mapstochar\rightarrow}
\newcommand\xslashedrightarrow[2][]{%
  \ext@arrow 0055{\rightslashedarrowfill@}{#1}{#2}}
\makeatother
\def\hto{\xslashedrightarrow{}}

\newcommand{\fa}{\ensuremath{\mathfrak{a}}}
\newcommand{\frl}{\ensuremath{\mathfrak{l}}}
\newcommand{\fr}{\ensuremath{\mathfrak{r}}}

\newcommand{\rdual}[1]{{#1}^\star}

\newcommand{\seqco}[2]{{#1}\circ \ldots \circ {#2}}

\usepackage{braket}

\newcommand{\sh}[1]{{\ensuremath{\hspace{1mm}\makebox[-1mm]{$\langle$}\makebox[0mm]{$\langle$}\hspace{1mm}{#1}\makebox[1mm]{$\rangle$}\makebox[0mm]{$\rangle$}}}}

\newcommand{\bigsh}[1]{{\ensuremath{\hspace{1mm}\makebox[-1mm]{$\big\langle$}\makebox[0mm]{$\big\langle$}\hspace{1mm}{#1}\makebox[1mm]{$\big\rangle$}\makebox[0mm]{$\big\rangle$}}}}
\newcommand{\Bigsh}[1]{{\ensuremath{\hspace{1mm}\makebox[-1mm]{$\Big\langle$}\makebox[0mm]{$\Big\langle$}\hspace{1mm}{#1}\makebox[1mm]{$\Big\rangle$}\makebox[0mm]{$\Big\rangle$}}}}

%These commands are designed to easily switch between having and not having subscripts on \odot and \sh{}
 %_{#2}}
\newcommand{\odots}[1]{\odot}%_{#1}}

%% add tikz pullback symbol
\usetikzlibrary{shapes,snakes}
\tikzset{pb/.style={draw,regular polygon,regular polygon sides=3,inner sep=0pt,shape border rotate=180,font=\scriptsize}}
\tikzset{pbflat/.style={draw,regular polygon,regular polygon sides=3,shape border rotate=270,inner sep=1pt,font=\scriptsize, fill=white}}

%% giant commuting cube macro - specify first the objects in the top face, then the bottom face, then the top face maps, then the bottom face maps, then the vertical maps

%% following copied from https://jcastellssala.com/2013/10/29/xkeyval-and-the-9-arguments/
\usepackage{xkeyval}
%This is needed for encapsulating. 
%changes the catcode of @ so it
%can be used freely (not needed
%when writing a package)
\makeatletter

\define@cmdkey      [PRE] {fam}     {A}  {}
\define@cmdkey      [PRE] {fam}     {B}  {}
\define@cmdkey      [PRE] {fam}     {C}  {}
\define@cmdkey      [PRE] {fam}     {D}  {}
\define@cmdkey      [PRE] {fam}     {f}  {}
\define@cmdkey      [PRE] {fam}     {g}  {}
\define@cmdkey      [PRE] {fam}     {h}  {}
\define@cmdkey      [PRE] {fam}     {k}  {}
\define@cmdkey      [PRE] {fam}     {E}  {}
\define@cmdkey      [PRE] {fam}     {F}  {}
\define@cmdkey      [PRE] {fam}     {G}  {}
\define@cmdkey      [PRE] {fam}     {H}  {}
\define@cmdkey      [PRE] {fam}     {phi}  {}
\define@cmdkey      [PRE] {fam}     {gamma}  {}
\define@cmdkey      [PRE] {fam}     {eta}  {}
\define@cmdkey      [PRE] {fam}     {kappa}  {}
\define@cmdkey      [PRE] {fam}     {a}  {}
\define@cmdkey      [PRE] {fam}     {b} {}
\define@cmdkey      [PRE] {fam}     {c} {}
\define@cmdkey      [PRE] {fam}     {d} {}
\define@cmdkey      [PRE] {fam}     {wi} {}
\define@cmdkey      [PRE] {fam}     {he} {}

\presetkeys         [PRE] {fam} {
	A = ,
	B = ,
	C = ,
	D = ,
	f = ,
	g = ,
	h = ,
	k = ,
	E = ,
	F = ,
	G = ,
	H = ,
	phi = ,
	gamma = ,
	eta = ,
	kappa = ,
	a = ,
	b = ,
	c = ,
	d = ,
	wi = ,
	he = ,
	}{}

\newcommand{\commutingcube}[1]{
	%First thing to do, Set the keys 
	%from the single parameter input
	\setkeys[PRE]{fam}{#1}
%
%$\xymatrix{
%									& \cmdPRE@fam@B \ar'[d][dd]^-(.4){\cmdPRE@fam@b}\ar[rrr]^-{\cmdPRE@fam@g} 	&&							& \cmdPRE@fam@D \ar[dd]^-{\cmdPRE@fam@d} \\
%\cmdPRE@fam@A \ar[dd]_-{\cmdPRE@fam@a}\ar[rrr]^-{\cmdPRE@fam@h}\ar[ru]^-{\cmdPRE@fam@f}	&							&& \cmdPRE@fam@C \ar[dd]^-(.3){\cmdPRE@fam@c}\ar[ru]^-{\cmdPRE@fam@k}	& \\
%									& \cmdPRE@fam@F \ar'[rr]^-(.6){\cmdPRE@fam@gamma}[rrr]					&&							& \cmdPRE@fam@H \\
%\cmdPRE@fam@E \ar[rrr]^-{\cmdPRE@fam@eta}\ar[ru]^-{\cmdPRE@fam@phi}	&										&& \cmdPRE@fam@G \ar[ru]^-{\cmdPRE@fam@kappa}
%}$
%
%atlternative using tikzcd
\begin{tikzcd}[ampersand replacement=\&,column sep={\cmdPRE@fam@wi},row sep={\cmdPRE@fam@he}]
			\& \cmdPRE@fam@B \ar[ddd, near end, "\cmdPRE@fam@b"]\ar[rr, "\cmdPRE@fam@g"]	\&\& \cmdPRE@fam@D \ar[ddd, "\cmdPRE@fam@d"] 
				\\
	\cmdPRE@fam@A \ar[ddd, "\cmdPRE@fam@a"]\ar[rr,near start,  crossing over,  "\cmdPRE@fam@h"]\ar[ru, "\cmdPRE@fam@f"]	\&\& \cmdPRE@fam@C \ar[ru, "\cmdPRE@fam@k"]	
				\\ \\
			{}\& \cmdPRE@fam@F \ar[rr, near start, "\cmdPRE@fam@gamma"]		\&\& \cmdPRE@fam@H
				 \\
	\cmdPRE@fam@E \ar[rr, "\cmdPRE@fam@eta"]\ar[ru, "\cmdPRE@fam@phi"]		\&\& \cmdPRE@fam@G \ar[ru, "\cmdPRE@fam@kappa"]\arrow [from=uuu,  near start,crossing over,  "\cmdPRE@fam@c"]
\end{tikzcd}
}

\newcommand{\commutingsquare}[1]{
	\setkeys[PRE]{fam}{#1}
%	$\xymatrix{
%					& \cmdPRE@fam@B \ar[rrr]^-{\cmdPRE@fam@g} 			&&							& \cmdPRE@fam@D  \\
%		\cmdPRE@fam@A \ar[rrr]^-{\cmdPRE@fam@h}\ar[ru]^-{\cmdPRE@fam@f}	&&	& \cmdPRE@fam@C \ar[ru]^-{\cmdPRE@fam@k}	&
%	}$
%%atlternative using tikzcd
\begin{tikzcd}[ampersand replacement=\&,column sep={\cmdPRE@fam@wi},row sep={\cmdPRE@fam@he}]
					\& \cmdPRE@fam@B \ar[rr,"\cmdPRE@fam@g"]			\&	\& \cmdPRE@fam@D  \\
		\cmdPRE@fam@A \ar[rr,"\cmdPRE@fam@h"]\ar[ru,"\cmdPRE@fam@f"]	\&\& \cmdPRE@fam@C \ar[ru,"\cmdPRE@fam@k"]	\&
\end{tikzcd}
}

\newcommand{\triangularprism}[1]{
	%First thing to do, Set the keys 
	%from the single parameter input
	\setkeys[PRE]{fam}{#1}
%	
%	$\xymatrix{
%		& \cmdPRE@fam@B \ar'[d][dd]^-(.4){\cmdPRE@fam@b}\ar[rrd]^-{\cmdPRE@fam@g} \\
%		\cmdPRE@fam@A \ar[dd]_-{\cmdPRE@fam@a}\ar[rrr]^-{\cmdPRE@fam@h}\ar[ru]^-{\cmdPRE@fam@f} &&& \cmdPRE@fam@C \ar[dd]^-{\cmdPRE@fam@c} \\
%		& \cmdPRE@fam@F \ar[rrd]^-{\cmdPRE@fam@gamma} \\
%		\cmdPRE@fam@E \ar[rrr]_-{\cmdPRE@fam@eta}\ar[ru]^-{\cmdPRE@fam@phi} &&& \cmdPRE@fam@G
%	}$
	%atlternative using tikzcd
\begin{tikzcd}[ampersand replacement=\&,column sep={\cmdPRE@fam@wi},row sep={\cmdPRE@fam@he}]
		\& \cmdPRE@fam@B \ar[dd, near start, "\cmdPRE@fam@b"]\ar[rd,"\cmdPRE@fam@g"]
		\\
	\cmdPRE@fam@A \ar[dd,"\cmdPRE@fam@a"]\ar[rr, crossing over, near start,  "\cmdPRE@fam@h"]\ar[ru,"\cmdPRE@fam@f"] 
		\&\& \cmdPRE@fam@C \ar[dd,"\cmdPRE@fam@c"] 
		\\
		\& \cmdPRE@fam@F \ar[rd,"\cmdPRE@fam@gamma"] 
		\\
		\cmdPRE@fam@E \ar[rr,"\cmdPRE@fam@eta"]\ar[ru,"\cmdPRE@fam@phi"] \&\& \cmdPRE@fam@G
\end{tikzcd}
}

\newcommand{\triangularprismtwo}[1]{
	%First thing to do, Set the keys 
	%from the single parameter input
	\setkeys[PRE]{fam}{#1}
	
%$\xymatrix{
%	& &	& \cmdPRE@fam@D \ar[dd]^-{\cmdPRE@fam@d} \\
%	\cmdPRE@fam@A \ar[dd]_-{\cmdPRE@fam@a}\ar[rr]^-{\cmdPRE@fam@h}\ar[rrru]^-{\cmdPRE@fam@f} && \cmdPRE@fam@C \ar[dd]^-(.3){\cmdPRE@fam@c}\ar[ru]^-{\cmdPRE@fam@k} \\
%	& & & \cmdPRE@fam@H \\
%	\cmdPRE@fam@E \ar[rr]_-{\cmdPRE@fam@eta}\ar[rrru]^-{\cmdPRE@fam@phi} && \cmdPRE@fam@G \ar[ru]_-{\cmdPRE@fam@kappa}
%}$
%
%$\xymatrix{
%	\cmdPRE@fam@B \ar[dd]_-{\cmdPRE@fam@b}\ar[rrr]^-{\cmdPRE@fam@g}\ar[rrd]^-{\cmdPRE@fam@h} &&& \cmdPRE@fam@D \ar[dd]^-{\cmdPRE@fam@d} \\
%	&& \cmdPRE@fam@C \ar[dd]^-(.3){\cmdPRE@fam@c}\ar[ru]^-{\cmdPRE@fam@k}	& \\
%	\cmdPRE@fam@F \ar'[rr]^-{\cmdPRE@fam@gamma}[rrr]\ar[rrd]^-{\cmdPRE@fam@eta} &&& \cmdPRE@fam@H \\
%	&& \cmdPRE@fam@G \ar[ru]^-{\cmdPRE@fam@kappa}
%}$
	%atlternative using tikzcd
\begin{tikzcd}[ampersand replacement=\&,column sep={\cmdPRE@fam@wi},row sep={\cmdPRE@fam@he}]
	\cmdPRE@fam@B \ar[dd,"\cmdPRE@fam@b"']\ar[rr,"\cmdPRE@fam@g"]\ar[rd,"\cmdPRE@fam@h"'] 
		\&\& \cmdPRE@fam@D \ar[dd,"\cmdPRE@fam@d"] 
		\\
	\& \cmdPRE@fam@C \ar[ru, "\cmdPRE@fam@k"']	
		 \\
	\cmdPRE@fam@F \ar[rr, near start, "\cmdPRE@fam@gamma"]\ar[rd,"\cmdPRE@fam@eta"'] 
		\&\& \cmdPRE@fam@H
		 \\
	\& \cmdPRE@fam@G \ar[ru,"\cmdPRE@fam@kappa"']\arrow [from =uu, near start,  crossing over, "\cmdPRE@fam@c"]
\end{tikzcd}
	
}

%Change again the @ catcode
%to normal
\makeatother

% art style for graphs
 % radius of a circle-shaped graph
 % radius of a really big circle-shaped graph
 % separation between nodes of a line-shaped graph
 % separation between nodes of a line-shaped graph
 % spacing between the edges and their labels
 % spacing between the vertices and their labels
 % radius of the vertices
 % width of a straight path (needs to be slightly thicker than a curved path)

%%
%% Document-specific stuff
%%

%% document-specific options for hyperref
\hypersetup{
 pdfkeywords={latex starter,template},
 pdfauthor={Niles Johnson},
}

\usepackage{tikz-cd} % for tikz commutative diagrams 

\theoremstyle{theorem}
\newtheorem{thmx}{Theorem}

%% MSC
%% http://www.ams.org/mathscinet/msc/msc2010.html
\subjclass[2010]{55M20, 55P42, 18D05, 55R70, 18D30, 55P91}

% 55P42 	Stable homotopy theory, spectra
%55P43  	Spectra with additional structure
% 18D05 	Double categories, 2-categories, bicategories
% 19D23 	Symmetric monoidal categories (in K-theory)
% 18D10  	Monoidal categories (= multiplicative categories), symmetric monoidal categories, braided categories
%18D30  	Fibered categories
%55R70  	Fibrewise topology
%55P25  	Spanier-Whitehead duality
%55M20  	Fixed points and coincidences  (algebraic topology)
%54H25  	Fixed-point and coincidence theorems (general topology)
%55P91  	Equivariant homotopy theory

\keywords{periodic points, topological restriction homology, bicategorical trace, parameterized spectra, Nielsen theory, Reideister trace}

\title{Periodic points and topological restriction homology}
\author{Cary Malkiewich}  
\email{malkiewich@math.binghamton.edu}
\address{Binghamton University, PO Box 6000, Binghamton, NY 13902}
\author{Kate Ponto}
\email{kate.ponto@uky.edu}
\address{University of Kentucky, 719 Patterson Office Tower, Lexington, KY 40506}
\date{19th Jun, 2020}

\begin{document}

 \maketitle
 
{\centering \small \it Dedicated to Bruce Williams (1945-2018)\par}

\begin{abstract}
	We answer in the affirmative two conjectures made by Klein and Williams. First, in a range of dimensions, the equivariant Reidemeister trace defines a  complete obstruction to  removing $n$-periodic points from a self-map $f$. 
Second, this obstruction defines a class in topological restriction homology. 

We prove these results using duality and trace for bicategories. This allows for immediate generalizations, including a corresponding theorem for the fiberwise Reidemeister trace.
\end{abstract}
 
\setcounter{tocdepth}{1}
 \tableofcontents

% !TeX root = ./topology_accepted_version.tex

\section{Introduction}

For a finite simplicial complex $X$ and a continuous map $f\colon X\to X$, the {\bf Lefschetz number} $L(f) \in \Z$ is a weighted sum of the fixed points of $f$. This invariant admits many generalizations. In this paper, we focus on generalizations that count the fixed points of $f^n$, or the {$n$-periodic points} of $f$.

Since it is a weighted sum, the Lefschetz number detects the presence of fixed points for any endomorphism in the homotopy class of $f$. However it does not give a sharp lower bound on the number of fixed points.  For that we need to refine the Lefschetz number to the \textbf{Reidemeister trace} $R(f)$.  This invariant takes values in the 0th homology group 
of the twisted free loop space of $f$,
\[\Lambda^{f} X\coloneqq \{ \ \gamma\in X^I \ | \ f(\gamma(1))=\gamma(0) \ \}.\]
If $X$ is a compact manifold of dimension at least 3, the Reidemeister trace is a complete obstruction to the removal of fixed points \cite{jiang,shi,wecken}. 

In this paper we compare several refinements of the Lefschetz number and Reidemeister trace for periodic points, the weakest of which are the Lefschetz number and Reidemeister trace for $f^n$. To build the others, we use Fuller's observation that the fixed points of the map
\[ \xymatrix @R=2pt {
	X\times \cdots \times X\ar[r]^{\ful{f}{n}} & X\times \cdots \times X \\
	(x_1,x_2,\ldots , x_n)\ar@{|->}[r] & (f(x_n), f(x_1), \ldots , f(x_{n-1}))
}\]
are precisely the periodic points of $f$ of period $n$ \cite{fuller,komiya,d:iterates}. 
If $f$ is homotopic to a map that has no $n$-periodic points, then $\ful{f}{n}$ is homotopic to a map with no fixed points.  

The map ${\ful{f}{n}} $
is equivariant with respect to the action of $C_n = \Z/n\Z$ that rotates coordinates. We can refine the observation above to say if
$f$ is homotopic to a map that has no $n$-periodic points, then $\ful{f}{n}$ is $C_n$-equivariantly homotopic to a map with no fixed points.  
Therefore the equivariant Reidemeister trace of $\ful fn$, which we also call the $n$th \textbf{Fuller trace}, is an obstruction to removing the $n$-periodic points from $f$. 
The Fuller trace is a map of equivariant spectra
	\[ R_{C_n}(\ful fn)\colon \Sph \to \Sigma^\infty_+ \Lambda^{\ful fn} X^n \]
or equivalently a map of spectra
	\[ R(\ful fn)^{C_n}\colon \Sph \to \left(\Sigma^\infty_+ \Lambda^{\ful fn} X^n\right)^{C_n}. \]
The following comparison theorem is the main result of the paper.
\begin{thm}[Theorems \ref{thm:main_first_half} and \ref{thm:main_second_half}]\label{thm:simple_main_thm}  
	Let $X$ be a finitely dominated space. Then the following diagram commutes up to homotopy for each $k \mid n$.
		\[\xymatrix@C=40pt@R=40pt{
 			&&\Sph \ar[dll]_-{R(\ful{f}n)^{C_n}} 
			\ar[dl]^-{R(\ful{f}k)^{C_k}} 
			\ar[d]^-{R(\ful{f}k)} 
		\ar[rd]^{R(f^k)} 
		\\
		(\Sigma^\infty_+ \Lambda^{\ful{f}n} X^n)^{C_n} \ar[r]^-{R} 
		&(\Sigma^\infty_+ \Lambda^{\ful{f}k} X^k)^{C_k} 
 			\ar[r]^-{F}
		&	\Sigma^\infty_+ \Lambda^{\ful{f}k} X^k \ar[r]^-{\simeq} 
		&		\Sigma^\infty_+ \Lambda^{f^k} X
		}\]
\end{thm}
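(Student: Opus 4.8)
The plan is to recognize all four maps out of $\Sph$ as bicategorical traces, and then to deduce each of the three triangles making up the diagram from functoriality of the trace along a strong symmetric monoidal, shadow-preserving functor between suitable bicategories. Write $\mathcal{B}_m$ for a point-set model of the bicategory of $C_m$-equivariant parameterized spectra, with $\mathcal{B}_1$ the non-equivariant version. Granting the identification of the (equivariant) Reidemeister trace with a bicategorical trace together with the computation of its shadow as a twisted free loop space --- the equivariant extension of the classical statements, which I take as the starting point --- the map $R(\ful fm)^{C_m}$ is the trace in $\mathcal{B}_m$ of the endo-$2$-cell determined by $\ful fm$ on the dualizable $1$-cell coming from the finite $C_m$-CW complex $X^m$, realized as a map $\Sph\to(\Sigma^\infty_+\Lambda^{\ful fm}X^m)^{C_m}$; forgetting equivariance gives $R(\ful fm)$, and $R(f^k)$ is the trace of $(X,f^k)$ in $\mathcal{B}_1$. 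The three maps along the bottom row are, from the right: the map induced by Fuller's homeomorphism $\Lambda^{\ful fk}X^k\cong\Lambda^{f^k}X$; the Frobenius/forgetful map $F$ from genuine $C_k$-fixed points to the underlying spectrum; and the map $R$ built from geometric fixed points together with the cyclotomic-type identification of the shadow. It therefore suffices to check: that the homeomorphism above carries $R(\ful fk)$ to $R(f^k)$; that $F$ carries $R(\ful fk)^{C_k}$ to $R(\ful fk)$; and that $R$ carries $R(\ful fn)^{C_n}$ to $R(\ful fk)^{C_k}$.

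The first two of these are the more formal part. For the Fuller identification, one promotes Fuller's observation one categorical level up: there is a strong symmetric monoidal, shadow-preserving endofunctor of $\mathcal{B}_1$, assembled from the $k$-fold external product and the cyclic twist, carrying the dualizable datum $(X^k,\ful fk)$ to $(X,f^k)$ and inducing on shadows exactly the homeomorphism $\Lambda^{\ful fk}X^k\cong\Lambda^{f^k}X$; functoriality of the trace then gives the statement. For the forgetful triangle, the restriction functor $\mathcal{B}_k\to\mathcal{B}_1$ is strong monoidal and shadow-preserving, it sends the $C_k$-equivariant datum $(X^k,\ful fk)$ to its underlying datum, and it induces on shadows precisely the canonical map $F\colon(\Sigma^\infty_+\Lambda^{\ful fk}X^k)^{C_k}\to\Sigma^\infty_+\Lambda^{\ful fk}X^k$; since the trace commutes with restriction, $F(R(\ful fk)^{C_k})=R(\ful fk)$. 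The only real labor here is combinatorial: writing Fuller's homeomorphism explicitly enough to see it as the induced map on shadows, and verifying that the two functors genuinely preserve the shadow.

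The restriction triangle is where the content lies. It is enough to treat the case in which $n/k$ is prime, since the general map $R$ is a composite of such steps. For $k\mid n$ one has $C_k=C_n/C_{n/k}$, and $R$ is the composite
\[
(\Sigma^\infty_+\Lambda^{\ful fn}X^n)^{C_n}\longrightarrow\bigl(\Phi^{C_{n/k}}\Sigma^\infty_+\Lambda^{\ful fn}X^n\bigr)^{C_k}\xrightarrow{\,\simeq\,}(\Sigma^\infty_+\Lambda^{\ful fk}X^k)^{C_k},
\]
where the first arrow is the canonical map from genuine fixed points to geometric fixed points and the second is the cyclotomic-type identification of the shadow. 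The plan is to realize $\Phi^{C_{n/k}}$ as a strong symmetric monoidal, shadow-preserving functor $\mathcal{B}_n\to\mathcal{B}_k$; to check that it sends the dualizable $1$-cell $X^n$ to the dualizable $1$-cell $X^k$ compatibly with duals, using that the $C_{n/k}$-fixed locus of $X^n$ is the subspace $\{x_i=x_{i+k}\}\cong X^k$ on which the residual $C_k$ acts by coordinate rotation; and to check that $\Phi^{C_{n/k}}$ carries the $2$-cell $\ful fn$ to the $2$-cell $\ful fk$, which in coordinates is the identity $\ful fn|_{\{x_i=x_{i+k}\}}=\ful fk$. Granting this, functoriality of the trace identifies $\Phi^{C_{n/k}}$ applied to the trace of $\ful fn$ with the trace of $\ful fk$ as a class of $C_k$-equivariant maps; taking $C_k$-fixed points, this says exactly that $R$ carries $R(\ful fn)^{C_n}$ to $R(\ful fk)^{C_k}$.

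The main obstacle is precisely the construction and verification in the previous paragraph. One needs a point-set model of $C_n$-equivariant parameterized spectra in which geometric fixed points is symmetric monoidal and compatible with the shadow (the trace/$\mathrm{THH}$-type structure), and in which $\Phi^{C_{n/k}}$ of the finite $C_n$-CW complex $X^n$ is computed by its honest fixed locus; then one must show that geometric fixed points commutes with Costenoble--Waner (equivariant Spanier--Whitehead) duality for such complexes, so that it carries the dual of $X^n$ to the dual of $X^k$; and finally one must track Fuller's cyclic bookkeeping one categorical level up so that the residual action on the fixed locus really is coordinate rotation and $\Phi^{C_{n/k}}(\ful fn)$ really is $\ful fk$. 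The equivariant duality theory and the monoidality of $\Phi$ together are the technical heart; once these are in hand the comparison of traces, and hence commutativity of the whole diagram, is formal --- which is also why the same argument transports to the fiberwise Reidemeister trace, replacing the one-point base by an arbitrary base space throughout.
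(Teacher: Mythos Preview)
Your overall strategy and your treatment of the two left-hand triangles essentially match the paper: both are deduced from the fact that $\iota_H^*$ and $\Phi^H$ extend to strong shadow functors on the bicategory of $G$-equivariant parametrized spectra, so that functoriality of the bicategorical trace (\cref{bicategory_map_preserves_traces}) gives the result. Part~\ref{part:varying} carries out precisely the program you outline---building $\Phi^H$ as a map of symmetric monoidal bifibrations, checking compatibility with the shadow, and observing that on $X^n$ it computes the honest fixed locus $X^k$ with its residual $C_k$-action and Fuller map $\ful fk$.

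The gap is in the right-hand triangle. There is no strong shadow \emph{endofunctor} of $\mathcal B_1$ sending $(X^k,\ful fk)$ to $(X,f^k)$: the $k$-fold external product goes the other way ($\boxtimes\colon \mathcal B_1^{\times k}\to \mathcal B_1$ sends $(X,\ldots,X)$ to $X^k$), and $\ful fk$ is not $\boxtimes(f,\ldots,f)$ but $\boxtimes(f,\ldots,f)$ followed by a cyclic permutation of the $0$-cells. That twist is exactly what prevents the argument from reducing to ordinary functoriality of trace along a strong shadow functor. The paper instead axiomatizes a dedicated structure---a \emph{shadowed $n$-Fuller structure} (\S\ref{sec:fuller_bicat})---consisting of the pseudofunctor $\boxtimes$, a twist $1$-cell $T_{A_i}$ encoding the permutation, a pseudonatural transformation $\vartheta$, and a compatibility $\tau$ between $T$ and the shadow. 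With these in hand the Fuller trace is first identified with a \emph{multitrace} (\cref{prop:fuller_equals_multitrace_axioms_version}), and the multitrace is then shown to equal the trace of the composite by cancelling coevaluation/evaluation pairs (\cref{lem:nthpower}); a further step (\cref{prop:fuller_equals_multitrace_axioms_version_2}) matches the abstract Fuller map with the base-change Fuller map. So your intuition that the argument is ``functoriality of trace, one categorical level up'' is morally right, but the actual mechanism is a new axiomatic gadget beyond strong shadow functors, and verifying that $\Ex$ carries such a structure is its own piece of work, carried out in \cite{mp2}.
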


\noindent
Here ``finitely dominated'' means that $X$ is a retract up to homotopy of a finite CW complex. This is essentially the most general case in which $R(f)$ is defined. The maps $R$ and $F$ are the natural analogs of the ``restriction'' and ``Frobenius'' maps from the theory of topological Hochschild homology, defined as in \cite[\S 2.5]{madsen_survey}. The homotopy equivalence equivalence at the bottom-right is given by the maps
\begin{align}\label{eq:forget_equiv}
	\begin{split}
	\{ \ \gamma_1,\ldots,\gamma_k\in X^I \ | \ f(\gamma_i(1))=\gamma_{i+1}(0) \ \}
	&\longrightarrow 
	\{ \ \gamma\in X^I \ | \ f^k(\gamma(1))=\gamma(0) \ \} \\
	(\gamma_1,\ldots,\gamma_k)
	& \longmapsto 
	f^{k-1}(\gamma_2) \cdot f^{k-2}(\gamma_3) \cdot \ldots \cdot f(\gamma_k) \cdot \gamma_1 \\
	(\gamma,c_{f(\gamma(1))},c_{f^2(\gamma(1))},\ldots,c_{f^{k-1}(\gamma(1))})
	& \longmapsfrom  \gamma
	\end{split}
\end{align}
where $c$ denotes the constant path at $x$.

\cref{thm:simple_main_thm} gives the following answer to a conjecture of Klein and Williams \cite{kw:equiv}.

\begin{cor}\label{cor:kw_solved}
	The Reidemeister traces $\{R(f^k)\}$ can be recovered from the Fuller trace $R(\ful{f}n)^{C_n}$.  
	The vanishing of $R(\ful{f}n)^{C_n}$ implies the vanishing of $R(f^k)$ for all $k | n$.
\end{cor}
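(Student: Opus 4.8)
The plan is to read the corollary directly off \cref{thm:simple_main_thm}. For each divisor $k \mid n$, that theorem says the displayed diagram commutes up to homotopy; tracing the path out of $\Sph$ along the bottom row and then up the final equivalence identifies
\[
	R(f^k) \;\simeq\; \sigma_k \circ R(\ful fn)^{C_n},
\]
where $\sigma_k$ is the composite
\[
	(\Sigma^\infty_+ \Lambda^{\ful fn} X^n)^{C_n} \xto{R} (\Sigma^\infty_+ \Lambda^{\ful fk} X^k)^{C_k} \xto{F} \Sigma^\infty_+ \Lambda^{\ful fk} X^k \xto{\simeq} \Sigma^\infty_+ \Lambda^{f^k} X
\]
of the two comparison maps $R$ and $F$ of \cref{thm:simple_main_thm} with the Fuller equivalence. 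The essential point is that $\sigma_k$ is structural: it is assembled from $X$, the divisor $k$, and (the homotopy class of) $f$ alone, without any reference to the Reidemeister or Fuller traces. Hence post-composition with the fixed map $\sigma_k$ turns $R(\ful fn)^{C_n}$ into $R(f^k)$, which is exactly the assertion that $R(f^k)$ is recovered from the Fuller trace; letting $k$ run over the divisors of $n$ recovers the whole family $\{R(f^k)\}$.

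For the vanishing statement I would argue as follows. Suppose $R(\ful fn)^{C_n}$ is null-homotopic, and pick a null-homotopy. Composing it with $\sigma_k$ --- which makes sense because composition of maps of spectra passes to homotopy classes and carries the zero map to the zero map --- exhibits $R(f^k) \simeq \sigma_k \circ R(\ful fn)^{C_n}$ as null-homotopic, simultaneously for every $k \mid n$. This is the second claim.

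I do not anticipate a real obstacle: all the substance lives in \cref{thm:simple_main_thm}, and the corollary is a formal consequence of it. The one spot that merits a sentence of care is the phrase ``recovered from'': one should note that homotopic self-maps of $X$ have canonically equivalent twisted free loop spaces, compatibly with the maps $R$ and $F$, so that ``apply $\sigma_k$'' is a well-defined operation on homotopy classes and genuinely extracts each $R(f^k)$ from the single invariant $R(\ful fn)^{C_n}$ without auxiliary data.
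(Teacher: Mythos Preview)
Your proposal is correct and matches the paper's approach: the corollary is stated immediately after \cref{thm:simple_main_thm} with no separate proof, as it is meant to be read directly off the commuting diagram exactly as you describe. The only content is that each $R(f^k)$ factors through $R(\ful fn)^{C_n}$ via the structural composite along the bottom row, and you have spelled this out accurately.
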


\noindent
When combined with the main result of \cite{jezierski}, this implies

\begin{cor}\label{cor:converse}
	If $X$ is a compact manifold of dimension at least 3, the Fuller trace $R(\ful{f}n)^{C_n}$ vanishes in the homotopy category of spectra \tiff $f$ is homotopic to a map with no $n$-periodic points.
\end{cor}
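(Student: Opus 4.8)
The plan is to derive this formally from \cref{cor:kw_solved} (hence from \cref{thm:simple_main_thm}) together with the main result of \cite{jezierski}, treating the two directions of the equivalence separately.

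For the direction asserting that a deformation forces the Fuller trace to vanish, I would argue exactly as in the paragraph preceding \cref{thm:simple_main_thm}. If $g \simeq f$ has no $n$-periodic points, then $\ful{g}{n}$ is $C_n$-equivariantly homotopic to $\ful{f}{n}$ and has empty fixed point set. Since the equivariant Reidemeister trace is a $C_n$-equivariant homotopy invariant that vanishes for fixed-point-free maps --- a property already used in the introduction to call the Fuller trace an obstruction --- the map $R(\ful{f}{n})^{C_n}$ is homotopic to $R(\ful{g}{n})^{C_n}$, which is null; so $R(\ful{f}{n})^{C_n}$ vanishes in the homotopy category of spectra.

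For the converse, suppose $R(\ful{f}{n})^{C_n} \simeq 0$. By \cref{cor:kw_solved} this gives $R(f^k) \simeq 0$ in the homotopy category of spectra for every $k \mid n$. Applying $\pi_0$, the classical Reidemeister trace of $f^k$ vanishes as a class in $H_0(\Lambda^{f^k} X)$ for every $k \mid n$; as this class is the index-weighted sum over the fixed point classes of $f^k$, and these classes index a basis of $H_0(\Lambda^{f^k} X)$, every fixed point class of $f^k$ has index zero, i.e.\ $N(f^k) = 0$ for all $k \mid n$. Because $X$ is a compact manifold of dimension at least $3$ and each iterate $f^k$ is again a self-map of $X$, the main result of \cite{jezierski} then produces a map homotopic to $f$ with no $n$-periodic points, completing the argument.

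The corollary is therefore formal once its two inputs are available, and the one point that genuinely needs to be checked is that the conclusion of \cref{cor:kw_solved} is precisely the hypothesis of Jezierski's theorem. The latter is naturally phrased via Nielsen-type \emph{periodic} point numbers, assembled from the \emph{irreducible} essential fixed point classes of the iterates $f^k$, whereas \cref{cor:kw_solved} supplies the vanishing of the ordinary Nielsen numbers $N(f^k)$ for $k \mid n$. These conditions coincide by an elementary induction on the divisors of $n$: an irreducible essential class of $f^k$ is in particular an essential class of $f^k$, and conversely every essential class of $f^k$ either is $k$-irreducible or reduces to an essential class of $f^m$ for some proper divisor $m$ of $k$. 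This bookkeeping --- and not any further homotopy theory, since only the $\pi_0$ shadow of the spectrum-level vanishing of $R(f^k)$ is used --- is the step I expect to require the most care.
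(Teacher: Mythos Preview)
Your proposal is correct and follows exactly the route the paper intends: one direction is the observation (already made in the introduction) that the Fuller trace is an obstruction, and the other combines \cref{cor:kw_solved} with the main result of \cite{jezierski}, which the paper explicitly phrases (in the paragraph after the conjectures) as the statement that the collection $\{R(f^k):k\mid n\}$ is a complete obstruction---so your worry about translating between Nielsen periodic numbers and the ordinary $N(f^k)$ is already absorbed into that citation. The only thing to tighten is that your middle step uses not just $\pi_0$-vanishing but the fact that the spectrum-level $R(f^k)$ recovers the classical Reidemeister trace on $\pi_0$, which the paper records via the identification $\sh{\bcr XfX}\simeq \Lambda^f X$ and the comparison references in \cref{sec:traces_in_bicat}.
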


\noindent
In other words, for high-dimensional manifolds the Fuller trace is a complete obstruction to the removal of $n$-periodic points.

Though our motivation for \cref{thm:simple_main_thm} comes mainly from dynamics, it also has important implications for algebraic $K$-theory. These implications can be succinctly expressed
by the slogan ``topological restriction homology ($\TR$) is the most natural home for periodic-point invariants".

More precisely, recall that the \textbf{topological restriction homology} $\TR(A)$ of a ring spectrum $A$ is the homotopy limit of $\THH(A)^{C_n}$ along restriction maps 
\[R\colon \THH(A)^{C_n} \to \THH(A)^{C_k}\] for $k \mid n$, see \cite{bhm,madsen_survey} for details. If $A = \Sigma^\infty_+ \Omega X$ for a connected CW complex $X$, $\THH(A) \simeq \Sigma^\infty_+ \Lambda X$, the suspension spectrum of the free loop space of $X$. 
The topological restriction homology of this ring, denoted $\TR(X)$, is the homotopy limit of the fixed point spectra $(\Sigma^\infty_+ \Lambda X)^{C_n}$ along the restriction maps 
\[R: (\Sigma^\infty_+ \Lambda X)^{C_n} \to (\Sigma^\infty_+ \Lambda X)^{C_k}.\] Concretely, the restriction map takes each $C_n$-equivariant map $S^V \to S^V \sma (\Lambda X)_+$ to its $C_{n/k}$-fixed points:
\[ \xymatrix{ S^{V^{C_{n/k}}} \ar[r] & S^{V^{C_{n/k}}} \sma (\Lambda X)^{C_{n/k}}_+ \ar@{<->}[r]^-\cong & S^{V^{C_{n/k}}} \sma (\Lambda X)_+ } \]
If $X$ is equipped with an endomorphism $f$, we define the {\bf twisted topological restriction homology} of $X$ as
\[ TR(X,f) \coloneqq \underset{n,R}\holim\, (\Sigma^\infty_+ \Lambda^{\ful{f}n} X^n)^{C_n} \]
where the restriction maps $R$ are the maps of \cref{thm:simple_main_thm}. Concretely, they take each $C_n$-equivariant map $S^V \to S^V \sma (\Lambda^{\ful{f}n} X^n)_+$ to its $C_{n/k}$-fixed points:
\[ \xymatrix{ S^{V^{C_{n/k}}} \ar[r] & S^{V^{C_{n/k}}} \sma (\Lambda^{\ful{f}n} X^n)^{C_{n/k}}_+ \ar@{<->}[r]^-\cong & S^{V^{C_{n/k}}} \sma (\Lambda^{\ful{f}k} X^k)_+ } \]
The justification for the name is that $\TR(X;f)$ agrees with a more general definition of $\TR(A;M)$ for any ring spectrum $A$ and bimodule $M$, defined in a similar way to Lindenstrauss and McCarthy's $W$-theory \cite{LM12} for ordinary rings. Details of this construction will appear in \cite{clmpz}.

By the tom Dieck splitting theorem, each of the spectra in the homotopy limit system for $\TR(X;f)$ splits as a finite product of homotopy orbit spectra
\begin{equation}\label{intro_splitting_1}
(\Sigma^\infty_+ \Lambda^{\ful{f}n} X^n)^{C_n} \simeq \prod_{k | n} (\Sigma^\infty_+ \Lambda^{\ful{f}k} X^k)_{hC_k},
\end{equation}
and the restriction map $R$ simply projects onto a subset of the factors. It follows that the homotopy limit $\TR(X;f)$ is an infinite product of homotopy orbit spectra
\begin{equation}\label{intro_splitting_2}
	\TR(X;f) \simeq \prod_{n \geq 1} (\Sigma^\infty_+ \Lambda^{\ful{f}n} X^n)_{hC_n}.
\end{equation}
Therefore, to define a class in $\pi_0\TR(X;f)$, it is enough to give a class in $\pi_0$ of the spectrum \eqref{intro_splitting_1} for each $n \geq 1$, agreeing along the restriction maps.

\cref{thm:simple_main_thm} says that the Fuller traces $R(\ful{f}n)^{C_n}$ give such a collection of classes. In particular, the commutativity of the left-hand triangle implies they agree along the restriction maps. Therefore they define a class in $\pi_0\TR(X;f)$ that we might call the ``infinite Fuller trace'' $R(\ful f\infty)^{C_\infty}$. Concretely, this is the element of the product \eqref{intro_splitting_2} whose $n$th term is recovered from $R(\ful{f}n)^{C_n}$ using the tom Dieck splitting \eqref{intro_splitting_1}. By \cref{cor:converse}, the infinite Fuller trace is a complete obstruction to removing $n$-periodic points for any value of $n$.  This is the precise interpretation of the slogan, ``periodic point invariants most naturally live in $\TR$.'' 

This slogan has been articulated before. Klein, McCarthy, Williams, and others have remarked that one should be able to construct a trace map from endomorphism $K$-theory $K(\End_{A}(M))$ to $\TR(A;M)$ for any ring spectrum, as in \cite{LM12}. Furthermore,  there should be a class $[f]\in K(\End_A(M))$ whose image in  $\pi_0 \TR(X,f)$ 
recovers the Reidemeister traces $R(f^n)$ of all the composites. Earlier results in this spirit can be found in \cite{grayson,gn,Iwashita,luck}, but this particular result will be developed in \cite{clmpz}. 
Granting this, this defines a periodic point invariant in $\pi_0 \TR(X,f)$ without reference to the  Fuller construction. 
From this point of view, the additional insight provided by \cref{thm:simple_main_thm} is that this class can be explicitly described as the trace of the Fuller map. 

\subsection*{Fiberwise invariants}
Following \cite{dp,gn,nicas,p:thesis,p:coincidences},  we interpret the Lefschetz number and Reidemeister trace as stable homotopy classes of maps rather than numbers. One of the primary advantages of this approach is that it allows for easy generalizations to the fiberwise and equivariant settings. Using this perspective, the following result has the same proof as its classical analog.

\begin{thm}\label{thm:fiberwise_main_thm}
	The variants of \cref{thm:simple_main_thm,cor:kw_solved} for a family of fiberwise endomorphisms $f\colon E \to E$ over $B$ also hold, provided $E \to B$ is a fibration with finitely dominated fiber.
\end{thm}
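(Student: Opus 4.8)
The plan is to observe that the proof of \cref{thm:simple_main_thm} is carried out entirely inside the symmetric monoidal bicategory of parametrized spectra, and uses only the formal apparatus of shadows, duality, and trace together with the base-change and transfer formulas established in the body of the paper; consequently, replacing the indexing category of spaces by the category of spaces over $B$ produces the fiberwise statement with no new input. Concretely, one works in the bicategory $\mathcal{E}x_B$ whose $0$-cells are spaces over $B$, whose $1$-cells $A \hto A'$ are spectra parametrized over the fiber product $A\times_B A'$, and whose composition is fiberwise pullback followed by pushforward (and, to accommodate the cyclic symmetry, in its $C_n$-equivariant refinement $C_n\mathcal{E}x_B$ with $C_n$ acting trivially on $B$). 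This is again a symmetric monoidal bicategory with a shadow, and the structural facts invoked in the proof — the behavior of the shadow on composites, multiplicativity of the bicategorical trace, the compatibility of the restriction map $R$ and the transfer $F$ with change of base, and the passage to genuine $C_n$-fixed points — are assertions about this formal structure, hence hold in $C_n\mathcal{E}x_B$ verbatim.

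Given a family of endomorphisms $f\colon E\to E$ over $B$, one forms the Fuller map $\ful fn\colon \fib{E}{B}{n}\to \fib{E}{B}{n}$ using the $n$-fold fiber product over $B$; it is $C_n$-equivariant exactly as before, and its fixed points over each $b\in B$ are the $n$-periodic points of $f_b$. The dualizability hypothesis that in the classical case is supplied by compactness of $X$ is here the requirement that $\Sigma^\infty_{+B}E$ be right dualizable in $\mathcal{E}x_B$, i.e.\ that $E\to B$ be a fibration fiberwise homotopy equivalent to a fiberwise finite complex (automatic when $B$ is a finite complex and the fibers are finitely dominated). Under this hypothesis the Fuller trace $R_{C_n}(\ful fn)$ and the fiberwise Reidemeister traces $R(f^k)$ are defined by the same bicategorical trace construction, and the shadow of the relevant $1$-cell is now the \emph{fiberwise} twisted free loop space
\[
\Lambda^{\ful fn}_B \fib{E}{B}{n} \;=\; \bigl\{\, \gamma \in (\fib{E}{B}{n})^{I} \;\big|\; \ful fn(\gamma(0)) = \gamma(1) \,\bigr\},
\]
where the path space is taken fiberwise over $B$; this is computed just as the ordinary shadow computes $\Lambda^{\ful fn}X^n$ in the non-parametrized setting.

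With these identifications in hand, the diagram of \cref{thm:simple_main_thm} is produced by the identical sequence of moves — functoriality of the shadow, the cyclic invariance of the trace, and the compatibility of $R$ and $F$ with change of base — now read in $C_n\mathcal{E}x_B$, and \cref{cor:kw_solved} (recovery of the $R(f^k)$ and the vanishing implication) follows formally as before. The one genuinely non-formal point, and the main thing to check, is that the constructions performed in the body of the paper were phrased intrinsically in the indexed monoidal bicategory and do not secretly use that the base is a point: one must confirm that each lemma used — in particular those involving genuine $C_n$-fixed points, restriction to subgroups $C_k\le C_n$, and the identification of the shadow with a twisted loop space — has been stated over an arbitrary base, and that the fiberwise twisted loop space displayed above really is the shadow of the fiberwise analogue of the $1$-cell attached to $f$. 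Once this bookkeeping is discharged, no additional homotopy-theoretic argument is needed, which is exactly the sense in which \cref{thm:fiberwise_main_thm} has the same proof as its classical counterpart.
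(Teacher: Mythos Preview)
Your proposal is correct and follows essentially the same approach as the paper: the argument for \cref{thm:simple_main_thm} is carried out entirely in the formal language of shadowed bicategories with an $n$-Fuller structure and base-change objects, together with the fact that $\iota_H^*$ and $\Phi^H$ are strong shadow functors, so it transports verbatim to the fiberwise bicategory $\Ex_B^{\fibra}$. The paper makes one point more explicit than you do: rather than asserting that $\Ex_B^{\fibra}$ ``is again a symmetric monoidal bicategory with a shadow,'' it constructs the underlying smbf $\ho\mc S_{(B)}$ by pulling back $\ho\mc S$ along the forgetful functor $\bS_B \to \bS$ and checking a Beck--Chevalley condition on fiber-product squares (this is the content of \cref{lem:get_rid_of_ex_b}), after which the Fuller structure, base-change objects, and strong shadow functoriality of $\iota_H^*$ and $\Phi^H$ all follow from the general smbf-to-bicategory machinery.
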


On the other hand, the fiberwise version of \cref{cor:converse} is the following conjecture. It will require a very different set of techniques, and we plan to take it up in future work.
\begin{conj}
	The fiberwise Fuller trace $R_B(\ful{f}{n})^{C_n}$ is the complete obstruction to the removal of $n$-periodic points from a family of endomorphisms $f\colon E \to E$ over $B$, when $B$ is a finite-dimensional cell complex and $E \to B$ is a smooth closed manifold bundle whose fiber $M$ has dimension at least $3 + \dim B$.
\end{conj}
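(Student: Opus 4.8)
The plan is to observe that the proof of \cref{thm:simple_main_thm}, that is of \cref{thm:main_first_half,thm:main_second_half}, never uses anything about spaces and spectra beyond the formal structure of a bicategory with shadow together with a supply of dualizable objects. The Reidemeister traces $R(f^k)$, the Fuller trace $R(\ful{f}{n})^{C_n}$, the maps $R$ and $F$ in the comparison diagram, and the identification $\Lambda^{\ful{f}{k}}X^k\simeq\Lambda^{f^k}X$ are all expressed either as bicategorical traces of dualizable $1$-cells twisted by endomorphisms, or as images of such traces under shadow-preserving functors of bicategories --- the change-of-group functors and the categorical fixed-point functors $(-)^{C_k}$ --- and the commutativity of the diagram follows from the cyclicity and multiplicativity of the trace together with the functoriality of the trace under those functors. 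Since all of this is purely formal, I would rerun the entire argument in the bicategory $\mathcal{E}x_B$ of (equivariant) parameterized spectra over a fixed base $B$, with its external smash product, its base-change $1$-cells, and its shadow, as constructed by May--Sigurdsson and Ponto; the conclusion is then exactly \cref{thm:fiberwise_main_thm}.

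Concretely, the steps I would carry out are: (1) replace the finite complex $X$ by a smooth fiber bundle $p\colon E\to B$ and $f$ by a fiberwise endomorphism over $B$, and form the fiberwise Fuller map $\ful{f}{n}\colon\fib{E}{B}{n}\to\fib{E}{B}{n}$ over $B$, which is again $C_n$-equivariant; (2) record that the fiberwise suspension spectrum of $E$ is a dualizable $1$-cell in $\mathcal{E}x_B$ whenever the fibers are compact ENRs --- fiberwise Atiyah/Costenoble--Waner duality --- and that its $n$-fold external product is dualizable in the $C_n$-equivariant bicategory of parameterized spectra over $B$; (3) define the fiberwise Reidemeister and Fuller traces as the bicategorical traces of these $1$-cells twisted by $f$ and by $\ful{f}{n}$, now landing in the homotopy groups of the parameterized target $(\Sigma^\infty_B\Lambda^{\ful{f}{n}}_B\fib{E}{B}{n})^{C_n}$; (4) apply, exactly as over a point, the functoriality of the trace under the fixed-point functors $(-)^{C_k}$ and the external base-change functors to obtain the commuting square of \cref{thm:simple_main_thm}; and (5) deduce the fiberwise form of \cref{cor:kw_solved} from the same diagram chase.

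The step that demands the most care is (2), together with the verification that the categorical fixed-point functors remain functors of bicategories with shadow in the parameterized world: one must check that $(-)^{C_n}$ commutes, up to coherent comparison, with the external smash product and with the relevant base changes over $B$, and that it carries the shadow of the $C_n$-equivariant bicategory of parameterized spectra over $B$ to the shadow of the nonequivariant one. This is where the parameterized setting could in principle introduce new coherence obligations --- one must give $B$ the trivial $C_n$-action, and parameterized categorical fixed points must be compatible with fiberwise smash products and fiberwise (indexed) colimits. But since categorical fixed points form a lax monoidal right adjoint, and the shadow is assembled from fiberwise colimits with which $(-)^{C_n}$ commutes up to the standard comparison maps, these checks go through just as over a point. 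No genuinely new homotopy-theoretic phenomenon appears; this is exactly the sense in which \cref{thm:fiberwise_main_thm} ``has the same proof as its classical analog,'' and the honest proof is obtained by reading the arguments for \cref{thm:main_first_half,thm:main_second_half} line by line, replacing ``space'' with ``space over $B$'' and ``spectrum'' with ``parameterized spectrum over $B$.''
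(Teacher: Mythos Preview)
You have misread the statement. The conjecture asserts that the fiberwise Fuller trace is a \emph{complete} obstruction: not only does nonvanishing obstruct removal of $n$-periodic points (the easy direction), but vanishing of $R_B(\fful{f}{B}{n})^{C_n}$ implies that $f$ is fiberwise homotopic to a map with no $n$-periodic points. Your proposal establishes only the formal comparison diagram of \cref{thm:fiberwise_main_thm} and the fiberwise analog of \cref{cor:kw_solved}, which the paper already proves by exactly the argument you sketch. The paper explicitly separates this from the conjecture, writing that the fiberwise version of \cref{cor:converse} ``will require a very different set of techniques'' and is deferred to future work.

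The missing ingredient is geometric, not categorical. Over a point, completeness comes from Jezierski's Wecken-type theorem for periodic points: the Reidemeister traces $R(f^k)$ for $k\mid n$ jointly detect the obstruction, and one then performs local surgery on a manifold of dimension $\geq 3$ to cancel periodic orbits. Nothing in the bicategorical trace formalism produces such a homotopy; it only compares invariants. In the fiberwise setting one would need a parametrized Wecken theorem, presumably requiring obstruction theory over the skeleta of $B$ together with a fiberwise cancellation procedure, and this is precisely why the dimension hypothesis $\dim M \geq 3 + \dim B$ appears. Your argument never touches this, so it does not address the conjecture.
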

Note that the special case of $n=1$ is proven in \cite[Cor 10.5]{kw}.

For higher values of $n$, we could have instead formulated the conjecture using the collection of Reidemeister traces $\{ R_B(f^k) : k \mid n \}$, but we expect that version of the conjecture to be false. In other words, we expect that the Reidemeister traces of the iterates do not form a complete obstruction to removing $n$-periodic points from bundles, in contrast to the case of a single endomorphism \cite{jezierski}. The reason for our expectation is that the product over $k \mid n$ of the maps in the bottom row of \cref{thm:simple_main_thm} is injective on $\pi_0$, but fails to be injective above $\pi_0$. As a result, once we start measuring the higher homotopy groups by looking at families of endomorphisms, we might find a Fuller trace that lies in the kernel, so that the corresponding Reidemeister traces are all zero. 
A counterexample of the following form would help settle this question.
\begin{conj}
	There is a family of endomorphisms $f\colon E \to E$ over some base $B$ for which $R_{B}(f)$ and $R_{B}(f^2)$ are zero, but the fiberwise Fuller trace $R_{B}(\ful f2)^{C_2}$ is nonzero.
\end{conj}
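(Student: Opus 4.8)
The plan is to build the counterexample by a mapping-torus construction in which the monodromy of the base twists the $C_2$-symmetry carried by a period-$2$ orbit. The first point is that no such example exists over a point: by Jezierski's theorem, $R(f)=R(f^2)=0$ already lets one deform $f$ off its $2$-periodic points over a point, so $R(\ful f2)^{C_2}=0$ there; equivalently, the $n=2$ case of \cref{thm:simple_main_thm} together with the mark homomorphism $A(C_2)\hookrightarrow\mathbb{Z}\oplus\mathbb{Z}$ of the Burnside ring shows that, over a point, $R(\ful f2)^{C_2}$ is determined by $R(f)$ and $R(f^2)$ up to $2$-torsion. So the counterexample has to live in the extra information that the fiberwise Fuller trace carries beyond $R_B(f)$ and $R_B(f^2)$: a class supported in a local system over $B$ twisted by the sign of the $C_2$ that exchanges the two points of a period-$2$ orbit.

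First I would isolate this extra class. Using the fiberwise analog of \cref{thm:simple_main_thm} supplied by \cref{thm:fiberwise_main_thm}, together with a fiberwise tom Dieck splitting of $(\Sigma^\infty_+\Lambda^{\ful f2}E^{\times_B 2})^{C_2}$ over $B$, one writes $R_B(\ful f2)^{C_2}$ as a geometric-fixed-point term (supported over $\Lambda^f E=(\Lambda^{\ful f2}E^{\times_B 2})^{C_2}$, recovering $R_B(f)$) plus a homotopy-orbit term mapping, under the identification $\Lambda^{\ful f2}E^{\times_B 2}\simeq\Lambda^{f^2}E$ over $B$, to $R_B(f^2)$. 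The goal is then to produce $f\colon E\to E$ over $B$ for which both of these images vanish but the class itself does not, i.e. the class lies in the kernel of the combined comparison map, a kernel that is zero over a point but can be nonzero once $\pi_1 B$ is nontrivial.

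For the construction I would take $B$ a lens space (or a finite model $\mathbb{RP}^N$ of $BC_2$), a closed fiber $F$ with $\dim F\ge 3+\dim B$, and $f_0\colon F\to F$ fixed-point free with exactly two period-$2$ orbits of opposite fixed-point index lying in one common Reidemeister class of $f_0^2$; then $R(f_0)=0$ automatically and $R(f_0^2)=0$ by the index cancellation, so $R_B(f)=R_B(f^2)=0$ for the induced fiberwise map $f$ on the associated $F$-bundle $E\to B$. The monodromy $\phi$, chosen to commute with $f_0$, is arranged to act on the period-$2$ locus compatibly with all of this but to twist the $C_2$-equivariant tangential data at the orbit points; the homotopy-orbit term of $R_B(\ful f2)^{C_2}$ then becomes a twisted fiberwise index class of the period-$2$ locus, which I would compute as a class in $H_*(B;\widetilde{\mathbb{Z}})$ with coefficients twisted by the sign representation of $\pi_1 B$, and show is nonzero.

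The hard part is exactly this nonvanishing, and it is why the statement is a conjecture and not a theorem: at the crude level of fixed-point indices and Reidemeister labels everything cancels---mirroring the cancellation forced over a point by Jezierski---so the construction must be set up so that a strictly finer invariant survives. That finer invariant is the $C_2$-equivariant framing (tangential) datum carried by each period-$2$ orbit, viewed as a class in the $RO(C_2)$-graded parameterized bordism of the twisted loop space; the real difficulty is to engineer $f_0$ and $\phi$ so that $\phi$ twists this framing on one of the two orbits but not compatibly on the other---making the two contributions fail to cancel---while keeping all the untwisted data, namely the inputs to $R_B(f)$ and $R_B(f^2)$, identically zero. An alternative, possibly cleaner route is homotopy-theoretic: first locate the desired class as a nonzero Verschiebung (transfer) in the fiberwise $TR(E,f)$ of the discussion above, for which a nontrivial base monodromy supplies a class absent when $B$ is a point, and only afterwards realize that class by an explicit family.
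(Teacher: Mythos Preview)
The paper does not prove this statement: it is stated as a conjecture and left open. There is no ``paper's own proof'' to compare against. You are clearly aware of this, since you write that ``the statement is a conjecture and not a theorem'' and describe the nonvanishing step as ``the hard part.''

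What you have written is not a proof but a research strategy. It is a reasonable one: the tom Dieck splitting would indeed separate the Fuller trace into a piece governed by $R_B(f)$ and a piece governed by $R_B(f^2)$, and the extra information over a nontrivial base should live in twisted coefficients coming from the monodromy action on the period-$2$ locus. Your observation that no such example can exist over a point (by Jezierski, or by the Burnside ring argument you sketch) is exactly the motivation the paper gives for expecting the fiberwise setting to be richer. But the core step---exhibiting a specific $(F,f_0,\phi)$ for which the twisted class is provably nonzero while $R_B(f)$ and $R_B(f^2)$ vanish---is not carried out, and you say so yourself. Until that computation is done, this remains a plan of attack rather than a proof, which is precisely the status the paper assigns to the statement.
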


\subsection*{Organization}
We first give a short proof of \cref{thm:simple_main_thm} in the special case where $X$ is a compact ENR. We then proceed with the general case. The proof 
splits into two pieces, and these proofs are the first two parts of this paper.  In \cref{part:unwinding} we prove that the right triangle commutes using the string diagram 
calculus developed in \cite{mp2}.  In \cref{part:varying} we prove the left two triangles commute, by extending certain functors on the category of equivariant spectra to shadow functors on the bicategory of equivariant parameterized spectra. In \cref{part:fiberwise} we prove \cref{thm:fiberwise_main_thm}.

\subsection*{Acknowledgments} The authors are pleased to acknowledge contributions to this project that emerged from enjoyable conversations with Manuel Araujo, Jonathan Campbell, Ross Geoghegan, Niles Johnson, Inbar Klang, John Lind, Randy McCarthy, and Mike Shulman.
They are indebted to John Klein and Bruce Williams for asking the questions that motivated this work. The first author thanks the Max Planck Institute in Bonn for their hospitality while the majority of this paper was written.  The second author was partially supported by a Simons Collaboration Grant and NSF grant DMS-1810779.

% !TeX root = ./topology_accepted_version.tex

\section{The case of a single smooth manifold or compact ENR}

There are two approaches to proving \cref{thm:simple_main_thm}.  The first is  a more classical and geometrically motivated path starting from an explicit descriptions of the Reidemeister trace.  This builds on ideas both explicit and implicit in  \cite{cj,dold:index,dold:transfer}, and a complete description of this version of the Reidemiester trace can be found in \cite{spectra_notes}.  Alternatively, there is a more formal and category theoretic approach that follows the understanding of fixed point invariants as traces in symmetric monoidal categories or bicategories. \cite{dp,p:thesis}.  

These approaches both require significant effort to implement but the work in each case is very different.  
We have chosen to follow the second approach
since we find it does not require the same level of  outsourcing to papers such as  \cite{ms,spectra_notes}.
Despite this preference, we find the geometric approach provides very useful intuition. To benefit from these insights we first sketch the alternative proof of \cref{thm:simple_main_thm} for a single manifold or compact ENR.  

Let $X$ be a compact topological space, with a topological embedding $i\colon X \to V$ into an open subset $V \subseteq \R^N$ and a retract $p\colon V \to X$, making $X$ into a compact ENR.
Choose $\epsilon>0$ so that the closed $\epsilon$-tube about $X$ is completely contained in $V$.
The Reidemeister trace $R(f)$ is the map of spectra obtained by formal de-suspension of the following map of spaces.
\begin{eqnarray*}
	S^N & \longrightarrow & S^N_\epsilon \sma (\Lambda^f X)_+ \\
	v & \longmapsto & \left\{
	\begin{array}{rl}
		\left(v - f(p(v))\right) \sma \gamma_{f(p(v)),v} & \textup{if } v \in V \textup{ and } \|v - f(p(v))\| \leq \epsilon \\
		{*} & \textup{otherwise}
	\end{array}\right.
\end{eqnarray*}
Here $S^N_\epsilon$ is a sphere of radius $\epsilon$, obtained by quotienting the complement of an open ball of radius $\epsilon$ in $\R^N$:
\[ S^N_\epsilon = \R^N/(\R^N - B_\epsilon) \cong \overline{B_\epsilon}/\partial \overline{B_\epsilon}. \]
The path $\gamma_{f(p(v)),v)}$ is defined by the formula
\[ \gamma_{f(p(v)),v}(t) = p[(1-t)f(p(v)) + tv]. \]
The condition on $\epsilon$ guarantees that $p$ is defined on the line segment from $f(p(v))$ to $v$, so that this path is well-defined. 
See  \cite[\S 7.7]{spectra_notes} for a discussion of how this description of the Reidemeister trace arises from the more categorical descriptions later in this paper.

Note that the homotopy class of this map does not depend on the choice of $\epsilon$.

The $C_n$-space $X^{\times n}$ becomes a $C_n$-equivariant ENR using the product embedding $i^{\times n}$ into the $C_n$-representation $\R^{nN} = \textup{Ind}^{C_n} \R^N$, and the product projection $p^{\times n}$. The Fuller trace $R_{C_n}(\ful fn)$ is given by the equivariant version of the above map, de-suspended by the $C_n$-representation $\R^{nN}$  \cite[\S 9.5]{spectra_notes}. It is a map 
\[S^{nN}  \longrightarrow  S^{N}_\epsilon \sma \ldots \sma S^{N}_\epsilon \sma (\Lambda^{\ful{f}n} X^n)_+\]
and  tor tuples $(v_1,\ldots ,v_n)$  where $v_i \in V$ and $\|v_{i+1} - f(p(v_i))\| \leq \epsilon$ for every $i$, it is given by 
\begin{eqnarray*}
	(v_1,\ldots,v_n) & \longmapsto & \left(v_1 - f(p(v_n))\right) \sma \left(v_2 - f(p(v_1))\right) \sma \left(v_3 - f(p(v_2))\right) \sma \ldots \\
	&& \sma \left( \gamma_{f(p(v_n)),v_1}, \gamma_{f(p(v_1)),v_2}, \gamma_{f(p(v_2)),v_3}, \ldots \right)
\end{eqnarray*}
Everywhere else it is zero.

To prove that the left-hand triangle of \cref{thm:simple_main_thm} commutes, it is enough to observe that taking $C_{n/k}$-fixed points of this map replaces the $n$ by $k$. The middle triangle of \cref{thm:simple_main_thm} commutes since forgetting the $C_n$ action, we have the  formula for the non-equivariant Reidemeister trace of $\ful{f}n$.
The right-hand triangle, on the other hand, does not follow from such a simple observation. We have to show that if we take the above formula, then apply the equivalence $\Lambda^{\ful{f}k} X^k \simar \Lambda^{f^k} X$ \eqref{eq:forget_equiv}, the map we get is homotopic to the formula for $R(f^n)$.

Applying the equivalence in \eqref{eq:forget_equiv} to the path in $\Lambda^{\ful{f}k} X^k$, gives the path
\[ f^{n-1}(\gamma_{f(p(v_1)),v_2}) \cdot \ldots \cdot f(\gamma_{f(p(v_{n-1})),v_n}) \cdot \gamma_{f(p(v_n)),v_1}. \]
We now change this path by a homotopy. As observed above, replacing $\epsilon$ by $\delta<\epsilon$ does not  change the Reidemeister trace  in the homotopy category. Since $f$ is a continuous function on a compact space it is uniformly continuous, and therefore there is a $\delta>0$ so that when every $v_i$ is within $\delta$ of $X$, 
the diameter of the paths $\gamma_{f(p(v_i)),v_{i+1}}$ and their images under $f$, $f^2$, $\ldots$, and $f^{n-1}$ are less than $\frac{\epsilon}{2n}$. (We measure all of these diameters as subsets of $\R^n$.) 
Then the sum of $n$ of these diameters is less than  $\frac{\epsilon}{2}$. This is small enough to guarantee that if we compose $n$ such paths together, the straight-line homotopy in $\R^n$ between their composite and $\gamma_{f(p(v_n)),v_1}$ lies entirely in $V$, and can therefore be projected to $X$. This gives a continuous homotopy of paths in $X$ rel endpoints
\[ f^{n-1}(\gamma_{f(p(v_1)),v_2}) \cdot f^{n-2}(\gamma_{f(p(v_2)),v_3}) \cdot \ldots \cdot f(\gamma_{f(p(v_{n-1})),v_n}) \cdot \gamma_{f(p(v_n)),v_1}
\quad \sim \quad
\gamma_{f^n(p(v_1)),v_1}. \]
Therefore our original formula is homotopic to:
\begin{eqnarray*}
	S^{nN} & \longrightarrow & S^{N}_\epsilon \sma \ldots \sma S^{N}_\epsilon \sma (\Lambda^{\ful{f}n} X^n)_+ \\
	(v_1,\ldots,v_n) & \longmapsto & v_1 - f(p(v_n)) \sma v_2 - f(p(v_1)) \sma v_3 - f(p(v_2)) \sma \ldots \sma \gamma_{f^n(p(v_1)),v_1}.
\end{eqnarray*}

The path now matches the path we would get for $R(f^n)$, but the sphere coordinates are different, so we apply a homotopy to those next. In the $(i+1)$st coordinate of the output, we apply a homotopy of the form
\[ v_{i+1} - f(p(v_i)) \quad \sim \quad v_{i+1} - f^2(p(v_{i-1})) \quad \sim \quad \ldots \quad \sim \quad v_{i+1} - f^i(p(v_1)) \]
by dragging the second term along the path $f(\gamma_{f(p(v_{i-1})),v_i})$, then the path $f(\gamma_{f^2(p(v_{i-2})),v_{i-1}})$, and so on. Note that before we start this homotopy, our map is supported on the region where the distance from each $v_i$ to $X$ is less than or equal to $\delta$, and throughout the homotopy, the boundary of this region is sent to the basepoint. In other words, if $d(v_{i+1},X) \geq \delta$ then throughout the homotopy the size of the sphere coordinate is always $\geq \delta$, because every path we use is a path contained in $X$. This guarantees that we get a well-defined homotopy of maps on all of $S^{nN}$. Performing this for each $1 \leq i \leq n$ gives a homotopic map with the formula
\begin{eqnarray*}
	S^{nN} & \longrightarrow & S^{N}_\delta \sma \ldots \sma S^{N}_\delta \sma (\Lambda^{\ful{f}n} X^n)_+ \\
	(v_1,\ldots,v_n) & \longmapsto & v_1 - f^n(p(v_1)) \sma v_2 - f(p(v_1)) \sma v_3 - f^2(p(v_1)) \sma \ldots \sma \gamma_{f^n(p(v_1)),v_1}.
\end{eqnarray*}
This is almost the formula for $R(f^n)$ using the embedding $(i,f \circ i,f^2 \circ i,\ldots,f^{n-1} \circ i)\colon X \to \R^{nN}$, except that $f^n$ is only being applied to the first coordinate. So for the final step, we examine the above formula and observe that it still makes sense if we relax the assumptions and allow $v_i$ to be any point in $\R^n$ when $i \geq 2$. With this change we can then remove the $f^k(p(v_1))$ term from the second through $n$th coordinates by a homotopy, arriving at
\begin{eqnarray*}
	S^{nN} & \longrightarrow & S^{N}_\delta \sma \ldots \sma S^{N}_\delta \sma (\Lambda^{\ful{f}n} X^n)_+ \\
	(v_1,\ldots,v_n) & \longmapsto & v_1 - f^n(p(v_1)) \sma v_2 \sma v_3 \sma \ldots \sma \gamma_{f^n(p(v_1)),v_1}
\end{eqnarray*}
This agrees with the formula for $R(f^n)$ using the embedding $(i,0,0,\ldots,0)\colon X \to \R^{nN}$. Equivalently, it is the $(n-1)N$-fold suspension of the formula for $R(f^n)$ using $i\colon X \to \R^N$. This concludes the proof that the third triangle commutes in the homotopy category.

% !TeX root = ./topology_accepted_version.tex

\part{Unwinding the Fuller trace}\label{part:unwinding}

In this part we give a very general proof that the last triangle of \cref{thm:simple_main_thm} commutes: 
\begin{thmx}
\label{thm:main_first_half}
	For any finitely dominated space $X$, the following diagram commutes up to homotopy.
	\[\xymatrix@C=40pt@R=40pt{
 		\Sph
		\ar[d]_-{R(\ful{f}k)} 
		\ar[rd]^{R(f^k)} 
		\\
		\Sigma^\infty_+ \Lambda^{\ful{f}k} X^k \ar[r]^-{\simeq} 
		&\Sigma^\infty_+ \Lambda^{f^k} X
	}\]
\end{thmx}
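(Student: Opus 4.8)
The plan is to realize both Reidemeister traces as bicategorical traces in a bicategory of parametrized spectra (as in \cite{mp2}) and then to show that the $k$-fold composite that computes $R(f^k)$ can be ``bent up'' into the single endomorphism of $X^k$ that computes $R(\ful fk)$. For a self-map $g\colon Y\to Y$ of a finite complex, write $Y_g$ for the associated endo-$1$-cell of $Y$; Costenoble--Waner duality of $Y$ makes $Y_g$ right dualizable, and $R(g)$ is by definition the trace of $Y_g$ equipped with its identity $2$-cell, living in $\pi_0$ of the shadow $\langle Y_g\rangle\simeq\Sigma^\infty_+\Lambda^g Y$. Two identifications of $1$-cells drive the argument: under horizontal composition $X_f^{\odot k}\simeq X_{f^k}$, so $R(f^k)=\tr(X_f^{\odot k})$; and the external product $X_f\boxtimes\cdots\boxtimes X_f$ ($k$ factors), post-composed with the equivalence $1$-cell of the cyclic permutation $\sigma\colon X^k\to X^k$, is exactly $(X^k)_{\ful fk}$, so $R(\ful fk)=\tr\big((X^k)_{\ful fk}\big)$. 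Since neither trace here involves a $C_k$-fixed-point construction, the cyclic action plays no role in this part; it enters only in \cref{part:varying}.

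The key step is a general ``unwinding'' lemma: for endo-$1$-cells $M_1,\dots,M_k$ of an object $X$ in a symmetric monoidal bicategory with shadows, the trace of $M_1\odot\cdots\odot M_k$ agrees, under a canonical isomorphism of shadows $\langle M_1\odot\cdots\odot M_k\rangle\cong\big\langle (M_1\boxtimes\cdots\boxtimes M_k)\odot\sigma\big\rangle$, with the trace of the bent-up $1$-cell $(M_1\boxtimes\cdots\boxtimes M_k)\odot\sigma$ over $X^k$, where $\sigma$ is the cyclic permutation. This is precisely the sort of statement the string-diagram calculus of \cite{mp2} is built to prove: one writes the defining string diagram for $\tr(M_1\odot\cdots\odot M_k)$ in the shadow of $X$, uses that a dual of $X^k$ is the external product of duals of $X$ to split each evaluation and coevaluation into $k$ parallel strands, and then performs a planar isotopy that regroups these strands into the diagram for $\tr\big((M_1\boxtimes\cdots\boxtimes M_k)\odot\sigma\big)$. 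Taking $M_i=X_f$ for all $i$ and reading off the induced map on shadows gives both the equivalence $\Sigma^\infty_+\Lambda^{\ful fk}X^k\simeq\Sigma^\infty_+\Lambda^{f^k}X$ --- which on underlying spaces is the familiar ``collapse all but one coordinate'' homotopy equivalence of twisted loop spaces --- and the commutativity of the triangle.

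The main obstacle is the coherence bookkeeping in the middle step. One must verify that the coevaluation and evaluation $2$-cells defining $R(\ful fk)$ on $X^k$ are genuinely the external product of those defining $R(f^k)$ on $X$, up to the coherence isomorphisms of the ambient bicategory (associators and unitors, the interchange between $\odot$ and $\boxtimes$, and the comparison identifying the dual of a product with the product of duals), and that the planar isotopy regrouping the $k$ strands is valid --- in particular that no residual twist or permutation-of-factors discrepancy is introduced when the strands are shuffled past one another to realize $\sigma$. I expect the string-diagram manipulation to be essentially forced once the right $1$-cells and duality data are in hand, but writing it out carefully requires genuine attention to the symmetry isomorphisms, and that is where the bulk of the work lies. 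A final, easier check is that the shadow equivalence produced by the argument agrees on $\pi_0$ (equivalently, on underlying spaces) with the geometric equivalence $\Lambda^{\ful fk}X^k\simeq\Lambda^{f^k}X$ marked ``$\simeq$'' in the statement.
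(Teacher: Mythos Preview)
Your proposal is correct and follows essentially the same approach as the paper: both realize the two Reidemeister traces as bicategorical traces, identify the Fuller $1$-cell with an external product twisted by the cyclic permutation, and then use string-diagram manipulations to unwind this into the trace of the $k$-fold composite. The paper organizes the argument slightly differently---it introduces an intermediate ``multitrace'' to separate the step that cancels duality data (which works in any shadowed bicategory) from the step that reorders the external factors (which requires the extra structure), and it axiomatizes the latter as a ``shadowed $n$-Fuller structure'' rather than appealing to a symmetric monoidal bicategory---but the underlying strategy and the identification of coherence bookkeeping as the main work are the same.
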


The argument is formal and based on the observation from \cite{p:thesis} that $R(f)$ is a bicategorical trace. To motivate this argument, we first describe the analogous argument for symmetric monoidal categories in \cref{sec:smc}. We then recall how to define fixed point invariants using bicategories in \cref{sec:bicategories,sec:traces_in_bicat}, and finally prove the bicategorical version of the argument in \cref{sec:fuller_bicat,sec:base_change}. 
In this part, we black-box all of the needed properties of parametrized spectra. 

\subsubsection*{Remark on string diagrams} 
 Even in simple cases, conventional notation choices obfuscate some of the central ideas in this paper.  In an attempt to make these ideas more visible,  we use the string diagrams calculus of \cite{jsv}.   As shown in \cite{jsv}, string diagram calculations are a rigorous alternative to traditional diagram chasing in symmetric monoidal categories and string diagrams manipulations can be translated into more conventional diagrams.  (The corresponding result for bicategories with shadows can be found in \cite{ps:bicat}.)
Together 
\cite{jsv,ps:bicat} put all string diagram manipulations in this paper on rigorous footing.  
The one exception is \cref{fig:multitrace_cartoon} which should be regarded as motivation.

The building blocks for the symmetric monoidal  string diagram  calculus are the first four figures in  \cref{fig:string-smc}. They are ``Poincar\'e dual'' to the usual graphical representation of symmetric monoidal categories.

Finally, there are no string diagram calculations after \cref{lem:nthpower} since this would require the development of a new calculus and that is beyond the scope of this paper.

\section{Traces and multitraces in symmetric monoidal categories}\label{sec:smc}

In this section we  consider the special case of \cref{thm:main_first_half} in a symmetric monoidal category $\sC$ with monoidal product $\otimes$ and unit object $U$. 
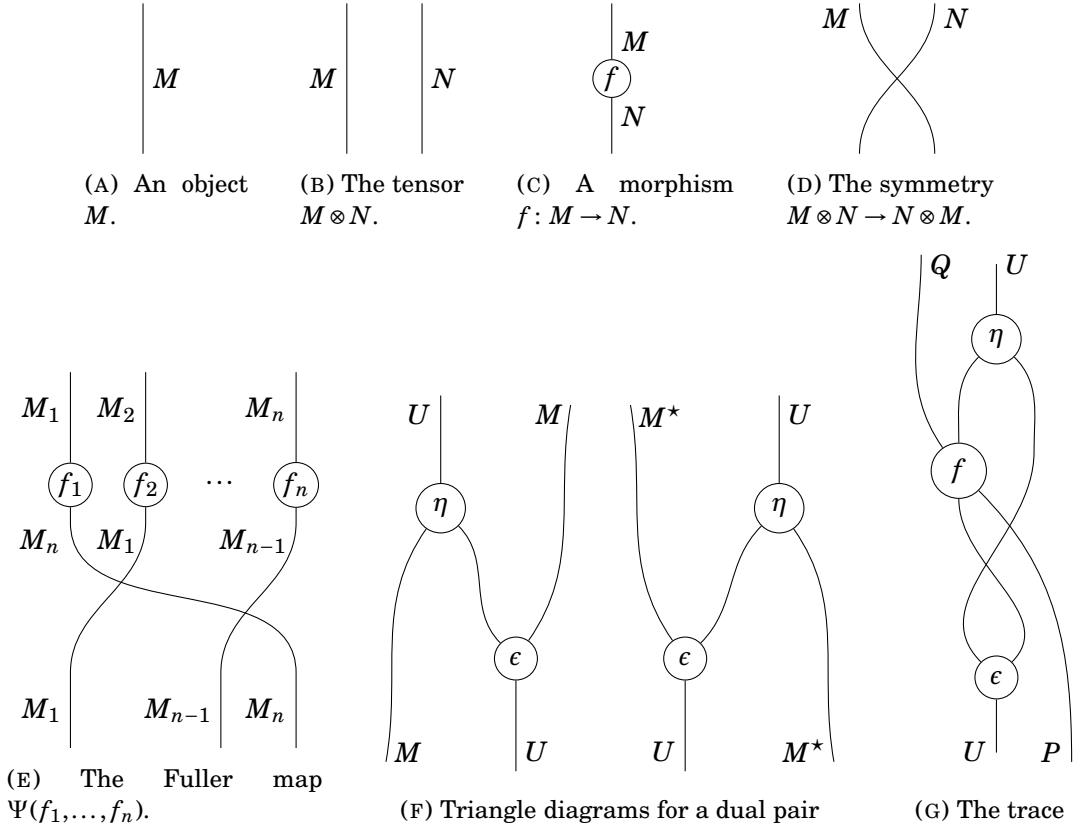
\begin{figure}[t!]
\centering
\begin{subfigure}[t]{.15\linewidth}
\centering
   \begin{tikzpicture}
       \draw(1,2)--(1, 0);
       \node[anchor= west] at (1,1){$M$};
     \end{tikzpicture}
\caption{An object $M$.}
\end{subfigure}
  \hspace{.5cm}
\begin{subfigure}[t]{.15\linewidth}
\centering
   \begin{tikzpicture}
      \draw(1,2)--(1,0);
      \draw(2,2)--(2,0);
      \node[anchor= east]at (1,1){$M$};
      \node[anchor= west]at (2,1){$N$};
     \end{tikzpicture}
\caption{The tensor $M\otimes N$.}
\end{subfigure}
\hspace{.5cm}
\begin{subfigure}[t]{.2\linewidth}
\centering
   \begin{tikzpicture}
\node[fill=white,draw,circle,inner sep=1pt] (f) at (1,1) {$f$};
       \draw (1,2) -- (f);
       \draw (f) -- (1,0);
       \node[anchor=west] at(1, 1.5){$M$};
       \node[anchor=west] at(1, .5){$N$};
     \end{tikzpicture}
\caption{A morphism $f\colon M\to N$.}
\end{subfigure}
  \hspace{.5cm}
\begin{subfigure}[t]{.20\linewidth}
\centering
   \begin{tikzpicture}
      \draw(1,2) to [out =-90, in =90](2,0);
      \draw(2,2) to [out =-90, in =90](1,0);
      \node[anchor= east]at (1,1.8){$M$};
      \node[anchor= west]at (2,1.8){$N$};
     \end{tikzpicture}
\caption{The symmetry \\ $M\otimes N\to N\otimes M$.}
\end{subfigure}

\begin{subfigure}[b]{.29\linewidth}
\centering
   \begin{tikzpicture}
       \node[fill=white,draw,circle,inner sep=1pt] (f) at (1,.5) {$f_1$};
\node[fill=white,draw,circle,inner sep=1pt] (f1) at (2,.5) {$f_{2}$};
  \node[fill=white,draw,circle,inner sep=1pt] (f3) at (4,.5) {$f_n$};
       \draw[-] (1,2) -- (f);
       \draw[-](f) -- (1,0) to [out =-90, in =90] (4,-2)--(4,-3);
  \draw[-] (2,2) -- (f1);
  \draw(f1) -- (2,0) to [out =-90, in =90] (1,-2)--(1,-3);
  \draw[-] (4,2) -- (f3);
  \draw[-](f3) -- (4,0) to [out =-90, in =90] (3,-2)--(3,-3);
  \node[anchor=east] at(1, 1.5){$M_1$};
  \node[anchor=east] at(2, 1.5){$M_2$};
  \node[anchor=east] at(4, 1.5){$M_n$};
  \node[anchor=east] at(1, -.25){$M_{n}$};
  \node[anchor=east] at(2, -.25){$M_1$};
  \node[anchor=east] at(4, -.25){$M_{n-1}$};
  \node[anchor=east] at(4, -2.5){$M_{n}$};
  \node[anchor=east] at(1, -2.5){$M_1$};
  \node[anchor=east] at(3, -2.5){$M_{n-1}$};
  \node at(3, .5){$\cdots$};
     \end{tikzpicture}
\caption{The Fuller map $\ful {f_1,\ldots ,f_n}{}$.}
   \label{fig:string-smc-fuller}
\end{subfigure}
\hspace{.03\linewidth}
\begin{subfigure}[b]{.44\linewidth}
\centering
   \begin{tikzpicture}
 \node at (-.5,-4)(s){};
 \node at (-.5,-4)[above right]{$M$};
 \node at (1.25,-4)[above right]{$U$};
 \node at (.25,-.5)(n1)[circle,fill = white, draw]{$\eta$};
 \node at (1.25,-2.5)(e1)[circle,fill = white, draw]{$\epsilon$};
 \node at (2,1)(f){};
 \node at (2,1)[below left]{$M$};
 \node at (.25,1)[below left]{$U$};

  \draw (s) to [out=80, in =-125] (n1) ;
  \draw (n1)--(.25, 1);
  \draw (n1) to[out=-45, in =135]  (e1);
  \draw (e1) -- (1.25, -4);
  \draw (e1) to [out = 55, in =-100] (f) ;
  
 \node at (5.5,-4)(s2){};
 \node at (5.5,-4)[above left]{$\rdual{M}$};
 \node at (3.5,-4)[above left]{$U$};
 \node at (4.75,-.5)(n2)[circle,fill = white, draw]{$\eta$};
 \node at (3.5,-2.5)(e2)[circle,fill = white, draw]{$\epsilon$};
 \node at (2.75,1)(f2){};
 \node at (2.75,1)[below right]{$\rdual{M}$};
 \node at (4.75,1)[below right]{$U$};

  \draw (s2) to [out=100, in =-55] (n2) ;
  \draw (n2)--(4.75, 1);
  \draw (n2) to[out=-135, in =45]  (e2);
  \draw (e2) -- (3.5, -4);
  \draw (e2) to [out = 125, in =-80] (f2) ;
  \end{tikzpicture}
   \caption{Triangle diagrams for a dual pair }
   \label{fig:triangles-smc}
\end{subfigure}
\hspace{.03\linewidth}
\begin{subfigure}[b]{.17\linewidth}
\centering
\begin{tikzpicture}

 \node at (2,-5.75)(p){};
 \node at (2,-5.75)[above left]{$P$};
 \node at (1,-5.75)[above left]{$U$};
 \node at (.5,-1.75)(f)[circle,fill = white, draw]{$f$};
 \node at (1,0)(n1)[circle,fill = white, draw]{$\eta$};
 \node at (1,-4.5)(e1)[circle,fill = white, draw]{$\epsilon$};
 \node at (0,1.25)(q){};
 \node at (0,1.25)[below right]{$Q$};
 \node at (1,1.25)[below right]{$U$};

  \draw (q) to [out=-90, in =125] (f) ;
  \draw (n1)--(1, 1);
  \draw (n1) to [out =-135, in =90] (f) ;
  \draw (f) to[out=-90, in =45]  (e1);
  \draw (e1) -- (1, -5.5);
  \draw (f) to [out = -45, in =90] (p) ;
  \draw (n1) to[out =-45, in =90] (1.5, -1.5);
  \draw (1.5, -1.5) to [out =-90,  in =135] (e1);

\end{tikzpicture}
\caption{The  trace}
\label{fig:smc_trace}
\end{subfigure}
\caption{String diagrams for a symmetric monoidal category. \\We view the strings as oriented from top to bottom.
}
   \label{fig:string-smc}

\end{figure}
Recall that:
\begin{itemize}
\item An object $M$ of $\sC$ is {\bf dualizable} if there is an object $\rdual{M}$ of $\sC$ and morphisms
	\[\eta\colon U\to M\otimes \rdual{M}\text{  and  }\epsilon \colon \rdual{M}\otimes M\to U\]
so that the composites in \cref{fig:triangles-smc} are identity maps.  These are the {\bf triangle identities}.
\item If $M$ is dualizable, the {\bf trace} of a morphism $f\colon P\otimes M\to M\otimes Q$ is the composite in \cref{fig:smc_trace}.  If $P$ and $Q$ are units,  $f\colon M\to M$.
\end{itemize}
We then define the {\bf Fuller construction} of an $n$-tuple of maps $f_i\colon M_i\to M_{i-1}$ to be the composite 
\begin{equation}
\label{eq:ful_construction}
\ful {f_1,\ldots ,f_n}{}\colon M_1\otimes \cdots \otimes M_n\xto{f_1\otimes \cdots \otimes f_n}M_n\otimes M_1\otimes \cdots \otimes M_{n-1}\xto{\gamma}M_1\otimes \cdots \otimes M_n\end{equation}
This is illustrated by the string diagram in \cref{fig:string-smc-fuller}. When all the $M_i$ and $f_i$ are equal, this is the $n$th Fuller map $\ful fn$ described in the introduction.

\begin{thm}[Symmetric monoidal version of \cref{thm:main_first_half}]\label{thm:smc_compare_traces}  
	For an $n$-tuple of dualizable objects $\{ M_i \}_{i=1}^n$ in a symmetric monoidal category $(\sC, \otimes, U)$ and maps $f_i\colon M_i\to M_{i-1}$
	\[\tr(\ful {f_1,\ldots ,f_n}{})= \tr(f_1\circ f_2\circ \cdots \circ f_n)\]
	as maps $U \to U$ in $\sC$.
\end{thm}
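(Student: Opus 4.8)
The plan is to prove this by manipulating string diagrams, using only the axioms of a symmetric monoidal category together with the triangle identities for each dual pair $(M_i, \rdual{M_i})$. Write out the left-hand side $\tr(\ful{f_1,\ldots,f_n}{})$ as a single string diagram: it consists of $n$ copies of the duality unit $\eta$ along the top, $n$ copies of the counit $\epsilon$ along the bottom, and in between the Fuller map, whose crucial feature (see \cref{fig:string-smc-fuller}) is that it applies $f_i$ to the $i$-th strand and then permutes the $n$ output strands cyclically. The right-hand side $\tr(f_1\circ f_2\circ\cdots\circ f_n)$ is the ordinary trace of a single composite $M_n\to M_n$, drawn with one $\eta$, one $\epsilon$, and the $n$ maps stacked vertically along a single strand that loops back.

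The key steps, in order, are as follows. First, I would isolate the ``Reidemeister/Fuller wiring'' in the left-hand diagram: the $n$ strands coming out of the $f_i$'s, the cyclic permutation, and the pairing of each strand against its dual via $\eta$ and $\epsilon$. Second, I would perform a sequence of isotopies of the string diagram — sliding the morphisms $f_i$ along the strands and using naturality of the symmetry to move the braidings past the $f_i$'s — so that all $n$ instances of $\eta$ and $\epsilon$ are arranged in a ``chain.'' Third, and this is the heart of the argument, I would apply the triangle identities repeatedly: each triangle identity lets me cancel an adjacent $(\epsilon, \eta)$ zig-zag on a given $M_i$, replacing a detour through $M_i\otimes\rdual{M_i}$ by a straight strand. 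Applying this $n-1$ times collapses the $n$ separate duality loops into a single loop, which absorbs all the $f_i$ in sequence; reading off the result gives precisely the string-diagram form of $\tr(f_1\circ\cdots\circ f_n)$. One should double-check that the cyclic ordering coming out of the Fuller map matches the order $f_1\circ f_2\circ\cdots\circ f_n$ rather than its reverse or a cyclic rotation — but since the trace of a composite is invariant under cyclic permutation of the factors, any consistent bookkeeping convention will land on the same answer.

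The main obstacle I anticipate is purely combinatorial bookkeeping: tracking how the cyclic permutation of strands in the Fuller map interacts with the $n$ copies of $\eta$ and $\epsilon$, and being careful that the ``slide and cancel'' reductions are applied in an order for which the relevant $(\epsilon,\eta)$ pair is genuinely adjacent (i.e.\ nothing is trapped between them). This is the kind of step where a picture is worth far more than prose, so I would present it as an explicit sequence of two or three annotated string diagrams, each obtained from the previous by one named move (isotopy, naturality of the symmetry, or a triangle identity), rather than by a symbolic computation. A secondary, minor point is the degenerate case $n=1$, where the Fuller map is just $f_1$ and the statement is a tautology; and the case where some $P$, $Q$ are units, which is what we are in, so no genuine ``twisted'' trace subtleties arise. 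Once the diagrammatic reduction is laid out, the identification with the bicategorical statement \cref{thm:main_first_half} will follow by the same argument carried out in the relevant bicategory, which is why this symmetric monoidal version serves as the motivating warm-up.
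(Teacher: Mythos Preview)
Your proposal is correct and matches the paper's proof essentially step for step: the paper also writes out the Fuller trace as a string diagram, uses symmetry isomorphisms to rearrange it into a chained ``multitrace'' form, and then cancels adjacent $(\eta,\epsilon)$ pairs via the triangle identities to obtain the trace of the composite. Your anticipated bookkeeping concerns are exactly the content of the three figures the paper uses in lieu of prose.
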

\begin{proof}
The trace of the Fuller construction 
	\[ \ful{f_1,\ldots, f_n}{}\colon \bigotimes M_i \to \bigotimes M_i  \] of maps $f_i\colon M_i \to M_{i-1}$ 
  is depicted in figure \cref{fig:n_to_fuller_1}. Symmetry isomorphisms transform this trace to  \cref{fig:n_to_fuller_4}. Canceling as in \cref{fig:triangles-smc} transforms this to  \cref{fig:n_to_fuller_6}.  This completes the proof of this theorem using string diagrams.

Alternatively, a diagram chase shows that
 if $X\otimes Y$ and $Z$ are dualizable and 
$g\colon X\otimes Y\to X\otimes Z$ and $f\colon Z\to Y$,
the trace of 
\[X\otimes Y\otimes Z\xto{g\otimes f} X\otimes Z\otimes Y\xto{\id\otimes \gamma}X\otimes Y\otimes Z\]
is the trace of 
\[X\otimes Y\xto{g} X\otimes Z\xto{\id\otimes f} X\otimes Y.\]
Since the map in \eqref{eq:ful_construction} is the composite
\begin{align*}
M_1\otimes \cdots \otimes M_{n-2}\otimes M_{n-1}\otimes M_n
&\xto{f_1\otimes \cdots \otimes f_{n-1}\otimes f_n}M_n\otimes M_1\otimes \cdots M_{n-2}\otimes M_{n-1}
\\
&\xto{\gamma_{1,n-2}\otimes \id}M_1\otimes \cdots M_{n-2}\otimes M_n\otimes M_{n-1}
\\
&\xto{\id^{\otimes n-2}\otimes \gamma}M_1\otimes \cdots M_{n-2}\otimes M_{n-1}\otimes M_n
\end{align*}
the trace of $\ful{f_1,\ldots, f_n}{}$ is the trace of $\ful{(f_n\circ f_1),\ldots, f_{n-1}}{}$.
Then the result follows by induction and the cyclic invariance of the trace.
\end{proof}

\tdplotsetmaincoords{60}{120}
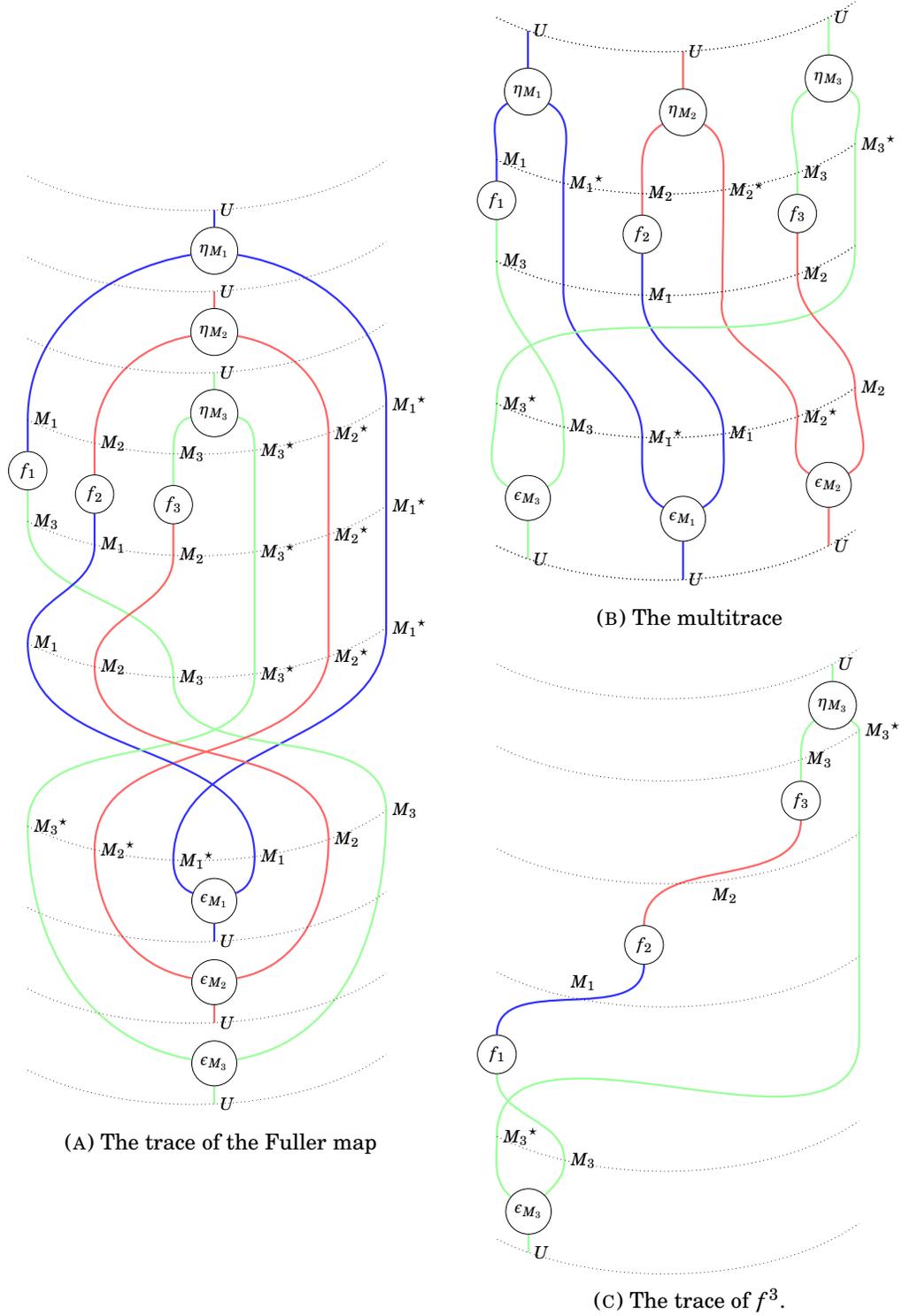
\begin{figure}[!htb]
    \centering
    \begin{tabular}[t]{cc}

\begin{subfigure}{.45\linewidth}
\resizebox{\textwidth}{!}
{
\centering
\begin{tikzpicture}[tdplot_main_coords, scale =.5]
   \pgfmathsetmacro{\RA}{10}
   \pgfmathsetmacro{\RB}{10}
   \pgfmathsetmacro{\RC}{10}

   \pgfmathsetmacro{\LA}{21}
   \pgfmathsetmacro{\LAA}{17}
   \pgfmathsetmacro{\LAB}{13}
   \pgfmathsetmacro{\LAC}{19}
   \pgfmathsetmacro{\LAD}{15}
   \pgfmathsetmacro{\LAE}{11}
   \pgfmathsetmacro{\LB}{9}
   \pgfmathsetmacro{\LBB}{6.5}
   \pgfmathsetmacro{\LC}{4}
   \pgfmathsetmacro{\LD}{-2}
   \pgfmathsetmacro{\LG}{-11}
   \pgfmathsetmacro{\LH}{-18}
   \pgfmathsetmacro{\LHH}{-22}
   \pgfmathsetmacro{\LI}{-13}

   \pgfmathsetmacro{\LIA}{-17}
   \pgfmathsetmacro{\LIB}{-21}
   \pgfmathsetmacro{\LIC}{-15}
   \pgfmathsetmacro{\LID}{-19}
   \pgfmathsetmacro{\LIE}{-23}

\pgfmathsetmacro{\BA}{-15}
\pgfmathsetmacro{\BAA}{\BA+50}
\pgfmathsetmacro{\BB}{\BA+20}
\pgfmathsetmacro{\BBA}{\BA+50}
\pgfmathsetmacro{\BC}{\BA+40}
\pgfmathsetmacro{\BCA}{\BA+50}
\pgfmathsetmacro{\BD}{\BA+60}
\pgfmathsetmacro{\BDA}{\BA+50}
\pgfmathsetmacro{\BE}{\BA+80}
\pgfmathsetmacro{\BEA}{\BA+50}
\pgfmathsetmacro{\BF}{\BA+100}
\pgfmathsetmacro{\BFA}{\BA+50}

   \draw [dotted,domain=\BA:\BF] plot ({\RA*cos(\x)}, {\RA*sin(\x)},\LA);
   \draw [dotted,domain=\BA:\BF] plot ({\RB*cos(\x)}, {\RB*sin(\x)},\LAA);
   \draw [dotted,domain=\BA:\BF] plot ({\RC*cos(\x)}, {\RC*sin(\x)},\LAB);

   \draw [dotted,domain=\BA:\BF] plot ({\RA*cos(\x)}, {\RA*sin(\x)},\LB);

   \draw [dotted,domain=\BA:\BF] plot ({\RA*cos(\x)}, {\RA*sin(\x)},\LC);

   \draw [dotted,domain=\BA:\BF] plot ({\RA*cos(\x)}, {\RA*sin(\x)},\LD);

   \draw [dotted,domain=\BA:\BF] plot ({\RA*cos(\x)}, {\RA*sin(\x)},\LG);

   \draw [dotted,domain=\BA:\BF] plot ({\RA*cos(\x)}, {\RA*sin(\x)},\LIC);
   \draw [dotted,domain=\BA:\BF] plot ({\RB*cos(\x)}, {\RB*sin(\x)},\LID);
   \draw [dotted,domain=\BA:\BF] plot ({\RC*cos(\x)}, {\RC*sin(\x)},\LIE);

\coordinate (ua1) at  ({\RA*cos(\BAA)}, {\RA*sin(\BAA)},\LA);
\node at (ua1)[right] {$U$};
  \coordinate (ua2) at ({\RB*cos(\BBA)}, {\RB*sin(\BBA)},\LAA) ;
\node at (ua2) [right] {$U$};
\coordinate (ua3) at  ({\RC*cos(\BCA)}, {\RC*sin(\BCA)},\LAB);
\node at (ua3) [right] {$U$};

  \node at ({\RA*cos(\BAA)}, {\RA*sin(\BAA)},\LAC) (nm1) [circle, draw, fill = white] {$\eta_{M_1}$};
  \node at ({\RB*cos(\BBA)}, {\RB*sin(\BBA)},\LAD) (nm2) [circle, draw, fill = white] {$\eta_{M_2}$};
  \node at ( {\RC*cos(\BCA)}, {\RC*sin(\BCA)},\LAE) (nm3) [circle, draw, fill = white] {$\eta_{M_3}$};

\coordinate (m21) at  ({\RA*cos(\BA)}, {\RA*sin(\BA)}, \LB);
\node at  (m21) [right] {$M_1$};
\coordinate (m21d) at  ({\RA*cos(\BF)}, {\RA*sin(\BF)}, \LB);
\node at  (m21d) [right] {$\rdual{M_1}$};
\coordinate (m22) at  ({\RB*cos(\BB)}, {\RB*sin(\BB)}, \LB);
\node at  (m22) [right] {$M_2$};
\coordinate (m22d) at  ({\RB*cos(\BE)}, {\RB*sin(\BE)}, \LB);
\node at  (m22d) [right] {$\rdual{M_2}$};
  \coordinate (m23) at  ({\RC*cos(\BC)}, {\RC*sin(\BC)}, \LB);
\node at (m23) [right] {$M_3$};
  \coordinate (m23d) at  ({\RC*cos(\BD)}, {\RC*sin(\BD)}, \LB) ;
\node at (m23d) [right] {$\rdual{M_3}$};

  \node at ({\RA*cos(\BA)}, {\RA*sin(\BA)}, \LBB) (f21) [circle, draw, fill = white] {$f_1$};
  \node at ({\RB*cos(\BB)}, {\RB*sin(\BB)}, \LBB) (f22) [circle, draw, fill = white] {$f_2$};
  \node at  ({\RC*cos(\BC)}, {\RC*sin(\BC)}, \LBB) (f23) [circle, draw, fill = white] {$f_3$};

  \coordinate (m31)  at ({\RA*cos(\BA)}, {\RA*sin(\BA)}, \LC);
\node at  (m31) [right] {$M_3$};
  \coordinate (m31d)  at ({\RA*cos(\BF)}, {\RA*sin(\BF)}, \LC);
\node at  (m31d) [right] {$\rdual{M_1}$};
  \coordinate (m32)  at ({\RB*cos(\BB)}, {\RB*sin(\BB)}, \LC);
\node at  (m32) [right] {$M_1$};
  \coordinate (m32d)  at ({\RB*cos(\BE)}, {\RB*sin(\BE)}, \LC);
\node at  (m32d) [right] {$\rdual{M_2}$};
  \coordinate (m33)  at ({\RC*cos(\BC)}, {\RC*sin(\BC)}, \LC);
\node at (m33) [right] {$M_2$};
  \coordinate (m33d) at ({\RC*cos(\BD)}, {\RC*sin(\BD)}, \LC);
\node at (m33d) [right] {$\rdual{M_3}$};

  \coordinate (m41) at ({\RC*cos(\BC)}, {\RC*sin(\BC)}, \LD);
\node at (m41) [right] {$M_3$};
  \coordinate (m41d) at ({\RA*cos(\BF)}, {\RA*sin(\BF)}, \LD);
\node at (m41d) [right] {$\rdual{M_1}$};
  \coordinate (m42)  at ({\RA*cos(\BA)}, {\RA*sin(\BA)}, \LD);
\node at (m42) [right] {$M_1$};
  \coordinate (m42d)  at ({\RB*cos(\BE)}, {\RB*sin(\BE)}, \LD);
\node at (m42d) [right] {$\rdual{M_2}$};
  \coordinate (m43)  at ({\RB*cos(\BB)}, {\RB*sin(\BB)}, \LD);
\node at  (m43) [right] {$M_2$};
  \coordinate (m43d) at ({\RC*cos(\BD)}, {\RC*sin(\BD)}, \LD);
\node at (m43d) [right] {$\rdual{M_3}$};

  \coordinate (m71) at ({\RC*cos(\BF)}, {\RC*sin(\BF)}, \LG);
\node at  (m71) [right] {$M_3$};
  \coordinate (m71d) at ({\RA*cos(\BC)}, {\RA*sin(\BC)} ,\LG);
\node at  (m71d) [right] {$\rdual{M_1}$};
  \coordinate (m72) at ({\RA*cos(\BD)}, {\RA*sin(\BD)}, \LG);
\node at (m72) [right] {$M_1$};
  \coordinate (m72d)  at ({\RB*cos(\BB)}, {\RB*sin(\BB)}, \LG);
\node at (m72d) [right] {$\rdual{M_2}$};
  \coordinate (m73)  at  ({\RB*cos(\BE)}, {\RB*sin(\BE)}, \LG);
\node at (m73) [right] {$M_2$};
  \coordinate (m73d)  at  ({\RC*cos(\BA)}, {\RC*sin(\BA)}, \LG);
\node at (m73d) [right] {$\rdual{M_3}$};

  \node at ({\RA*cos(\BFA)}, {\RA*sin(\BFA)}, \LI) (e91) [circle, draw, fill = white] {$\epsilon_{M_1}$};
  \node at ({\RB*cos(\BEA)}, {\RB*sin(\BEA)}, \LIA) (e92) [circle, draw, fill = white] {$\epsilon_{M_2}$};
  \node at ({\RC*cos(\BDA)}, {\RC*sin(\BDA)}, \LIB) (e93) [circle, draw, fill = white] {$\epsilon_{M_3}$};

  \coordinate (u91) at ({\RA*cos(\BFA)}, {\RA*sin(\BFA)}, \LIC);
\node at (u91) [right] {$U$};
  \coordinate (u92)  at ({\RB*cos(\BEA)}, {\RB*sin(\BEA)}, \LID);
\node at  (u92) [right] {$U$};
  \coordinate (u93) at ({\RC*cos(\BDA)}, {\RC*sin(\BDA)}, \LIE);
\node at  (u93) [right] {$U$};

 \begin{scope}[on background layer]

  \draw [mydark, very thick] (e91) to [out = 160, in = -90]
(m71d) to [out =90, in =-90] 
(m41d) -- (m31d)  to [out=90,in =-90]  (m21d) to [out = 90, in = -10] (nm1); 
  \draw [mydark, very thick] (nm1) to [out = 190, in =90] (m21) --(f21); 
  \draw [mylight, very thick] (f21) to [out=-90,in =90]  (m31)  to [out=-90,in =90]  (m41) to[in=90, out = -90]   
(m71) 
to [out=-90, in =10] (e93);
  \draw [mymed, very thick ]  (e92) to [out = 170, in = -90]  
 (m72d) to [out =90, in =-100] 
(m42d) -- (m32d)  to [out =90, in =-90]  (m22d) to [out = 90, in =-10] (nm2);  
\draw [mymed, very thick] (nm2) to [out =-170, in =90]  (m22) -- (f22);
\draw [mydark, very thick] (f22) to [out =-90, in =90]  (m32)  to [out =-90, in =90]  (m42)  to [out = -90,in =90]  
(m72)  to [out =-90, in =20]  
(e91);
  \draw [mylight, very thick ](e93) to [out = 170, in = -90]   
(m73d)to [out =90, in =-90]  
(m43d) -- (m33d) -- (m23d) to [out = 90, in = -10]  (nm3);
\draw [mylight, very thick](nm3) to [out =-170, in =90] (m23) -- (f23);
\draw [mymed, very thick] (f23) to [out =-90, in =90]  (m33) to [out =-90, in =90]  (m43) to [out =-90, in =90]
 (m73)
 to [out =-90, in =10]  (e92);

  \draw[mydark, very thick] (ua1) -- (nm1);
  \draw[mymed, very thick] (ua2) -- (nm2);
  \draw[mylight, very thick ] (ua3) -- (nm3);
  \draw[mydark, very thick] (u91) -- (e91);
  \draw[mymed, very thick] (u92) -- (e92);
  \draw[mylight, very thick] (u93) -- (e93);
    \end{scope}
     \end{tikzpicture}
     }
  \caption{The trace of the Fuller map} \label{fig:n_to_fuller_1}
\end{subfigure}
    &
        \begin{tabular}{c}
        \smallskip
\begin{subfigure}[b]{.45\linewidth}
\resizebox{\textwidth}{!}{
\centering
\begin{tikzpicture}[tdplot_main_coords, scale =.5]

   \pgfmathsetmacro{\RA}{10}
   \pgfmathsetmacro{\RB}{10}
   \pgfmathsetmacro{\RC}{10}
   
   \pgfmathsetmacro{\LA}{21}
   \pgfmathsetmacro{\LAA}{21}
   \pgfmathsetmacro{\LAB}{21}
   \pgfmathsetmacro{\LAC}{18}
   \pgfmathsetmacro{\LAD}{18}
   \pgfmathsetmacro{\LAE}{18}
   \pgfmathsetmacro{\LB}{14}
   \pgfmathsetmacro{\LBB}{12}
   \pgfmathsetmacro{\LC}{9}
   \pgfmathsetmacro{\LD}{6}
   \pgfmathsetmacro{\LG}{2}
   \pgfmathsetmacro{\LH}{0}
   \pgfmathsetmacro{\LHH}{-22}
   \pgfmathsetmacro{\LI}{-2}

   \pgfmathsetmacro{\LIA}{-2}
   \pgfmathsetmacro{\LIB}{-2}
   \pgfmathsetmacro{\LIC}{-5}
   \pgfmathsetmacro{\LID}{-5}
   \pgfmathsetmacro{\LIE}{-5}

\pgfmathsetmacro{\BA}{-15}
\pgfmathsetmacro{\BAA}{\BA+10}
\pgfmathsetmacro{\BB}{\BA+20}
\pgfmathsetmacro{\BBA}{\BA+30}
\pgfmathsetmacro{\BC}{\BA+40}
\pgfmathsetmacro{\BCA}{\BA+50}
\pgfmathsetmacro{\BD}{\BA+60}
\pgfmathsetmacro{\BDA}{\BA+70}
\pgfmathsetmacro{\BE}{\BA+80}
\pgfmathsetmacro{\BEA}{\BA+90}
\pgfmathsetmacro{\BF}{\BA+100}
\pgfmathsetmacro{\BFA}{\BA+30}

   \draw [dotted,domain=\BA:\BF] plot ({\RA*cos(\x)}, {\RA*sin(\x)},\LA);
   \draw [dotted,domain=\BA:\BF] plot ({\RB*cos(\x)}, {\RB*sin(\x)},\LAA);
   \draw [dotted,domain=\BA:\BF] plot ({\RC*cos(\x)}, {\RC*sin(\x)},\LAB);

   \draw [dotted,domain=\BA:\BF] plot ({\RA*cos(\x)}, {\RA*sin(\x)},\LB);
   \draw [dotted,domain=\BA:\BF] plot ({\RB*cos(\x)}, {\RB*sin(\x)},\LB);
   \draw [dotted,domain=\BA:\BF] plot ({\RC*cos(\x)}, {\RC*sin(\x)},\LB);

   \draw [dotted,domain=\BA:\BF] plot ({\RA*cos(\x)}, {\RA*sin(\x)},\LC);
   \draw [dotted,domain=\BA:\BF] plot ({\RB*cos(\x)}, {\RB*sin(\x)},\LC);
   \draw [dotted,domain=\BA:\BF] plot ({\RC*cos(\x)}, {\RC*sin(\x)},\LC);

   \draw [dotted,domain=\BA:\BF] plot ({\RA*cos(\x)}, {\RA*sin(\x)},\LG);
   \draw [dotted,domain=\BA:\BF] plot ({\RB*cos(\x)}, {\RB*sin(\x)},\LG);
   \draw [dotted,domain=\BA:\BF] plot ({\RC*cos(\x)}, {\RC*sin(\x)},\LG);
   
   \draw [dotted,domain=\BA:\BF] plot ({\RA*cos(\x)}, {\RA*sin(\x)},\LIC);
   \draw [dotted,domain=\BA:\BF] plot ({\RB*cos(\x)}, {\RB*sin(\x)},\LID);
   \draw [dotted,domain=\BA:\BF] plot ({\RC*cos(\x)}, {\RC*sin(\x)},\LIE);

  \coordinate (ua1)  at ({\RA*cos(\BAA)}, {\RA*sin(\BAA)},\LA);
\node at  (ua1) [right] {$U$};
  \coordinate (ua2) at ({\RB*cos(\BCA)}, {\RB*sin(\BCA)},\LAA);
\node at  (ua2) [right] {$U$};
  \coordinate (ua3) at ({\RC*cos(\BEA)}, {\RC*sin(\BEA)},\LAB);
\node at  (ua3) [right] {$U$};

  \node at ({\RA*cos(\BAA)}, {\RA*sin(\BAA)},\LAC) (nm1) [circle, draw, fill = white] {$\eta_{M_1}$};
  \node at ({\RB*cos(\BCA)}, {\RB*sin(\BCA)},\LAD) (nm2) [circle, draw, fill = white] {$\eta_{M_2}$};
  \node at ( {\RC*cos(\BEA)}, {\RC*sin(\BEA)},\LAE) (nm3) [circle, draw, fill = white] {$\eta_{M_3}$};

  \coordinate (m21) at ({\RA*cos(\BA)}, {\RA*sin(\BA)}, \LB);
\node at  (m21) [right] {$M_1$};
  \coordinate (m21d) at ({\RA*cos(\BB)}, {\RA*sin(\BB)}, \LB);
\node at  (m21d) [right] {$\rdual{M_1}$};
  \coordinate (m22) at ({\RB*cos(\BC)}, {\RB*sin(\BC)}, \LB);
\node at  (m22) [right] {$M_2$};
  \coordinate (m22d) at  ({\RB*cos(\BD)}, {\RB*sin(\BD)}, \LB);
\node at  (m22d) [right] {$\rdual{M_2}$};
  \coordinate (m23) at  ({\RC*cos(\BE)}, {\RC*sin(\BE)}, \LB);
\node at  (m23) [right] {$M_3$};
  \coordinate (m23d) at  ({\RC*cos(\BF)}, {\RC*sin(\BF)}, \LB);
\node at  (m23d) [right] {$\rdual{M_3}$};

  \node at ({\RA*cos(\BA)}, {\RA*sin(\BA)}, \LBB) (f21) [circle, draw, fill = white] {$f_1$};
  \node at ({\RB*cos(\BC)}, {\RB*sin(\BC)}, \LBB) (f22) [circle, draw, fill = white] {$f_2$};
  \node at  ({\RC*cos(\BE)}, {\RC*sin(\BE)}, \LBB) (f23) [circle, draw, fill = white] {$f_3$};

  \coordinate (m31) at ({\RA*cos(\BA)}, {\RA*sin(\BA)}, \LC);
\node at  (m31) [right] {$M_3$};
  \node at ({\RA*cos(\BB)}, {\RA*sin(\BB)}, \LC) (m31d){};
  \coordinate (m32) at ({\RB*cos(\BC)}, {\RB*sin(\BC)}, \LC);
\node at  (m32) [right] {$M_1$};
  \node at ({\RB*cos(\BD)}, {\RB*sin(\BD)}, \LC) (m32d){};
  \coordinate (m33) at ({\RC*cos(\BE)}, {\RC*sin(\BE)}, \LC);
\node at (m33) [right] {$M_2$};
  \node at ({\RC*cos(\BF)}, {\RC*sin(\BF)}, \LC)(m33d){};

  \coordinate (m71) at ({\RC*cos(\BB)}, {\RC*sin(\BB)}, \LG);
\node at (m71) [right] {$M_3$};
  \coordinate (m71d) at ({\RA*cos(\BC)}, {\RA*sin(\BC)} ,\LG);
\node at  (m71d) [right] {$\rdual{M_1}$};
  \coordinate (m72) at ({\RA*cos(\BD)}, {\RA*sin(\BD)}, \LG);
\node at (m72) [right] {$M_1$};
  \coordinate(m72d) at ({\RB*cos(\BE)}, {\RB*sin(\BE)}, \LG);
\node at  (m72d) [right] {$\rdual{M_2}$};
  \coordinate (m73) at  ({\RB*cos(\BF)}, {\RB*sin(\BF)}, \LG);
 \node at(m73) [right] {$M_2$};
  \coordinate (m73d) at  ({\RC*cos(\BA)}, {\RC*sin(\BA)}, \LG);
\node at (m73d) [right] {$\rdual{M_3}$};

  \node at ({\RA*cos(\BCA)}, {\RA*sin(\BCA)}, \LI) (e91) [circle, draw, fill = white] {$\epsilon_{M_1}$};
  \node at ({\RB*cos(\BEA)}, {\RB*sin(\BEA)}, \LIA) (e92) [circle, draw, fill = white] {$\epsilon_{M_2}$};
  \node at ({\RC*cos(\BAA)}, {\RC*sin(\BAA)}, \LIB) (e93) [circle, draw, fill = white] {$\epsilon_{M_3}$};

  \coordinate (u91) at ({\RA*cos(\BCA)}, {\RA*sin(\BCA)}, \LIC);
\node at  (u91) [right] {$U$};
  \coordinate (u92) at ({\RB*cos(\BEA)}, {\RB*sin(\BEA)}, \LID);
\node at  (u92) [right] {$U$};
  \coordinate (u93) at ({\RC*cos(\BAA)}, {\RC*sin(\BAA)}, \LIE);
\node at  (u93) [right] {$U$};

 \begin{scope}[on background layer]

  \draw [mydark, very thick] (e91) to [out = 150, in = -90]
(m71d) to [out =90, in =-90] 
(m31d)  to [out=90,in =-90]  (m21d) to [out = 90, in = -30] (nm1); 
  \draw [mydark, very thick] (nm1) to [out = -150, in =90] (m21) --(f21);
 \draw [mylight, very thick] (f21)  to [out=-90,in =90]  (m31)  to [out=-90,in =90] 
(m71) 
to [out=-90, in =30] (e93);
  \draw [mymed, very thick ]  (e92) to [out = 150, in = -90]  
 (m72d) to [out =90, in =-100]  
(m32d)  to [out =90, in =-90]  (m22d) to [out = 90, in =-30] (nm2);  
\draw [mymed, very thick] (nm2) to [out =-150, in =90]  (m22) -- (f22);
 \draw [mydark, very thick] (f22) to [out =-90, in =90]  (m32)  to [out =-90, in =90]  
(m72)  to [out =-90, in =30]  
(e91);
  \draw [mylight, very thick ](e93) to [out = 150, in = -90]  
(m73d)to [out =90, in =-90]  
(m33d) -- (m23d) to [out = 90, in = -30]  (nm3);
\draw [mylight, very thick](nm3) to [out =-150, in =90] (m23) -- (f23);
 \draw [mymed, very thick] (f23) to [out =-90, in =90]  (m33) to [out =-90, in =90]  
 (m73)
 to [out =-90, in =30]  
(e92);

  \draw[mydark, very thick] (ua1) -- (nm1);
  \draw[mymed, very thick] (ua2) -- (nm2);
  \draw[mylight, very thick ] (ua3) -- (nm3);
  \draw[mydark, very thick] (u91) -- (e91);
  \draw[mymed, very thick] (u92) -- (e92);
  \draw[mylight, very thick] (u93) -- (e93);

    \end{scope}

     \end{tikzpicture}
     }
  \caption{The multitrace} \label{fig:n_to_fuller_4}
\end{subfigure}
	\\
 \begin{subfigure}[t]{.45\linewidth}
\resizebox{\textwidth}{!}{
\centering
\begin{tikzpicture}[tdplot_main_coords, scale=.5]

   \pgfmathsetmacro{\RA}{10}
   \pgfmathsetmacro{\RB}{10}
   \pgfmathsetmacro{\RC}{10}

   \pgfmathsetmacro{\LAB}{25}
   \pgfmathsetmacro{\LAE}{23}
   \pgfmathsetmacro{\LBC}{21}
   \pgfmathsetmacro{\LDB}{19}
   \pgfmathsetmacro{\LCB}{16}
   \pgfmathsetmacro{\LDA}{13}
   \pgfmathsetmacro{\LGA}{10}
   \pgfmathsetmacro{\LCC}{5}
   \pgfmathsetmacro{\LD}{6}
   \pgfmathsetmacro{\LGC}{2}
   \pgfmathsetmacro{\LIB}{-1}
   \pgfmathsetmacro{\LIE}{-3}

\pgfmathsetmacro{\BA}{-15}
\pgfmathsetmacro{\BAA}{\BA+10}
\pgfmathsetmacro{\BB}{\BA+20}
\pgfmathsetmacro{\BBA}{\BA+30}
\pgfmathsetmacro{\BC}{\BA+40}
\pgfmathsetmacro{\BCA}{\BA+50}
\pgfmathsetmacro{\BD}{\BA+60}
\pgfmathsetmacro{\BDA}{\BA+70}
\pgfmathsetmacro{\BE}{\BA+80}
\pgfmathsetmacro{\BEA}{\BA+90}
\pgfmathsetmacro{\BF}{\BA+100}
\pgfmathsetmacro{\BFA}{\BA+30}

   \pgfmathsetmacro{\AA}{-20}
   \pgfmathsetmacro{\AB}{-30}
   \pgfmathsetmacro{\AC}{-5}
   \pgfmathsetmacro{\AD}{-20}
   \pgfmathsetmacro{\ADD}{-7}
   \pgfmathsetmacro{\AE}{5}
    \pgfmathsetmacro{\AEE}{12}
   \pgfmathsetmacro{\AF}{20}
   \pgfmathsetmacro{\AFF}{30}
   \pgfmathsetmacro{\AG}{40}
      \pgfmathsetmacro{\AGG}{50}
   \pgfmathsetmacro{\AH}{60}
   \pgfmathsetmacro{\AHH}{75}
   \pgfmathsetmacro{\AI}{90}
   \pgfmathsetmacro{\AII}{100}
   \pgfmathsetmacro{\AJ}{110}

   \draw [dotted,domain=\BA:\BF] plot ({\RC*cos(\x)}, {\RC*sin(\x)},\LAB);

   \draw [dotted,domain=\BA:\BF] plot ({\RB*cos(\x)}, {\RB*sin(\x)},\LBC);

   \draw [dotted,domain=\BA:\BF] plot ({\RA*cos(\x)}, {\RA*sin(\x)},\LCB);
   
   \draw [dotted,domain=\BA:\BF] plot ({\RA*cos(\x)}, {\RA*sin(\x)},\LGC);
   \draw [dotted,domain=\BA:\BF] plot ({\RA*cos(\x)}, {\RA*sin(\x)},\LGA);
   \draw [dotted,domain=\BA:\BF] plot ({\RC*cos(\x)}, {\RC*sin(\x)},\LIE);

  \coordinate (ua3) at ({\RC*cos(\BEA)}, {\RC*sin(\BEA)},\LAB);
\node at  (ua3) [right] {$U$};

  \node at ( {\RC*cos(\BEA)}, {\RC*sin(\BEA)},\LAE) (nm3) [circle, draw, fill = white] {$\eta_{M_3}$};

  \coordinate (m23) at  ({\RC*cos(\BE)}, {\RC*sin(\BE)}, \LBC);
\node at  (m23) [right] {$M_3$};
  \coordinate (m23d) at  ({\RC*cos(\BF)}, {\RC*sin(\BF)}, \LBC);
\node at  (m23d) [right] {$\rdual{M_3}$};

  \node at ({\RA*cos(\BA)}, {\RA*sin(\BA)}, \LD) (f21) [circle, draw, fill = white] {$f_1$};
  \node at ({\RB*cos(\BC)}, {\RB*sin(\BC)}, \LDA) (f22) [circle, draw, fill = white] {$f_2$};
  \node at  ({\RC*cos(\BE)}, {\RC*sin(\BE)}, \LDB) (f23) [circle, draw, fill = white] {$f_3$};

  \coordinate (m31) at ({\RC*cos(\BA)}, {\RC*sin(\BA)}, \LCC);
  \coordinate (m33) at ({\RB*cos(\AG)}, {\RB*sin(\AG)}, \LCB);
\node at (m33) [below right] {$M_2$};

  \coordinate (m63d) at  ({\RC*cos(\BF)}, {\RC*sin(\BF)}, \LD);
  
  \coordinate (m71) at ({\RC*cos(\BB)}, {\RC*sin(\BB)}, \LGC);
\node at  (m71) [right] {$M_3$};
  \coordinate (m72) at ({\RA*cos(\BB)}, {\RA*sin(\BB)}, \LGA);
\node at (m72) [above right] {$M_1$};
  \coordinate (m73d) at  ({\RC*cos(\BA)}, {\RC*sin(\BA)}, \LGC);
\node at (m73d) [right] {$\rdual{M_3}$};

  \node at ({\RC*cos(\BAA)}, {\RC*sin(\BAA)}, \LIB) (e93) [circle, draw, fill = white] {$\epsilon_{M_3}$};

  \coordinate (u93)  at ({\RC*cos(\BAA)}, {\RC*sin(\BAA)}, \LIE);
\node at  (u93) [right] {$U$};

 \begin{scope}[on background layer]
 \draw [mydark, very thick] (f21) to[out =90, in =-90] (f22);
 \draw [mylight, very thick] (f21) to [out=-90,in =90]  (m31)  to [out=-90,in =90]  
(m71);
 \draw [mylight, very thick] (m71) to [out=-90, in =45] (e93);
 \draw [mymed, very thick] (f22) to[out =90, in =-90]  (f23);
  \draw [mylight, very thick ](e93) to [out = 140, in = -90]   
(m73d)to [out =90, in =-90]   (m63d) -- 
(m23d) to [out = 90, in = -40]  (nm3);
\draw [mylight, very thick](nm3) to [out =-140, in =90] (m23) -- (f23);
  \draw[mylight, very thick ] (ua3) -- (nm3);
  \draw[mylight, very thick] (u93) -- (e93);

    \end{scope}

\end{tikzpicture}}
 \caption{The trace of $f^3$.} \label{fig:n_to_fuller_6}
 \end{subfigure}
        \end{tabular}
    \end{tabular}
\caption{Untwisting the Fuller trace
}
   \label{fig:n_to_fuller_smc_2}
\end{figure}

\begin{example} If $X$ is a finite or finitely dominated complex and $f\colon X \to X$, 
	the {\bf Lefschetz number} $L(f)$ is the trace of
	\[ \Sigma^\infty_+ f\colon \Sigma^\infty_+ X \to \Sigma^\infty_+ X \]
	in the stable homotopy category. This trace is a self-map of the sphere spectrum $\Sph = \Sigma^\infty_+ *$. The above theorem implies that $L(\ful{f}k) = L(f^k)$, which is the Lefschetz version of \cref{thm:main_first_half}, see also \cite[4.4]{fuller_67}. 
\end{example}

Our proof of \cref{thm:main_first_half} will essentially be a generalization of the above proof. In the more general setting of a shadowed bicategory, we will re-arrange the Fuller trace into a map as in \cref{fig:n_to_fuller_4} that we call the {\bf multitrace}, and then re-arrange the multitrace into the trace of the composite. The latter step does not require us to re-order objects, so we can do it in any shadowed bicategory. The former does require us to re-order the objects, so we need to ask for more structure beyond that of a bicategory. We will show that this step can be performed anytime we have a ``shadowed $n$-Fuller structure,'' defined in \S\ref{sec:fuller_bicat}. In \cite{mp2} we show that parametrized spectra have such a structure.

\begin{example}
The multitrace defined by \cref{fig:n_to_fuller_4} coincides with the multitrace from \cite[\S 4]{schlichtkrull}, cf. \cite[(2.6.4),(2.6.5)]{madsen_survey}. To recall it explicitly, let $A_0,\ldots , A_k$ be $n\times n$ matrices with coefficients in a field, and $V$ an $n$ dimensional vector space with basis $\{e_i\}$. The coevaluation map of $V$ is given by linearly extending $1\mapsto \sum e_i\otimes e_i^*$, and the evaluation map is given by linearly extending $(\phi,v)\mapsto \phi(v)$.  

The image of $(1,\ldots 1)$ under the multitrace of $(A_0,\ldots ,A_k)$ is then
	\[ \sum_{0\leq i_1,\ldots, i_k\leq n} a^0_{i_ki_0}\otimes a^1_{i_0i_1}\otimes \ldots \otimes a^k_{i_{k-1}i_k}\]
where $a_{j,l}^m$ is the $(j,l)$ entry of $A_m$.
\end{example}

\section{Bicategories and shadows}\label{sec:bicategories}

A \textbf{bicategory} $\sB$ consists of the following data:
  \begin{itemize}
  \item A collection of \emph{objects} or \emph{0-cells} $R,S,T,\dots$.
  \item For each pair of objects, a category $\sB(R,S)$.
  \item For each object, a \emph{unit} $U_R\in \sB(R,R)$.
  \item For each triple of objects, a \emph{composition} functor
    \[\odot\colon \sB(R,S)\times\sB(S,T)\to \sB(R,T).\]
  \item Associator and unit isomorphisms
      \begin{align*}
      \fa\colon M\odot (N\odot P) &\xto{\sim} (M\odot N)\odot P\\
      \frl\colon U_R \odot M &\xto{\sim} M\\
      \fr\colon M\odot U_S &\xto{\sim} M
      \end{align*}
      satisfying the same coherence axioms as for a monoidal category.
  \end{itemize}
The objects of $\sB(R,S)$ are called \emph{1-cells} and the morphisms are \emph{2-cells}. We think of these as ``monoidal categories with many objects'' and the operation $\odot$ as a tensor product. The coherence theorem for bicategories \cite{power} allows us to tensor a string of several 1-cells in a well-defined way up to canonical isomorphism, hence we often omit parentheses from expressions such as $M \odot N \odot P$.

A \textbf{shadow functor} on a bicategory $\sB$ is a 1-category $\bT$, a functor $\bigsh{-}\colon \sB(R,R) \to \bT$ for each 0-cell $R$, and natural isomorphisms
\[\theta\colon \sh{M\odot N}\xto{\sim} \sh{N\odot M}\]
satisfying following two coherence conditions.
\[ \resizebox{\textwidth}{!}{\xymatrix{\bigsh{(M\odot N)\odot P} \ar[r]^\theta \ar[d]_{\sh{\fa}} &
	\bigsh{P \odot (M\odot N)} \ar[r]^{\sh{\fa}} &
	\bigsh{(P\odot M) \odot N}\\
	\bigsh{M\odot (N\odot P)} \ar[r]^\theta & \bigsh{(N\odot P)
		\odot M} \ar[r]^{\sh{\fa}} & \bigsh{N\odot (P\odot
		M)}\ar[u]_\theta }
\quad
\xymatrix{\bigsh{M\odot U_R} \ar[r]^\theta \ar[dr]_{\sh{\fr}} &
	\bigsh{U_R\odot M} \ar[d]^{\sh{\frl}} \ar[r]^\theta &
	\bigsh{M\odot U_R} \ar[dl]^{\sh{\fr}}\\
	&\bigsh{M}}
}\]
This makes $\sB$ into a \textbf{bicategory with shadow}.

The point of a shadowed bicategory is that the 1-cells can be tensored along a circle. Given 1-cells $M_i \in \sB(R_{i-1},R_i)$, indices taken mod $n$, define their \textbf{circular product} by
\[ \sh{M_1,\ldots,M_n} := \sh{(\ldots((M_1 \odot M_2) \odot M_3)\ldots \odot M_n)}. \]
The following allows us to work with such products without worrying about parenthesization.
\begin{thm}[Coherence for shadowed bicategories {\cite[Theorem 9.12%\cref{thm:coherence_bicat_shadow}
]{mp2}}]
	If a functor is naturally isomorphic to the circular product by a composition of isomorphisms $\fa, \frl, \fr, \theta$, then there is only one such isomorphism.
\end{thm}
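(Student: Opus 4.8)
\medskip
\noindent\textbf{Proof proposal.}
The plan is to reduce the statement to the ordinary coherence theorem for bicategories, and then to settle the remaining content by unpacking the two shadow axioms. First I would apply Power's coherence theorem \cite{power} to replace $\sB$ by a biequivalent strict $2$-category $\sB'$, and transport the shadow functor together with the isomorphism $\theta$ along the biequivalence: a shadow pulls back along a pseudofunctor, and the two shadow coherence diagrams survive the transport because the comparison pseudofunctor and the mediating pseudonatural equivalence are themselves coherent. Since this process carries circular products to circular products and sends the generators $\fa,\frl,\fr,\theta$ to generators of the same name, it suffices to prove the theorem for $\sB'$. Equivalently, it suffices to show that every endomorphism of $\sh{M_1\odot\cdots\odot M_n}$ built from (strictified) structure isomorphisms is the identity, since any two canonical isomorphisms to the circular product differ by such an endomorphism.

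In $\sB'$ the isomorphisms $\fa,\frl,\fr$ are identities, so $\sh{M_1,\ldots,M_n}$ is the honest shadow $\sh{M_1\odot\cdots\odot M_n}$ and every canonical endomorphism of it is a composite of instances of $\theta$. Writing the first shadow coherence diagram out in the strict $2$-category turns it into the cocycle identity $\theta_{A\odot B,\, C}=\theta_{B,\, C\odot A}\circ\theta_{A,\, B\odot C}$, and the second turns into $\theta_{M,U}=\mathrm{id}$. Specializing the cocycle identity with $B$ a unit gives $\theta_{U,X}=\mathrm{id}$ for every $1$-cell $X$, and specializing it with $C$ a unit gives $\theta_{B,A}\circ\theta_{A,B}=\theta_{A\odot B,\,U}=\mathrm{id}$. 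Now the cocycle identity lets any composite of $\theta$'s starting at $\sh{M_1\odot\cdots\odot M_n}$ telescope to a single rotation $\theta_{M_1\odot\cdots\odot M_i,\; M_{i+1}\odot\cdots\odot M_n}\colon \sh{M_1\odot\cdots\odot M_n}\to\sh{M_{i+1}\odot\cdots\odot M_n\odot M_1\odot\cdots\odot M_i}$, whose target already records $i\bmod n$; when the target equals the source, the relation $\theta_{B,A}\circ\theta_{A,B}=\mathrm{id}$ (with $A=M_1\odot\cdots\odot M_{n-1}$, $B=M_n$) shows that the ``full rotation'' is the identity, and longer return words collapse the same way. Hence two canonical isomorphisms with the same source and target coincide, and transporting back along the biequivalence finishes the proof.

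The diagram chases that convert the two coherence diagrams into the cocycle and unit relations, and the telescoping of $\theta$-words, are routine. The main obstacle is the first step: one must check that the shadow transports along the strictifying biequivalence in a sufficiently coherent way for uniqueness in $\sB'$ to imply uniqueness in $\sB$. If one prefers to avoid strictification, the same endgame can be run directly in the free shadowed bicategory on $M_1,\ldots,M_n$: the cyclic word $(M_1,\ldots,M_n)$ obtained by deleting units is preserved by all four generators, so the connected component of $\sh{M_1,\ldots,M_n}$ is exactly its $n$ cyclic rotations, after which a Mac Lane--style rewriting argument (normalize strings of $\fa,\frl,\fr$ by bicategory coherence and strings of $\theta$ by the telescoping above) gives uniqueness using only the structure recalled in the excerpt.
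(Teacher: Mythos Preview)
This theorem is not proven in the present paper; it is quoted verbatim from the companion paper \cite{mp2}, where the full argument lives. So there is no ``paper's own proof'' here to compare against---the paper treats this as an imported black box.

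On the merits of your argument: the overall strategy is sound and is a standard route to such coherence results. After strictification the relations you extract from the two shadow axioms are correct, namely the cocycle $\theta_{A\odot B,C}=\theta_{B,C\odot A}\circ\theta_{A,B\odot C}$ and the unit relation $\theta_{M,U}=\mathrm{id}$, and from these $\theta_{B,A}\circ\theta_{A,B}=\mathrm{id}$ follows as you say. The telescoping of arbitrary $\theta$-words to a single cyclic rotation also goes through, including the ``over-rotation'' case (one applies the cocycle to break the second $\theta$ into two pieces, one of which cancels the first $\theta$ via $\theta^2=\mathrm{id}$). Since the statement concerns \emph{natural} isomorphisms between functors with inputs $M_1,\ldots,M_n$, the only rotation returning to the source functor is the full one, which you correctly identify as the identity.

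You are right to flag the strictification step as the delicate point. Transporting a shadow along a biequivalence is not entirely formal: one must show that the shadow axioms are preserved and, crucially, that uniqueness of canonical isomorphisms on the strict side pulls back to uniqueness on the original side. This requires invoking coherence for the pseudofunctor and pseudonatural equivalence realizing the biequivalence, so that the extra comparison $2$-cells introduced by transport are themselves ``canonical.'' This can be made to work, but it is genuine content. Your alternative---working directly in the free shadowed bicategory on $n$ generic $1$-cells and running a Mac Lane--style rewriting argument---is cleaner and avoids this issue entirely; that is likely closer to what appears in \cite{mp2}.
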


\begin{example}[Examples of bicategories and shadows]\label{ex:bicategories}\label{ex:shadows}\hfill 
\begin{enumerate}
	\item If $\sC$ is a monoidal category, it is also a bicategory with one object. If $\sC$ is a symmetric monoidal category, this is a shadowed bicategory in which $\bT = \sC$, the shadow functor is $\id\colon \sC \to \sC$, and $\theta$ is the symmetry isomorphism in $\sC$.

	\item There is a bicategory of bimodules and homomorphisms where the 0-cells are rings $R$, the 1-cells are bimodules $_R M_S$ and the 2-cells are bimodule homomorphisms $_R M_S \to \,_R N_S$.  The composition functor $\odot$ is the tensor product
	and the unit $U_R$ is $R$ as a bimodule over itself. 
There is a shadow functor 
that assigns $_R M_R$ to the quotient $M/\langle rm-mr\rangle$. 
	This can be generalized by taking the 1-cells to be chain complexes and the 2-cells to be maps in the derived category.

	\item There is a point-set bicategory of parameterized spaces $\bicat U\bS$. The 0-cells are spaces $A$ and $\bicat U\bS(A,B)$ is the category of spaces $X \to A \times B$. The composition of $X \to A \times B$ and $Y \to B \times C$ is the fiber product $X \times_B Y$ and the unit $U_B$ is the diagonal $B \to B \times B$. The shadow sends $X \to B \times B$ to the pullback along the diagonal $B \to B \times B$, which gives an unbased space.

\item There is a homotopy bicategory of spaces $\ho(\bicat U\bS)$. It is obtained by inverting the weak homotopy equivalences in each of the categories $\bicat U\bS(A,B)$, and replacing $\odot$ and $\sh{}$ by their right-derived functors. In particular, the shadow of the 1-cell in $\bicat U\bS(X,X)$ given by $X \xto{(f,\id)} X \times X$ is the twisted free loop space $\Lambda^f X$.

	\item
There is a bicategory $\Ex$ of parameterized spectra \cite[Ch. 17]{ms}.  Its 0-cells are spaces $A$, and the category $\Ex(A,B)$ is the homotopy category of spectra parametrized by the product space $A\times B$. Each parametrized space $X \to A \times B$ in $\ho(\bicat U\bS(A,B))$ has a suspension spectrum $\Sigma^\infty_{+(A \times B)} X$ in $\Ex(A,B)$. 
The shadow functor from $\Ex$ to spectra agrees with the one in $\ho(\bicat U\bS)$ along the suspension spectrum functor. 

\item The last three examples admit generalizations $G\bicat U{G\bS}, \ho G\bicat U{G\bS}, G\Ex$ by allowing the action of a finite group $G$. When forming the homotopy category $\ho G\bicat U{G\bS}$, we invert those maps of $G$-spaces that are equivalences on the $H$-fixed points for every $H \leq G$.
\end{enumerate}
\end{example}

A \textbf{pseudofunctor} is a homomorphism of bicategories $F\colon \sC \to \sD$. It consists of the following data.
\begin{itemize}
	\item A function $\ob \sC \to \ob \sD$ of 0-cells, denoted by $F$.
	\item A functor $\sC(R,S) \to \sD(F(R),F(S))$ for each pair of 0-cells in $\sC$, denoted by $F$.
	\item Natural isomorphisms
	\begin{align*}
	m\colon F(M) \odot F(N) &\xto{\sim} F(M \odot N) \\
	i\colon U_{F(R)} &\xto{\sim} F(U_R)
	\end{align*}
	satisfying the same coherence axioms as for a strong monoidal functor.
\end{itemize}
A \textbf{strong shadow functor} is a homomorphism of shadowed bicategories $F\colon (\sC,\bT_{\sC}) \to (\sD,\bT_{\sD})$. It consists of a pseudofunctor and the following additional data.
\begin{itemize}
	\item A functor of shadow categories $F_{\tr}\colon \bT_{\sC} \to \bT_{\sD}$.
	\item Natural isomorphisms $s\colon \sh{F(M)} \xto{\sim} \Ft \sh{M}$ such that
  \[\xymatrix{\sh{F(M)\odot F(N)} \ar[r]^\theta\ar[d]_{\sh{m}} & \sh{F(N)\odot F(M)} \ar[d]^{\sh{m}}\\
    \sh{F(M\odot N)}\ar[d]_s & \sh{F(N\odot M)} \ar[d]^s\\
    \Ft\,\sh{M\odot N} \ar[r]_{\Ft(\theta)} & \Ft\,\sh{N\odot M}.}
  \]
	commutes whenever it makes sense.
\end{itemize}
If $F,G \colon \sC \to \sD$ are strong shadow functors that are the same function of 0-cells, an \textbf{isomorphism of strong shadow functors} from $F$ to $G$ consists of natural isomorphisms $F \cong G$ and $F_{\tr} \cong G_{\tr}$ that commute with $m$, $i$, and $s$. We will often implicitly work with these functors up to isomorphism.

\section{Duality and trace for bicategories}\label{sec:traces_in_bicat}

A 1-cell $M\colon R\hto S$ in a bicategory is \textbf{right dualizable}, or \textbf{dualizable over $S$}, if there is a 1-cell $\rdual{M}\colon S\hto R$, and coevaluation and evaluation 2-cells
	\[\eta\colon U_R \to M\odot \rdual{M}\text{ and  }\epsilon\colon \rdual{M}\odot M\to U_S\] 
satisfying the triangle identities.
We say that $(M,\rdual{M})$ is a \textbf{dual pair}, that $\rdual{M}$ is \textbf{left dualizable} or \textbf{dualizable over $S$}.

\begin{example}[Dualizable objects]\label{ex:dualizable_objects}\hfill 
	\begin{enumerate}
		\item An object $M$ is dualizable in the symmetric monoidal category $\sC$ \tiff it is right (or left) dualizable in the bicategory associated to $\sC$.

		\item If $A$ and $R$ are rings, a bimodule $_A M_R$ is right dualizable precisely when it is finitely generated and projective as a right $R$-module, in which case the dual is $\Hom_R(M,R)$. Of course, $M$ left dualizable when it is finitely generated and projective as a left $A$-module. 
	\end{enumerate}
\end{example}

\begin{example}\label{ex:base_change}
The graph $A \xto{\id_A,f}  A \times B$ of a map $f\colon A\to B$ of unbased spaces 
defines  1-cells in $\ho(\bicat U\bS)(A,B)$ and $\ho(\bicat U\bS)(B,A)$. Taking suspension spectra gives two different 1-cells in $\Ex$, which we call the {\bf base-change} 1-cells associated to $f$:
\[ \bcl AfB\coloneqq \Sigma^\infty_{+(A \times B)} A, \qquad \bcr AfB\coloneqq \Sigma^\infty_{+(B \times A)} A. \] 
\begin{itemize}
	\item The 1-cell $\bcl AfB$ is always right dualizable \cite[17.3.1]{ms}. 
	\item If $p\colon E\to B$ is a perfect fibration, i.e. a fibration whose fibers are finitely dominated, then $\bcr EpB$ is right dualizable \cite[4.7]{PS:mult}. When there is a $G$-action, the same is true if the fiber is equivariantly finitely dominated, meaning it is a retract in the homotopy category of $G$-spaces of a finite $G$-CW complex \cite[4.3]{PS:mult}\cite[18.2.1]{ms}. 
\end{itemize}
\end{example}

Base change objects define a pseudofunctor $\bS\to \ho(\bicat U\bS)$.  In particular, 
there are coherent isomorphisms
\begin{align}\label{eq:compositions_of_base_change_in_spectra}
m_{[]}\colon \bcr{B}{g}{C} \odot \bcr{A}{f}{B} &\xto{\sim} \bcr{A}{g \circ f}{C}, \qquad
i_{[]}\colon U_A \xto{\sim} \bcr{A}{\id}{A}.
\end{align}
for each pair of composable maps $A \overset{f}\to B \overset{g}\to C$ and each 0-cell $A$.
The same discussion applies with $G$-equivariant spaces as well.

Let $\sB$ be a bicategory with a shadow functor to $\bT$, and $M$ a right dualizable 1-cell of $\sB$.
  The {\bf trace} of a 2-cell $f\colon Q\odot M\rightarrow M\odot P$ is the morphism in $\bT$ is the composite:
  \[ \xymatrix{
  	\sh{Q} \ar[r]^-{\eta} & \sh{Q, M, \rdual{M}} \ar[r]^-{f} & \sh{M, P, \rdual{M}} \ar[r]^-{\epsilon} & \sh{P}.
  } \]
When $\sB$ comes from a symmetric monoidal category, this is the trace as defined in \cref{sec:smc}.

As in the symmetric monoidal case, it is helpful to visualize these traces using the string diagram calculus for bicategories from \cite{ps:bicat}. (The rigor of this approach is established in the appendix of that paper.  As in the symmertic monoidal case any string diagram can be translated into a conventional commutative diagram.)  We represent 0-cells by 2-dimensional regions, 1-cells by strings, and 2-cells by vertices; see \cref{fig:string-bicat}. 
Pasting pictures together corresponds to horizontal (tensoring) and vertical composition of the resulting expressions in the bicategory.
\begin{figure}[tb]
 \begin{subfigure}[b]{.15\linewidth}
\centering
      \begin{tikzpicture}
       \fill[mydarkfill] (0,0) rectangle (2,2);
       \node[anchor= north west,blue] at (0,2) {$R$};
     \end{tikzpicture}
\caption{Object $R$}
\end{subfigure}
  \hspace{.5cm}
\begin{subfigure}[b]{.2\linewidth}
\centering
   \begin{tikzpicture}
       \fill[mydarkfill] (0,0) rectangle (1,2);
       \fill[mylightfill] (1,0) rectangle (2,2);
       \node[anchor=north west,blue] at (0,2) {$R$};
       \node[anchor=north east,green!70!black] at (2,2) {$S$};
       \draw[-](1,2)--(1, 0);
       \node[anchor= west] at (1,1){$M$};
     \end{tikzpicture}
\caption{1-cell $R\xrightarrow{M} S$}
\end{subfigure}
  \hspace{.5cm}
\begin{subfigure}[b]{.25\linewidth}
\centering
   \begin{tikzpicture}
      \fill[mydarkfill] (0,0) rectangle (1,2);
      \fill[mylightfill] (1,0) rectangle (2,2);
      \fill[mymedfill] (2,0) rectangle (3,2);
      \node[anchor=north west,blue] at (0,2) {$R$};
      \node[anchor=north,green!70!black] at (1.5,2) {$S$};
      \node[anchor=north east,red] at (3,2) {$T$};
      \draw[-](1,2)--(1,0);
      \draw[-](2,2)--(2,0);
      \node[anchor= east]at (1,1){$M$};
      \node[anchor= west]at (2,1){$N$};
     \end{tikzpicture}
\caption{Composite $M\odot N$}
\end{subfigure}
  \hspace{.5cm}
\begin{subfigure}[b]{.2\linewidth}
\centering
   \begin{tikzpicture}
       \fill[mydarkfill] (0,0) rectangle (1,2);
       \fill[mylightfill] (1,0) rectangle (2,2);
       \node[anchor=north west,blue] at (0,2) {$R$};
       \node[anchor=north east,green!70!black] at (2,2) {$S$};
       \node[fill=white,draw,circle,inner sep=1pt] (f) at (1,1) {$f$};
       \draw[-] (1,2) -- (f);
       \draw[-](f) -- (1,0);
       \node[anchor=west] at(1, 1.5){$M$};
       \node[anchor=west] at(1, .5){$N$};
     \end{tikzpicture}
\caption{2-cell 
}
\end{subfigure}
\begin{subfigure}[b]{.25\linewidth}
\centering
   \begin{tikzpicture}
      \fill[mydarkfill] (0,0) rectangle (1,2);
      \fill[mylightfill] (1,0) rectangle (2,2);
      \fill[mymedfill] (2,0) rectangle (3,2);
      \node[anchor=north west,blue] at (0,2) {$R$};
      \node[anchor=north,green!70!black] at (1.5,2) {$S$};
      \node[anchor=north east,red] at (3,2) {$T$};
      \draw[-](1,2)--(1,0);
      \draw[-](2,2)--(2,0);
      \node[anchor= east]at (1,1){$M$};
      \node[anchor= west]at (2,1){$N$};
\filldraw [pattern color = black!60!white, pattern = dots, draw =white]  (.5,0) rectangle (2.5,2);
     \end{tikzpicture}
\caption{$F(M\odot N)$}
\end{subfigure}
  \hspace{.5cm}
\begin{subfigure}[b]{.25\linewidth}
\centering
   \begin{tikzpicture}
      \fill[mydarkfill] (0,0) rectangle (1,2);
      \fill[mylightfill] (1,0) rectangle (2,2);
      \fill[mymedfill] (2,0) rectangle (3,2);
      \node[anchor=north west,blue] at (0,2) {$R$};
      \node[anchor=north,green!70!black] at (1.5,2) {$S$};
      \node[anchor=north east,red] at (3,2) {$T$};
      \draw[-](1,2)--(1,0);
      \draw[-](2,2)--(2,0);
      \node[anchor= east]at (1,1){$M$};
      \node[anchor= west]at (2,1){$N$};
\filldraw [pattern color = black!60!white, pattern = dots, draw =white]  (.5,0) rectangle (1.25,2);
\filldraw [pattern color = black!60!white, pattern = dots, draw =white]  (1.75,0) rectangle (2.5,2);
     \end{tikzpicture}
\caption{ $F(M)\odot F(N)$}
\end{subfigure}
  \hspace{.5cm}
\begin{subfigure}[b]{.25\linewidth}
\centering
\begin{tikzpicture}
\filldraw[blue!60!white] (0,0) ellipse (1.5 and 0.35);
\filldraw[mydarkfill] (-1.5,0) -- (-1.5,-1.5) arc (180:360:1.5 and 0.35) -- (1.5,0) arc (0:180:1.5 and -0.35);
\draw[-] (0,-.35) --(0, -1.85);
\node[anchor=west] at(0, -1){$M$};
\end{tikzpicture}
\caption{$\sh{M}$}\label{cyl:shadow}
\end{subfigure}
   \caption{String diagrams for bicategories}
   \label{fig:string-bicat}
\end{figure}
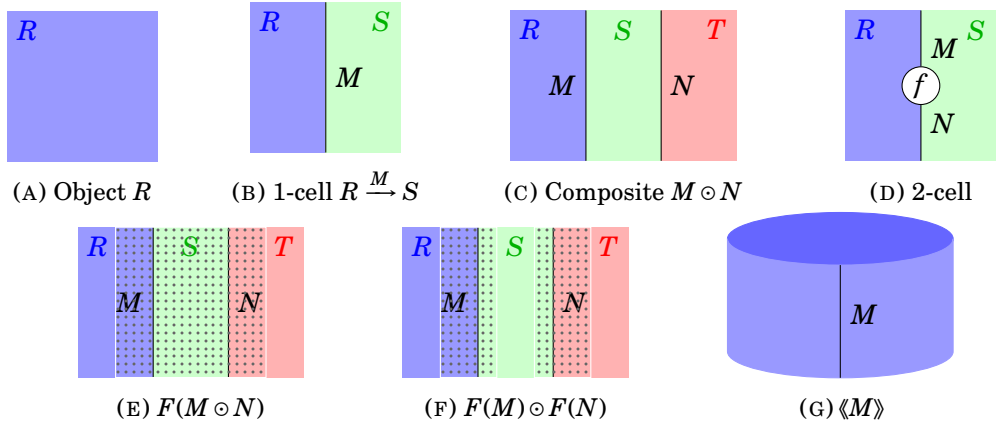
To extend this visual language to bicategories with shadow, we represent the shadow by closing a planar string diagram into a cylindrical string diagram. See \cref{cyl:shadow}. See \cref{bitrace} for the string diagram for the bicategorical trace, cf. the earlier string diagram in \cref{fig:smc_trace}.
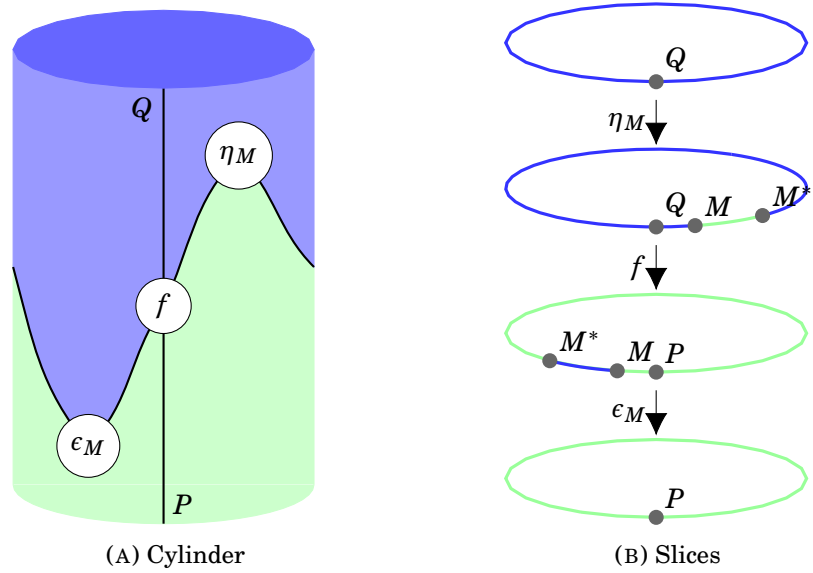
\begin{figure}
\tdplotsetmaincoords{75}{120}
\centering
    \begin{subfigure}[b]{0.30\textwidth}
{
\begin{tikzpicture}[tdplot_main_coords, scale =.5]
   \pgfmathsetmacro{\RA}{4}

   \pgfmathsetmacro{\LA}{6}
   \pgfmathsetmacro{\LB}{4}
   \pgfmathsetmacro{\LC}{0}
   \pgfmathsetmacro{\LD}{-4}
   \pgfmathsetmacro{\LE}{-6}

\pgfmathsetmacro{\AA}{-25}
\pgfmathsetmacro{\AB}{\AA+15}
\pgfmathsetmacro{\AC}{\AA+25}
\pgfmathsetmacro{\AD}{\AA+25}
\pgfmathsetmacro{\AE}{\AA+55}
\pgfmathsetmacro{\AF}{\AA+85}
\pgfmathsetmacro{\AG}{\AA+85}
\pgfmathsetmacro{\AH}{\AA+100}
\pgfmathsetmacro{\AI}{\AA+115}
\pgfmathsetmacro{\AJ}{120}
\pgfmathsetmacro{\AK}{\AA-35}

   \filldraw [blue!60!white,domain=0:360] plot ({\RA*cos(\x)}, {\RA*sin(\x)},\LA);

\coordinate (tl) at ({\RA*cos(\AK)}, {\RA*sin(\AK)}, \LA);
\coordinate (tr) at ({\RA*cos(\AJ)}, {\RA*sin(\AJ)}, \LA);
\coordinate (bl) at ({\RA*cos(\AK)}, {\RA*sin(\AK)}, \LE);
\coordinate (br) at ({\RA*cos(\AJ)}, {\RA*sin(\AJ)}, \LE);

\coordinate (s1) at  ({\RA*cos(\AK)}, {\RA*sin(\AK)}, \LC);
\coordinate (s2) at  ({\RA*cos(\AJ)}, {\RA*sin(\AJ)}, \LC);

\coordinate (n2) at  ({\RA*cos(\AF)}, {\RA*sin(\AF)}, \LB);
\node at (n2)[circle, draw , fill =white] {$\eta_{M}$};

\coordinate (q2) at ({\RA*cos(\AE)}, {\RA*sin(\AE)}, \LA);
\node at (q2)[below left] {$Q$};

\coordinate (f2) at ({\RA*cos(\AE)}, {\RA*sin(\AE)}, \LC);
\node at (f2)[circle, draw , fill =white] {$f$};

\coordinate (e2) at ({\RA*cos(\AD)}, {\RA*sin(\AD)}, \LD);
\node at (e2)[circle, draw , fill =white] {$\epsilon_{M}$};

\coordinate (p2) at  ({\RA*cos(\AE)}, {\RA*sin(\AE)}, \LE);
\node at (p2)[above right] {$P$};

 \begin{scope}[on background layer]
\filldraw[mylightfill] (bl)-- (s1)to [out = -75, in =130]
(e2)to  [out= 45, in = -125]  (f2)to [out= 65, in = -135]  (n2)
to  [out= -45, in = 135] (s2) -- (br) [domain=\AK:\AJ] plot ({\RA*cos(\x)}, {\RA*sin(\x)},\LE);

\filldraw[mydarkfill] (tl)-- (s1)to [out = -75, in =130] 
 (e2)to  [out= 45, in = -125]  (f2)to [out= 65, in = -135]  (n2)
to  [out= -45, in = 135] (s2) --  (tr) [domain=\AK:\AJ] plot ({\RA*cos(\x)}, {\RA*sin(\x)},\LA);

\draw[thick]  (s1)to [out = -75, in =130] 
(e2)to  [out= 45, in = -125]  
(f2)to [out= 65, in = -135]  (n2)
to  [out= -45, in = 135] (s2) ;

\draw[thick] (q2)--(f2)-- (p2);
\end{scope}

\end{tikzpicture}
} \caption{Cylinder}

\end{subfigure}
\hspace{2cm}
    \begin{subfigure}[b]{0.30\textwidth}
\tdplotsetmaincoords{75}{120}
\centering
{
\begin{tikzpicture}[tdplot_main_coords, scale =.5,roundnode/.style={circle, fill=black!60, inner sep=0pt, minimum size=2mm}]
   \pgfmathsetmacro{\RA}{4}

   \pgfmathsetmacro{\LA}{6}
   \pgfmathsetmacro{\LB}{4}
   \pgfmathsetmacro{\LBA}{2}
   \pgfmathsetmacro{\LC}{0}
   \pgfmathsetmacro{\LCA}{-2}
   \pgfmathsetmacro{\LD}{-4}
   \pgfmathsetmacro{\LE}{-6}

\pgfmathsetmacro{\AA}{-25}
\pgfmathsetmacro{\AB}{\AA+10}%M*
\pgfmathsetmacro{\AC}{\AA+25}%epsilon
\pgfmathsetmacro{\AD}{\AA+40}%M
\pgfmathsetmacro{\AE}{\AA+55}%Q
\pgfmathsetmacro{\AEA}{\AA+70}%M
\pgfmathsetmacro{\AF}{\AA+85}%\eta
\pgfmathsetmacro{\AG}{\AA+100}%M*
\pgfmathsetmacro{\AH}{\AA+100}
\pgfmathsetmacro{\AI}{\AA+115}
\pgfmathsetmacro{\AJ}{120}
\pgfmathsetmacro{\AK}{\AA-35}

   \draw [very thick, mydark,domain=0:360] plot ({\RA*cos(\x)}, {\RA*sin(\x)},\LA);
	\coordinate (q1) at ({\RA*cos(\AE)}, {\RA*sin(\AE)},\LA); %Q
		\node[roundnode] at (q1){};
		\node at (q1)[above right] {$Q$};

   \draw [very thick, mydark,domain=-180:\AEA] plot ({\RA*cos(\x)}, {\RA*sin(\x)},\LBA);
   \draw [very thick, mydark,domain=\AG:180] plot ({\RA*cos(\x)}, {\RA*sin(\x)},\LBA);
   \draw [very thick, mylight,domain=\AG:\AEA] plot ({\RA*cos(\x)}, {\RA*sin(\x)},\LBA);
	\coordinate (q2) at  ({\RA*cos(\AE)}, {\RA*sin(\AE)},\LBA){}; %Q
		\node[roundnode] at (q2){};
		\node at (q2)[above right] {$Q$};
	\coordinate (m1) at ({\RA*cos(\AEA)}, {\RA*sin(\AEA)},\LBA){}; %M
		\node[roundnode] at (m1){};
		\node at (m1)[above right] {$M$};
	\coordinate (dm1) at ({\RA*cos(\AG)}, {\RA*sin(\AG)},\LBA){};%M*
		\node[roundnode] at (dm1){};
		\node at (dm1)[above right] {$M^*$};

   \draw [very thick, mylight,domain=0:360] plot ({\RA*cos(\x)}, {\RA*sin(\x)},\LCA);
   \draw [very thick, mydark,domain=\AB:\AD] plot ({\RA*cos(\x)}, {\RA*sin(\x)},\LCA);
	\coordinate (p1) at  ({\RA*cos(\AE)}, {\RA*sin(\AE)},\LCA){}; %P
		\node[roundnode] at (p1){};
		\node at (p1)[above right] {$P$};
	\coordinate (m2) at ({\RA*cos(\AD)}, {\RA*sin(\AD)},\LCA){}; %M
		\node[roundnode] at (m2){};
		\node at (m2)[above right] {$M$};
	\coordinate (dm2) at ({\RA*cos(\AB)}, {\RA*sin(\AB)},\LCA){};%M*
		\node[roundnode] at (dm2){};
		\node at (dm2)[above right] {$M^*$};

   \draw [very thick, mylight,domain=0:360] plot ({\RA*cos(\x)}, {\RA*sin(\x)},\LE);
\coordinate (p2) at  ({\RA*cos(\AE)}, {\RA*sin(\AE)}, \LE);
\node[roundnode] at (p2){};
\node at (p2)[above right] {$P$};

\draw [->] ({\RA*cos(\AE)}, {\RA*sin(\AE)},\LA-.5) -- ({\RA*cos(\AE)}, {\RA*sin(\AE)},\LBA+2.25)node[midway,left ]{$\eta_{M}$};

\draw [->] ({\RA*cos(\AE)}, {\RA*sin(\AE)},\LBA-.5) -- ({\RA*cos(\AE)}, {\RA*sin(\AE)},\LCA+2.25)node[midway,left ]{$f$};

\draw [->] ({\RA*cos(\AE)}, {\RA*sin(\AE)},\LCA-.5) -- ({\RA*cos(\AE)}, {\RA*sin(\AE)},\LE+2.25)node[midway,left ]{$\epsilon_{M}$};

\end{tikzpicture}
} \caption{Slices}

\end{subfigure}
\caption{The bicategorical trace}\label{bitrace}
\end{figure}

\begin{example}
Suppose that $X$ is a finite or finitely dominated complex and $f\colon X \to X$.
\begin{itemize}

	\item The {\bf Reidemeister trace} $R(f)$ is the trace of the canonical isomorphism
		\[ \bcr X{p}* \xto{\sim} \bcr X{p}* \odot \bcr XfX. \]	
	This trace is a map in the homotopy category
		\[ R(f)\colon \Sph \simeq \Sigma^\infty_+ \Bigsh{\bcr {*}{}{*}} \to \Sigma^\infty_+ \Bigsh{\bcr XfX} \simeq \Sigma^\infty_+ \Lambda^f X \]
	(cf \cite[Appendix A]{cp}), which can be regarded as an element of $H_0(\Lambda^f X)$.
	
	\item The {\bf $n$th Fuller trace} $R_{C_n}(\ful fn)$ is the trace of the canonical isomorphism
		\[ \bcr {X^n}{p}* \xto{\sim} \bcr {X^n}{p}* \odot \bcr {X^n}{\ful fn}{X^n}. \]
	It is a map in the $C_n$-equivariant homotopy category
		\[ R_{C_n}(\ful fn)\colon \Sph \simeq \Sigma^\infty_+ \Bigsh{\bcr {*}{}{*}} \to \Sigma^\infty_+ \Bigsh{\bcr {X^n}{\ful fn}{X^n}} \simeq \Sigma^\infty_+ \Lambda^{\ful fn} X^n. \]
\end{itemize}

These are not the standard definitions of Lefschetz number or the Reidemeister trace.  The definition here for the Lefschetz number is shown to agree with more classical descriptions in \cite{dp}.  This description of the Reidemeister trace is compared to more classical versions in \cite{p:coincidences} and to the description in \cite{kw} in \cite[6.3.2]{p:thesis}. The fact that different constructions of $\Ex$ give the same base-change isomorphisms is handled carefully in \cite{spectra_notes}, so we refrain from commenting on it here.
\end{example}

We end this section by recalling a fundamental functoriality result for the trace.
\begin{thm}\label{bicategory_map_preserves_traces}\cite[8.3]{ps:bicat}
  Let $F\colon \sB\rightarrow \sC$ be a strong shadow functor and suppose $M\in \sB(R, S)$ is right dualizable.
  \begin{enumerate}
  \item Then $F(M)$ is right dualizable with dual $F(\rdual{M})$.\label{item:bfpt1}
  \item  For any $f\colon Q\odot M\rightarrow M\odot P$, the following square commutes:\label{item:bfpt2}
    \[\xymatrix@C=1.5in{\sh{F(Q)}\ar[r]^{\tr(m_{M,P}^{-1}\circ F(f)\circ m_{Q,M})}
      \ar[d]_s &\sh{F(P)}\ar[d]^s\\
      \Ft\,\sh{Q}\ar[r]^{\Ft(\tr(f))}&\Ft\,\sh{P}.}\]
  \end{enumerate}
\end{thm}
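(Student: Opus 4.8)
The plan is to build the dual data for $F(M)$ out of the dual data for $M$ by composing with the structure isomorphisms $m$ and $i$, and then to establish both claims by a pasting (string-diagram) computation in which the coherence cells $\fa,\frl,\fr,m,i$ are suppressed and the only genuinely nontrivial ingredient is the compatibility square of $s$ with $\theta$ from the definition of a strong shadow functor. Since the statement is \cite[8.3]{ps:bicat}, the argument below is essentially a recollection of that one.

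For the first claim, I would declare the coevaluation and evaluation of $(F(M),F(\rdual{M}))$ to be
\[ \eta'\colon U_{F(R)}\xto{i} F(U_R)\xto{F(\eta)} F(M\odot\rdual{M})\xto{m^{-1}} F(M)\odot F(\rdual{M}), \]
\[ \epsilon'\colon F(\rdual{M})\odot F(M)\xto{m} F(\rdual{M}\odot M)\xto{F(\epsilon)} F(U_S)\xto{i^{-1}} U_{F(S)}. \]
One then checks the two triangle identities for $(\eta',\epsilon')$ by applying $F$ to the triangle identities for $(\eta,\epsilon)$ and chasing, using functoriality of $F$, naturality of $m$ and $i$, and the pseudofunctor coherence axioms relating $m,i$ to $\fa,\frl,\fr$. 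This is the standard fact that a strong monoidal functor carries dual pairs to dual pairs, applied hom-category by hom-category; I expect no difficulty here.

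For the second claim, I would first write out $\tr(m_{M,P}^{-1}\circ F(f)\circ m_{Q,M})$ from the definition of the bicategorical trace, as the circular composite through $\sh{F(Q)}\to\sh{F(Q),F(M),F(\rdual{M})}\to\sh{F(M),F(P),F(\rdual{M})}\to\sh{F(P)}$, using $\eta',\epsilon'$ from above and with the unlabelled maps the unique coherence isomorphisms guaranteed by the coherence theorem for shadowed bicategories. I would then substitute the formulas for $\eta'$ and $\epsilon'$ and transport the comparison isomorphism $s$ across the diagram one generator at a time: through the $F$-images of $\eta$, $f$, $\epsilon$ by naturality of $s$, through the instances of $m$, $i$ and the unit/associativity constraints by the pseudofunctor axioms, and through the single cyclic rotation of the circular product by the $s$-versus-$\theta$ square. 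Reassembling, this identifies $s\circ\tr(m_{M,P}^{-1}\circ F(f)\circ m_{Q,M})$ with $\Ft(\tr(f))\circ s$. The cleanest way to organize this is pictorially: draw $\tr(f)$ as a cylindrical string diagram as in \cref{bitrace}, apply $F$ to thicken it as in \cref{fig:string-bicat}, and observe that the isomorphism $s$ peels the $F$-shaded region off the cylinder, each elementary generator being accounted for by one of the coherences just listed.

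The hard part will not be any single step but the bookkeeping: several coherence isomorphisms are in play at once, and one must verify that each is transported correctly and that the diagrams commute on the nose rather than merely up to further coherence. The string-diagram calculus of \cite{ps:bicat} is precisely what keeps this in check, reducing the essential content to the one $s$-versus-$\theta$ square, and a fully rigorous chase along these lines is carried out there.
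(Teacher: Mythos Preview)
The paper does not give its own proof of this theorem; it simply cites \cite[8.3]{ps:bicat}. Your proposal is a correct sketch of exactly that cited argument: transport the dual pair via $m,i$, then verify the trace square by a string-diagram/coherence chase whose only nontrivial ingredient beyond pseudofunctor axioms is the $s$--$\theta$ compatibility.
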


\section{The multitrace for bicategories and Fuller bicategories}\label{sec:fuller_bicat}
Now that we have defined the Reidemeister trace for $f^n$ and for $\ful{f}n$, we may begin the formal work of relating them together.

Suppose in a shadowed bicategory $\sB$ we select right dualizable 1-cells $M_i\in \sB(A_i,B_i)$, 1-cells $Q_i\in \sB(A_{i-1}, A_i)$, $P_i\in \sB(B_{i-1}, B_i)$ (subscripts taken mod $n$), and 2-cells \[\phi_i\colon Q_i\odot M_i\to M_{i-1}\odot P_i.\]
Then we define the ``composite'' $\seqco{\phi_1}{\phi_n}$ to be the composite of the 2-cells
 \begin{align*}
 Q_1\odot \ldots \odot Q_n \odot M_n\xto{\id^{n-1}\odot \phi_n} Q_1\odot \ldots \odot Q_{n-1} \odot M_{n-1}\odot P_n\xto{\id^{n-2}\odot \phi_{n-1}\odot \id}
 \\ Q_1\odot \ldots \odot Q_{n-2} \odot M_{n-2}\odot P_{n-1}\odot P_n \to \ldots \to M_n\odot P_1\odot \ldots \odot P_n.
 \end{align*}
If the modules $Q_i$ and $P_i$ are all units, this is canonically isomorphic to the composite of the maps $\phi_i$.
 
On the other hand, we define the {\bf multitrace} of the maps $\phi_i$, denoted $\tr(\phi_1,\ldots ,\phi_n)$, as the composite in $\bT$:
  \[ \xymatrix@C=60pt@R15pt{
 	\sh{Q_1,\ldots,Q_n}
		\ar[r]^-{\sh{\id,\eta_1,\id,\ldots,\id,\eta_n}} 
	& \sh{Q_1,M_1,\rdual{M}_1,Q_2,M_2,\rdual{M}_2,\ldots,M_n,\rdual{M}_n} 
		\ar[d]^-{\sh{\phi_1,\id,\ldots,\id,\phi_n,\id}} 
	\\
 	\sh{P_1,\ldots,P_n} 
	& \sh{M_n,P_1,\rdual{M}_1,M_1,P_2,\rdual{M}_2,\ldots,P_n,\rdual{M}_n}
		\ar[l]_-{\sh{\id,\epsilon_1,\id, \ldots,\id, \epsilon_n}}	
  } \]
  
\begin{thm}[Step 1 of \cref{thm:main_first_half}]\label{lem:nthpower} 
The multitrace equals the trace of the composite,
\[\tr(\phi_1,\ldots,\phi_n) = \tr(\seqco{\phi_1}{\phi_n}),\]
as maps $\sh{Q_1,\ldots,Q_n} \to \sh{P_1,\ldots,P_n}$.
 \end{thm}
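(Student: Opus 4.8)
The plan is to prove \cref{lem:nthpower} by induction on $n$, where each inductive step is a single application of a triangle identity. When $n=1$ there is nothing to prove: the construction of $\seqco{\phi_1}{\phi_1}$ returns $\phi_1$, and the diagram defining $\tr(\phi_1,\ldots,\phi_1)$ is verbatim the definition of the bicategorical trace from \cref{sec:traces_in_bicat}. So assume $n\geq 2$ and that the statement holds for all $(n-1)$-tuples.

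For the inductive step, I would first note that, because $\odot$ is a functor in each variable, the elementary $2$-cells $\eta_i,\phi_i,\epsilon_i$ appearing in the multitrace act on pairwise disjoint tensor factors except where one feeds directly into the next; by the interchange law the multitrace may therefore be computed by performing these $2$-cells in any order compatible with that dependency. In particular one may carry out $\eta_{n-1}$, then $\phi_n$, then $\epsilon_{n-1}$ before performing $\phi_{n-1}$: the coevaluation $\eta_{n-1}$ inserts a pair $M_{n-1}\odot\rdual{M_{n-1}}$, applying $\phi_n$ to $Q_n\odot M_n$ produces a second copy of $M_{n-1}$ adjacent to that $\rdual{M_{n-1}}$, and $\epsilon_{n-1}$ annihilates those two. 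A second use of the interchange law identifies this sub-composite, from $Q_{n-1}\odot Q_n\odot M_n$ to $Q_{n-1}\odot M_{n-1}\odot P_n$, with $\id_{Q_{n-1}}$ tensored with $\bigl((\id_{M_{n-1}}\odot\epsilon_{n-1})\circ(\eta_{n-1}\odot\id_{M_{n-1}})\bigr)\odot\id_{P_n}$ precomposed with $\phi_n$; since the bracketed $2$-cell is $\id_{M_{n-1}}$ by the triangle identity for the dual pair $(M_{n-1},\rdual{M_{n-1}})$ (up to the coherence isomorphisms $\fa,\frl,\fr$), the sub-composite is just $\id_{Q_{n-1}}\odot\phi_n$. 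As $\phi_{n-1}$ acts on factors disjoint from those touched by $\phi_n$ and $\epsilon_{n-1}$, performing it afterwards composes with $(\phi_{n-1}\odot\id_{P_n})$, so the four $2$-cells $\eta_{n-1},\phi_{n-1},\phi_n,\epsilon_{n-1}$ together realize the single $2$-cell $\psi:=(\phi_{n-1}\odot\id_{P_n})\circ(\id_{Q_{n-1}}\odot\phi_n)\colon Q_{n-1}\odot Q_n\odot M_n\to M_{n-2}\odot P_{n-1}\odot P_n$.

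It follows that the $n$-fold multitrace of $(\phi_1,\ldots,\phi_n)$ equals the $(n-1)$-fold multitrace of the tuple $(\phi_1,\ldots,\phi_{n-2},\psi)$, with $\psi$ occupying the last slot, its dualizable $1$-cell being $M_n$, its source $Q_{n-1}\odot Q_n$, and its target $P_{n-1}\odot P_n$ (the cyclic indexing of this shorter tuple is inherited from the original, since $M_0=M_n$). Applying the inductive hypothesis gives $\tr(\phi_1,\ldots,\phi_{n-2},\psi)=\tr(\seqco{\phi_1}{\phi_{n-2}}\circ\psi)$, and expanding $\psi$ and comparing with the iterative definition of $\seqco{\phi_1}{\phi_n}$ (plug in $\phi_n$, then $\phi_{n-1}$, then $\phi_{n-2}$, and so on) shows $\seqco{\phi_1}{\phi_{n-2}}\circ\psi=\seqco{\phi_1}{\phi_n}$. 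Finally, the coherence theorem for shadowed bicategories identifies $\sh{Q_1,\ldots,Q_{n-2},Q_{n-1}\odot Q_n}$ with $\sh{Q_1,\ldots,Q_n}$ canonically, and likewise for the $P_i$, so both sides agree as maps $\sh{Q_1,\ldots,Q_n}\to\sh{P_1,\ldots,P_n}$, which closes the induction.

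The argument is conceptually short, and the main obstacle is organizational: making the \emph{zigzag} cancellations precise in the presence of the associator, the unitors, and --- for the pair $\eta_n,\epsilon_n$, which is the one that survives to become the outer coevaluation and evaluation of $\tr(\seqco{\phi_1}{\phi_n})$ --- the shadow's cyclic isomorphism $\theta$. Here the coherence theorem for shadowed bicategories does all the bookkeeping: it lets us treat every circular product as unparenthesized, and it guarantees that the single cyclic rotation needed to apply $\epsilon_n$ is literally the same $2$-cell on both sides of the asserted equation. In practice it is probably cleanest to run the whole computation as a manipulation of cylindrical string diagrams --- straighten the $n-1$ inner zigzags, leaving exactly the cap $\eta_n$ and cup $\epsilon_n$ of the trace of the composite --- with each move being an instance of the interchange law or of a triangle identity.
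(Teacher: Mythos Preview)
Your argument is correct and is essentially the same as the paper's: both reduce the multitrace to the trace of the composite by sliding the $2$-cells past each other via the interchange law and then cancelling the $n-1$ inner coevaluation/evaluation pairs $(\eta_i,\epsilon_i)$, $1\le i\le n-1$, using the triangle identities, leaving only the outer pair $(\eta_n,\epsilon_n)$. The paper does this in one stroke via the cylindrical string-diagram calculus (\cref{Comparison_n_traces}), whereas you package the same cancellations as an induction on $n$, carrying out a single triangle-identity cancellation at each step; your closing paragraph already identifies these as the same proof.
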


\begin{proof}
Using the string diagram calculus of \cite{ps:bicat}, \cref{Comparison_n_traces} provides a full proof.

Alternatively,
if $X$ is dualizable,  the composite
\begin{equation}\label{eq:full_2}
A\odot C\odot Y\xto{\id\odot \eta \odot \id^2}A\odot X\odot \rdual{X}\odot C\odot Y\xto{f\odot \id\odot g}Y\odot B\odot \rdual{X}\odot X\odot D\xto{\id^2\odot \epsilon\odot \id}Y\odot B\odot D
\end{equation}
for 2-cells $f\colon A\odot X\to Y\odot B$ and $g\colon C\odot Y\to X\odot D$
is\[
A\odot C\odot Y\xto{\id \odot g}
A\odot X\odot D \xto{\id\odot \eta\odot \id^2} 
A\odot X\odot \rdual{X}\odot X\odot D\xto{\id^2\odot \epsilon\odot \id}
A\odot X\odot D\xto{f\odot \id} Y\odot B\odot D
\]
Canceling the center evaluation and coevaluation, \eqref{eq:full_2} is the composite 
\begin{equation}\label{eq:full_2_composed}
A\odot C\odot Y\xto{\id \odot g}
A\odot X\odot D\xto{f\odot \id} Y\odot B\odot D
\end{equation}
If $Y$ is also dualizable
the multitrace of $f$ and $g$ is the trace of \eqref{eq:full_2} and so the multitrace of $f$ and $g$ is the trace of \eqref{eq:full_2_composed}.
Then the theorem follows by induction.
\end{proof}\usetikzlibrary{patterns}
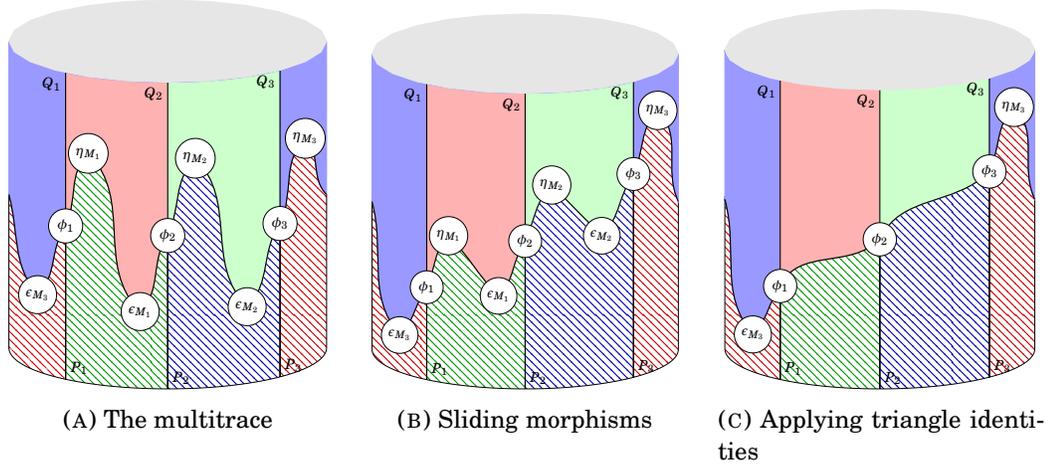
\begin{figure}
\tdplotsetmaincoords{75}{120}
\begin{subfigure}[t]{0.3\textwidth}
\centering
\resizebox{1\textwidth}{!}
{
\begin{tikzpicture}[tdplot_main_coords, scale =.5]
   \pgfmathsetmacro{\RA}{8}

   \pgfmathsetmacro{\LA}{8}
   \pgfmathsetmacro{\LB}{4}
   \pgfmathsetmacro{\LC}{0}
   \pgfmathsetmacro{\LD}{-4}
   \pgfmathsetmacro{\LE}{-8}

\pgfmathsetmacro{\AA}{-25}
\pgfmathsetmacro{\AB}{\AA+15}
\pgfmathsetmacro{\AC}{\AA+25}
\pgfmathsetmacro{\AD}{\AA+45}
\pgfmathsetmacro{\AE}{\AA+55}
\pgfmathsetmacro{\AF}{\AA+65}
\pgfmathsetmacro{\AG}{\AA+85}
\pgfmathsetmacro{\AH}{\AA+100}
\pgfmathsetmacro{\AI}{\AA+115}
\pgfmathsetmacro{\AJ}{120}
\pgfmathsetmacro{\AK}{\AA-35}

   \filldraw [black!10!white,domain=0:360] plot ({\RA*cos(\x)}, {\RA*sin(\x)},\LA);

\coordinate (tl) at ({\RA*cos(\AK)}, {\RA*sin(\AK)}, \LA);
\coordinate (tr) at ({\RA*cos(\AJ)}, {\RA*sin(\AJ)}, \LA);
\coordinate (bl) at ({\RA*cos(\AK)}, {\RA*sin(\AK)}, \LE);
\coordinate (br) at ({\RA*cos(\AJ)}, {\RA*sin(\AJ)}, \LE);

\coordinate (s1) at  ({\RA*cos(\AK)}, {\RA*sin(\AK)}, \LC);
\coordinate (s2) at  ({\RA*cos(\AJ)}, {\RA*sin(\AJ)}, \LC);

\coordinate (n1) at  ({\RA*cos(\AC)}, {\RA*sin(\AC)}, \LB);
\node at (n1)[circle, draw , fill =white] {$\eta_{M_1}$};
\coordinate (n2) at  ({\RA*cos(\AF)}, {\RA*sin(\AF)}, \LB);
\node at (n2)[circle, draw , fill =white] {$\eta_{M_2}$};
\coordinate (n3) at  ({\RA*cos(\AI)}, {\RA*sin(\AI)}, \LB);
\node at (n3)[circle, draw , fill =white] {$\eta_{M_3}$};

\coordinate (q1) at ({\RA*cos(\AB)}, {\RA*sin(\AB)}, \LA);
\node at (q1)[below left] {$Q_1$};
\coordinate (q2) at ({\RA*cos(\AE)}, {\RA*sin(\AE)}, \LA);
\node at (q2)[below left] {$Q_2$};
\coordinate (q3) at ({\RA*cos(\AH)}, {\RA*sin(\AH)}, \LA);
\node at (q3)[below left] {$Q_3$};

\coordinate (f1) at ({\RA*cos(\AB)}, {\RA*sin(\AB)}, \LC);
\node at (f1)[circle, draw , fill =white] {$\phi_1$};
\coordinate (f2) at ({\RA*cos(\AE)}, {\RA*sin(\AE)}, \LC);
\node at (f2)[circle, draw , fill =white] {$\phi_2$};
\coordinate (f3) at ({\RA*cos(\AH)}, {\RA*sin(\AH)}, \LC);
\node at (f3)[circle, draw , fill =white] {$\phi_3$};

\coordinate (e1) at ({\RA*cos(\AA)}, {\RA*sin(\AA)}, \LD);
\node at (e1)[circle, draw , fill =white] {$\epsilon_{M_3}$};
\coordinate (e2) at ({\RA*cos(\AD)}, {\RA*sin(\AD)}, \LD);
\node at (e2)[circle, draw , fill =white] {$\epsilon_{M_1}$};
\coordinate (e3) at ({\RA*cos(\AG)}, {\RA*sin(\AG)}, \LD);
\node at (e3)[circle, draw , fill =white] {$\epsilon_{M_2}$};

\coordinate (p1) at ({\RA*cos(\AB)}, {\RA*sin(\AB)}, \LE);
\node at (p1)[above right] {$P_1$};
\coordinate (p2) at  ({\RA*cos(\AE)}, {\RA*sin(\AE)}, \LE);
\node at (p2)[above right] {$P_2$};
\coordinate (p3) at  ({\RA*cos(\AH)}, {\RA*sin(\AH)}, \LE);
\node at (p3)[above right] {$P_3$};

 \begin{scope}[on background layer]
%lower regions
\filldraw[pattern =north west lines, pattern color = mydarkred] (bl)-- (s1)to [out = -75, in =130] (e1) to [out= 45, in = -125]  (f1)--(p1)
[domain=\AK:\AB] plot ({\RA*cos(\x)}, {\RA*sin(\x)},\LE);
\filldraw[pattern =north west lines, pattern color = mydarkgreen]
(p1)to (f1) to  [out= 65, in = -135]  (n1) 
to  [out= -45, in = 135]  (e2)to  [out= 45, in = -125]  (f2) to (p2)[domain=\AE:\AB] plot ({\RA*cos(\x)}, {\RA*sin(\x)},\LE);
\filldraw[pattern =north west lines, pattern color = mydarkblue](p2)to (f2) to [out= 65, in = -135]  (n2)
to [out= -45, in = 135]  (e3)to  [out= 45, in = -125] (f3) to (p3)[domain=\AH:\AE] plot ({\RA*cos(\x)}, {\RA*sin(\x)},\LE);
\filldraw[pattern =north west lines, pattern color = mydarkred] (p3)to (f3) to [out= 65, in = -135]   (n3)
to  [out= -45, in = 135] (s2) -- (br) [domain=\AJ:\AH] plot ({\RA*cos(\x)}, {\RA*sin(\x)},\LE);
%upper regions
\filldraw[ mydarkfill] (tl)-- (s1)to [out = -75, in =130] (e1) to [out= 45, in = -125]  (f1) to (q1) [domain=\AK:\AB] plot ({\RA*cos(\x)}, {\RA*sin(\x)},\LA);
\filldraw[mymedfill]  (q1)to (f1) to  [out= 65, in = -135]  (n1) 
to  [out= -45, in = 135]  (e2)to  [out= 45, in = -125]  (f2) to (q2)[domain=\AE:\AB] plot ({\RA*cos(\x)}, {\RA*sin(\x)},\LA);
\filldraw[mylightfill] (q2)to (f2) to [out= 65, in = -135]  (n2)
to [out= -45, in = 135]  (e3)to  [out= 45, in = -125] (f3) to (q3)[domain=\AH:\AE] plot ({\RA*cos(\x)}, {\RA*sin(\x)},\LA);
\filldraw[mydarkfill] (q3) to (f3) to [out= 65, in = -135]   (n3)
to  [out= -45, in = 135] (s2) --  (tr) [domain=\AK:\AH] plot ({\RA*cos(\x)}, {\RA*sin(\x)},\LA);

\draw[thick]  (s1)to [out = -75, in =130] (e1) to [out= 45, in = -125]  (f1)to  [out= 65, in = -135]  (n1) 
to  [out= -45, in = 135]  (e2)to  [out= 45, in = -125]  (f2)to [out= 65, in = -135]  (n2)
to [out= -45, in = 135]  (e3)to  [out= 45, in = -125] (f3) to [out= 65, in = -135]   (n3)
to  [out= -45, in = 135] (s2) ;

\draw[thick] (q1)--(f1)-- (p1);
\draw[thick] (q2)--(f2)-- (p2);
\draw[thick] (q3)--(f3)-- (p3);
\end{scope}

\end{tikzpicture}
} \caption{The multitrace}\label{ntrace}
\end{subfigure}
\hspace{.2cm}
\begin{subfigure}[t]{0.3\textwidth}
\resizebox{1\textwidth}{!}
{

\begin{tikzpicture}[tdplot_main_coords, scale =.5]
   \pgfmathsetmacro{\RA}{8}

   \pgfmathsetmacro{\LA}{8}
   \pgfmathsetmacro{\LBA}{0}
   \pgfmathsetmacro{\LBB}{3}
   \pgfmathsetmacro{\LBC}{6}
   \pgfmathsetmacro{\LCA}{-3}
   \pgfmathsetmacro{\LCB}{0}
   \pgfmathsetmacro{\LCC}{3}
   \pgfmathsetmacro{\LDA}{-6}
   \pgfmathsetmacro{\LDB}{-3}
   \pgfmathsetmacro{\LDC}{0}
   \pgfmathsetmacro{\LE}{-8}

\pgfmathsetmacro{\AA}{-25}
\pgfmathsetmacro{\AB}{\AA+15}
\pgfmathsetmacro{\AC}{\AA+25}
\pgfmathsetmacro{\AD}{\AA+45}
\pgfmathsetmacro{\AE}{\AA+55}
\pgfmathsetmacro{\AF}{\AA+65}
\pgfmathsetmacro{\AG}{\AA+85}
\pgfmathsetmacro{\AH}{\AA+100}
\pgfmathsetmacro{\AI}{\AA+115}
\pgfmathsetmacro{\AJ}{120}
\pgfmathsetmacro{\AK}{\AA-35}

   \filldraw [black!10!white,domain=0:360] plot ({\RA*cos(\x)}, {\RA*sin(\x)},\LA);

\coordinate (tl) at ({\RA*cos(\AK)}, {\RA*sin(\AK)}, \LA);
\coordinate (tr) at ({\RA*cos(\AJ)}, {\RA*sin(\AJ)}, \LA);
\coordinate (bl) at ({\RA*cos(\AK)}, {\RA*sin(\AK)}, \LE);
\coordinate (br) at ({\RA*cos(\AJ)}, {\RA*sin(\AJ)}, \LE);

\coordinate (s1) at  ({\RA*cos(\AK)}, {\RA*sin(\AK)}, \LCB);
\coordinate (s2) at  ({\RA*cos(\AJ)}, {\RA*sin(\AJ)}, \LCB);

\coordinate (n1) at  ({\RA*cos(\AC)}, {\RA*sin(\AC)}, \LBA);
\node at (n1)[circle, draw , fill =white] {$\eta_{M_1}$};
\coordinate (n2) at  ({\RA*cos(\AF)}, {\RA*sin(\AF)}, \LBB);
\node at (n2)[circle, draw , fill =white] {$\eta_{M_2}$};
\coordinate (n3) at  ({\RA*cos(\AI)}, {\RA*sin(\AI)}, \LBC);
\node at (n3)[circle, draw , fill =white] {$\eta_{M_3}$};

\coordinate (q1) at ({\RA*cos(\AB)}, {\RA*sin(\AB)}, \LA);
\node at (q1)[below left] {$Q_1$};
\coordinate (q2) at ({\RA*cos(\AE)}, {\RA*sin(\AE)}, \LA);
\node at (q2)[below left] {$Q_2$};
\coordinate (q3) at ({\RA*cos(\AH)}, {\RA*sin(\AH)}, \LA);
\node at (q3)[below left] {$Q_3$};

\coordinate (f1) at ({\RA*cos(\AB)}, {\RA*sin(\AB)}, \LCA);
\node at (f1)[circle, draw , fill =white] {$\phi_1$};
\coordinate (f2) at ({\RA*cos(\AE)}, {\RA*sin(\AE)}, \LCB);
\node at (f2)[circle, draw , fill =white] {$\phi_2$};
\coordinate (f3) at ({\RA*cos(\AH)}, {\RA*sin(\AH)}, \LCC);
\node at (f3)[circle, draw , fill =white] {$\phi_3$};

\coordinate (e1) at ({\RA*cos(\AA)}, {\RA*sin(\AA)}, \LDA);
\node at (e1)[circle, draw , fill =white] {$\epsilon_{M_3}$};
\coordinate (e2) at ({\RA*cos(\AD)}, {\RA*sin(\AD)}, \LDB);
\node at (e2)[circle, draw , fill =white] {$\epsilon_{M_1}$};
\coordinate (e3) at ({\RA*cos(\AG)}, {\RA*sin(\AG)}, \LDC);
\node at (e3)[circle, draw , fill =white] {$\epsilon_{M_2}$};

\coordinate (p1) at ({\RA*cos(\AB)}, {\RA*sin(\AB)}, \LE);
\node at (p1)[above right] {$P_1$};
\coordinate (p2) at  ({\RA*cos(\AE)}, {\RA*sin(\AE)}, \LE);
\node at (p2)[above right] {$P_2$};
\coordinate (p3) at  ({\RA*cos(\AH)}, {\RA*sin(\AH)}, \LE);
\node at (p3)[above right] {$P_3$};

 \begin{scope}[on background layer]
%lower regions 
\filldraw[pattern =north west lines, pattern color = mydarkred] (bl)-- (s1)to [out = -75, in =130] (e1) to [out= 45, in = -125]  (f1)--(p1) [domain=\AB:\AK] plot ({\RA*cos(\x)}, {\RA*sin(\x)},\LE);
\filldraw[pattern =north west lines, pattern color = mydarkgreen] (p1)--(f1)to  [out= 65, in = -135]  (n1) 
to  [out= -45, in = 135]  (e2)to  [out= 45, in = -125]  (f2)--(p2)[domain=\AE:\AB] plot ({\RA*cos(\x)}, {\RA*sin(\x)},\LE);
\filldraw[pattern =north west lines, pattern color = mydarkblue](p2)--(f2)to [out= 65, in = -135]  (n2)
to [out= -45, in = 135]  (e3)to  [out= 45, in = -125] (f3) --(p3)[domain=\AH:\AE] plot ({\RA*cos(\x)}, {\RA*sin(\x)},\LE);
\filldraw[pattern =north west lines, pattern color = mydarkred](p3)--(f3)to [out= 65, in = -135]   (n3)
to  [out= -45, in = 115] (s2) -- (br) [domain=\AJ:\AH] plot ({\RA*cos(\x)}, {\RA*sin(\x)},\LE);
%upper regions
\filldraw[mydarkfill] (tl)-- (s1)to [out = -75, in =130] (e1) to [out= 45, in = -125]  (f1)--(q1)
[domain=\AB:\AJ] plot ({\RA*cos(\x)}, {\RA*sin(\x)},\LA);
\filldraw[mymedfill] (q1)--(f1)to  [out= 65, in = -135]  (n1) 
to  [out= -45, in = 135]  (e2)to  [out= 45, in = -125]  (f2)--(q2)[domain=\AE:\AB] plot ({\RA*cos(\x)}, {\RA*sin(\x)},\LA);
\filldraw[mylightfill](q2)--(f2)to [out= 65, in = -135]  (n2)
to [out= -45, in = 135]  (e3)to  [out= 45, in = -125] (f3)--(q3)[domain=\AH:\AE] plot ({\RA*cos(\x)}, {\RA*sin(\x)},\LA);
\filldraw[mydarkfill](q3)--(f3) to [out= 65, in = -135]   (n3)
to  [out= -45, in = 115] (s2) --  (tr) [domain=\AK:\AH] plot ({\RA*cos(\x)}, {\RA*sin(\x)},\LA);

\draw[thick]  (s1)to [out = -75, in =130] (e1) to [out= 45, in = -125]  (f1)to  [out= 65, in = -135]  (n1) 
to  [out= -45, in = 135]  (e2)to  [out= 45, in = -125]  (f2)to [out= 65, in = -135]  (n2)
to [out= -45, in = 135]  (e3)to  [out= 45, in = -125] (f3) to [out= 65, in = -135]   (n3)
to  [out= -45, in = 115] (s2) ;

\draw[thick] (q1)--(f1)-- (p1);
\draw[thick] (q2)--(f2)-- (p2);
\draw[thick] (q3)--(f3)-- (p3);
\end{scope}

\end{tikzpicture}
}
\caption{Sliding morphisms }
\end{subfigure}
\hspace{.2cm}
\begin{subfigure}[t]{0.3\textwidth}
\resizebox{1\textwidth}{!}
{\begin{tikzpicture}[tdplot_main_coords, scale =.5]
   \pgfmathsetmacro{\RA}{8}

   \pgfmathsetmacro{\LA}{8}
   \pgfmathsetmacro{\LBA}{0}
   \pgfmathsetmacro{\LBB}{3}
   \pgfmathsetmacro{\LBC}{6}
   \pgfmathsetmacro{\LCA}{-3}
   \pgfmathsetmacro{\LCB}{0}
   \pgfmathsetmacro{\LCC}{3}
   \pgfmathsetmacro{\LDA}{-6}
   \pgfmathsetmacro{\LDB}{-3}
   \pgfmathsetmacro{\LDC}{0}
   \pgfmathsetmacro{\LE}{-8}

\pgfmathsetmacro{\AA}{-25}
\pgfmathsetmacro{\AB}{\AA+15}
\pgfmathsetmacro{\AC}{\AA+25}
\pgfmathsetmacro{\AD}{\AA+45}
\pgfmathsetmacro{\AE}{\AA+55}
\pgfmathsetmacro{\AF}{\AA+65}
\pgfmathsetmacro{\AG}{\AA+85}
\pgfmathsetmacro{\AH}{\AA+100}
\pgfmathsetmacro{\AI}{\AA+115}
\pgfmathsetmacro{\AJ}{120}
\pgfmathsetmacro{\AK}{\AA-35}

   \filldraw [black!10!white,domain=0:360] plot ({\RA*cos(\x)}, {\RA*sin(\x)},\LA);

\coordinate (tl) at ({\RA*cos(\AK)}, {\RA*sin(\AK)}, \LA);
\coordinate (tr) at ({\RA*cos(\AJ)}, {\RA*sin(\AJ)}, \LA);
\coordinate (bl) at ({\RA*cos(\AK)}, {\RA*sin(\AK)}, \LE);
\coordinate (br) at ({\RA*cos(\AJ)}, {\RA*sin(\AJ)}, \LE);

\coordinate (s1) at  ({\RA*cos(\AK)}, {\RA*sin(\AK)}, \LCB);
\coordinate (s2) at  ({\RA*cos(\AJ)}, {\RA*sin(\AJ)}, \LCB);

\coordinate (n1) at  ({\RA*cos(\AC)}, {\RA*sin(\AC)}, \LBA);
\coordinate (n2) at  ({\RA*cos(\AF)}, {\RA*sin(\AF)}, \LBB);
\coordinate (n3) at  ({\RA*cos(\AI)}, {\RA*sin(\AI)}, \LBC);
\node at (n3)[circle, draw , fill =white] {$\eta_{M_3}$};

\coordinate (q1) at ({\RA*cos(\AB)}, {\RA*sin(\AB)}, \LA);
\node at (q1)[below left] {$Q_1$};
\coordinate (q2) at ({\RA*cos(\AE)}, {\RA*sin(\AE)}, \LA);
\node at (q2)[below left] {$Q_2$};
\coordinate (q3) at ({\RA*cos(\AH)}, {\RA*sin(\AH)}, \LA);
\node at (q3)[below left] {$Q_3$};

\coordinate (f1) at ({\RA*cos(\AB)}, {\RA*sin(\AB)}, \LCA);
\node at (f1)[circle, draw , fill =white] {$\phi_1$};
\coordinate (f2) at ({\RA*cos(\AE)}, {\RA*sin(\AE)}, \LCB);
\node at (f2)[circle, draw , fill =white] {$\phi_2$};
\coordinate (f3) at ({\RA*cos(\AH)}, {\RA*sin(\AH)}, \LCC);
\node at (f3)[circle, draw , fill =white] {$\phi_3$};

\coordinate (e1) at ({\RA*cos(\AA)}, {\RA*sin(\AA)}, \LDA);
\node at (e1)[circle, draw , fill =white] {$\epsilon_{M_3}$};
\coordinate (e2) at ({\RA*cos(\AD)}, {\RA*sin(\AD)}, \LDB);
\coordinate (e3) at ({\RA*cos(\AG)}, {\RA*sin(\AG)}, \LDC);

\coordinate (p1) at ({\RA*cos(\AB)}, {\RA*sin(\AB)}, \LE);
\node at (p1)[above right] {$P_1$};
\coordinate (p2) at  ({\RA*cos(\AE)}, {\RA*sin(\AE)}, \LE);
\node at (p2)[above right] {$P_2$};
\coordinate (p3) at  ({\RA*cos(\AH)}, {\RA*sin(\AH)}, \LE);
\node at (p3)[above right] {$P_3$};

 \begin{scope}[on background layer]
%lower regions
\filldraw[pattern =north west lines, pattern color = mydarkred] (bl)-- (s1)to [out = -75, in =130] (e1) to [out= 45, in = -125] (f1)--(p1)[domain=\AB:\AK] plot ({\RA*cos(\x)}, {\RA*sin(\x)},\LE);
\filldraw[pattern =north west lines, pattern color = mydarkgreen] (p1)--(f1)to  [out= 65, in = -135]  (f2)--(p2)[domain=\AE:\AB] plot ({\RA*cos(\x)}, {\RA*sin(\x)},\LE);
\filldraw[pattern =north west lines, pattern color = mydarkblue](p2)--(f2)to [out= 65, in = -135] (f3)--(p3)[domain=\AH:\AE] plot ({\RA*cos(\x)}, {\RA*sin(\x)},\LE);
\filldraw[pattern =north west lines, pattern color = mydarkred](p3)--(f3) to [out= 65, in = -135]   (n3)to  [out= -45, in = 115] (s2) -- (br)[domain=\AJ:\AH] plot ({\RA*cos(\x)}, {\RA*sin(\x)},\LE);
%upper regions
\filldraw[mydarkfill] (tl)-- (s1)to [out = -75, in =130] (e1) to [out= 45, in = -125]  
(f1)--(q1)[domain=\AB:\AK] plot ({\RA*cos(\x)}, {\RA*sin(\x)},\LA);
\filldraw[mymedfill](q1)--(f1)to  [out= 65, in = -135]  
(f2)--(q2)[domain=\AE:\AB] plot ({\RA*cos(\x)}, {\RA*sin(\x)},\LA);
\filldraw[mylightfill](q2)--(f2)to [out= 65, in = -135] 
(f3)--(q3)[domain=\AH:\AE] plot ({\RA*cos(\x)}, {\RA*sin(\x)},\LA);
\filldraw[mydarkfill](q3)--(f3) to [out= 65, in = -135]   (n3) to  [out= -45, in = 115] 
(s2) --  (tr) [domain=\AJ:\AH] plot ({\RA*cos(\x)}, {\RA*sin(\x)},\LA);

\draw[thick]  (s1)to [out = -75, in =130] (e1) to [out= 45, in = -125]  (f1)to  [out= 65, in = -135]  
(f2)to [out= 65, in = -135] 
(f3) to [out= 65, in = -135]   (n3)
to  [out= -45, in = 115] (s2) ;

\draw[thick] (q1)--(f1)-- (p1);
\draw[thick] (q2)--(f2)-- (p2);
\draw[thick] (q3)--(f3)-- (p3);
\end{scope}

\end{tikzpicture}
}
\caption{Applying triangle identities}
\end{subfigure}

\caption{The multitrace is isomorphic to the trace of the composition.}
\label{Comparison_n_traces}
\end{figure}

Now we turn to the rest of the proof of \cref{thm:main_first_half}. As discussed in \cref{sec:smc}, we need to break free of the bicategory structure on $\sB$ and use some additional structure that can reorder tensored objects. Here we give an axiomatic description of this extra structure and use it to prove \cref{thm:main_first_half}.  The existence of examples of this structure (other than symmetric monoidal categories) is established in \cite{mp2}.

To motivate the following definitions, it is useful to think of the trace of 
 $\ful fn$ 
as $n$ nested circles, with an extra twist owing to the fact that $\ful fn$ rotates the factors around.  See 
\cref{fig:multitrace_cartoon}.
If we re-interpret this picture as a single circle winding around $n$ times, we get precisely the multitrace pictured in \cref{ntrace}. So we just need to formally understand the process of ``unwinding the coil'' in \cref{fig:multitrace_cartoon}, in other words lifting it to the $n$-fold cover of the circle.
\begin{figure}
%\tdplotsetmaincoords{75}{120}
\centering

\hspace{1cm}
  %  \begin{subfigure}[b]{0.30\textwidth}
\tdplotsetmaincoords{65}{100}
\centering
%\resizebox{1\textwidth}{!}
{
\begin{tikzpicture}[tdplot_main_coords, scale =.5,roundnode/.style={circle, fill=black!60, inner sep=0pt, minimum size=2mm}]
   \pgfmathsetmacro{\RA}{3}
   \pgfmathsetmacro{\RB}{5}
   \pgfmathsetmacro{\RC}{7}

   \pgfmathsetmacro{\vertshift}{7}
   \pgfmathsetmacro{\LA}{9}
   \pgfmathsetmacro{\LBA}{\LA-\vertshift}
   \pgfmathsetmacro{\LCA}{\LA-2*\vertshift}
   \pgfmathsetmacro{\LD}{\LA-3*\vertshift}
   \pgfmathsetmacro{\LE}{\LA-4*\vertshift}

\pgfmathsetmacro{\AA}{-45}
\pgfmathsetmacro{\AB}{\AA-30}%M*
\pgfmathsetmacro{\AC}{\AA+20}%epsilon
\pgfmathsetmacro{\AD}{\AA+15}%M
\pgfmathsetmacro{\AE}{\AA+55}%Q
\pgfmathsetmacro{\AEA}{\AA+95}%M
\pgfmathsetmacro{\AF}{\AA+90}%\eta
\pgfmathsetmacro{\AG}{\AA+140}%M*
\pgfmathsetmacro{\AH}{\AA+100}
\pgfmathsetmacro{\AI}{\AA+115}
\pgfmathsetmacro{\AJ}{120}
\pgfmathsetmacro{\AK}{\AA-35}
\pgfmathsetmacro{\Jct}{150}
\pgfmathsetmacro{\Jcta}{210}

   \draw [very thick, mydarkfill,domain=-\Jct:\AE] plot ({\RA*cos(\x)}, {\RA*sin(\x)},\LA);
   \draw [very thick, mymedfill,domain=\AE:\Jct] plot ({\RA*cos(\x)}, {\RA*sin(\x)},\LA);
   \draw [very thick, mymedfill,domain=-\Jct:\AE] plot ({\RB*cos(\x)}, {\RB*sin(\x)},\LA);
   \draw [very thick, mylightfill,domain=\AE:\Jct] plot ({\RB*cos(\x)}, {\RB*sin(\x)},\LA);
   \draw [very thick, mylightfill,domain=-\Jct:\AE] plot ({\RC*cos(\x)}, {\RC*sin(\x)},\LA);
   \draw [very thick, mydarkfill,domain=\AE:\Jct] plot ({\RC*cos(\x)}, {\RC*sin(\x)},\LA);
   \draw [very thick, mydarkfill] ({\RA*cos(-\Jct)}, {\RA*sin(-\Jct)},\LA) to  ({\RC*cos(-\Jcta)}, {\RC*sin(-\Jcta)},\LA);
   \draw [very thick, mymedfill] ({\RB*cos(-\Jct)}, {\RB*sin(-\Jct)},\LA) to  ({\RA*cos(-\Jcta)}, {\RA*sin(-\Jcta)},\LA);
   \draw [very thick, mylightfill] ({\RC*cos(-\Jct)}, {\RC*sin(-\Jct)},\LA) to  ({\RB*cos(-\Jcta)}, {\RB*sin(-\Jcta)},\LA);

	\coordinate (q1) at ({\RA*cos(\AE)}, {\RA*sin(\AE)},\LA); %Q
		\node[roundnode] at (q1){};
		\node at ([shift={(-.65,.65)}]q1) {$Q_1$};
	\coordinate (q1b) at ({\RB*cos(\AE)}, {\RB*sin(\AE)},\LA); %Q
		\node[roundnode] at (q1b){};
		\node at ([shift={(-.65,.65)}]q1b) {$Q_2$};
	\coordinate (q1c) at ({\RC*cos(\AE)}, {\RC*sin(\AE)},\LA); %Q
		\node[roundnode] at (q1c){};
		\node at ([shift={(-.65,.65)}]q1c){$Q_3$};

   \draw [very thick, mydarkfill,domain=-\Jct:\AE] plot ({\RA*cos(\x)}, {\RA*sin(\x)},\LBA);
   \draw [very thick, mymedfill,domain=\AE:\AEA] plot ({\RA*cos(\x)}, {\RA*sin(\x)},\LBA);
   \draw [very thick, dotted, mydarkgreen,domain=\AEA:\AG] plot ({\RA*cos(\x)}, {\RA*sin(\x)},\LBA);
   \draw [very thick, mymedfill,domain=\AG:\Jct] plot ({\RA*cos(\x)}, {\RA*sin(\x)},\LBA);
   \draw [very thick, mymedfill,domain=-\Jct:\AE] plot ({\RB*cos(\x)}, {\RB*sin(\x)},\LBA);
   \draw [very thick, mylightfill,domain=\AE:\AEA] plot ({\RB*cos(\x)}, {\RB*sin(\x)},\LBA);
   \draw [very thick, dotted, mydarkblue,domain=\AEA:\AG] plot ({\RB*cos(\x)}, {\RB*sin(\x)},\LBA);
   \draw [very thick, mylightfill,domain=\AG:\Jct] plot ({\RB*cos(\x)}, {\RB*sin(\x)},\LBA);
   \draw [very thick, mylightfill,domain=-\Jct:\AE] plot ({\RC*cos(\x)}, {\RC*sin(\x)},\LBA);
   \draw [very thick, mydarkfill,domain=\AE:\AEA] plot ({\RC*cos(\x)}, {\RC*sin(\x)},\LBA);
   \draw [very thick,  dotted,mydarkred,domain=\AEA:\AG] plot ({\RC*cos(\x)}, {\RC*sin(\x)},\LBA);
   \draw [very thick, mydarkfill,domain=\AG:\Jct] plot ({\RC*cos(\x)}, {\RC*sin(\x)},\LBA);
   \draw [very thick, mydarkfill] ({\RA*cos(-\Jct)}, {\RA*sin(-\Jct)},\LBA) to  ({\RC*cos(-\Jcta)}, {\RC*sin(-\Jcta)},\LBA);
   \draw [very thick, mymedfill] ({\RB*cos(-\Jct)}, {\RB*sin(-\Jct)},\LBA) to  ({\RA*cos(-\Jcta)}, {\RA*sin(-\Jcta)},\LBA);
   \draw [very thick, mylightfill] ({\RC*cos(-\Jct)}, {\RC*sin(-\Jct)},\LBA) to  ({\RB*cos(-\Jcta)}, {\RB*sin(-\Jcta)},\LBA);

	\coordinate (q2) at  ({\RA*cos(\AE)}, {\RA*sin(\AE)},\LBA){}; %Q
		\node[roundnode] at (q2){};
		\node at ([shift={(-.65,.65)}]q2) {$Q_1$};
	\coordinate (q2b) at  ({\RB*cos(\AE)}, {\RB*sin(\AE)},\LBA){}; %Q
		\node[roundnode] at (q2b){};
		\node at ([shift={(-.65,.65)}]q2b) {$Q_2$};
	\coordinate (q2c) at  ({\RC*cos(\AE)}, {\RC*sin(\AE)},\LBA){}; %Q
		\node[roundnode] at (q2c){};
		\node at ([shift={(-.65,.65)}]q2c){$Q_3$};
	\coordinate (m1) at ({\RA*cos(\AEA)}, {\RA*sin(\AEA)},\LBA){}; %M
		\node[roundnode] at (m1){};
		\node at ([shift={(-.65,.65)}]m1){$M_1$};
	\coordinate (m1b) at ({\RB*cos(\AEA)}, {\RB*sin(\AEA)},\LBA){}; %M
		\node[roundnode] at (m1b){};
		\node at ([shift={(-.65,.65)}]m1b){$M_2$};
	\coordinate (m1c) at ({\RC*cos(\AEA)}, {\RC*sin(\AEA)},\LBA){}; %M
		\node[roundnode] at (m1c){};
		\node at ([shift={(-.65,.65)}]m1c) {$M_3$};
	\coordinate (dm1) at ({\RA*cos(\AG)}, {\RA*sin(\AG)},\LBA){};%M*
		\node[roundnode] at (dm1){};
		\node at ([shift={(-.65,.65)}]dm1) {$M^*_1$};
	\coordinate (dm1b) at ({\RB*cos(\AG)}, {\RB*sin(\AG)},\LBA){};%M*
		\node[roundnode] at (dm1b){};
		\node at ([shift={(-.65,.65)}]dm1b){$M^*_2$};
	\coordinate (dm1c) at ({\RC*cos(\AG)}, {\RC*sin(\AG)},\LBA){};%M*
		\node[roundnode] at (dm1c){};
		\node at ([shift={(-.65,.65)}]dm1c){$M^*_3$};

   \draw [very thick, mydarkfill,domain=-\Jct:\AD] plot ({\RA*cos(\x)}, {\RA*sin(\x)},\LCA);
   \draw [very thick,  dotted, mydarkred,domain=\AE:\AD] plot ({\RA*cos(\x)}, {\RA*sin(\x)},\LCA);
   \draw [very thick, dotted, mydarkgreen,domain=\AE:\AG] plot ({\RA*cos(\x)}, {\RA*sin(\x)},\LCA);
   \draw [very thick, mymedfill,domain=\AG:\Jct] plot ({\RA*cos(\x)}, {\RA*sin(\x)},\LCA);
   \draw [very thick, mymedfill,domain=-\Jct:\AD] plot ({\RB*cos(\x)}, {\RB*sin(\x)},\LCA);
   \draw [very thick,  dotted,mydarkgreen,domain=\AE:\AD] plot ({\RB*cos(\x)}, {\RB*sin(\x)},\LCA);
   \draw [very thick,  dotted,mydarkblue,domain=\AE:\AG] plot ({\RB*cos(\x)}, {\RB*sin(\x)},\LCA);
   \draw [very thick, mylightfill,domain=\AG:\Jct] plot ({\RB*cos(\x)}, {\RB*sin(\x)},\LCA);
   \draw [very thick, mylightfill,domain=-\Jct:\AD] plot ({\RC*cos(\x)}, {\RC*sin(\x)},\LCA);
   \draw [very thick, dotted, mydarkblue,domain=\AD:\AE] plot ({\RC*cos(\x)}, {\RC*sin(\x)},\LCA);
  \draw [very thick,  dotted,mydarkred,domain=\AE:\AG] plot ({\RC*cos(\x)}, {\RC*sin(\x)},\LCA);
   \draw [very thick, mydarkfill,domain=\AG:\Jct] plot ({\RC*cos(\x)}, {\RC*sin(\x)},\LCA);
   \draw [very thick, mydarkfill] ({\RA*cos(-\Jct)}, {\RA*sin(-\Jct)},\LCA) to  ({\RC*cos(-\Jcta)}, {\RC*sin(-\Jcta)},\LCA);
   \draw [very thick, mymedfill] ({\RB*cos(-\Jct)}, {\RB*sin(-\Jct)},\LCA) to  ({\RA*cos(-\Jcta)}, {\RA*sin(-\Jcta)},\LCA);
   \draw [very thick, mylightfill] ({\RC*cos(-\Jct)}, {\RC*sin(-\Jct)},\LCA) to  ({\RB*cos(-\Jcta)}, {\RB*sin(-\Jcta)},\LCA);
	\coordinate (p1) at  ({\RA*cos(\AE)}, {\RA*sin(\AE)},\LCA){}; %P
		\node[roundnode] at (p1){};
		\node at ([shift={(-.65,.65)}]p1) {$P_1$};
	\coordinate (p1b) at  ({\RB*cos(\AE)}, {\RB*sin(\AE)},\LCA){}; %P
		\node[roundnode] at (p1b){};
		\node at ([shift={(-.65,.65)}]p1b) {$P_2$};
	\coordinate (p1c) at  ({\RC*cos(\AE)}, {\RC*sin(\AE)},\LCA){}; %P
		\node[roundnode] at (p1c){};
		\node at ([shift={(-.65,.65)}]p1c){$P_3$};
	\coordinate (m2) at ({\RA*cos(\AD)}, {\RA*sin(\AD)},\LCA){}; %M
		\node[roundnode] at (m2){};
		\node at ([shift={(-.65,.65)}]m2) {$M_3$};
	\coordinate (m2b) at ({\RB*cos(\AD)}, {\RB*sin(\AD)},\LCA){}; %M
		\node[roundnode] at (m2b){};
		\node at ([shift={(-.65,.65)}]m2b){$M_1$};
	\coordinate (m2c) at ({\RC*cos(\AD)}, {\RC*sin(\AD)},\LCA){}; %M
		\node[roundnode] at (m2c){};
		\node at ([shift={(-.65,.65)}]m2c) {$M_2$};
	\coordinate (dm2) at ({\RA*cos(\AG)}, {\RA*sin(\AG)},\LCA){};%M*
		\node[roundnode] at (dm2){};
		\node at ([shift={(-.65,.65)}]dm2){$M^*_1$};
	\coordinate (dm2b) at ({\RB*cos(\AG)}, {\RB*sin(\AG)},\LCA){};%M*
		\node[roundnode] at (dm2b){};
		\node at ([shift={(-.65,.65)}]dm2b) {$M^*_2$};
	\coordinate (dm2c) at ({\RC*cos(\AG)}, {\RC*sin(\AG)},\LCA){};%M*
		\node[roundnode] at (dm2c){};
		\node at ([shift={(-.65,.65)}]dm2c) {$M^*_3$};

   \draw [very thick, mydarkfill,domain=\AB:\AD] plot ({\RA*cos(\x)}, {\RA*sin(\x)},\LD);
   \draw [very thick,  dotted,mydarkred,domain=\AE:\AD] plot ({\RA*cos(\x)}, {\RA*sin(\x)},\LD);
   \draw [very thick, dotted, mydarkgreen,domain=\AE:\Jct] plot ({\RA*cos(\x)}, {\RA*sin(\x)},\LD);
   \draw [very thick, dotted, mydarkred,domain=\AB:-\Jct] plot ({\RA*cos(\x)}, {\RA*sin(\x)},\LD);
   \draw [very thick, mymedfill,domain=\AB:\AD] plot ({\RB*cos(\x)}, {\RB*sin(\x)},\LD);
   \draw [very thick, dotted, mydarkgreen,domain=\AE:\AD] plot ({\RB*cos(\x)}, {\RB*sin(\x)},\LD);
   \draw [very thick, dotted, mydarkblue,domain=\AE:\Jct] plot ({\RB*cos(\x)}, {\RB*sin(\x)},\LD);
   \draw [very thick, dotted, mydarkgreen,domain=\AB:-\Jct] plot ({\RB*cos(\x)}, {\RB*sin(\x)},\LD);
   \draw [very thick, mylightfill,domain=\AB:\AD] plot ({\RC*cos(\x)}, {\RC*sin(\x)},\LD);
   \draw [very thick, dotted, mydarkblue,domain=\AD:\AE] plot ({\RC*cos(\x)}, {\RC*sin(\x)},\LD);
  \draw [very thick,  dotted,mydarkred,domain=\AE:\Jct] plot ({\RC*cos(\x)}, {\RC*sin(\x)},\LD);
   \draw [very thick, dotted, mydarkblue,domain=\AB:-\Jct] plot ({\RC*cos(\x)}, {\RC*sin(\x)},\LD);
   \draw [very thick, dotted, mydarkred] ({\RA*cos(-\Jct)}, {\RA*sin(-\Jct)},\LD) to  ({\RC*cos(-\Jcta)}, {\RC*sin(-\Jcta)},\LD);
   \draw [very thick, dotted, mydarkgreen] ({\RB*cos(-\Jct)}, {\RB*sin(-\Jct)},\LD) to  ({\RA*cos(-\Jcta)}, {\RA*sin(-\Jcta)},\LD);
   \draw [very thick, dotted, mydarkblue] ({\RC*cos(-\Jct)}, {\RC*sin(-\Jct)},\LD) to  ({\RB*cos(-\Jcta)}, {\RB*sin(-\Jcta)},\LD);

	\coordinate (p1) at  ({\RA*cos(\AE)}, {\RA*sin(\AE)},\LD){}; %P
		\node[roundnode] at (p1){};
		\node at ([shift={(-.65,.65)}]p1){$P_1$};
	\coordinate (p1b) at  ({\RB*cos(\AE)}, {\RB*sin(\AE)},\LD){}; %P
		\node[roundnode] at (p1b){};
		\node at ([shift={(-.65,.65)}]p1b) {$P_2$};
	\coordinate (p1c) at  ({\RC*cos(\AE)}, {\RC*sin(\AE)},\LD){}; %P
		\node[roundnode] at (p1c){};
		\node at ([shift={(-.65,.65)}]p1c){$P_3$};
	\coordinate (m2) at ({\RA*cos(\AD)}, {\RA*sin(\AD)},\LD){}; %M
		\node[roundnode] at (m2){};
		\node at ([shift={(-.65,.65)}]m2) {$M_3$};
	\coordinate (m2b) at ({\RB*cos(\AD)}, {\RB*sin(\AD)},\LD){}; %M
		\node[roundnode] at (m2b){};
		\node at ([shift={(-.65,.65)}]m2b) {$M_1$};
	\coordinate (m2c) at ({\RC*cos(\AD)}, {\RC*sin(\AD)},\LD){}; %M
		\node[roundnode] at (m2c){};
		\node at ([shift={(-.65,.65)}]m2c){$M_2$};
	\coordinate (dm2) at ({\RA*cos(\AB)}, {\RA*sin(\AB)},\LD){};%M*
		\node[roundnode] at (dm2){};
		\node at ([shift={(-.65,.65)}]dm2){$M^*_3$};
	\coordinate (dm2b) at ({\RB*cos(\AB)}, {\RB*sin(\AB)},\LD){};%M*
		\node[roundnode] at (dm2b){};
		\node at ([shift={(-.65,.65)}]dm2b){$M^*_1$};
	\coordinate (dm2c) at ({\RC*cos(\AB)}, {\RC*sin(\AB)},\LD){};%M*
		\node[roundnode] at (dm2c){};
		\node at ([shift={(-.65,.65)}]dm2c){$M^*_2$};

   \draw [very thick, dotted, mydarkred,domain=-\Jct:\AE] plot ({\RA*cos(\x)}, {\RA*sin(\x)},\LE);
   \draw [very thick, dotted, mydarkgreen,domain=\AE:\Jct] plot ({\RA*cos(\x)}, {\RA*sin(\x)},\LE);
   \draw [very thick, dotted, mydarkgreen,domain=-\Jct:\AE] plot ({\RB*cos(\x)}, {\RB*sin(\x)},\LE);
   \draw [very thick, dotted, mydarkblue,domain=\AE:\Jct] plot ({\RB*cos(\x)}, {\RB*sin(\x)},\LE);
   \draw [very thick, dotted, mydarkblue,domain=-\Jct:\AE] plot ({\RC*cos(\x)}, {\RC*sin(\x)},\LE);
   \draw [very thick, dotted, mydarkred,domain=\AE:\Jct] plot ({\RC*cos(\x)}, {\RC*sin(\x)},\LE);
   \draw [very thick, dotted, mydarkred] ({\RA*cos(-\Jct)}, {\RA*sin(-\Jct)},\LE) to  ({\RC*cos(-\Jcta)}, {\RC*sin(-\Jcta)},\LE);
   \draw [very thick, dotted, mydarkgreen] ({\RB*cos(-\Jct)}, {\RB*sin(-\Jct)},\LE) to  ({\RA*cos(-\Jcta)}, {\RA*sin(-\Jcta)},\LE);
   \draw [very thick, dotted, mydarkblue] ({\RC*cos(-\Jct)}, {\RC*sin(-\Jct)},\LE) to  ({\RB*cos(-\Jcta)}, {\RB*sin(-\Jcta)},\LE);

\coordinate (p2) at  ({\RA*cos(\AE)}, {\RA*sin(\AE)}, \LE);
\node[roundnode] at (p2){};
\node at ([shift={(-.65,.65)}]p2) {$P_1$};
\coordinate (p2b) at  ({\RB*cos(\AE)}, {\RB*sin(\AE)}, \LE);
\node[roundnode] at (p2b){};
\node at ([shift={(-.65,.65)}]p2b) {$P_2$};
\coordinate (p2c) at  ({\RC*cos(\AE)}, {\RC*sin(\AE)}, \LE);
\node[roundnode] at (p2c){};
\node at ([shift={(-.65,.65)}]p2c){$P_3$};

\end{tikzpicture}
} \caption{The trace of $\ful f3$}\label{fig:multitrace_cartoon}

\end{figure}

A {\bf shadowed $n$-Fuller structure} on a bicategory with shadow $\sB$ consists of the following.
\begin{enumerate}
	\renewcommand{\theenumi}{\arabic{enumi}}
	\item\label{item:defn_boxtimes} A strong functor (pseudofunctor) of bicategories
	\[\boxtimes \colon \underbrace{\sB\times \ldots \times \sB}_n\to \sB. \]
	Here $\sB\times \ldots\times \sB$ is the bicategory whose 0-cells are tuples of 0-cells $\sB$ and
	\[(\sB\times \ldots\times \sB)((A_1,\ldots,A_n),(B_1,\ldots,B_n))\coloneqq \sB(A_1,B_1)\times \ldots \times \sB(A_n,B_n). \]
	The product, shadow, associator, and so on are all defined componentwise. 

	More explicitly, this is a function that assigns a 0-cell $\boxtimes A_i$ to every tuple of 0-cells $A_i$, functors 
	$\boxtimes\colon \prod \sB(A_i,B_i) \to \sB(\boxtimes A_i,\boxtimes B_i)$, and natural isomorphisms
	\begin{align*}
		m_{\boxtimes}\colon \left(\boxtimes M_i\right) \odot \left(\boxtimes N_i\right) &\cong \boxtimes (M_i \odot N_i) \\
		i_{\boxtimes}\colon U_{\boxtimes A_i} &\cong \boxtimes U_{A_i}
	\end{align*}
	satisfying the same coherence axioms as for a monoidal functor including
\eqref{eq:i_m},  \eqref{eq:m_box_naturality}, \eqref{eq:m_box_assoc}.

	\item\label{item:defn_vartheta} A pseudonatural transformation
	\[\vartheta\colon \boxtimes \circ \gamma \to \boxtimes\]
	where $\gamma$ is the strong functor $\sB\times \ldots \times \sB\to \sB\times \ldots \times \sB$ that permutes the leftmost $\sB$ to the right. 

	More explicitly,   for each $n$ tuple of objects $(A_1,\ldots ,A_n)$ in $\sB$ there is an object 
	$T_{A_i}\in \sB(  A_{2}\times \ldots \times A_n\times A_1, A_1\times \ldots \times A_n)$
	and natural  isomorphisms
	\[ \vartheta\colon T_{A_i}\odot (\boxtimes M_{i}) \overset\cong\to (\boxtimes M_{i+1})\odot T_{B_i}\]
	for all $M_i\in \sB(A_i,B_i)$ that are compatible with $m_{\boxtimes}$ and $i_{\boxtimes}$.\footnote{In fact, the compatibility with $i_\boxtimes$ is entirely optional, because our arguments below do not use it.}

	\item\label{item:defn_tau_fuller} A natural isomorphism
	\[ \tau\colon \sh{T_{A_{i-1}},\boxtimes Q_i} \overset\cong\to \sh{Q_1,\ldots,Q_n} \] 
	so that 
	\begin{equation}\label{eq:fuller_defn_diagram}\xymatrix@R=10pt{\sh{T_{A_{i-1}}, \boxtimes M_{i}, \boxtimes N_i }\ar[r]_-\sim\ar[d]^-\vartheta_-\sim&
		\sh{T_{A_{i-1}}, \boxtimes ( M_{i}\odot N_i) }\ar[r]^-\tau_-\sim
		&\sh{ M_1, N_1, M_2,\ldots ,M_n, N_n}
		\ar[dd]_-\sim^\theta
		\\
		\sh{\boxtimes M_{i+1}, T_{B_{i}}, \boxtimes N_i}\ar[d]^\theta
		\\
		\sh{T_{B_{i}}, \boxtimes N_i,\boxtimes M_{i+1}}\ar[r]_-\sim
		&\sh{T_{B_{i}}, \boxtimes (N_i \odot M_{i+1})}\ar[r]^-\tau_-\sim
		&\sh{ N_1, M_2, \ldots, M_n, N_n, M_1}
	}\end{equation}
	commutes for all $M_i\in \sB(A_{i-1},B_i)$ and $N_i\in \sB(B_i,A_i)$. 
\end{enumerate}

\begin{example}
	If $\sC$ is a symmetric monoidal category, it has a canonical $n$-Fuller structure in which $\boxtimes$ is the $n$-fold tensor product, $T_{A_i}$ is the unit, and the rest of the isomorphisms are the canonical ones that come from the coherence theorem for symmetric monoidal categories.
\end{example}

\begin{example}
	The bicategory $\Ex$ has a shadowed $n$-Fuller structure. This can be deduced from our foundational work in \cref{thm:spectra_smbf} below, and the formal work from \cite{mp2} summarized in \cref{thm:indexed_bicat} below.
\end{example}

The following statement is an immediate consequence of \cref{bicategory_map_preserves_traces}.

\begin{lem}\label{lem:external_tensor_of_duals}
	If $M_i\in \sB (A_i,B_i)$ are right dualizable with duals $N_i\in \sB (B_i,A_i)$
	and \[\boxtimes \colon \sB\times \ldots \times \sB\to \sB\] is a
	strong functor of bicategories 
	then  $\boxtimes M_i\in \sB (\prod A_i,\prod B_i)$ is dualizable with dual $\boxtimes N_i$. 
\end{lem}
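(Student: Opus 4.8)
The plan is to exhibit $\boxtimes M_i$ as the image of a dual pair under a pseudofunctor, so that the statement becomes a special case of the first assertion of \cref{bicategory_map_preserves_traces} (equivalently \cite[8.3]{ps:bicat}). First I would record that $\sB\times\cdots\times\sB$ is a bicategory (indeed a bicategory with shadow) in which composition, units, the associativity and unit constraints, the shadow, and $\theta$ are all defined one coordinate at a time; this is immediate from the definition of the product bicategory. Consequently a $1$-cell $(M_1,\dots,M_n)$ of $\sB\times\cdots\times\sB$ is right dualizable exactly when each $M_i$ is, and in that case $(N_1,\dots,N_n)$ is a right dual, with coevaluation $(\eta_1,\dots,\eta_n)$ and evaluation $(\epsilon_1,\dots,\epsilon_n)$ — the triangle identities hold because they may be checked coordinatewise, where they reduce to the triangle identities for the pairs $(M_i,N_i)$.

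Next I would apply the strong functor $\boxtimes$ to this dual pair. Concretely, the candidate coevaluation and evaluation for $(\boxtimes M_i,\boxtimes N_i)$ are the composites
\[
U_{\boxtimes A_i}\xto{\ i_\boxtimes\ }\boxtimes U_{A_i}\xto{\ \boxtimes\eta_i\ }\boxtimes(M_i\odot N_i)\xto{\ m_\boxtimes^{-1}\ }(\boxtimes M_i)\odot(\boxtimes N_i),
\]
\[
(\boxtimes N_i)\odot(\boxtimes M_i)\xto{\ m_\boxtimes\ }\boxtimes(N_i\odot M_i)\xto{\ \boxtimes\epsilon_i\ }\boxtimes U_{B_i}\xto{\ i_\boxtimes^{-1}\ }U_{\boxtimes B_i},
\]
and one checks the two triangle identities for this pair by the same diagram chase used to prove that a strong monoidal functor, or pseudofunctor, preserves duals: one uses the triangle identities for $(\eta_i,\epsilon_i)$ together with the coherence axioms tying $m_\boxtimes$ and $i_\boxtimes$ to the associativity and unit constraints. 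This is precisely the content of the first assertion of \cref{bicategory_map_preserves_traces}, so I would simply invoke that result rather than reproduce the chase.

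The one point deserving a remark is that \cref{bicategory_map_preserves_traces} is phrased for strong shadow functors, whereas in the Fuller structure $\boxtimes$ is assumed only to be a strong functor (pseudofunctor) of bicategories; but the preservation of dual pairs — the first assertion there — uses only the pseudofunctor data $m$ and $i$, not the shadow-level data $F_{\tr}$ and $s$, so it applies verbatim. Thus the only real work is the bookkeeping verification that the product bicategory carries the expected coordinatewise structure and that dualizability in it is detected coordinatewise; this is the closest thing to an obstacle, and it is entirely routine.
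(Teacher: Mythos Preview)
Your proposal is correct and matches the paper's approach exactly: the paper states that this lemma is an immediate consequence of \cref{bicategory_map_preserves_traces}, which is precisely the reduction you carry out. Your additional remarks --- that dualizability in the product bicategory is detected coordinatewise, and that only the pseudofunctor data (not the shadow data) is needed for part~\ref{item:bfpt1} --- are accurate elaborations of what the paper leaves implicit.
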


For dualizable $M_i\in \sB (A_i,B_i)$  and $Q_i\in \sB (A_{i-1}, A_i)$ and $P_i\in \sB (B_{i-1}, B_i)$ (subscripts are taken mod $n$)  the {\bf abstract Fuller map} of
\[ \phi_i\colon Q_i\odots{A_i} M_i\to M_{i-1}\odots{B_{i-1}} P_i,\]
denoted   $\ful{\phi_1,\ldots ,\phi_n}{} \in \sB (\prod A_i, \prod B_i)$, is the composite 
\[\xymatrix@R=10pt{T_{A_{i-1}}\odot \boxtimes Q_i\odot \boxtimes M_i\ar[d]^{\id\odot m_{\boxtimes}}
	& T_{A_{i-1}}\odot \boxtimes M_{i-1}\odot \boxtimes P_i\ar[r]^-{\vartheta\odot \id}_-\sim\ar[d]^{\id\odot m_\boxtimes}
	& \boxtimes M_{i}\odot T_{B_{i-1}}\odot \boxtimes P_i
	\\
	T_{A_{i-1}}\odot \boxtimes (Q_i\odot  M_i)\ar[r]^-{\id\odot \boxtimes \phi_i}& T_{A_{i-1}}\odot \boxtimes (M_{i-1}\odot  P_i)
}\]
Essentially, it is $\boxtimes \phi_i$, but written in a form that allows us to use the dualizability of $\boxtimes M_i$ to take its trace.

\begin{thm}[Step 2 of \cref{thm:main_first_half}]\label{prop:fuller_equals_multitrace_axioms_version}
If $\sB$ is a bicategory with a shadowed Fuller structure, then for each tuple of maps $\phi_i$ as above the following diagram commutes.
\[\xymatrix@C=60pt{
        \sh{T_{A_i}\odot \boxtimes Q_i}	\ar[r]^-{\tr(\ful{\phi_1,\ldots ,\phi_n}{})}\ar[d]^-\tau_-\sim
	&\sh{T_{B_i}\odot \boxtimes P_i}\ar[d]^-\tau_-\sim
		\\
	\sh{Q_1,\ldots,Q_n} \ar[r]^-{\tr(\phi_1,\ldots, \phi_n)}
	& \sh{P_1,\ldots,P_n} 
}\]
\end{thm}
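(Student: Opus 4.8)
The plan is to unwind $\tr(\ful{\phi_1,\ldots ,\phi_n}{})$ into an explicit composite of shadow morphisms and then transport it along $\tau$, exactly as the symmetric monoidal argument of \cref{sec:smc} transports \cref{fig:n_to_fuller_1} into \cref{fig:n_to_fuller_4}. By \cref{lem:external_tensor_of_duals} the 1-cell $\boxtimes M_i$ is right dualizable with dual $\boxtimes \rdual{M_i}$, and by the first part of \cref{bicategory_map_preserves_traces} applied to the pseudofunctor $\boxtimes$ its coevaluation and evaluation are $m_\boxtimes^{-1}\circ(\boxtimes\eta_i)\circ i_\boxtimes$ and $i_\boxtimes^{-1}\circ(\boxtimes\epsilon_i)\circ m_\boxtimes$. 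Substituting these, together with the definition of the abstract Fuller map, into the trace formula of \cref{sec:traces_in_bicat} writes $\tr(\ful{\phi_1,\ldots ,\phi_n}{})$ as a composite built entirely out of $\boxtimes(-)$ applied to the $\eta_i,\phi_i,\epsilon_i$ and out of the structural isomorphisms $m_\boxtimes$, $i_\boxtimes$, $\vartheta$, $\fa$, $\frl$, $\fr$, $\theta$. Throughout, the coherence theorem for shadowed bicategories lets me suppress all associators, unitors and shadow symmetries, so that I only need to track how $\tau$, $m_\boxtimes$, $i_\boxtimes$ and $\vartheta$ interact. I would prove the claimed square one trace-stage at a time.

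For the coevaluation stage: merging the external tensors with $m_\boxtimes$ and applying $\tau$ defines an isomorphism $\tau'\colon\sh{T_{A_{i-1}},\boxtimes(Q_i\odot M_i\odot\rdual{M_i})}\xto{\sim}\sh{Q_1,M_1,\rdual{M_1},\ldots,Q_n,M_n,\rdual{M_n}}$, and the displayed formula for the coevaluation of $\boxtimes M_i$, the functoriality of $\boxtimes$, and the naturality of $\tau$ together show that $\tau'$ carries the coevaluation stage to $\sh{\id,\eta_1,\ldots,\id,\eta_n}\circ\tau$. The evaluation stage is handled symmetrically by a dual isomorphism $\tau''$, producing $\sh{\id,\epsilon_1,\ldots,\id,\epsilon_n}$; here one uses that $\epsilon_{\boxtimes M}$ contracts $\boxtimes\rdual{M_i}\odot\boxtimes M_i$ and that $\tau$ is natural in the 1-cell arguments.

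The heart of the matter is the middle stage, where the $\vartheta$-twist of the abstract Fuller map must be shown to implement the one-slot cyclic shift of the $M$-factors visible in the multitrace's middle object $\sh{M_n,P_1,\rdual{M_1},M_1,P_2,\ldots,P_n,\rdual{M_n}}$. After applying $\boxtimes\phi_i$ one reaches $\sh{T_{A_{i-1}},\boxtimes(M_{i-1}\odot P_i),\boxtimes\rdual{M_i}}$, and after $m_\boxtimes^{-1}$ and $\vartheta$ one reaches $\sh{\boxtimes M_i,T_{B_{i-1}},\boxtimes P_i,\boxtimes\rdual{M_i}}$. I would re-merge $\boxtimes P_i\odot\boxtimes\rdual{M_i}$ and invoke the $\tau$-compatibility square in the definition of a shadowed $n$-Fuller structure, applied with its ``$M_i$'' taken to be $M_{i-1}$ and its ``$N_i$'' taken to be $P_i\odot\rdual{M_i}$. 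That square says precisely that $\theta\circ\tau\circ m_\boxtimes=\tau\circ m_\boxtimes\circ\vartheta$, which absorbs the $\vartheta$ and identifies the composite with $\tau^{-1}$ of $\tau(\sh{T_{A_{i-1}},\boxtimes(M_{i-1}\odot P_i\odot\rdual{M_i})})$; one final application of naturality of $\tau$ to the 2-cell $\boxtimes(\phi_i\odot\id)$ rewrites this as $\sh{\phi_1,\id,\ldots,\phi_n,\id}\circ\tau'$. Composing the three stages yields $\tau\circ\tr(\ful{\phi_1,\ldots ,\phi_n}{})=\tr(\phi_1,\ldots ,\phi_n)\circ\tau$; pictorially this is the manipulation recorded in \cref{Comparison_n_traces}.

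The main obstacle is exactly this middle step: one must check that the index shift built into $\vartheta$ lines up every $M_i$, $\rdual{M_i}$, $P_i$ and $\phi_i$ with the slot prescribed by the multitrace, and that no spurious coherence cell appears when one re-merges and re-splits external tensors to match the hypotheses of the $\tau$-square. I expect the cleanest way to keep control is to apply the $\tau$-square ``one $\vartheta$ at a time'' and to use the pseudonaturality of $\vartheta$ in its 1-cell arguments to slide the $\boxtimes\phi_i$ and $\boxtimes\eta_i$ past the twist; the coherence theorem for shadowed bicategories then forces the residual associativity, unit and symmetry cells on the two sides to agree.
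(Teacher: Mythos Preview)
Your proposal is correct and follows essentially the same route as the paper. Both arguments expand the trace of $\ful{\phi_1,\ldots,\phi_n}{}$ using the explicit (co)evaluations for $\boxtimes M_i$ furnished by \cref{bicategory_map_preserves_traces}, and then transport each stage across $\tau$ by invoking in turn the pseudofunctor axioms for $\boxtimes$, the pseudonaturality of $\vartheta$, and the $\tau$-compatibility square; the paper simply packages your three-stage verification as a single large commutative diagram (\cref{fig:fuller_equals_multitrace}) whose small cells are labelled by exactly these axioms.
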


\begin{proof}  This is a modification of the compatibility between trace and shadow functors (\cref{bicategory_map_preserves_traces}).  
The required commutative diagram is \cref{fig:fuller_equals_multitrace}.  This is a very large diagram and so we have labeled the small regions so we can more easily indicate why they commute.  

The right column of commutative diagrams in \cref{fig:fuller_equals_multitrace} are mostly examples of  the naturality of $\tau$:
\begin{equation}\label{eq:tau_naturality}\xymatrix{\sh{T_{C_i}\odot \boxtimes X_i}\ar[r]^-\tau\ar[d]^{\sh{\id\odot \boxtimes f_i}}&\sh{X_1\odot\cdots \odot X_n}\ar[d]^-{\sh{f_1\odot \cdots \odot f_n}}
\\
\sh{T_{C_i}\odot \boxtimes Y_i}\ar[r]^-\tau&\sh{Y_1\odot\cdots \odot Y_n}
}\end{equation}
This diagram commutes for 2-cells $f_i\colon X_i\to Y_i$.   The remaining region in the right column is the assumed compatibility between $\theta$ and $\vartheta$.

Many of the left column regions  are the result of applying the functor $\sh{T_{C_i}\otimes - }$ to coherence axioms for $\boxtimes$.  These include: 
\begin{itemize}
\item naturality of $m_\boxtimes$
\begin{equation}\label{eq:m_box_naturality} 
\xymatrix{(\boxtimes X_i)\odot (\boxtimes Y_i) \ar[r]^-{m_\boxtimes}\ar[d]^{(\boxtimes f_i)\odot (\boxtimes g_i)}&\boxtimes (X_i\odot  Y_i)\ar[d]^{\boxtimes (f_i\odot g_i)}
\\
(\boxtimes W_i)\odot (\boxtimes Z_i) \ar[r]^-{m_\boxtimes}&\boxtimes (W_i\odot  Z_i)  
}\end{equation} 
\item associativity of $m_\boxtimes$
\begin{equation}\label{eq:m_box_assoc} \xymatrix{(\boxtimes X_i)\odot (\boxtimes Y_i)\odot (\boxtimes Z_i)\ar[r]^-{m_\boxtimes \odot \id}\ar[d]^{\id\odot m_\boxtimes}
&(\boxtimes (X_i\odot  Y_i))\odot (\boxtimes Z_i)\ar[d]^{m_\boxtimes}
\\
(\boxtimes X_i)\odot (\boxtimes (Y_i\odot Z_i))\ar[r]^-{m_\boxtimes}
&\boxtimes (X_i\odot Y_i\odot Z_i)
}\end{equation} 
\item compatibility of $i_\boxtimes$ and $m_\boxtimes$
\begin{equation}\label{eq:i_m}\xymatrix{(\boxtimes X_i)\odot U_{\boxtimes C_i}\ar[r]\ar[d]^{\id\odot i_\boxtimes}&\boxtimes X_i
\\
(\boxtimes X_i)\odot (\boxtimes U_{C_i})\ar[r]^-{m_\boxtimes}&\boxtimes (X_i\odot  U_{C_i})\ar[u]
}\end{equation} 
\end{itemize}
In \eqref{eq:i_m} the unlabeled arrows are unit isomorphisms.

  The dotted and dashed arrows are defined to be the composites of the remaining arrows bounding the relevant region.  For the two remaining regions: 
\begin{enumerate}
\renewcommand{\theenumi}{\arabic{enumi}}

\item\label{item:functoriality_odot_m_box} This square commutes by applying $\sh{-}$ to a square that commutes by the functoriality of $\odot$. 
\item\label{item:shadow_iso_naturality} This square commutes by the naturality of the shadow isomorphism.
\end{enumerate}
\end{proof}

\begin{figure}
\resizebox{\textwidth}{!}
{
\begin{tikzpicture}
\def\y{1.25}
\def\s{22}
\node at (19,-9*\y+\s*\y) (tbq){$\sh{T_{A_i}, \boxtimes Q_i}$};
\node at (23.5,-9*\y+\s*\y) (q){$\sh{Q_1,\ldots,Q_n}$};
\draw [->](tbq)--(q);

\node at (11,-9*\y+\s*\y) (tbqta) {$\sh{T_{A_i}, \boxtimes Q_i, U_{ \boxtimes A_i}}$};
\node at (15,-10*\y+\s*\y) (tbqbu) {$\sh{T_{A_i},  \boxtimes Q_i,  \boxtimes U_{A_i}}$};
\node at (19,-10*\y+\s*\y) (tbqu) {$\sh{T_{A_i}, \boxtimes (Q_i\odot U_{A_i})}$};
\node at (23.5,-10*\y+\s*\y) (qu){$\sh{Q_1,U_{A_1},\ldots,Q_n,U_{A_n}}$};
\draw[->](tbqta)--(tbqbu);

\node at (11,-12*\y+\s*\y) (tbqbmbn){$\sh{T_{A_i},  \boxtimes Q_i, \boxtimes M_i, \boxtimes N_i}$};
\node at (15,-11*\y+\s*\y) (tbqbmn){$\sh{T_{A_i}, \boxtimes Q_i, \boxtimes (M_i\odot N_i)}$};
\node at (19, -12*\y+\s*\y) (tbqmn){$\sh{T_{A_i}, \boxtimes (Q_i\odot M_i\odot N_i)}$};
\node at (23.5, -11*\y+\s*\y) (qmn){$\sh{Q_1, M_1, N_1, \ldots, Q_n, M_n, N_n}$};

\node at (15, -13*\y+\s*\y) (tbqmbn) {$\sh{T_{A_i}, \boxtimes(Q_i\odot M_i), \boxtimes N_i }$};
\draw[->](tbqbu)--(tbqbmn);
\draw[->] (tbqbmbn)--(tbqbmn);
\draw [->](tbqmn)--(qmn);
\node at (11, -15*\y+\s*\y) (tbmbpbn) {$\sh{T_{A_i}, \boxtimes M_{i-1}, \boxtimes P_i, \boxtimes N_i}$};
\node at (15, -14*\y+\s*\y) (tbmpbn){$\sh{T_{A_i}, \boxtimes (M_{i-1}\odot P_i), \boxtimes N_i}$};
\node at (19,-15*\y+\s*\y) (tbmpn){$\sh{T_{A_i}, \boxtimes (M_{i-1}\odot P_i\odot N_i)}$};
\node at (23.5,-16*\y+\s*\y) (mpn){$\sh{M_n,P_1,N_1,M_1,\ldots,P_n,N_n}$};

\node at (15,-16*\y+\s*\y) (tbmbpn){$\sh{T_{A_i}, \boxtimes M_{i-1}, \boxtimes (P_i\odot N_i)}$};
\draw [->](tbmpn)--(mpn);
\draw [->](tbmbpbn)--(tbmbpn);

\node at (11, -17*\y+\s*\y)(bmtbpbn){$\sh{\boxtimes M_{i}, T_{B_i},  \boxtimes P_i, \boxtimes N_i}$};
\node at (15.5,-17*\y+\s*\y) (bmtbpn){$\sh{\boxtimes M_{i}, T_{B_i}, \boxtimes (P_i\odot N_i)}$};
\draw[->](bmtbpbn)--(bmtbpn);
\draw [->](tbmbpn)--(bmtbpn);
\node at (15,-18*\y+\s*\y) (tbpnbm){$\sh{T_{B_i}, \boxtimes (P_i\odot N_i), \boxtimes M_{i}}$};
\node at (19,-19*\y+\s*\y) (tbpnm){$\sh{T_{B_i}, \boxtimes (P_i\odot N_i\odot M_i)}$};
\node at (23.5, -18*\y+\s*\y) (pnm){$\sh{P_1,N_1,M_1,\ldots,P_n,N_n,M_n}$};

\node at (11, -19*\y+\s*\y) (tbpbnbm){$\sh{T_{B_i}, \boxtimes P_i, \boxtimes N_i, \boxtimes M_i}$};
\node at (15, -20*\y+\s*\y) (tbpbnm){$\sh{T_{B_i}, \boxtimes P_i, \boxtimes (N_i\odot M_i)}$};

\draw [->](tbpbnbm)--(tbpnbm);
\draw [->](bmtbpbn) --(tbpbnbm);
\draw[->](tbpbnbm)--(tbpnbm);

\node at (11,-22*\y+\s*\y)(tbptu){$\sh{T_{B_i}, \boxtimes P_i, U_{ \boxtimes B_i} }$};
\node at (15,-21*\y+\s*\y)(tbpbu){$\sh{T_{B_i}, \boxtimes P_i, \boxtimes U_{B_i}}$};
\draw [->]  (tbptu)--(tbpbu);
\node at (19,-21*\y+\s*\y)(tbpu){$\sh{T_{B_i}, \boxtimes (P_i\odot U_{B_i})}$};
\node at (23.5, -21*\y+\s*\y)(pu){$\sh{P_1, U_{B_1},\ldots,P_n,U_{B_n}}$};

\node at (19,-22*\y+\s*\y) (tbp) {$\sh{T_{B_i}, \boxtimes P_i}$};
\node  at (23.5,-22*\y+\s*\y) (p) {$\sh{P_1,\ldots,P_n}$};

\draw[->] (tbp)--(p);
\draw[->] (pu)--(p);
\draw[->] (pnm)--(pu);
\draw[->] (mpn)--(pnm);
\draw[->] (qmn)--(mpn);
\draw[->] (qu)--(qmn);
\draw[<-] (q)--(qu);
\draw [->,dashed] (tbqbmbn)to[out = -150, in = 150](bmtbpbn);
\draw[->] (tbqbmbn)--(tbqmbn);
\draw[->] (tbqmbn)--(tbmpbn);
\draw [->] (tbmbpbn)--(tbmpbn);
\draw [->] (tbmbpbn)--(bmtbpbn);
\draw [->,dotted] (tbpbnbm)--(tbptu);
\draw [->] (bmtbpbn)--(tbpbnbm);
\draw[->,dotted] (tbqta)--(tbqbmbn);
\draw [->] (tbpbnbm)--(tbpbnm);
\draw [->] (tbpbnm)--(tbpbu);
\draw [->] (tbqta) --(tbqbu);
\draw [->] (tbqta)-- (tbq);
\draw [->] (tbqbu)--(tbqbmn);
\draw [->] (tbqbmbn)--(tbqbmn);
\draw [->] (tbqbu)--(tbqu);
\draw [->] (tbqu)--(qu);
\draw [->] (tbqu)--(tbq);
\draw [->] (tbptu)--(tbp);
\draw [->] (tbpbu)--(tbpu);
\draw [->] (tbpu)--(tbp);
\draw [->] (tbpu)--(pu);
\draw [->] (tbqu)--(tbqmn);
\draw [->] (tbqmn)--(qmn);
\draw [->] (tbqmn)--(tbmpn);
\draw [->](tbqbmn)--(tbqmn);
\draw [->](tbmpn)--(mpn);
\draw [->] (tbpnm)--(pnm);
\draw [->] (tbpnm)--(pnm);
\draw [->] (tbpbnm)--(tbpnm);
\draw [->] (tbpnm)--(tbpu);
\draw [->] (tbqmbn)--(tbqmn);
\draw [->](tbmpbn)--(tbmpn);
\draw [->](tbmbpbn)-- (tbmbpn);
\draw[->] (tbmbpn)--(tbmpn);
\draw[->] (bmtbpn)--(tbpnbm);
\draw[->] (tbpnbm)--(tbpnm);
\draw[->] (bmtbpbn)--(bmtbpn);

\node  at (barycentric cs:tbmpn=1,mpn=1,pnm=1,tbpnm=1,tbpnbm=1,tbmbpn=1) {(\ref{eq:fuller_defn_diagram})};

\node  at (barycentric cs:tbqta=1,tbq=1,tbqu=1,tbqbu=1) {$\sh{T_{A_i}\odot \eqref{eq:i_m} }$};
\node  at (barycentric cs:tbq=1,q=1,tbqu=1,qu=1) {(\ref{eq:tau_naturality})};

\node  at (barycentric cs:tbptu=1,tbp=1,tbpu=1,tbpbu=1) {$\sh{T_{B_i}\odot \eqref{eq:i_m} }$};

\node  at (barycentric cs:tbpu=1,p=1,tbp=1,pu=1) {(\ref{eq:tau_naturality})};
\node  at (barycentric cs:tbpnm=1,pnm=1,pu=1,tbpu=1) {(\ref{eq:tau_naturality})};
\node  at (barycentric cs:tbqmn=1,qmn=1,mpn=1,tbmpn=1) {(\ref{eq:tau_naturality})};
\node  at (barycentric cs:tbqu=2,qmn=1,qu=1,tbqmn=1) {(\ref{eq:tau_naturality})};

\node  at (barycentric cs:tbqbu=1,tbqu=2,tbqmn=1,tbqbmn=1) {$\sh{T_{A_i}\odot  \eqref{eq:m_box_naturality} }$};
\node  at (barycentric cs:tbqmbn=1,tbqmn=1,tbmpn=1,tbmpbn=1) {$\sh{T_{A_i}\odot \eqref{eq:m_box_naturality} }$};
\node  at (barycentric cs:tbpbnm=1,tbpnm=1,tbpu=2,tbpbu=1) {$\sh{T_{B_i}\odot  \eqref{eq:m_box_naturality} }$};

\node  at (barycentric cs:tbqbmn=1,tbqmn=1,tbqmbn=1,tbqbmbn=1) {$\sh{T_{A_i}\odot \eqref{eq:m_box_assoc} }$};
\node  at (barycentric cs:tbmpbn=1,tbmpn=1,tbmbpn=1,tbmbpbn=1) {$\sh{T_{A_i}\odot \eqref{eq:m_box_assoc} }$};
\node  at (barycentric cs:tbpnbm=1,tbpnm=1,tbpbnm=1,tbpbnbm=1) {$\sh{T_{B_i}\odot  \eqref{eq:m_box_assoc} }$};

\node  at (barycentric cs:tbmbpbn=1,tbmbpn=1,bmtbpn=1,bmtbpbn=2) {(\ref{item:functoriality_odot_m_box})};
\node  at (barycentric cs:tbpbnbm=1,tbpnbm=1,bmtbpn=1,bmtbpbn=2) {(\ref{item:shadow_iso_naturality})};

\end{tikzpicture}
}\caption{Proof of \cref{prop:fuller_equals_multitrace_axioms_version}}\label{fig:fuller_equals_multitrace}
\end{figure}

Together \cref{lem:nthpower,prop:fuller_equals_multitrace_axioms_version} prove a very abstract and general form of the ``unwinding'' argument, that the trace of a Fuller construction is isomorphic to the trace of the composite. To recover \cref{thm:main_first_half} from this, we have to further develop the case where the maps $\phi_i$ are canonical isomorphisms of base-change objects associated to maps $f_i$ in some 1-category $\bS$.

\section{Base change}\label{sec:base_change}
If $\sB$ is a shadowed bicategory with an $n$-Fuller structure and $\bS$ is a cartesian monoidal 1-category, a {\bf system of base-change objects for $\sB$ indexed by $\bS$} is  the following data and conditions.
\begin{enumerate}
	\item A pseudofunctor $[]\colon \bS \to \sB$.

 In particular, natural isomorphisms
	\[ m_{[]}\colon \bcr{B_{n-1}}{f_n}{B_n} \odot \ldots \odot \bcr{B_1}{f_2}{B_2} \odot \bcr{B_0}{f_1}{B_1} \cong \bcr{B_0}{f_n \circ \ldots \circ f_1}{B_n} \]
	compatible with composition. (The unit isomorphism $i_{[]}$ is not necessary.)

	\item A vertical natural isomorphism $\pi$ filling the square of pseudofunctors
	\[ \xymatrix{
		\bS^{\times n} \ar[r]^-\prod \ar[d]_-{[]} & \bS \ar[d]^-{[]} \\
		\sB^{\times n} \ar[r]^-\boxtimes & \sB
	} \]
	 where $\prod$ denotes a fixed model for the $n$-fold product in $\bS$.

This implies  $\boxtimes A_i = \prod A_i$ for a tuple of 0-cells $A_i$, and for each $n$-tuple of maps $A_i \overset{f_i}\to B_i$ there is an isomorphism of 1-cells 
	\[ \pi\colon \boxtimes \bcr{A_i}{f_i}{B_i} \overset\cong\to \bcr{\prod A_i}{\prod f_i}{\prod B_i} \]
so that for any $n$-tuple of composable maps $A_i \overset{f_i}\to B_i \overset{g_i}\to C_i$, the following pentagon of isomorphisms commutes.
	\begin{equation}\label{eq:mboxtimes_m_bracket}{
		\xymatrix{
			\left(\boxtimes \bcr{B_i}{g_i}{C_i}\right) \odot \left(\boxtimes \bcr{A_i}{f_i}{B_i}\right)\ar[dd]^-{\pi\odot \pi}_-\cong \ar[r]^-{m_{\boxtimes}}_-\cong&
			\boxtimes \left(\bcr{B_i}{g_i}{C_i} \odot \bcr{A_i}{f_i}{B_i}\right)\ar[d]^{\boxtimes m_{[]}}_-\cong
			\\&
			\boxtimes \bcr{A_i}{g_i \circ f_i}{C_i}\ar[d]^-\pi _-\cong
			\\
			\bcr{\prod B_i}{\prod g_i}{\prod C_i}\odot \bcr{\prod A_i}{\prod  f_i}{\prod B_i}\ar[r]^-{m_{[]}}_-\cong
			&
			\bcr{\prod A_i}{\prod (g_i \circ f_i)}{\prod C_i}
		}
	}\end{equation}
(Again, the corresponding map for the unit of $\boxtimes$ is not necessary.)

	\item An equality $T_{B_i} = \bcr{\prod B_i}{\cong}{\prod B_{i+1}}$ so that the following diagram relating $\vartheta$, $\pi$ and the pseudofunctor structure commutes.
	\begin{equation}\label{eq:vartheta_m_bracket} \xymatrix@C=8pt{
		\bcr{\prod B_i}{\cong}{\prod B_{i+1}} \odot \left(\boxtimes \bcr{E_i}{p_i}{B_i}\right) \ar[dd]_-{\id\odot \pi}^-\cong \ar[r]^-\vartheta_-\cong 
		&
		\left(\boxtimes \bcr{E_{i+1}}{p_{i+1}}{B_{i+1}}\right)\odot \bcr{\prod E_i}{\cong}{\prod E_{i+1}} \ar[d]^-{\pi\odot \id}_-\cong \\
&
		\bcr{\prod E_{i+1}}{\prod p_{i+1}}{\prod B_{i+1}} \odot \bcr{\prod E_i}{\cong}{\prod E_{i+1}}\ar[d]_-\cong^{m_{[]}} 
\\		
		\bcr{\prod B_i}{\cong}{\prod B_{i+1}} \odot \bcr{\prod E_i}{\prod p_i}{\prod B_i} \ar[r]_-\cong^-{m_{[]}} & \bcr{\prod E_i}{\textup{shift} \circ \prod p_i}{\prod B_{i+1}} 
	} \end{equation}
\end{enumerate}

\begin{example}
	The bicategory $\Ex$ has a system of base-change objects from the category of unbased spaces. As with the $n$-Fuller structure, this follows from \cref{thm:spectra_smbf} and the results from \cite{mp2} summarized in \cref{thm:indexed_bicat}.
\end{example}

If $\sB$ is a shadowed $n$-Fuller category with base change objects, any commuting square in $\bS$ of the form
\[\xymatrix{ E \ar[r]^{f}\ar[d]_{p} & E \ar[d]^p \\ B \ar[r]^{\overline{f}} & B }\]
gives an isomorphism of base-change objects
\[ \bcr{B}{\overline{f}}{B} \odot \bcr{E}{p}{B}
\to \bcr{E}{p}{B} \odot \bcr{E}{f}{E}. \]
If the base-change object $\bcr{E}{p}{B}$ is right-dualizable in $\sB$, then we can take the trace of this map. This is the Reidemeister trace associated to the above commuting square. Note that in $\Ex$, when $B = *$, it agrees with the definition of $R(f)$ we gave in \cref{sec:traces_in_bicat}.

If we instead have an $n$-tuple of  commuting squares 
	\[\xymatrix{ E_i \ar[r]^{f_i}\ar[d]_{p_i} & E_{i-1} \ar[d]^{p_{i-1}} \\ B_i \ar[r]^{\overline{f_i}} & B_{i-1} }\]
in $\bS$ then we can define a commuting square 
	\[\xymatrix @C=5em{ \prod E_i \ar[r]^{\ful{f_1,\ldots,f_n}{}} \ar[d]_{\prod p_i} &\prod E_i \ar[d]^{\prod p_{i}} \\ \prod B_i \ar[r]^{\ful{\overline{f_1},\ldots,\overline{f_n}}{}} & \prod B_i. }\]
The first squares define maps 	
\[ \phi_i\colon \bcr{B_i}{\overline{f}_i}{B_{i-1}} \odot \bcr{E_i}{p_i}{B_i}
\to \bcr{E_{i-1}}{p_{i-1}}{B_{i-1}} \odot \bcr{E_i}{f_i}{E_{i-1}} \]
for each $i$, and the second square defines a map 
\begin{center}\resizebox{\textwidth}{!}{ $\phi\colon \bcr{\prod B_i}{\ful{\overline f_1,\ldots , \overline f_n}{}}{\prod B_i} \odot \bcr{\prod E_i}{\prod p_i}{\prod B_i}\to \bcr {\prod E_{i}}{\prod p_{i}}{\prod B_{i}} \odot  \bcr{\prod E_i}{\ful{ f_1,\ldots ,  f_n}{}}{\prod E_i}$.}
\end{center}

\begin{prop}[Step 3 of \cref{thm:main_first_half}]
\label{prop:fuller_equals_multitrace_axioms_version_2}
	In a shadowed $n$-Fuller category $\sB$ with a system of base-change objects from $\bS$, for any $n$-tuple of commuting squares in $\bS$
	\[\xymatrix{ E_i \ar[r]^{f_i}\ar[d]_{p_i} & E_{i-1} \ar[d]^{p_{i-1}} \\ B_i \ar[r]^{\overline{f_i}} & B_{i-1} }\]
there is a commuting diagram 
\[\xymatrix{
	\Bigsh{T_{B_i} \odot \boxtimes  \bcr{B_i}{\overline{f}_i}{B_{i-1}} }\ar[r]^-{\sh{\cong}}\ar[d]_{\tr(\ful{\phi_1,\ldots,\phi_n}{})} &
	\Bigsh{\bcr{\prod B_i}{\ful{\overline f_1,\ldots , \overline f_n}{}}{\prod B_i}}\ar[d]^{\tr(\phi)} \\
	\Bigsh{T_{E_i} \odot \boxtimes \bcr{E_i}{f_i}{E_{i-1}} }\ar[r]^-{\sh{\cong}} &
	\Bigsh{ \bcr {\prod E_{i}}{\ful{ f_1,\ldots ,f_n}{}}{\prod E_{i}}}
}\]
\end{prop}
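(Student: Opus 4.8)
The plan is to reduce the proposition to the observation that, after the horizontal base-change identifications, the single $2$-cell $\phi$ attached to the product commuting square is carried to the abstract Fuller map $\ful{\phi_1,\ldots,\phi_n}{}$, and then to invoke functoriality of the trace. Both vertical maps are bicategorical traces of matching $2$-cells: the left one is $\tr(\ful{\phi_1,\ldots,\phi_n}{})$, where $\ful{\phi_1,\ldots,\phi_n}{}$ is regarded as a $2$-cell $\widetilde Q\odot\widetilde M\to\widetilde M\odot\widetilde P$ with $\widetilde M=\boxtimes\bcr{E_i}{p_i}{B_i}$, $\widetilde Q=T_{B_i}\odot\boxtimes\bcr{B_i}{\overline f_i}{B_{i-1}}$, and $\widetilde P=T_{E_i}\odot\boxtimes\bcr{E_i}{f_i}{E_{i-1}}$, the trace being taken with respect to the right dual of $\widetilde M$ from \cref{lem:external_tensor_of_duals}; the right one is $\tr(\phi)$, taken with respect to the right dual of $\bcr{\prod E_i}{\prod p_i}{\prod B_i}$. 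The $1$-cell isomorphism $\pi$, the defining equalities $T_{B_i}=\bcr{\prod B_i}{\cong}{\prod B_{i+1}}$ and $T_{E_i}=\bcr{\prod E_i}{\cong}{\prod E_{i+1}}$, and the coherence isomorphisms $m_{[]}$ identify the source and target $1$-cells of these two $2$-cells; applying the shadow to those identifications yields exactly the horizontal maps $\sh{\cong}$ of the square. In particular $\pi$ transports the chosen right dual of $\widetilde M$ to a right dual of $\bcr{\prod E_i}{\prod p_i}{\prod B_i}$, so the latter is right dualizable and $\tr(\phi)$ makes sense.

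Next I would identify $\phi$ with $\ful{\phi_1,\ldots,\phi_n}{}$ across these identifications. Unwinding the definition of $\phi=[]$ of the product commuting square, its structure $2$-cell is built from the square's top and bottom maps $\ful{f_1,\ldots,f_n}{}$ and $\ful{\overline f_1,\ldots,\overline f_n}{}$, and by the definition of the Fuller construction each of these factors as the coordinatewise product of the $f_i$ (respectively $\overline f_i$) followed by the coordinate shift, whose base-change object is precisely $T_{E_i}$ (respectively $T_{B_i}$). Feeding these factorizations through the pseudofunctor $[]$, and using the pseudofunctoriality of $[]$ (its coherence isomorphisms $m_{[]}$), the compatibility pentagon relating $\pi$, $m_{[]}$ and $m_{\boxtimes}$, the compatibility square relating $\vartheta$, $\pi$ and the $T$-equalities, and naturality of $\pi$, one assembles a commuting diagram of $2$-cells exhibiting $\phi$ as $\ful{\phi_1,\ldots,\phi_n}{}$ conjugated by $\pi$ together with the $T$-equalities. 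This is the same sort of diagram chase as in the proof of \cref{prop:fuller_equals_multitrace_axioms_version}: every cell commutes by pseudofunctoriality of $[]$, by the definition of a pseudonatural transformation, or by one of the three compatibility conditions in the definition of a system of base-change objects.

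The square of the proposition then follows by assembling a commuting diagram in the shadow category whose two rows are the defining composites of the traces, namely $\sh{\widetilde Q}\to\sh{\widetilde Q,\widetilde M,\rdual{\widetilde M}}\to\sh{\widetilde M,\widetilde P,\rdual{\widetilde M}}\to\sh{\widetilde P}$ for $\ful{\phi_1,\ldots,\phi_n}{}$ and the analogous composite for $\phi$, with vertical connecting isomorphisms obtained by applying the shadow functor to $\pi$ and the $T$-equalities. The outer squares commute because $\pi$ is an isomorphism of dual pairs (hence intertwines the coevaluation and evaluation $2$-cells with their counterparts) together with naturality of $\theta$ in the shadow; the middle square is the content of the previous paragraph. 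This assembly is exactly the mechanism behind \cref{bicategory_map_preserves_traces}, here specialized to the identity functor but applied to two isomorphic (dual pair, $2$-cell) inputs.

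I expect the main obstacle to be the bookkeeping in the identification of $\phi$ with $\ful{\phi_1,\ldots,\phi_n}{}$: tracking how the cyclic shift built into $\ful{\overline f_1,\ldots,\overline f_n}{}$ and $\ful{f_1,\ldots,f_n}{}$ is matched, on the Fuller side, by the twist objects $T_{B_i}$, $T_{E_i}$ and the pseudonatural transformation $\vartheta$, and verifying that each cell of the resulting diagram is an instance of one of the compatibility axioms. There is no conceptual difficulty once those axioms are in place, but this is the step that uses essentially all of them, and it is where a reindexing or orientation error would hide.
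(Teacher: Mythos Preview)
Your proposal is correct and follows essentially the same route as the paper: first a diagram chase using the pentagon for $\pi$, naturality of $\pi$, the $\vartheta$-compatibility square, and pseudofunctor coherence for $[]$ to exhibit $\phi$ as $\ful{\phi_1,\ldots,\phi_n}{}$ conjugated by the isomorphisms $\pi$ and $m_{[]}$, and then the observation that traces of $2$-cells related by such conjugation (with the dualizable object transported by an isomorphism) agree. The paper phrases the second step as a standalone lemma about commuting squares of the form $(\alpha\odot\beta,\beta\odot\gamma)$ with $\beta$ invertible, but this is exactly your ``isomorphism of dual pairs'' argument.
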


\begin{proof}  
The first step in this proof is to compare $\ful{\phi_1,\ldots,\phi_n}{}$ and  $\phi$.  For this we use the commutative diagram in \cref{fig:full_multi_base_change}.   We have already encountered all of the small regions in the diagram.  
The regions not labeled by an equation number commute by: 
\begin{enumerate}
\renewcommand{\theenumi}{\arabic{enumi}}
\item \label{item:functoriality_odot_2} The functoriality of $\odot$.
\item \label{item:natuality_of_m_bracket} Naturality of $m_{[]}$.
\end{enumerate}
The left composite in \cref{fig:full_multi_base_change}  is $\ful{\phi_1,\ldots,\phi_n}{}$ and the right composite is $\phi$. 

A straightforward diagram chase shows that for a diagram of the form below on the left, where $\beta$ is an isomorphism and $M_1$ and $M_2$ are dualizable, the corresponding diagram of traces on the right commutes.
\[\xymatrix{
		Q_1\odot M_1\ar[r]^{\alpha \odot \beta}\ar[d]^{f_1}&Q_2\odot M_2\ar[d]^{f_2}
		\\
		  M_1\odot P_1\ar[r]^{\beta\odot \gamma}&M_2\odot P_2}
\qquad
\xymatrix{
            \sh{Q_1}\ar[r]^{\sh{\alpha}}\ar[d]^{\tr(f_1)}&\sh{Q_2}\ar[d]^{\tr(f_2)}\\
             \sh{P_1}\ar[r]^{\sh{\gamma}}&\sh{P_2}}\] 
Looking only at the outside edges of \cref{fig:full_multi_base_change} we have a commutative diagram of exactly this form,
\[\resizebox{\textwidth}{!}{\xymatrix@C=50pt{
	\bcr{\prod B_{i-1}}{\cong}{\prod B_i} \odot 
		\boxtimes \bcr{B_i}{\overline{f_i}}{B_{i-1}} \odot 
		\boxtimes \bcr{E_i}{p_i}{B_i}
		\ar[d]^{\ful{\phi_1,\ldots,\phi_n}{}}
		\ar[r]^-{(m_{[]}\circ (\id \odot \pi))\odot \pi }
	&\bcr{\prod B_i}{\ful{\overline{f_1},\ldots,\overline{f_n}}{}}{\prod B_i}\odot 
		\bcr{\prod E_i}{\prod p_i}{\prod B_i}
		\ar[d]^-{\phi}
\\
      	\boxtimes \bcr{E_i}{p_i}{B_i} \odot 
		\bcr{\prod E_{i-1}}{\cong}{\prod E_i} \odot 
		\boxtimes \bcr{E_i}{f_i}{E_{i-1}}
		\ar[r]^-{\pi \odot (m_{[]}\circ (\id \odot \pi))}
	&\bcr{\prod E_i}{\prod p_i}{\prod B_i}\odot 
		\bcr{\prod E_i}{\ful{f_1,\ldots,f_n}{}}{\prod E_i}.
	}}\]
This completes the proof.
\end{proof}

\afterpage{
\begin{figure}
\resizebox{1\textwidth}{!}
{\begin{tikzpicture}
\def\x{7}
\def\y{3.5}
%row 0
	\node at (0*\x,0*\y) (00) {$\begin{pmatrix} \bcr{\prod B_{i-1}}{\cong}{\prod B_i} \\
		\boxtimes \bcr{B_i}{\overline{f_i}}{B_{i-1}} \\
		\boxtimes \bcr{E_i}{p_i}{B_i}\end{pmatrix}$};
	\node at (1.5*\x,0*\y)(10){$\begin{pmatrix}\bcr{\prod B_{i-1}}{\cong}{\prod B_i} \\
		\boxtimes \bcr{B_i}{\overline{f_i}}{B_{i-1}}  \\
		\bcr{\prod E_i}{\prod p_i}{\prod B_i}\end{pmatrix} $};
	\node at (2.5*\x,0*\y)(20){$\begin{pmatrix}\bcr{\prod B_i}{\ful{\overline{f_1},\ldots,\overline{f_n}}{}}{\prod B_i}\\
		\bcr{\prod E_i}{\prod p_i}{\prod B_i}\end{pmatrix}$};
%row 1
        \node at (1.5*\x,-1*\y)(01){$\begin{pmatrix}\bcr{\prod B_{i-1}}{\cong}{\prod B_i} \\
		\bcr{\prod B_i}{\prod \overline{f_i}}{\prod B_{i-1}} \\
		\bcr{\prod E_i}{\prod p_i}{\prod B_i} \end{pmatrix}$};
%row 2
	\node at (0*\x, -2*\y)(02){$\begin{pmatrix}\bcr{\prod B_{i-1}}{\cong}{\prod B_i} \\
		\boxtimes \left(\bcr{B_i}{\overline{f_i}}{B_{i-1}} \odot 
		\bcr{E_i}{p_i}{B_i} \right)\end{pmatrix}$};
      	\node at (.75*\x,-1.5*\y)(12){$\begin{pmatrix}\bcr{\prod B_{i-1}}{\cong}{\prod B_i} \\
		\boxtimes \bcr{E_i}{\overline{f}_{i} \circ p_i}{B_{i-1}} \end{pmatrix}$};
     	\node at (1.5*\x,-2*\y)(22){$\begin{pmatrix}\bcr{\prod B_{i-1}}{\cong}{\prod B_i} \\ 
		\bcr{\prod E_i}{\prod (\overline{f}_{i} \circ p_i)}{\prod B_{i-1}}\end{pmatrix}$};
      	\node at (2.5*\x, -2*\y)(32){$\bcr{\prod E_{i}}{ \prod (\overline{f}_{i} \circ p_i)\circ \cong}{\prod B_i}$};
%%row 3	
	\node at (0*\x,-3*\y)(03){$\begin{pmatrix}\bcr{\prod B_{i-1}}{\cong}{\prod B_i} \\
		\boxtimes \left(\bcr{E_{i-1}}{p_{i-1}}{B_{i-1}} \odot 
		\bcr{E_i}{f_i}{E_{i-1}}\right)\end{pmatrix}$};
	\node at (.75*\x, -2.5*\y)(13){$\begin{pmatrix}\bcr{\prod B_{i-1}}{\cong}{\prod B_i} \\
		\boxtimes \bcr{E_i}{p_{i-1} \circ f_i}{B_{i-1}} \end{pmatrix}$};
     	\node at (1.5*\x,-3*\y)(23){$\begin{pmatrix}\bcr{\prod B_{i-1}}{\cong}{\prod B_i} \\
		\bcr{\prod E_i}{\prod (p_{i-1} \circ f_i)}{\prod B_{i-1}}\end{pmatrix}$};
	\node at (2.5*\x,-3*\y)(33){$\bcr{\prod E_{i}}{\prod (p_{i-1} \circ f_i)\circ \cong }{\prod B_i} $};
%%row 4
	\node at (0*\x, -4*\y)(04){$\begin{pmatrix}\bcr{\prod B_{i-1}}{\cong}{\prod B_i} \\
		\boxtimes \bcr{E_{i-1}}{p_{i-1}}{B_{i-1}} \\
		\boxtimes \bcr{E_i}{f_i}{E_{i-1}} \end{pmatrix}$};
        \node at (1.5*\x,-4*\y)(14){$\begin{pmatrix}\bcr{\prod B_{i-1}}{\cong}{\prod B_i} \\
		\bcr{\prod E_{i-1}}{\prod p_{i-1}}{\prod B_{i-1}} \\
		\bcr{\prod E_i}{\prod f_i}{\prod E_{i-1}}\end{pmatrix} $};
%%row 5
        \node at (.75*\x, -5*\y)(05){$\begin{pmatrix}\bcr{\prod B_{i-1}}{\cong}{\prod B_i} \\
		\bcr{\prod E_{i-1}}{\prod p_{i-1}}{\prod B_{i-1}} \\ 
		\boxtimes \bcr{E_i}{f_i}{E_{i-1}}\end{pmatrix}$};
% \\
% %row 6
 	\node at (.75*\x, -6*\y)(06){$\begin{pmatrix} \bcr{\prod E_{i-1}}{\prod p_i \circ \cong}{\prod B_i}\\
		\boxtimes \bcr{E_i}{f_i}{E_{i-1}}\end{pmatrix}$};
         \node at (1.5*\x,-6*\y)(16){$\begin{pmatrix}\bcr{\prod E_{i-1}}{\prod p_i \circ \cong}{\prod B_i} \\ 
		\bcr{\prod E_i}{\prod f_i}{\prod E_{i-1}}\end{pmatrix}$}; 
% %row 7
      	\node at (1.5*\x, -7*\y)(07){$\begin{pmatrix} \bcr{\prod E_i}{\prod p_i}{\prod B_i}\\
		\bcr{\prod E_{i-1}}{\cong}{\prod E_i}\\
		\bcr{\prod E_i}{\prod f_i}{\prod E_{i-1}}\end{pmatrix} $};
%%row 8
     \node at (0*\x,-8*\y)(08){$\begin{pmatrix}\boxtimes \bcr{E_i}{p_i}{B_i} \\
		\bcr{\prod E_{i-1}}{\cong}{\prod E_i} \\
		\boxtimes \bcr{E_i}{f_i}{E_{i-1}}\end{pmatrix}$};
	\node at (.75*\x,-8*\y)(18){$\begin{pmatrix}\bcr{\prod E_i}{\prod p_i}{\prod B_i}\\
		\bcr{\prod E_{i-1}}{\cong}{\prod E_i}\\
		\boxtimes \bcr{E_i}{f_i}{E_{i-1}}\end{pmatrix}$};
	\node at (2.5*\x,-8*\y)(28){$\begin{pmatrix}\bcr{\prod E_i}{\prod p_i}{\prod B_i}\\
		\bcr{\prod E_i}{\ful{f_1,\ldots,f_n}{}}{\prod E_i}\end{pmatrix}$};
%arrows
		\draw[->] (00)--(02) node[midway,left]{$\begin{pmatrix}\id\\ m_{\boxtimes}\end{pmatrix}$};
		\draw[->] (00)--(01)node[midway,below]{$\begin{pmatrix}\id\\ \pi\\ \pi\end{pmatrix}$};
		\draw[->] (00)--(10)node[midway,above]{$\begin{pmatrix}\id \\ \id \\ \pi\end{pmatrix}$};
		\draw[->] (10)--(01)node[midway,right] {$\begin{pmatrix}\id \\ \pi\\ \id \end{pmatrix}$};
		\draw[->] (10)--(20)node[midway,above] {$\begin{pmatrix}m_{[]}\circ (\id \odot \pi)\\ \id \end{pmatrix}$};
		\draw[->] (20)--(32)node[midway,right] {$m_{[]}$};
		\draw[->] (01)--(22)node[midway,left] {$\begin{pmatrix}\id \\ m_{[]}\end{pmatrix}$};
		\draw[->] (01)--(20)node[midway,below] {$\begin{pmatrix}m_{[]}\\ \id \end{pmatrix}$};
		\draw[->] (02)--(03)node[midway,left] {$\begin{pmatrix}\id\\ \boxtimes \phi_i\end{pmatrix}$};
		\draw[->] (02)--(12)node[midway,above] {$\begin{pmatrix}\id\\ \boxtimes m_{[]}\end{pmatrix}$};
		\draw[->] (12)--(22)node[midway,above] {$\begin{pmatrix}\id\\ \pi\end{pmatrix}$};
		\draw[double distance=2pt] (12)--(13); 
		\draw[double distance=2pt] (22)--(23); 
		\draw[->] (22)--(32)node[midway,above] {$m_{[]}$};
		\draw[double distance=2pt] (32)--(33); 
		\draw[->] (03)--(13)node[midway,above] {$\begin{pmatrix}\id \\ \boxtimes m_{[]}\end{pmatrix}$};
		\draw[->] (13)--(23)node[midway,above] {$\begin{pmatrix}\id \\ \pi\end{pmatrix}$};
		\draw[->] (23)--(33)node[midway,above] {$m_{[]}$};
		\draw[->] (04)--(08)node[midway,left] {$\begin{pmatrix}\vartheta\\ \id\end{pmatrix}$};
		\draw[->] (04)--(03)node[midway,left] {$\begin{pmatrix}\id\\ m_{\boxtimes}\end{pmatrix}$};
		\draw[->] (04)--(14)node[midway,above] {$\begin{pmatrix}\id\\ \pi\\ \pi\end{pmatrix}$};
		\draw[->] (04)--(05)node[midway,above] {$\begin{pmatrix}\id\\ \pi \\ \id\end{pmatrix}$};
		\draw[->] (14)--(23)node[midway,left] {$\begin{pmatrix}\id \\ m_{[]}\end{pmatrix}$};
		\draw[->] (14)--(16)node[midway,left] {$\begin{pmatrix}m_{[]}\\ \id \end{pmatrix}$};
		\draw[->] (05)--(14)node[midway,above] {$\begin{pmatrix}\id \\ \id\\ \pi\end{pmatrix}$};
		\draw[->] (05)--(06)node[midway,left] {$\begin{pmatrix}m_{[]}\\ \id \end{pmatrix}$};
		\draw[->] (06)--(16)node[midway,above] {$\begin{pmatrix}\id \\ \pi\end{pmatrix}$};
		\draw[->] (16)--(33)node[midway,left] {$m_{[]}$};
		\draw[->] (07)--(16)node[midway,left] {$\begin{pmatrix}m_{[]}\\ \id\end{pmatrix} $};
		\draw[->] (07)--(28)node[midway,above] {$\begin{pmatrix}\id\\ m_{[]}\end{pmatrix}$};
		\draw[->] (08)--(18)node[midway,above] {$\begin{pmatrix}\pi\\ \id\\ \id\end{pmatrix}$};
		\draw[->] (18)--(06) node[midway,left] {$\begin{pmatrix}m_{[]}\\ \id\end{pmatrix} $};
		\draw[->] (18)-- (07)node[midway,above] {$\begin{pmatrix}\id \\ \id \\ \pi\end{pmatrix}$};
		\draw[->] (18)-- (28)node[midway,above] {$\begin{pmatrix}\id \\ m_{[]}\circ (\id \odot \pi)\end{pmatrix}$};
		\draw[->] (28)--(33)node[midway,left] {$m_{[]}$};

\node  at (barycentric cs:00=1,10=1,01=1) {(\ref{item:functoriality_odot_2})};
\node  at (barycentric cs:04=1,14=1,05=1) {(\ref{item:functoriality_odot_2})};
\node  at (barycentric cs:06=1,16=2,14=1,05=1) {(\ref{item:functoriality_odot_2})};
\node  at (barycentric cs:10=1,01=1,20=1) {(\ref{item:functoriality_odot_2})};
\node  at (barycentric cs:16=1,33=1,28=1,07=1) {(\ref{eq:m_box_assoc})};
\node  at (barycentric cs:06=2,16=1,07=1,18=1) {(\ref{item:functoriality_odot_2})};
\node  at (barycentric cs:23=1,33=3,16=1,14=1) {(\ref{eq:m_box_assoc})};
\node  at (barycentric cs:20=1,32=1,22=1,01=1) {(\ref{eq:m_box_assoc})};
\node  at (barycentric cs:18=1,07=2,28=1) {(\ref{item:functoriality_odot_2})};
\node  at (barycentric cs:12=1,22=1,23=1,13=1) {(\ref{item:functoriality_odot_2})};
\node  at (barycentric cs:32=1,22=1,23=1,33=1) {(\ref{item:natuality_of_m_bracket})};
\node  at (barycentric cs:02=1,12=1,13=1,03=1) {(\ref{item:natuality_of_m_bracket})};
\node  at (barycentric cs:00=3,01=1,22=1,12=1,02=1) {(\ref{eq:mboxtimes_m_bracket})};

\node  at (barycentric cs:03=1,13=1,23=1,14=1,04=1) {(\ref{eq:mboxtimes_m_bracket})};

\node  at (barycentric cs:04=2,05=1,06=1,18=1,08=2) {(\ref{eq:vartheta_m_bracket})};

	\end{tikzpicture} }\caption{Comparing Fuller maps.   \\Stacked entries inside a single pair of large parentheses are combined with $\odot$.
}\label{fig:full_multi_base_change}
\end{figure}
\clearpage}

Combining \cref{lem:nthpower,prop:fuller_equals_multitrace_axioms_version,prop:fuller_equals_multitrace_axioms_version_2} in the setting of \cref{prop:fuller_equals_multitrace_axioms_version_2}, in other words the first three steps of \cref{thm:main_first_half}, we get a commutative diagram 
\[\xymatrix@C=50pt{
	\Bigsh{\bcr{\prod B_i}{\ful{\overline f_1,\ldots , \overline f_n}{}}{\prod B_i}}\ar[r]^{\tr(\phi)} 
	&
	\Bigsh{ \bcr {\prod E_{i}}{\ful{ f_1,\ldots ,f_n}{}}{\prod E_{i}}}
\\
		\Bigsh{T_{B_i} \odot \boxtimes  \bcr{B_i}{\overline{f}_i}{B_{i-1}} }\ar[u]^-{\sh{\cong}}\ar[r]^{\tr(\ful{\phi_1,\ldots,\phi_n}{})} \ar[d]_-\cong 
	&
	\Bigsh{T_{E_i} \odot \boxtimes \bcr{E_i}{f_i}{E_{i-1}} }\ar[u]^-{\sh{\cong}} \ar[d]_-\cong
\\
	\Bigsh{\bcr{B_1}{\overline{f}_1}{B_{n}},\ldots , \bcr{B_n}{\overline{f}_n}{B_{n-1}} }\ar[r]^{\tr(\phi_1,\ldots ,\phi_n)}\ar[d]_{\sh{\cong}}
	&
	\Bigsh{\bcr{E_1}{f_1}{E_{n}},\ldots ,\bcr{E_n}{f_n}{E_{n-1}}}\ar[d]_{\sh{\cong}}
\\
	\Bigsh{\bcr{B_1}{\overline{f}_1}{B_{n}} \odot \ldots \odot \bcr{B_n}{\overline{f}_n}{B_{n-1}} }\ar[r]^{\tr(\seqco{\phi_1}{\phi_n})}\ar[d]_{\sh{\cong}}
&	\Bigsh{\bcr{E_1}{f_1}{E_{n}} \odot \ldots \odot \bcr{E_n}{f_n}{E_{n-1}}}\ar[d]_{\sh{\cong}}
\\
\Bigsh{\bcr{B_n}{\seqco{\overline f_1}{\overline f_n}}{B_{n}} }\ar@{-->}[r]
&	\Bigsh{\bcr{E_n}{\seqco{f_1}{f_n}}{E_n}}
}
\]
relating the Reidemeister trace of the Fuller construction to the trace of the composite of base-change isomorphisms $\seqco{\phi_1}{\phi_n}$. To fill in the remaining dashed arrow, we observe that $\seqco{\phi_1}{\phi_n}$ arises by pasting the base-change isomorphisms that bring us from the lower to the upper route in the following diagram.
\[\xymatrix{
	E_n \ar[r]^{f_n} \ar[d]_{p_n} &
	E_{n-1} \ar[r]^-{f_{n-1}} \ar[d]^{p_{n-1}} &
	\cdots \ar[r]^{f_2} &
	E_1 \ar[r]^{f_1}\ar[d]^{p_1} & 
	E_n \ar[d]^{p_n} \\
	B_n \ar[r]^{\overline{f_n}} &
	B_{n-1} \ar[r]^{\overline{f_{n-1}}} &
	\cdots \ar[r]^{\overline{f_2}} &
	B_1 \ar[r]^{\overline{f_1}} &
	B_n
}\]
Using one last time the fact that $[]$ is a pseudofunctor, along the canonical maps this is identified with the isomorphism provided by $[]$ for the composite square. Therefore the dashed arrow is the Reidemeister trace for the square
\[\xymatrix{
	E_n \ar[r]^{\seqco{f_1}{f_n}} \ar[d]_{p_n} &
	E_n \ar[d]^{p_n} \\
	B_n \ar[r]^{\seqco{\overline{f_1}}{\overline{f_n}}} &
	B_n.
}\]
Taking $B$ to be the terminal object, this gives the fourth and final piece of the proof of the following.
\begin{cor}\label{cor:Fuller_trace_is_trace_of_composite}
	In a shadowed $n$-Fuller category $\sB$ with a system of base-change objects from $\bS$, for any $n$-tuple of composable maps $f_i\colon X_i \to X_{i-1}$ in $\bS$, the Reidemeister trace of the Fuller construction $\ful{f_1,\ldots,f_n}{}$ is isomorphic to the Reidemeister trace of the composite $\seqco{f_1}{f_n}$.
\end{cor}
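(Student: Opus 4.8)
The plan is to concatenate the three ``steps'' already in hand --- \cref{prop:fuller_equals_multitrace_axioms_version_2}, \cref{prop:fuller_equals_multitrace_axioms_version}, and \cref{lem:nthpower} --- and then perform one final identification coming from pseudofunctoriality of $[]$. Throughout I would specialize the setup of \cref{prop:fuller_equals_multitrace_axioms_version_2} to the terminal object $B_i = *$ of $\bS$, with $p_i\colon X_i \to *$ the unique maps and $\overline{f_i} = \id_*$. By \cref{lem:external_tensor_of_duals}, the relevant external tensor products of base-change objects are then right dualizable whenever the individual ones $\bcr{X_i}{p_i}{*}$ are, so all the traces below are defined.

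With this specialization, the map $\phi$ of \cref{prop:fuller_equals_multitrace_axioms_version_2} is the canonical isomorphism $\bcr{*}{\id_*}{*} \odot \bcr{\prod X_i}{\prod p_i}{*} \to \bcr{\prod X_i}{\prod p_i}{*} \odot \bcr{\prod X_i}{\ful{f_1,\ldots,f_n}{}}{\prod X_i}$, whose trace is by definition the Reidemeister trace of the Fuller construction $\ful{f_1,\ldots,f_n}{}$, and its commuting square becomes an isomorphism
\[ \tr(\phi) \;\cong\; \tr(\ful{\phi_1,\ldots,\phi_n}{}) \]
in the shadow category, where $\phi_i\colon \bcr{*}{\id_*}{*} \odot \bcr{X_i}{p_i}{*} \to \bcr{X_{i-1}}{p_{i-1}}{*} \odot \bcr{X_i}{f_i}{X_{i-1}}$ are the base-change isomorphisms of the individual squares. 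Applying \cref{prop:fuller_equals_multitrace_axioms_version} then rewrites $\tr(\ful{\phi_1,\ldots,\phi_n}{})$ as the multitrace $\tr(\phi_1,\ldots,\phi_n)$, up to the $\tau$-isomorphisms, and \cref{lem:nthpower} rewrites the multitrace as $\tr(\seqco{\phi_1}{\phi_n})$, an ordinary bicategorical trace taking place inside a single hom-category of $\sB$.

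It remains to recognize $\tr(\seqco{\phi_1}{\phi_n})$ as the Reidemeister trace of $\seqco{f_1}{f_n}$. The 2-cell $\seqco{\phi_1}{\phi_n}$ is exactly the pasting of the base-change isomorphisms that carry the bottom row of the ladder
\[\xymatrix{
	X_n \ar[r]^{f_n} \ar[d]_{p_n} & X_{n-1} \ar[r]^-{f_{n-1}} \ar[d]^{p_{n-1}} & \cdots \ar[r]^{f_2} & X_1 \ar[r]^{f_1}\ar[d]^{p_1} & X_n \ar[d]^{p_n} \\
	* \ar[r] & * \ar[r] & \cdots \ar[r] & * \ar[r] & *
}\]
to the top row. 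Since $[]\colon \bS \to \sB$ is a pseudofunctor, these pasting isomorphisms compose --- along the canonical isomorphisms built from $m_{[]}$ --- to the single base-change isomorphism of the total composite square, whose horizontal maps are $\seqco{f_1}{f_n}\colon X_n \to X_n$ and $\id_*$. The trace of that isomorphism is by definition the Reidemeister trace of $\seqco{f_1}{f_n}$, so composing the displayed isomorphisms proves the corollary.

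The step I expect to be the main obstacle is this last identification: checking that the pasted base-change isomorphisms really do coincide with the single base-change isomorphism of the composite square, and that the various collapses of shadows used above are compatible with all the $\tau$, $\vartheta$, $\pi$, and $m_{[]}$ data. The engine is the uniqueness clause of the coherence theorem for shadowed bicategories \cite{mp2}, together with pseudofunctor coherence for $[]$ --- so that any two composites of $\fa, \frl, \fr, \theta$ and $m_{[]}$ with the same source and target agree --- but some care is needed to confirm that the degenerate cases $B_i = *$ of $\pi$ and $\vartheta$, in particular the compatibility square relating $\vartheta$, $\pi$, and the pseudofunctor structure, behave as the bookkeeping requires. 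Once that is verified, the corollary is a purely formal consequence, exactly as in the symmetric monoidal model \cref{thm:smc_compare_traces}.
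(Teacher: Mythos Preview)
Your proposal is correct and follows essentially the same route as the paper: concatenate \cref{prop:fuller_equals_multitrace_axioms_version_2}, \cref{prop:fuller_equals_multitrace_axioms_version}, and \cref{lem:nthpower}, then use pseudofunctoriality of $[]$ on the ladder diagram to identify $\tr(\seqco{\phi_1}{\phi_n})$ with the Reidemeister trace of $\seqco{f_1}{f_n}$. The only cosmetic difference is that the paper keeps the $B_i$ general until the final sentence and specializes to the terminal object at the end, whereas you specialize from the outset; this changes nothing of substance, and your anticipated obstacle is exactly the ``fourth and final piece'' the paper dispatches with pseudofunctor coherence for $[]$.
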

Since the bicategory $\Ex$ has a shadowed $n$-Fuller structure and a system of base-change objects, this proves \cref{thm:main_first_half}. Our motivation for stating the proof at this level of generality is that the same argument will establish a more general result for the fiberwise Reidemeister trace and Fuller trace.  See \cref{part:fiberwise}.

% !TeX root = ./topology_accepted_version.tex

\part{Varying the group $G$}\label{part:varying}

In this section we prove the first two triangles of \cref{thm:simple_main_thm} commute:

\begin{thmx}
	\label{thm:main_second_half}
	The following diagram commutes up to homotopy.
	\[\xymatrix@C=40pt@R=40pt{
		&&\Sph \ar[lld]_-{R(\ful{f}n)^{C_n}} 
		\ar[dl]^-{R(\ful{f}k)^{C_k}} 
		\ar[d]^-{R(\ful{f}k)} 
		\\
		(\Sigma^\infty_+ \Lambda^{\ful{f}n} X^n)^{C_n} \ar[r]^-{R} &
		(\Sigma^\infty_+ \Lambda^{\ful{f}k} X^k)^{C_k} 
		 \ar[r]^-{F}
		&	\Sigma^\infty_+ \Lambda^{\ful{f}k} X^k
	}\]
\end{thmx}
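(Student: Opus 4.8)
The plan is to deduce \cref{thm:main_second_half} from the functoriality of the bicategorical trace (\cref{bicategory_map_preserves_traces}), in the same spirit as the proof of \cref{thm:main_first_half}, but with the relevant bicategory maps now being the change-of-group operations on equivariant parametrized spectra rather than the Fuller/base-change constructions. The starting observation is that each of the three diagonal maps in \cref{thm:main_second_half} is, by construction, a bicategorical trace pushed through a shadow functor: $R(\ful{f}n)^{C_n}$ is the image of the $C_n$-equivariant trace $R_{C_n}(\ful{f}n)$ (the trace in $C_n\Ex$ of the canonical isomorphism on the base-change objects of $\ful{f}n$) under the shadow-functor part of the strong shadow functor $(-)^{C_n}\colon C_n\Ex\to\Ex$; similarly $R(\ful{f}k)^{C_k}$ and $R(\ful{f}k)$ are the images of $R_{C_k}(\ful{f}k)$ under $(-)^{C_k}\colon C_k\Ex\to\Ex$ and under the underlying-object functor $(-)^e\colon C_k\Ex\to\Ex$. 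So it suffices to (i) realize the horizontal maps $R$ and $F$ as coming from strong shadow functors, or from natural transformations between such, and (ii) check that these functors send the base-change $1$-cells and canonical $2$-cells of $\ful{f}n$ to those of $\ful{f}k$. Both commutativities then follow formally from \cref{bicategory_map_preserves_traces}. Throughout we take as given the equivariant analogues of \cref{thm:spectra_smbf}: that $C_n\Ex$, $C_k\Ex$, $\Ex$ carry shadowed $n$-Fuller structures and systems of base-change objects compatible with the change-of-group functors.

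For the middle triangle, both $(-)^{C_k}$ and $(-)^e$ are strong shadow functors $C_k\Ex\to\Ex$, and the Frobenius $F$ is precisely the inclusion of genuine $C_k$-fixed points into the underlying spectrum, which is a natural transformation $(-)^{C_k}\Rightarrow(-)^e$ compatible with the monoidal maps $m,i$ and the shadow comparison $s$. Since $(-)^e$ carries the $C_k$-equivariant base-change $1$-cell $\bcr{X^k}{\ful{f}k}{X^k}$ to the non-equivariant one and the canonical $2$-cell to the canonical $2$-cell, \cref{bicategory_map_preserves_traces} gives $((-)^e)_{\tr}(R_{C_k}(\ful{f}k))=R(\ful{f}k)$. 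Applying the natural transformation $F$ to $R(\ful{f}k)^{C_k}=((-)^{C_k})_{\tr}(R_{C_k}(\ful{f}k))$ therefore yields $R(\ful{f}k)$, which is the middle triangle.

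For the left triangle, let $C_{n/k}\le C_n$ be the kernel of the quotient $C_n\twoheadrightarrow C_k$. The fiberwise geometric $C_{n/k}$-fixed point functor assembles into a strong shadow functor $\Phi^{C_{n/k}}\colon C_n\Ex\to C_k\Ex$ whose effect on $0$-cells is $A\mapsto A^{C_{n/k}}$ and whose shadow-functor part is geometric fixed points of $C_n$-spectra. Because geometric fixed points of a suspension spectrum is the suspension spectrum of the fixed-point space (fiberwise), and because a $C_{n/k}$-fixed point of $X^n$ is exactly a $k$-periodic $n$-tuple — so that $(X^n)^{C_{n/k}}\cong X^k$ and $\ful{f}n$ restricts on this fixed locus to $\ful{f}k$ — the functor $\Phi^{C_{n/k}}$ sends $\bcr{X^n}{p}{*}\mapsto\bcr{X^k}{p}{*}$, $\bcr{X^n}{\ful{f}n}{X^n}\mapsto\bcr{X^k}{\ful{f}k}{X^k}$, and the canonical $2$-cell of $\ful{f}n$ to that of $\ful{f}k$. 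Hence \cref{bicategory_map_preserves_traces} gives $(\Phi^{C_{n/k}})_{\tr}(R_{C_n}(\ful{f}n))=R_{C_k}(\ful{f}k)$ as maps of $C_k$-spectra. Applying $(-)^{C_k}$ produces $R(\ful{f}k)^{C_k}$ on the right; on the left, since $(-)^{C_n}$ is identified with $(-)^{C_k}\circ\Phi^{C_{n/k}}$ via the canonical comparison from genuine fixed points to geometric fixed points (a natural transformation of the corresponding shadow functors), and since $R$ is by definition this comparison followed by the identification $\Phi^{C_{n/k}}\Sigma^\infty_+\Lambda^{\ful{f}n}X^n\simeq\Sigma^\infty_+\Lambda^{\ful{f}k}X^k$, we obtain $R\circ R(\ful{f}n)^{C_n}=(-)^{C_k}_{\tr}(R_{C_k}(\ful{f}k))=R(\ful{f}k)^{C_k}$.

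The main obstacle is not the trace bookkeeping above but the input it rests on: checking that the change-of-group operations — fiberwise geometric $C_{n/k}$-fixed points, genuine $C_k$-fixed points, and the underlying-object functor — really do assemble into strong shadow functors between $C_n\Ex$, $C_k\Ex$, $\Ex$, that is, that they commute coherently with the external smash product, with pushforward, with the unit $1$-cells (the diagonals), and, most delicately, with the shadow (the pullback along the diagonal that produces the twisted free loop space); and that the Frobenius and the genuine-to-geometric comparison are natural transformations of such functors in the sense required by \cref{bicategory_map_preserves_traces}. For genuine fixed points this forces one to contend with the tom Dieck splitting, which is exactly why the restriction map $R$ must be routed through geometric fixed points — where $\Phi^H\Sigma^\infty_+Y\simeq\Sigma^\infty_+Y^H$ holds on the nose — rather than directly through genuine fixed points. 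Establishing these compatibilities in a workable point-set model of equivariant parametrized spectra is the technical content of \cref{part:varying}.
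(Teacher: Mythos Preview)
Your overall strategy matches the paper's: establish that the change-of-group operations are strong shadow functors, invoke \cref{bicategory_map_preserves_traces} to transport Reidemeister traces, and then link the results via the restriction map $r$ and the Frobenius inclusion. The identification $(X^n)^{C_{n/k}}\cong X^k$ with $\ful{f}n$ restricting to $\ful{f}k$, and the use of $\Phi^{C_{n/k}}$ for the left triangle and the forgetful functor for the right, are exactly what the paper does.

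There is one genuine imprecision. You assert that ``both $(-)^{C_k}$ and $(-)^e$ are strong shadow functors $C_k\Ex\to\Ex$.'' Categorical fixed points $(-)^{C_k}$ is only \emph{lax} symmetric monoidal---the tom Dieck splitting obstructs the monoidal comparison from being an isomorphism---so it cannot be a strong shadow functor in the sense needed for \cref{bicategory_map_preserves_traces}. You acknowledge this obstruction in your final paragraph, but the earlier argument for the middle triangle uses it as though it were strong. The paper sidesteps this: only $\iota_H^*$ and $\Phi^H$ are shown to be strong shadow functors (\cref{thm:forget_fixed_strong_shadow}), yielding identifications $\iota_H^* R_G(\phi)\cong R_H(\iota_H^*\phi)$ and $\Phi^H R_G(\phi)\cong R_{WH}(\phi^H)$ as maps of equivariant spectra. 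The categorical fixed-point functor $(-)^G$ is then applied only as an ordinary functor on spectra to these already-commuting squares, together with the naturality of the restriction $r\colon X^H\to\Phi^H X$ (which is an isomorphism on $\Sph$); see the diagram in the proof of \cref{cor:they_preserve_reidemeister_traces}. The fix to your argument is to stop treating $(-)^{C_k}$ as a bicategorical functor and instead apply it post hoc to the spectrum-level identities obtained from $\Phi^{C_{n/k}}$ and $\iota_e^*$.
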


The essential idea is to show that the geometric fixed point functor $\Phi^H$, and the functor $\iota_H^*$ that forgets group actions, are strong shadow functors, so that they preserve Reidemeister traces by \cref{bicategory_map_preserves_traces}. In contrast to the previous part where we black-boxed all needed properties of parameterized spectra, in this part we work directly with these spectra. In \cref{sec:bifi} we recall some general theory about passing between symmetric monoidal bifibrations (smbfs) and bicategories, and in \cref{sec:spectra} we apply these ideas to the smbf of parametrized $G$-spectra.
We finish the proof of \cref{thm:main_second_half} in \cref{sec:grand_finale}. 

\section{Symmetric monoidal bifibrations}\label{sec:bifi}

Strong shadow functors such as $\Phi^H$ are difficult to construct on $G\Ex$ because the operations $\odot$ and $\sh{-}$ are composites of left and right derived functors. It is far easier to show that the constituent pieces of $\odot$ are  separately preserved by $\Phi^H$, and then assemble those pieces back together. The structure of these constituent pieces is captured formally by the idea of a symmetric monoidal bifibration (smbf). 

In this paper, a \textbf{bifibration} is a functor $\pi\colon \sC \to \bS$ from a category $\sC$ to a cartesian monoidal category $\bS$ with the following properties.
\begin{itemize}
	\item For every pair of an object $X \in \sC$ and an arrow $A \xto{f} \pi(X)$ in $\bS$, there is \textbf{cartesian arrow} $f^*X \to X$ satisfying a universal property given in shorthand in \cref{fig:cartesian_arrow}.
	\item For every pair of an object $X \in \sC$ and an arrow $\pi(X) \xto{f} A$ in $\bS$, there is a \textbf{cocartesian arrow} $X \to f_!X$ satisfying a universal property given in shorthand in \cref{fig:cocartesian_arrow}.
	\item There is a class of \textbf{Beck-Chevalley squares} in $\bS$,
	\[\xymatrix@-.5pc{A \ar[r]^f\ar[d]_h & B \ar[d]^g\\
		C\ar[r]_k & D,}\]
	 such that in each one the natural transformation of functors $\sC^C \to \sC^B$
	\begin{equation}\label{beck_chevalley}f_! h^* \to f_!h^*k^*k_! \xto{\sim} f_!f^*g^*k_! \to g^*k_!
	\end{equation}
	is an isomorphism.
	\item The class of Beck-Chevalley squares can be chosen to include the following squares.
	\begin{itemize}
		\item For any pair of composable maps $A \overset{f}\to B \overset{g}\to C$ and $A' \overset{f'}\to B'$, 
		\[ \xymatrix{
			A \times A' \ar[r]^-{1 \times f'} \ar[d]_-{f \times 1} & A \times B' \ar[d]^-{f \times 1} \\
			B \times A' \ar[r]^-{1 \times f'} & B \times B'
		}
		\qquad
		\xymatrix{
			A \ar[d]_-{(1,f)} \ar[r]^-{(1,g \circ f)} & A \times C \ar[d]^-{(1,f) \times 1} \\
			A \times B \ar[r]^-{1 \times (1,g)} & A \times B \times C
		}
		\qquad
		\xymatrix{
			A \ar[d]_-{f} \ar[r]^-{(1,g \circ f)} & A \times C \ar[d]^-{f \times 1} \\
			B \ar[r]^-{(1,g)} & B \times C.
		} \]
		\item Any square isomorphic to a Beck-Chevalley square. (This includes commuting squares with two parallel isomorphisms.)
		\item Any product of a Beck-Chevalley square and an object of $\bS$.
	\end{itemize}
\end{itemize}
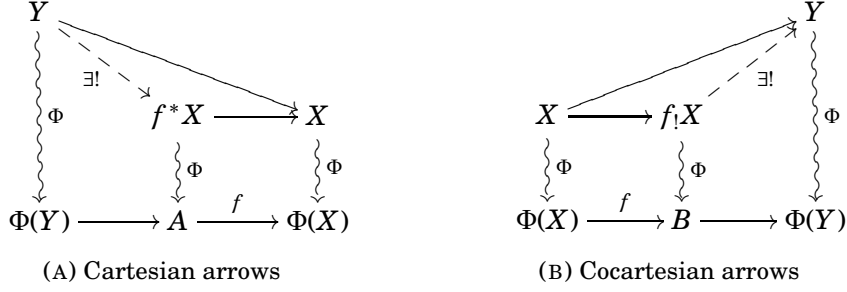
\begin{figure}
	\hspace{7em}
	\begin{subfigure}[t]{12em}
		\xymatrix{
			Y \ar@{~>}[dd]^-\Phi \ar[rrd] \ar@{-->}[rd]_-{\exists !} & & \\
			& f^*X \ar[r] \ar@{~>}[d]^-\Phi & X \ar@{~>}[d]^-\Phi \\
			\Phi(Y) \ar[r] & A \ar[r]^-f & \Phi(X)
		} 
		\caption{Cartesian arrows}\label{fig:cartesian_arrow}
	\end{subfigure}
	\hspace{5em}
	\begin{subfigure}[t]{12em}
		\xymatrix{
			& & Y \ar@{~>}[dd]^-\Phi \\
			X \ar[r] \ar[rru] \ar@{~>}[d]^-\Phi & f_!X \ar@{~>}[d]^-\Phi \ar@{-->}[ru]_-{\exists !} & \\
			\Phi(X) \ar[r]^-f & B \ar[r] & \Phi(Y)
		}
		\caption{Cocartesian arrows}\label{fig:cocartesian_arrow}
	\end{subfigure}
	\caption{Cartesian and Cocartesian arrows}
\end{figure}

A \textbf{fibration} is merely a functor $\Phi\colon \sC \to \bS$ that has cartesian arrows, while an \textbf{op-fibration} only has cocartesian arrows. A \textbf{map of bifibrations} is a strictly commuting square of functors
\[ \xymatrix{
	\sC \ar[r]^-F \ar[d]_-{\pi_\sC} & \sD \ar[d]^-{\pi_\sD} \\
	\bS \ar[r]_-{F_\flat} & \bT
} \]
such that $F$ preserves cartesian arrows and cocartesian arrows, while $F_\flat$ preserves products and Beck-Chevalley squares.

A \textbf{symmetric monoidal bifibration (smbf)} is a bifibration $\pi\colon \sC \to \bS$ and a symmetric monoidal structure on $\sC$ with monoidal product denoted $\boxtimes$ so that 
\begin{itemize}
	\item $\pi$ is a strict symmetric monoidal functor,
	\item $\boxtimes$ is a map of fibrations, i.e. a tensor of two cartesian arrows is cartesian, and 
	\item $\boxtimes$ is a map of op-fibrations, i.e. a tensor of two cocartesian arrows is cocartesian.
\end{itemize}
We think of this monoidal product as ``external'' and we denote the unit by $I$. A {\bf map of symmetric monoidal bifibrations} is a map of bifibrations together with a strong symmetric monoidal structure on the functor $F\colon \sC \to \sD$, so that $F(X) \boxtimes F(Y) \to F(X \boxtimes Y)$ lies over the canonical map $F_\flat(A) \times F_\flat(B) \cong F_\flat(A \times B)$.

Intuitively, an smbf has three operations $\boxtimes$, $f^*$, $f_!$ that ``commute'' along canonical isomorphisms. For each pair of maps $f\colon A \to B$, $g\colon A' \to B'$, there is  a canonical isomorphism
\[ f^*X \boxtimes g^*Y \cong (f \times g)^*(X \boxtimes Y), \]
of functors $\sC^{B} \times \sC^{B'} \to \sC^{A \times A'}$, and a similar canonical isomorphism for pushforwards.

\begin{example}
	 Let $\bS$ be the category of unbased spaces. The objects of $\mc U$ are the arrows $X \to A$ in $\bS$, and maps are commuting squares. The projection $\mc U \to \bS$ sends $X \to A$ to $A$. A Cartesian arrow over $A \to B$ is a pullback square of spaces.  The pushforward of $X \to A$ along $A \to B$ is  the composite $X \to B$. It satisfies the Beck-Chevalley condition for all pullback squares.
$\mc U$ is a symmetric monoidal bifibration, with tensor product given by the Cartesian product, sending $X \to A$ and $Y \to B$ to $X \times Y \to A \times B$.
\end{example}

As we have already mentioned, an smbf contains all the raw ingredients needed to form a bicategory with a system of base-change objects. We assemble the operations $\odot$, $\sh{}$, $U_B$, and $\bcr AfB$ from these more basic pieces as follows.
\begin{align*}
- \odot_B - &\colon  \sC^{A \times B} \times \sC^{B \times C} \to \sC^{A \times C}
&		\sh{-}_B &\colon  \sC^{B \times B} \to \sC^{*} \\
M\odot_B N &= (\id_A\times\pi_B\times\id_C)_!(\id_A\times \Delta_B\times\id_C)^*(M \boxtimes N) 
&	\sh{M}_B &= (\pi_B)_!(\Delta_B)^* M
\end{align*}
\begin{align*}
U_B &\colon  {*} \to \sC^{B \times B} 
&	\bcr AfB &\colon  {*} \to \sC^{B \times A} \\
U_B &= (\Delta_B)_! \pi_B^* I
&\bcr AfB &= (f,\id_A)_! \pi_A^* I
\end{align*}

\begin{thm} \label{thm:indexed_bicat} Let  $\sC \to \bS$ be a  symmetric monoidal bifibration.
	\begin{itemize}
		\item The operations $-\odot -$ and $U_B$ above and the maps $\fa, \frl, \fr$ defined in \cite[Figures 5.6 and 5.7%\cref{fig:assoc_indexed,fig:unit_indexed}
]{mp2} define a bicategory $\calBi CS$. \cite[14.4, 14.11]{s:framed}
		\item The operation $\sh{-}$ above and $ \theta$ defined in \cite[Figure 5.8%\cref{fig:shadow_indexed}
]{mp2} define a shadow on $\calBi CS$. \cite[5.2]{PS:indexed}
		\item There is a pseudofunctor $[]\colon \bS \to \calBi CS$ that sends each morphism to the base-change object $\bcr AfB$.  \cite[Theorems 3.4 and 3.6%\cref{thm:indexed_fuller_bicat,thm:fuller_bicategories_exist}
]{mp2}
		\item The bicategory $\calBi CS$ has a shadowed $n$-Fuller structure and a system of base-change objects given by $[]$. \cite[Theorem 3.6% \cref{thm:fuller_bicategories_exist}
]{mp2}
		\item Each map of symmetric monoidal bifibrations $F\colon (\sC,\bS) \to (\sD,\bT)$ induces a strong shadow functor $F\colon \calBi CS \to \calBi DT$, and an isomorphism $F \circ [] \cong [] \circ F_\flat$. \cite[Theorem 14.1%\cref{fibration_map_gives_bicategory_map}
]{mp2}
	\end{itemize}
\end{thm}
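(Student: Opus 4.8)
The plan is to prove each bullet by expressing the assembled operations $\odot$, $U_B$, $\sh{-}$, and $\bcr AfB$ in terms of the three primitive operations $\boxtimes$, $f^*$, $f_!$ of the smbf, and then building all the required coherence $2$-cells out of (i) the interchange isomorphisms $f^*X \boxtimes g^*Y \cong (f\times g)^*(X\boxtimes Y)$, the symmetry of $\boxtimes$, and the dual interchange for pushforwards; (ii) the pseudofunctoriality isomorphisms $(g\circ f)^*\cong f^*g^*$ and $(g\circ f)_!\cong g_!f_!$; and (iii) the Beck--Chevalley isomorphisms for the distinguished squares listed in the axioms. The content of each bullet is then a coherence check that these pasted $2$-cells satisfy the stated axioms.

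For the first bullet, the associator $\fa$ and unitors $\frl,\fr$ for $\odot$ are produced by expanding $M\odot(N\odot P)$ and $(M\odot N)\odot P$ through the defining formula and sliding $f^*$'s past $f_!$'s using the distinguished squares; the pentagon and triangle identities then reduce to coherence of the constituent isomorphisms. This is exactly the passage from a symmetric monoidal bifibration to its underlying bicategory worked out in \cite[14.4, 14.11]{s:framed}. For the second bullet, $\sh{M}_B = (\pi_B)_!\Delta_B^* M$ is assembled the same way, and the cyclicity isomorphism $\theta$ comes from the symmetry of $\boxtimes$ together with a Beck--Chevalley square exchanging the two copies of the middle object; the shadow hexagon and unit triangle again reduce to coherence of these pastings, as in \cite[5.2]{PS:indexed}.

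For the remaining bullets I would follow \cite{mp2}. The pseudofunctor $[]$ sends $f\colon A\to B$ to $(f,\id_A)_!\pi_A^* I$, and the composition isomorphism $m_{[]}$ is built from the distinguished squares involving graphs of composable maps together with functoriality of $(-)_!$, its pentagon being a diagram chase. The $n$-Fuller functor $\boxtimes\colon \sB^{\times n}\to \sB$ is induced by the external product on $\sC$, with pseudofunctoriality data coming once more from the interchange isomorphisms; the twist $1$-cell $T_{A_i}$ is the base-change object of the cyclic coordinate permutation of $\prod A_i$, and $\vartheta$ and $\tau$ are further pastings of Beck--Chevalley and interchange cells. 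Verifying the three coherence diagrams in the definition of a shadowed $n$-Fuller structure and the three conditions for a system of base-change objects is the substantive bookkeeping in \cite{mp2}. Finally, a map of smbfs $F$ preserves each of $\boxtimes$, $f^*$, $f_!$ up to coherent isomorphism by definition, so composing those structure isomorphisms along the formulas for $\odot$, $U_B$, $\sh{-}$ yields the pseudofunctor data $m$, $i$ and the shadow comparison $s$; the coherence square for $s$ reduces to compatibility of $F$ with the Beck--Chevalley square defining $\theta$, and $F\circ[]\cong[]\circ F_\flat$ comes from comparing $(f,\id)_!\pi^* I$ before and after applying $F$. This is \cite[\cref{fibration_map_gives_bicategory_map}]{mp2}.

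The main obstacle is simply the volume of coherence: each of $\fa$, $\theta$, $m_{[]}$, $\vartheta$, $\tau$ is a composite of several Beck--Chevalley and interchange $2$-cells, and the compatibility between the shadow isomorphism $\theta$ and the Fuller twist $\tau$ --- the last coherence square in the definition of a shadowed $n$-Fuller structure --- is a long but mechanical pasting argument. For this reason the verification is not reproduced here but quoted from \cite{s:framed,PS:indexed,mp2}.
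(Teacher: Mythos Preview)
Your proposal is correct and matches the paper's treatment: the paper does not give an independent proof of this theorem at all, but simply quotes each bullet from the indicated references \cite{s:framed,PS:indexed,mp2}, and your sketch accurately summarizes the content of those references --- building $\fa,\frl,\fr,\theta,m_{[]},\vartheta,\tau$ as pastings of interchange, pseudofunctoriality, and Beck--Chevalley cells, and then checking coherence. There is nothing to add; your final paragraph acknowledging that the verification is ``not reproduced here but quoted'' is exactly how the paper handles it as well.
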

The last bullet point in particular reduces the problem of building strong shadow functors $\Phi^H$ and $\iota_H^*$ to the problem of building maps of symmetric monoidal bifibrations.

Finally we discuss how to invert weak equivalences in a bifibration. Suppose $\pi\colon \sC \to \bS$ is a fibration, each fiber category $\sC^A$ has a subcategory of weak equivalences, and $\ho\sC$ is the category formally obtained by inverting these equivalences. By the universal property of $\ho\sC$ there is a functor $\ho\sC \to \bS$, that is in general not a fibration.

We say that $\pi$ is a \textbf{right-deformable fibration} if for each $\sC^A$ there is
\begin{itemize}
	\item a full subcategory $\sF^A$ ,
	\item a functor $R_A\colon \sC^A \to \sC^A$ with image in $\sF^A$, and 
	\item a weak equivalence $r_A\colon \id_{\sC^A} \xto{\sim} R_A$,
\end{itemize}
 such that 
\begin{itemize}
	\item$f^*\colon \sC^B \to \sC^A$ preserves weak equivalences on $\sF^B$, and 
	\item $f^*(\sF^B) \subseteq \sF^A$.
\end{itemize}
The following two results are proven by an elementary but tedious diagram-chase, that compares $\ho\sC$ to the Grothendieck construction formed from the right-derived pullback functors $f^*R_B\colon \ho(\sC^B) \to \ho(\sC^A)$. The full proof appears in \cite{spectra_notes}.

\begin{thm}\label{thm:homotopy_category_of_fibration}
	If $\pi$ is a right-deformable fibration then $\ho\sC \to \bS$ is a fibration, and the canonical maps $\ho(\sC^A) \to (\ho\sC)^A$ are isomorphisms of categories. Dually, if $\pi$ a left-deformable op-fibration then $\ho\sC \to \bS$ is an op-fibration.
\end{thm}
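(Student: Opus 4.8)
The plan is to identify $\ho\sC$, as a category over $\bS$, with the Grothendieck construction of the pseudofunctor sending $A$ to $\ho(\sC^A)$ and $f\colon A\to B$ to the right-derived pullback $f^{*}R_B\colon \ho(\sC^B)\to\ho(\sC^A)$. The first step is to check that this assignment really is a pseudofunctor $\bS^{\mathrm{op}}\to\mathrm{Cat}$: the point-set functor $f^{*}R_B$ is homotopical because $R_B$ lands in $\sF^B$, on which $f^{*}$ preserves weak equivalences, so it descends to homotopy categories; the compositor isomorphism $(gf)^{*}R_C\cong f^{*}R_B\,g^{*}R_C$ is assembled from the canonical isomorphism $f^{*}g^{*}\cong(gf)^{*}$ coming from the chosen cartesian cleavage together with the equivalence $r_B\colon \id_{\sC^B}\xto{\sim}R_B$ inserted between $f^{*}$ and $g^{*}R_C$; the pentagon and triangle coherences then reduce to naturality of $r$ and coherence of the cleavage. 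Let $\mathcal G\to\bS$ be the associated fibration, with objects the pairs $(A,X)$ for $X\in\sC^A$, with morphisms $(A,X)\to(B,Y)$ the pairs $(f,\bar\varphi)$ for $f\colon A\to B$ and $\bar\varphi\in\ho(\sC^A)(X,f^{*}R_BY)$, and with fiber over $A$ canonically $\ho(\sC^A)$.

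Next I would build a comparison functor $\Gamma\colon \sC\to\mathcal G$. It is the evident bijection on objects, and it sends a morphism $g\colon X\to Y$ over $f$ to $(f,[\,f^{*}(r_BY)\circ g'\,])$, where $g'\colon X\to f^{*}Y$ is the unique fiberwise morphism with $\tilde f\circ g'=g$ for the chosen cartesian arrow $\tilde f\colon f^{*}Y\to Y$. Functoriality of $\Gamma$ is a diagram chase using naturality of $r$, uniqueness in the universal property of cartesian arrows, and the compositor isomorphisms above. A fiberwise weak equivalence $w$ in $\sC^A$ is sent to $(\id_A,[\,r_A\circ w\,])$, whose second component is an isomorphism in $\ho(\sC^A)$ and hence is invertible in $\mathcal G$; therefore $\Gamma$ inverts every morphism that is formally inverted in $\ho\sC$, and so factors through a functor $\overline\Gamma\colon \ho\sC\to\mathcal G$ over $\bS$, still a bijection on objects.

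It remains to prove $\overline\Gamma$ is fully faithful, equivalently that for all $X\in\sC^A$ and $Y\in\sC^B$ the induced map from the morphisms $X\to Y$ in $\ho\sC$ to the pairs $(f,\bar\varphi)$ with $f\colon A\to B$ and $\bar\varphi\in\ho(\sC^A)(X,f^{*}R_BY)$ is a bijection. Both surjectivity and injectivity follow from a single normal-form statement: every morphism $X\to Y$ in $\ho\sC$ is represented by one fiberwise zigzag, followed by one cartesian arrow, followed by the inverse of a map $r_BY$, and two such representatives induce the same morphism of $\ho\sC$ exactly when they represent the same pair $(f,\bar\varphi)$. One can package the surjectivity half as an explicit inverse functor $\Psi\colon \mathcal G\to\ho\sC$, sending $(f,\bar\varphi)$ to the composite in $\ho\sC$ of the image of $\bar\varphi$ under the canonical functor $\ho(\sC^A)\to\ho\sC$, then the cartesian arrow $f^{*}R_BY\to R_BY$, then $(r_BY)^{-1}$; naturality of the cleavage and cancellation of $r$-maps give $\Psi\overline\Gamma=\id$ immediately. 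I expect the normal form itself to be the main obstacle, and it is exactly where the deformation hypothesis is used: to commute an inverted fiberwise weak equivalence past a cartesian arrow one first applies $R$ and $r$ to move its source and target into $\sF$, where $f^{*}$ is homotopical and so commutes with the inversion up to a fiberwise weak equivalence; iterating this manipulation collapses an arbitrary zigzag in $\sC$ to the required shape. This is elementary but, as recorded in \cite{spectra_notes}, genuinely tedious.

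Granting the isomorphism $\overline\Gamma\colon \ho\sC\xto{\cong}\mathcal G$ over $\bS$, the theorem follows: $\mathcal G\to\bS$ is a fibration by the standard theory of the Grothendieck construction, hence so is $\ho\sC\to\bS$, and restricting $\overline\Gamma$ to fibers over $A$ identifies $(\ho\sC)^A$ with $\mathcal G^A=\ho(\sC^A)$, so the canonical comparison functor $\ho(\sC^A)\to(\ho\sC)^A$ is an isomorphism of categories. The dual assertion is obtained by applying this to $\pi^{\mathrm{op}}\colon \sC^{\mathrm{op}}\to\bS^{\mathrm{op}}$: an op-fibration becomes a fibration and a left deformation becomes a right deformation, $\ho(\sC^{\mathrm{op}})=(\ho\sC)^{\mathrm{op}}$, and a fibration $(\ho\sC)^{\mathrm{op}}\to\bS^{\mathrm{op}}$ is the same thing as an op-fibration $\ho\sC\to\bS$.
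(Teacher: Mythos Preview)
Your proposal is correct and follows exactly the approach the paper indicates: the paper does not give a full proof here but states that the argument is ``an elementary but tedious diagram-chase, that compares $\ho\sC$ to the Grothendieck construction formed from the right-derived pullback functors $f^*R_B\colon \ho(\sC^B) \to \ho(\sC^A)$,'' deferring the details to \cite{spectra_notes}. Your outline---build the pseudofunctor, form the Grothendieck construction $\mathcal G$, construct the comparison $\overline\Gamma\colon \ho\sC\to\mathcal G$ over $\bS$, and verify it is an isomorphism via a normal-form argument for zigzags---is precisely this strategy, and your identification of the normal-form reduction as the place where the deformation hypotheses do their work is on target.
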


We call the cartesian arrows in $\ho\sC$ \textbf{homotopy cartesian} when we want to distinguish from the cartesian arrows in $\sC$.

\begin{prop}\label{prop:cartesian_in_homotopy_category}
	Suppose $\sC$ is a right deformable fibration. Then an arrow in $\ho\sC$ is homotopy cartesian \tiff it is isomorphic to a cartesian arrow in $\sC$ with fibrant target.
The dual statement applies to cocartesian arrows in a left-deformable op-fibration.
\end{prop}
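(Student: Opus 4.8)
The plan is to read off the homotopy cartesian arrows from the comparison, established in the proof of \cref{thm:homotopy_category_of_fibration}, between $\ho\sC$ and the Grothendieck construction of the pseudofunctor $\bS^{\mathrm{op}}\to\mathbf{Cat}$ that sends $A$ to $\ho(\sC^A)$ and $f\colon A\to B$ to the right-derived pullback $f^*R_B\colon\ho(\sC^B)\to\ho(\sC^A)$. Let $\Theta$ denote the resulting equivalence over $\bS$ from that Grothendieck construction to $\ho\sC$; it is essentially the identity on objects, and on a morphism $(f,\tilde g)$, with $\tilde g\colon X\to f^*R_BY$ in $\ho(\sC^A)$, it is the composite
\[ \Theta(f,\tilde g)\ =\ (r_B)^{-1}\circ\xi_{f,Y}\circ\tilde g \]
in $\ho\sC$, where $\xi_{f,Y}\colon f^*R_BY\to R_BY$ is the chosen cartesian lift of $f$ at $R_BY$ and $r_B\colon Y\to R_BY$ is inverted. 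Since $\Theta$ is an equivalence of fibrations over $\bS$, it identifies the homotopy cartesian arrows of $\ho\sC$ with the cartesian arrows of the Grothendieck construction, and by the standard description of the latter, $(f,\tilde g)$ is cartesian precisely when $\tilde g$ is an isomorphism in $\ho(\sC^A)$. It then remains to match these with $\sC$-cartesian arrows with fibrant target, for which we use throughout that cartesian arrows of a fibration are closed under isomorphism in the arrow category.

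Suppose first that $g$ is homotopy cartesian, say over $f\colon A\to B$ with source $X$ and target $Y$, and write $g=\Theta(f,\tilde g)$ with $\tilde g$ an isomorphism. The defining composite exhibits a commuting square in $\ho\sC$ with vertical isomorphisms $\tilde g$ and $r_B$ whose bottom edge is $\xi_{f,Y}$; hence $g$ is isomorphic in the arrow category of $\ho\sC$ to $\xi_{f,Y}$, which is a cartesian arrow of $\sC$ with target $R_BY\in\sF^B$, i.e. with fibrant target. (By $f^*(\sF^B)\subseteq\sF^A$ the source is fibrant as well, but only the target is needed.) Conversely, suppose $g$ is isomorphic in the arrow category of $\ho\sC$ to a cartesian arrow $c$ of $\sC$, say over $f\colon A\to B$, with fibrant target $Z\in\sF^B$; it suffices to show $c$ is homotopy cartesian, and since $c$ is then isomorphic in $\sC$ to the chosen cartesian lift, we may assume $c$ equals that lift $f^*Z\to Z$. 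The morphism $\tilde c$ in $\ho(\sC^A)$ determined by $\Theta(f,\tilde c)=c$ is the image of $f^*(r_B)\colon f^*Z\to f^*R_BZ$: the naturality relation of the cartesian lifts,
\[ \xi_{f,Z}\circ f^*(r_B)\ =\ r_B\circ c , \]
gives $\Theta\bigl(f,f^*(r_B)\bigr)=(r_B)^{-1}\circ r_B\circ c=c$, and $\tilde c$ is unique because $\Theta$ is faithful. Now $r_B\colon Z\to R_BZ$ is a weak equivalence with both $Z$ and $R_BZ$ in $\sF^B$, so the right-deformability condition that $f^*$ preserve weak equivalences on $\sF^B$ makes $f^*(r_B)$ a weak equivalence in $\sC^A$, hence an isomorphism in $\ho(\sC^A)$. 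Therefore $\tilde c$ is an isomorphism, $(f,\tilde c)$ is cartesian in the Grothendieck construction, and $c=\Theta(f,\tilde c)$ is homotopy cartesian.

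The dual statement follows formally by applying all of this to the right-deformable fibration $\pi^{\mathrm{op}}\colon\sC^{\mathrm{op}}\to\bS^{\mathrm{op}}$, under which cocartesian arrows and left deformations become cartesian arrows and right deformations.

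The main obstacle is really bundled into the first paragraph: pinning down the explicit equivalence $\Theta$, and in particular the fact that every arrow of $\ho\sC$ has the displayed zigzag form through a genuine $\sC$-cartesian arrow $\xi_{f,Y}$ with fibrant target. This is precisely the content of the comparison with the Grothendieck construction carried out in the proof of \cref{thm:homotopy_category_of_fibration} in \cite{spectra_notes}; once that is granted, the two implications above are the short diagram chases indicated.
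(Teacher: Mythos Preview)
Your argument is correct and follows exactly the approach the paper indicates: the paper does not give its own proof here but defers to \cite{spectra_notes}, noting only that both this proposition and \cref{thm:homotopy_category_of_fibration} are obtained by comparing $\ho\sC$ to the Grothendieck construction on the right-derived pullbacks $f^*R_B$. Your proof carries out precisely that comparison, reading off the cartesian arrows of the Grothendieck construction and transporting them across the equivalence $\Theta$, so there is no substantive difference in method.
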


\section{Parametrized $G$-spectra and fixed point functors}\label{sec:spectra}

By the last bullet point of \cref{thm:indexed_bicat}, it now remains to construct the smbf of parametrized $G$-spectra, and to prove that $\iota_H^*$ and $\Phi^H$ give smbf maps.

\subsection{On the nose}

Fix a finite group $G$ and an unbased left $G$-space $B$. Recall from \cite{ms} that there is a category $G\mc S(B)$ of \textbf{orthogonal $G$-spectra over $B$}, or equivalently \textbf{$\mathscr J_G$-spaces over $B$}. The objects are sequences that assign to each integer $n \geq 0$ a retractive $G \times O(n)$ space $X_n$ over $B$, together with $G$-equivariant structure maps $\Sigma_B X_n \to X_{1+n}$, satisfying the condition that the composite map $\Sigma^p_B X_q \to X_{p+q}$ is $O(p) \times O(q)$-equivariant. We always assume the base space $B$ is compactly generated weak Hausdorff, while $X_n$ only has to be compactly generated.

For each $G$-equivariant map of base spaces $f\colon A \to B$,  a map of orthogonal $G$-spectra over $f$ consists of commuting diagrams
\[ \xymatrix @R=1em @C=3em{
	A \ar[d] \ar[r]^-f & B \ar[d] \\
	X_n \ar[d] \ar[r]^-{\phi_n} & Y_n \ar[d] \\
	A \ar[r]^-f & B
} \]
in which $\phi_n$ is $G \times O(n)$-equivariant and commutes with the structure maps of $X$ and $Y$. This defines a larger category $G\mc S$ of all orthogonal $G$-spectra over all base spaces, whose fiber category over $B$ is $G\mc S(B)$. The projection functor to the category $G\bS$ of unbased $G$-spaces is a bifibration, with Beck-Chevalley along strict pullback squares \cite[11.4.8]{ms}. We therefore have adjoint pullback and pushforward functors
\begin{align*}
f^*\colon G\mc S(B) &\to G\mc S(A), \\
f_!\colon G\mc S(A) &\to G\mc S(B).
\end{align*}
The pullback $f^*$ is also a left adjoint, and therefore preserves all colimits.

There is an \textbf{external smash product} functor
\begin{align*}
\barsmash \colon G\mc S(A) \times G\mc S(B) &\to G\mc S(A \times B)
\end{align*}
defined for retractive $G$-spaces by the formula
\begin{equation}\label{external_smash}
\xymatrix{
	(X \times B) \cup_{A \times B} (A \times Y) \ar[r] \ar[d] & X \times Y \ar[d] \\
	A \times B \ar[r] & X \barsmash Y }
\end{equation}
and then extended to parametrized $G$-spectra using the Day convolution along $\mathscr J_G$. This can be regarded as a functor on the entire category of $G$-spectra, $G\mc S \times G\mc S \to G\mc S$. It preserves cartesian and cocartesian arrows, and extends to a symmetric monoidal structure, hence it makes the category $G\mc S$ a symmetric monoidal bifibration. The unit of $\barsmash$ is the sphere spectrum, regarded as a parametrized spectrum over the one-point space $*$.

Let $F_V K$ denote the free parametrized $\mathscr J_G$-space on a retractive $G$-space $K$ over $B$. Concretely, this is the $\mathscr J_G$-space whose value at $W$ is the external smash product $\mathscr J_G(V,W) \barsmash K$, where $\mathscr J_G(V,W)$ is regarded as a retractive space over $*$. Since pullback and pushforward commute with $\barsmash$, they also commute with free spectra (\cite[11.4.7]{ms}):
\[ f^*F_V K \cong F_V f^*K, \qquad f_!F_V K \cong F_V f_!K. \]

We will frequently use the class of ``freely $f$-cofibrant'' orthogonal spectra over $B$. A map of retractive spaces is a (closed, equivariant) \textbf{$f$-cofibration} if it is closed and has the fiberwise, unbased, equivariant version of the homotopy extension property. A spectrum is \textbf{freely $f$-cofibrant} if it is isomorphic to a cell complex spectrum built from maps of the form $F_V K \to F_V L$, where $K \to L$ is a (closed, equivariant) $f$-cofibration of $f$-cofibrant spaces over $B$. By ``cell complex'' we mean a sequential colimit of pushouts of arbitrary coproducts of such maps.

\begin{lem}\label{lem:pullback_cofibration}
	Freely $f$-cofibrant spectra are preserved by the pullback functor $f^*$.
\end{lem}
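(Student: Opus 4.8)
The plan is to reduce the statement to the two structural facts already in hand: that pullback $f^*$ commutes with the free-spectrum functor (the isomorphisms $f^*F_V K \cong F_V f^*K$ recorded just before the lemma), and that $f^*$, being a left adjoint, preserves all colimits. Since a freely $f$-cofibrant spectrum over $B$ is by definition a cell complex built from generating maps $F_V K \to F_V L$ with $K \to L$ a closed equivariant $f$-cofibration of $f$-cofibrant spaces over $B$, it suffices to check that $f^*$ carries each of the three ingredients of this construction into the analogous ingredient over $A$: (1) the generating maps, (2) the cell-attachment process (pushouts along coproducts), and (3) the sequential colimits.

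For step (1): applying $f^*$ to $F_V K \to F_V L$ yields, via the natural isomorphism above, the map $F_V f^*K \to F_V f^*L$. So I must know that $f^*$ sends a closed equivariant $f$-cofibration $K \to L$ of $f$-cofibrant retractive $G$-spaces over $B$ to a closed equivariant (here the relevant map is over $A$, so the cofibration condition is relative to $A$) cofibration of the correspondingly cofibrant spaces over $A$. This is the key point and I expect it to be the main obstacle: it is the statement that the pullback of spaces along $f$ preserves the fiberwise (unbased, equivariant) homotopy extension property. The cleanest route is to note that $f^*$ on retractive spaces is itself a left adjoint (it is the strict pullback $A\times_B(-)$ together with the evident retraction data), so it preserves the pushout-product / lifting characterization of $f$-cofibrations; alternatively one can cite the relevant statement from \cite{ms} (the discussion of $q$-cofibrations and well-grounded spectra in Chapters 5--6 and the fiberwise results in Chapter 8) that pullback preserves the fiberwise cofibration classes. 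One also checks that $f^*$ of an $f$-cofibrant space over $B$ is an $f$-cofibrant space over $A$, which is the special case $K = \emptyset \to L$ of the same fact, and that $f^*$ preserves the $G\times O(n)$-actions, which is immediate since $f$ is $G$-equivariant and $f^*$ acts levelwise.

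For steps (2) and (3): because $f^*$ is a left adjoint (stated explicitly in the excerpt: ``the pullback $f^*$ is also a left adjoint, and therefore preserves all colimits''), it commutes with arbitrary coproducts, with pushouts, and with sequential colimits. Therefore, given a cell complex presentation of a freely $f$-cofibrant spectrum $X$ over $B$ as a sequential colimit $X = \operatorname{colim}_j X_j$ where $X_0 = *_B$ and each $X_{j+1}$ is a pushout of a coproduct of generating maps $F_V K_\alpha \to F_V L_\alpha$, applying $f^*$ produces $f^*X = \operatorname{colim}_j f^*X_j$ with $f^*X_0 = *_A$ and each $f^*X_{j+1}$ a pushout of the coproduct of the maps $F_V f^*K_\alpha \to F_V f^*L_\alpha$, which by step (1) are again generating maps for freely $f$-cofibrant spectra over $A$. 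Hence $f^*X$ is freely $f$-cofibrant over $A$, completing the proof. I would write this up as: first record the reduction to the generating maps, then prove the cofibration-preservation lemma (step 1), then conclude by the colimit-preservation of the left adjoint $f^*$.
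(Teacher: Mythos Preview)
Your proposal is correct and follows essentially the same approach as the paper: the paper's proof is a single sentence, ``This follows because $f^*$ preserves colimits, free spectra, and $f$-cofibrations of retractive spaces,'' and your three steps are exactly an elaboration of these three ingredients. One small caution: the justification you sketch for step (1) via ``$f^*$ is a left adjoint, so it preserves the lifting characterization'' is not quite the right mechanism (left adjoints preserve left lifting properties only when the right adjoint preserves the class being lifted against, which you would still need to check); the cleaner argument is that the fiberwise homotopy extension data (a retraction of the mapping cylinder onto its base) pulls back directly along $f$, or, as you also suggest, simply cite the relevant preservation result from \cite{ms}.
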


\begin{proof}
	This follows because $f^*$ preserves colimits, free spectra, and $f$-cofibrations of retractive spaces.
\end{proof}

Next we define three functors that change the group $G$. First, for each subgroup $H \leq G$, we can  forget the $G$-action and remember the action of $H$. This gives the \textbf{forgetful functor to $H$}. Second, if $H$ has Weyl group $WH = NH/H$, the $H$-fixed point subspaces $X_n^H$ form a $WH$-equivariant spectrum over the fixed point subspace $B^H$. This defines the \textbf{categorical fixed points} functor.  Finally, we define the \textbf{geometric fixed points} functor by the following coequalizer of $WH$-spectra. 
\[ \bigvee_{V,W} F_{W^{H}} S^0 \barsmash \mathscr J_G^{H}(V,W) \barsmash X(V)^{H} \rightrightarrows \bigvee_V F_{V^{H}} S^0 \barsmash X(V)^{H}  \ra \Phi^{H} X \]
In total, this gives three functors
\begin{align*}
\iota_H^*\colon G\mc S(B) &\to H\mc S(B) \\
(-)^H\colon G\mc S(B) &\to WH\mc S(B^H) \\
\Phi^H\colon G\mc S(B) &\to WH\mc S(B^H).
\end{align*}
The \textbf{restriction map} $r\colon X^H \to \Phi^H X$ assigns each level $X_n^H$ to the $\R^n$-term 
on the right-hand side of the coequalizer system. (A little diagram-chasing shows this gives a well-defined map of spectra.)

Each of these definitions extends to maps in $G\mc S$, giving commuting squares of functors
\[ \xymatrix{
	G\mc S \ar[d] \ar[r]^-{\iota_H^*} & H\mc S \ar[d] \\
	G\bS \ar[r]^-{\iota_H^*} & H\bS
} \qquad
\xymatrix{
	G\mc S \ar[d] \ar[r]^-{(-)^H} & WH\mc S \ar[d] \\
	G\bS \ar[r]^-{(-)^H} & WH\bS
} \qquad
\xymatrix{
	G\mc S \ar[d] \ar[r]^-{\Phi^H} & WH\mc S \ar[d] \\
	G\bS \ar[r]^-{(-)^H} & WH\bS
} \]
and $r$ lives over the identity transformation of $(-)^H\colon G\bS \to WH\bS$.

\begin{prop}
	Each of these functors is a map of bifibrations, if we restrict to freely $f$-cofibrant spectra. We can give each one a lax symmetric monoidal structure, commuting with the same structure on the other three functors in its square. Furthermore $\iota_H^*$ is strong symmetric monoidal, and $\Phi^H$ is strong on the subcategory of freely $f$-cofibrant spectra.
\end{prop}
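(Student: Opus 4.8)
The plan is to verify each of the three defining properties of a map of (symmetric monoidal) bifibrations separately for each of the three functors $\iota_H^*$, $(-)^H$, $\Phi^H$, and then to isolate where freely $f$-cofibrant replacement is genuinely needed. First I would check that each functor preserves cartesian arrows. For $\iota_H^*$ this is immediate, since forgetting the $G$-action does nothing to the underlying data and a pullback square of $G$-spaces becomes a pullback square of $H$-spaces; the defining universal property in \cref{fig:cartesian_arrow} is literally inherited. For $(-)^H$ and $\Phi^H$ the point is that taking $H$-fixed points of a strict pullback square of $G$-spaces is again a strict pullback square of $WH$-spaces (fixed points commute with limits), and one then checks that the level spaces $(f^*X)_n^H$ and $\Phi^H(f^*X)_n$ are computed as the corresponding pullbacks; for $\Phi^H$ one uses that pullback commutes with the free spectrum functor (the isomorphism $f^*F_VK \cong F_V f^*K$ recalled in the excerpt) and with the coequalizer defining $\Phi^H$, since $f^*$ preserves colimits. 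Preservation of cocartesian arrows is the dual statement: a cocartesian arrow over $f$ is just postcomposition $X \to B \to A$ at the base level, so $(-)^H$, $\iota_H^*$ and $\Phi^H$ all send it to the corresponding composite, again using commutation of $\Phi^H$ with colimits and free spectra. Preservation of Beck–Chevalley squares on the base is automatic once we know $(-)^H$ and $\iota_H^*$ take strict pullback squares of base spaces to strict pullback squares (for $\Phi^H$ the base map is $(-)^H$, same argument).

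Next I would supply the monoidal data. On all three functors there is an evident lax symmetric monoidal structure for $\barsmash$: the external smash is built out of products and pushouts of retractive spaces via formula \eqref{external_smash} and then Day convolution, so there are canonical comparison maps $FX \barsmash FY \to F(X \barsmash Y)$ assembled levelwise from the canonical maps on $H$-fixed points, respectively from the universal property of the coequalizer defining $\Phi^H$. One then checks these are compatible with associativity, symmetry and units, and that they commute with the base-level functors in each of the three squares of functors displayed in the excerpt. That $\iota_H^*$ is \emph{strong} symmetric monoidal is clear since forgetting group actions is an exact functor commuting with all the colimits and products in sight. For the final and only substantive claim, that $\Phi^H$ is strong on freely $f$-cofibrant spectra, the standard strategy (as in \cite{ms}) is: reduce to checking that $\Phi^H(FX \barsmash FY)\to \Phi^H(FX)\barsmash \Phi^H(FY)$ — sorry, the comparison map $\Phi^H(FX)\barsmash\Phi^H(FY)\to\Phi^H(FX\barsmash FY)$ — is an isomorphism on generating cells $F_VK$ with $K$ an $f$-cofibrant retractive space over $B$, where it follows from the explicit formula $\Phi^H F_V K \cong F_{V^H}(K^H)$ together with the fact that $H$-fixed points of the external smash of cofibrant retractive $G$-spaces agree with the external smash of the fixed points; then propagate along the cell structure using that $\Phi^H$ preserves the relevant colimits (sequential colimits along cofibrations, pushouts, coproducts) and that $\barsmash$ preserves $f$-cofibrations and the relevant colimits in each variable. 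Here \cref{lem:pullback_cofibration} and the compatibility of $f$-cofibrations with free spectra, recalled above, are exactly what keep us inside the class of freely $f$-cofibrant objects while doing the cell induction.

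The main obstacle is this last step. The subtlety is that $\Phi^H$ is not strong on all spectra — the comparison map fails to be an isomorphism on non-cofibrant objects because $\Phi^H$ does not commute with the (underived) external smash in general — so one must be scrupulous about where cofibrancy is used and check that the cell-induction genuinely stays within freely $f$-cofibrant spectra at every stage, including after applying $\barsmash$ and after applying $\Phi^H$ itself. The key lemma underpinning the base case, that $(K \barsmash L)^H \cong K^H \barsmash L^H$ for $f$-cofibrant retractive $G$-spaces $K, L$ (and its parametrized, $\mathscr J_G$-indexed elaboration), is the technical heart; it is where one must be careful that the pushout defining $\barsmash$ in \eqref{external_smash} is a homotopy pushout so that fixed points distribute over it. Everything else — preservation of (co)cartesian arrows, of Beck–Chevalley squares, and the coherence of the lax monoidal structures — is formal and follows from the commutation of $\Phi^H$, $(-)^H$ and $\iota_H^*$ with colimits, free spectra, and pullbacks as recalled in the excerpt.
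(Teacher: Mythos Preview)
Your proposal is correct and follows essentially the same approach as the paper's proof: verify preservation of (co)cartesian arrows directly (trivial for $\iota_H^*$ and $(-)^H$, reduced for $\Phi^H$ to the fact that $f^*$ preserves colimits and external smash products), build the lax monoidal structure by the evident comparison maps, and establish that $\Phi^H$ is strong on freely $f$-cofibrant spectra by checking it on free spectra $F_VK$ and then inducting up the cell structure. The paper is terser, offloading the construction and coherence for $\Phi^H$ to the Mandell--May argument \cite[4.7]{mandell_may} and \cite[1.2]{malkiewich_thh_dx}, whereas you spell out the strategy and correctly isolate the space-level input $(K\barsmash L)^H\cong K^H\barsmash L^H$ as the crux. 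One small sharpening: the reason fixed points distribute over the pushout \eqref{external_smash} is not that it is a \emph{homotopy} pushout per se, but that it is a pushout along a closed inclusion, and $(-)^H$ preserves such pushouts strictly; phrasing it this way avoids any ambiguity.
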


\begin{proof}
It is elementary to check that the first two functors preserve cartesian and cocartesian arrows. For $\Phi^H$ this reduces to the same claim for external smash products, and the fact that $f^*$ preserves all colimits \cite[11.4.1]{ms}.

The symmetric monoidal structure on $\iota_H^*$ is given by the identity map on the underlying spectra, and the coherences are obviously satisfied.
For $(-)^H$ the symmetric monoidal structure map $X^H \barsmash Y^H \to (X \barsmash Y)^H$ is given by noticing that the inclusion to $X \barsmash Y$ lands in the $H$-fixed points, and $(\Sph_{(G)})^H \cong \Sph_{(WH)}$ is the unique isomorphism. The coherences are also straightforward. For $\Phi^H$ the map commuting it with smash product is constructed by the method of \cite[4.7]{mandell_may}, applied verbatim with smash products replaced by external smash products. We also use the same argument to prove this map is an isomorphism on free spectra, and therefore on freely $f$-cofibrant spectra by an induction up the skeleton of the cell complex. To check the coherence of this symmetric monoidal structure, it suffices to restrict attention to one fiber. Then it follows immediately from \cite[1.2]{malkiewich_thh_dx}.
\end{proof}

\subsection{The homotopy category}
A map of orthogonal $G$-spectra $X \to Y$ over $B$ is a \textbf{level equivalence} if each map $X(V) \to Y(V)$ is an equivalence on the $H$-fixed points for all subgroups $H \leq G$. There is a \textbf{level fibrant replacement} functor $R^{lv}$ that replaces each $X$ by a level equivalent spectrum $X \overset\sim\to R^{lv}X$  so that $(R^{lv}X(V))^H \to B^H$ is a quasifibration \cite[6.5.1 and 12.1.7]{ms}. 
A map $X \to Y$ is a \textbf{stable equivalence} if on each fiber over $b \in B$ the map $R^{lv} X \to R^{lv} Y$ is  an isomorphism on the $H$-equivariant stable homotopy groups for every $H \leq \textup{stab}_b \leq G$. This definition is independent of the choice of functor $R^{lv}$.

\begin{thm}\cite[12.3.10]{ms}\label{thm:stable_model_structure}
	There is a model structure on $G\mc S(B)$ where the weak equivalences are stable equivalences.  
\end{thm}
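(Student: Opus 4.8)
The plan is to realize this model structure as a left Bousfield localization of a \emph{level} model structure on $G\mc S(B)$, following the Mandell--May--Schwede--Shipley template for diagram spectra, adapted to the parametrized equivariant setting as in \cite{ms}.

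\textbf{Level model structure.} First I would equip the category of retractive $G$-spaces over $B$ with a cofibrantly generated model structure whose weak equivalences are the fiberwise $H$-equivalences for all $H \le G$. In the parametrized world one cannot simply use the Quillen (``$q$'') structure on ex-spaces, because its generating cofibrations are not compatible with $f_!$, $f^*$, and $\barsmash$ all being well behaved at once; instead one uses the ``$qf$'' structure of \cite{ms}, whose cells have CW base spaces, and organizes the whole discussion inside the class of \emph{well-grounded} (compactly generated) model categories so that transfer and localization apply. Transporting this structure along the free $\mathscr J_G$-space functors $F_V$ (indexed also over the orbits $G/H$) produces the \textbf{level model structure} on $G\mc S(B)$, in which the weak equivalences are the level equivalences and the fibrations are the level fibrations.

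\textbf{Localization.} Then I would localize at the set $S$ of ``$\lambda$-maps'' $F_W(S^{W-V} \barsmash K) \to F_V(S^0 \barsmash K)$, where $V \subseteq W$ run over the indexing $G$-representations, $S^{W-V}$ is the sphere of the orthogonal complement, and $K$ runs over a set of generating cells over $B$. Because the level model structure is cofibrantly generated and well-grounded, the Bousfield localization exists; its local objects are exactly the \textbf{$\Omega$-$G$-spectra over $B$}, i.e.\ the level-fibrant $X$ for which each adjoint structure map $X(V) \to \Omega_B^{W-V} X(W)$ is a fiberwise $H$-equivalence. Call the resulting structure the stable model structure. It then remains to identify its weak equivalences with the stable equivalences defined just above the theorem: each $\lambda$-map is itself a stable equivalence (a mapping-telescope computation on fibers), so every $S$-local equivalence is a stable equivalence; conversely a stable equivalence between $\Omega$-$G$-spectra is a level equivalence, since the fiberwise $\pi^H_*$ of an $\Omega$-$G$-spectrum is read off directly from its levels. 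The fiberwise bookkeeping uses that a parametrized level fibrant replacement exists and that the relevant homotopy groups commute with the filtered colimits computing stable homotopy; one also checks here that $H$ need only range over $\stab_b$ on the fiber over $b$, as in the definition.

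\textbf{Main obstacle.} The genuinely hard part is not the abstract localization but the point-set input that makes the first step work over an arbitrary compactly generated weak Hausdorff base $B$: one needs a model structure on ex-$G$-spaces over $B$ that is left proper, well-grounded, and for which $f^*$, $f_!$, $\barsmash$, and fiberwise loops and suspensions are all appropriately Quillen, so that the localization both exists and interacts correctly with the external smash product and base change used elsewhere in the paper. This is exactly why the citation is to an extended development in \cite{ms} (and why the $qf$-structure, rather than the $q$-structure, is forced) rather than a one-line appeal to Hirschhorn's localization theorem; the equivariance then adds only the routine need to index generating cofibrations over the orbit category and to track fixed points, on all of $G$ at the level of $B$ and on $\stab_b$ on each fiber.
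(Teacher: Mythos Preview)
The paper does not give a proof of this theorem at all: it is stated with a bare citation to \cite[12.3.10]{ms} and the text moves on immediately. Your proposal is a faithful outline of the construction carried out in that reference---level $qf$-model structure transported along the free functors $F_V$, followed by a left Bousfield localization at the $\lambda$-maps, with the identification of local objects as $\Omega$-$G$-spectra and the hard work lying in the well-grounded point-set input---so it is correct and matches the approach the paper is implicitly invoking.
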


\begin{prop}\cite[12.6.7]{ms}
	The pullback functor $f^*$ is a Quillen right adjoint, and a Quillen equivalence if $f$ is a weak equivalence of $G$-spaces (i.e. $A^H \overset\sim\to B^H$ is a weak equivalence for all $H \leq G$). In fact, $f^*$ preserves all stable equivalences between spectra whose levels $X(V)^H$ are quasifibrations over $B^H$.
\end{prop}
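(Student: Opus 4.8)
The proposition bundles three assertions: (1) the adjunction $(f_!, f^*)$ is a Quillen adjunction for the stable model structures of \cref{thm:stable_model_structure}; (2) it is a Quillen equivalence when $f$ is a $G$-weak equivalence; and (3) the point-set strengthening that $f^*$ already preserves every stable equivalence between spectra all of whose fixed-point levels $X(V)^H$ are quasifibrations over $B^H$. Parts (1) and (2) are precisely \cite[12.6.7]{ms}; the plan is to reconstruct the argument, which runs through the level model structure and the $qf$-model structure on retractive $G$-spaces of \cite{ms}. Part (3) is the piece actually invoked in \cref{part:varying}, and it is elementary.

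For (1) it is enough to show that the left adjoint $f_!$ is left Quillen. The generating (acyclic) cofibrations of the level model structure on $G\mc S(A)$ have the form $F_V K \to F_V L$ with $K \to L$ a generating (acyclic) cofibration of retractive $G$-spaces over $A$ in the $qf$-structure, and the stable model structure only adds the usual stabilizing acyclic cofibrations, again built from the $F_V$ and $\barsmash$. Since $f_!$ commutes with the free functors, $f_! F_V K \cong F_V f_! K$, and on retractive $G$-spaces $f_!$ is composition with $f$, which is left Quillen for the $qf$-structure by \cite{ms}, the functor $f_!$ sends each generating (acyclic) cofibration to a cofibration (resp.\ acyclic cofibration); a routine cell-complex induction, using that $f_!$ (like $f^*$) commutes with colimits, upgrades this to all cofibrations, and the same input handles the stabilizing maps. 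Hence $(f_!, f^*)$ is a Quillen adjunction.

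For (2), with $f$ a $G$-weak equivalence, I would verify the standard criterion: the right adjoint $f^*$ reflects weak equivalences between fibrant objects, and the derived unit is a weak equivalence. Reflection is quick, since stable equivalences are detected fiberwise, the fiber of $f^* X$ over a point $a$ is the fiber of $X$ over $f(a)$, and $A^H \to B^H$ is a weak equivalence for every $H$ (in particular a bijection on components), so every fiber of a fibrant object over $B$ arises up to equivalence as a fiber of its $f^*$; hence a map between fibrant objects that becomes a stable equivalence after $f^*$ was already one. The derived unit $X \to f^*((f_!X)^{\mathrm{fib}})$ for cofibrant $X$ is the delicate point: one must know that a cofibrant $X$ is levelwise fibered enough over $A$ that $f_! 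X$ carries the expected homotopy fibers over $B$, which is exactly the property the $qf$-structure is designed to supply; granting that, the derived unit is a fiberwise equivalence because $f$ is a weak equivalence. \textbf{This derived-unit analysis is the main obstacle}; the rest of (1)--(2) is bookkeeping over the model structures of \cite{ms}.

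Finally, for (3): because $f^*$ is computed level by level as pullback of retractive $G$-spaces along $f$, and passage to $H$-fixed points commutes with pullbacks of spaces, one has $(f^* X)(V)^H \cong A^H \times_{B^H} X(V)^H$; in particular the fiber of $f^* X$ over $a$ is literally the fiber of $X$ over $f(a)$. Under the hypothesis that $X(V)^H \to B^H$ is a quasifibration these fibers already carry the correct stable homotopy groups, and the content of \cite[12.6.7]{ms} is that this identification is compatible with level fibrant replacement; so a stable equivalence $X \to Y$ between spectra of this kind induces $\pi^H_*$-isomorphisms on all the relevant (homotopy) fibers of $f^* X$ and $f^* Y$, hence $f^* X \to f^* Y$ is again a stable equivalence. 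Combining the three parts yields the proposition; \cref{lem:pullback_cofibration} records the complementary statement on the cofibrant side.
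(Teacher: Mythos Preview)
The paper does not prove this proposition; it is stated with a citation to \cite[12.6.7]{ms} and used as a black box. So there is no proof in the paper against which to compare your reconstruction.

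That said, your outline of (1) and (2) follows the standard May--Sigurdsson strategy, and you rightly flag the derived-unit step in (2) as the place where real work happens. For (3), however, there is a genuine gap. You correctly observe that the strict fiber of $f^*X$ over $a\in A$ equals the strict fiber of $X$ over $f(a)$, and that the quasifibration hypothesis on $X$ makes the latter compute the homotopy fiber of $X$ over $f(a)$. But the definition of stable equivalence for $f^*X\to f^*Y$ is phrased in terms of the \emph{homotopy} fibers of $f^*X$ and $f^*Y$ over points of $A$, and to identify those with strict fibers you would need $(f^*X)(V)^H\to A^H$ to be a quasifibration as well. Quasifibrations are not preserved by pullback in general, so this does not follow from the hypothesis on $X$. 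Your phrase ``the content of \cite[12.6.7]{ms} is that this identification is compatible with level fibrant replacement'' is precisely the point at issue, so invoking it is circular. In the paper's actual uses of the proposition (the proofs of \cref{thm:spectra_smbf} and \cref{lem:geo_fp_symmetric_monoidal}) the spectra involved are level $qf$-fibrant or come from the path-space functor $P$, and those classes \emph{are} pullback-stable; your fiberwise argument goes through cleanly for them.
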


This is the \textbf{stable $qf$-model structure}. The generating cofibrations are the free spectra on the {\bf $qf$-cells}, i.e. those maps $G/H \times (S^{k-1} \to D^k)$ over $B$ that are $f$-cofibrations. The generating acyclic cofibrations are the free spectra on maps of the form $G/H \times (D^{k-1} \to D^k)$ with a different cofibration condition, and also the $\barsmash$-pushout-products of generating cofibrations over $B$ and the maps $k_{V,W}$ over $*$ from \cite[III.4.6]{mandell_may}. We will not spell out the condition on $D^{k-1} \to D^k$ because it will not matter; it only matters that we fix the definition once and for all.

In particular, every $qf$-cofibrant spectrum is also freely $f$-cofibrant. The next two lemmas therefore show that $\Phi^H$ and $\barsmash$ preserve stable equivalences between pullbacks of such spectra. This was already done nonequivariantly for $\barsmash$ in \cite{coassembly}, but here we give a different argument that is easier to make equivariant.

\begin{lem}\label{lem:geo_fp_free_spectra}
	$\Phi^H$ preserves cofibrations, acyclic cofibrations, and stable equivalences between freely $f$-cofibrant spectra.
\end{lem}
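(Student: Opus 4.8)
The plan is to reduce to the generating (acyclic) cofibrations of the stable $qf$-model structure, using that $\Phi^{H}$ is a left adjoint and hence preserves all colimits, and then to deduce the statement about stable equivalences from Ken Brown's lemma. The two inputs we will exploit repeatedly are the identification $\Phi^{H}(F_{V}K) \cong F_{V^{H}}(K^{H})$ of $\Phi^{H}$ on free spectra (immediate from the coequalizer defining $\Phi^{H}$, since $\barsmash$, $F_{(-)}$ and $(-)^{H}$ all behave as stated), together with the fact, proven in the preceding proposition, that $\Phi^{H}$ is strong symmetric monoidal on the subcategory of freely $f$-cofibrant spectra and hence sends external-smash pushout-products to external-smash pushout-products there. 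Throughout we use that $H$-fixed points of a $G$-equivariant (acyclic) $f$-cofibration is a $WH$-equivariant (acyclic) $f$-cofibration over $B^{H}$, which is the fiberwise equivariant version of the classical statement that fixed points preserve the HEP.

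For the cofibrations: a $qf$-cofibration is a retract of a relative cell complex built from the generating $qf$-cofibrations $F_{V}(G/K \times (S^{k-1}\to D^{k}))$ with $G/K \times (S^{k-1}\to D^{k})$ an $f$-cofibration of retractive $G$-spaces over $B$. Applying $\Phi^{H}$ and using $\Phi^{H}F_{V} \cong F_{V^{H}}(-)^{H}$ turns such a generator into $F_{V^{H}}\big((G/K)^{H}\times (S^{k-1}\to D^{k})\big)$ over $B^{H}$; writing the $WH$-set $(G/K)^{H}$ as a disjoint union of orbits exhibits this as a coproduct of generating $qf$-cofibrations for $WH\mc S(B^{H})$. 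Since $\Phi^{H}$ preserves colimits and retracts, it sends $qf$-cofibrations between freely $f$-cofibrant spectra to $qf$-cofibrations. For the acyclic cofibrations one argues the same way with the two families of generators: the free spectra on acyclic $f$-cofibrations $G/K\times(D^{k-1}\to D^{k})$ are handled exactly as above, and the $\barsmash$-pushout-products of generating $qf$-cofibrations with the stabilization maps $k_{V,W}$ of \cite[III.4.6]{mandell_may} are handled by noting that, on freely $f$-cofibrant spectra, $\Phi^{H}$ carries $k_{V,W}$ to (a map isomorphic to) $k_{V^{H},W^{H}}$ and commutes with pushout-products, so $\Phi^{H}$ of such a generator is again an acyclic $qf$-cofibration. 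All the cell complexes involved stay inside the freely $f$-cofibrant spectra, so no generality is lost.

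For stable equivalences, observe that the freely $f$-cofibrant spectra, with cofibrations the $qf$-cofibrations between them and weak equivalences the stable equivalences, form a category of cofibrant objects in the sense of Brown: the small-object factorization against the generating $qf$-cofibrations factors any map as a $qf$-cell inclusion followed by an acyclic $qf$-fibration (a stable equivalence), and the intermediate object remains freely $f$-cofibrant. By the two previous steps $\Phi^{H}$ preserves cofibrations, acyclic cofibrations, and (being a left adjoint) pushouts inside this category, so Brown's lemma applies and $\Phi^{H}$ preserves all stable equivalences between freely $f$-cofibrant spectra. The main obstacle is the homotopical analysis of $\Phi^{H}$ on the stabilization maps $k_{V,W}$ and its compatibility with pushout-products; this is exactly the point where our argument diverges from the coassembly argument of \cite{coassembly}, the advantage being that computing $\Phi^{H}$ of $k_{V,W}$ and of fiberwise cylinders $-\barsmash[0,1]_{+}$ only uses the coequalizer formula and the monoidal structure on $\Phi^{H}$, and so goes through verbatim in the equivariant and parametrized setting.
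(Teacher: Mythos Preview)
Your treatment of cofibrations and acyclic cofibrations is fine in spirit and matches what the paper defers to \cite{mandell_may}. One small correction: the paper explicitly notes that $\Phi^H$ is \emph{not} a left adjoint in this setting. This does not actually break your argument, since the colimits that arise in a cell complex (coproducts, pushouts along closed $f$-cofibrations, sequential colimits along such) are preserved by levelwise $H$-fixed points on retractive spaces and hence by $\Phi^H$; but you should justify colimit-preservation this way rather than by invoking an adjunction.

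The genuine gap is in the stable-equivalence step. Your proposed category of cofibrant objects takes the objects to be freely $f$-cofibrant spectra and the cofibrations to be $qf$-cofibrations. But then the axiom ``every object is cofibrant'' fails: a freely $f$-cofibrant spectrum $X$ need not have $*\to X$ a $qf$-cofibration, since freely $f$-cofibrant is built from \emph{arbitrary} $f$-cofibrations $K\to L$, not just the specific $qf$-cells. Ken Brown applied to the $qf$-model structure therefore only yields that $\Phi^H$ preserves stable equivalences between \emph{$qf$-cofibrant} spectra, which is strictly weaker than the statement of the lemma. The larger class is essential for the applications: in the proofs of the next lemma and of \cref{lem:geo_fp_symmetric_monoidal} one uses that $f^*$ and the path-fibration functor $P$ preserve freely $f$-cofibrant spectra, whereas neither preserves $qf$-cofibrant spectra.

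The paper closes exactly this gap by a direct construction: for any freely $f$-cofibrant $X$ it builds, by induction on skeleta, a $qf$-cofibrant approximation $X'\to X$ which is a level equivalence and for which $\Phi^H X'\to\Phi^H X$ is also an equivalence. Combined with the Ken Brown conclusion on $qf$-cofibrant objects, this yields the full statement by 2-out-of-3. Your argument is missing precisely this comparison between the two classes of cofibrant objects; the factorization you invoke via the small-object argument produces a $qf$-cofibrant intermediate object, but gives no control over $\Phi^H$ of the acyclic fibration to $Y$.
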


\begin{proof}
	The proof that it preserves cofibrations and acyclic cofibrations is identical to the proof in the non-parametrized case \cite{mandell_may}, so we focus on the last claim.
	
	We freely use the fact that a pushout-product of $f$-cofibrations of retractive spaces is again an $f$-cofibration, and that the external smash product $K \barsmash K'$ of $f$-cofibrant spaces preserves weak equivalences. This implies that $F_V K \to F_V L$ is an $f$-cofibration on each spectrum level when $K \to L$ is an $f$-cofibration, and also that $F_V K \to F_V K'$ is a level equivalence when $K \to K'$ is an equivalence of $f$-cofibrant spaces.
	
	It suffices to show that for a freely $f$-cofibrant spectrum $X$, there is some $qf$-cofibrant spectrum $X'$ and stable equivalence $X' \to X$ such that $\Phi^H X' \to \Phi^H X$ is an equivalence. Let $X^{(n)}$ denote the $n$-skeleton of $X$, meaning the target of the $n$th map in the sequential colimit system that defines $X$. By induction on $n$, we build two cofibrant spectra $X^{[n-1/2]}$ and $X^{[n]}$, fitting into a diagram
	\[ \xymatrix{
		X^{[n-1]} \ar[r]^-\sim \ar[d]^-\sim & X^{[n-1/2]} \ar[r] \ar[d]^-\sim & X^{[n]} \ar[d]^-\sim \\
		X^{(n-1)} \ar@{=}[r] & X^{(n-1)} \ar[r] & X^{(n)}
	} \]
	where the $\sim$ maps are level equivalences and the top row consists of $qf$-cofibrations of $qf$-cofibrant spectra. The colimit over $n$ is a homotopy colimit on each spectrum level and therefore $\colim_n X^{[n]} \to X$ is a level equivalence of spectra. Then we prove that $\Phi^H X^{[n]} \to \Phi^H X^{(n)}$ is an equivalence. Since $\Phi^H$ preserves free $f$-cofibrations, and pushouts and sequential colimits along such, this implies that $\Phi^H \colim_n X^{[n]} \to \Phi^H X$ is an equivalence, as desired.

	Now we build these spectra. For each of the maps $K \to L$ appearing at stage $n$ of the colimit system for $X$, factor $B \to K$ into a $qf$-cell complex $B \to K' \xto\sim K$, then factor $K' \to L$ into another $qf$-cell complex $K' \to L' \xto\sim L$. Because $K'$ is a cell complex relative to $B$ and $X^{[n-1]}_V \to X^{(n-1)}_V$ is a weak equivalence of $G$-spaces, the map $K' \to K \to X^{(n-1)}_V$ can be modified by a homotopy rel $B$ to a map that lifts to $X^{[n-1]}_V$. This data together gives a map from the mapping cylinder of $K' \to L'$ rel $B$ into $X^{(n-1)}_V$ for which the front end lifts to a map $K' \to X^{[n-1]}_V$. We define a projection map from the mapping cylinder back to $B$,  by composing this map with the projection $X^{(n-1)}_V \to B$ so that we have a map of retractive spaces over $B$.
	
	Form $X^{[n-1/2]}$ by attaching the cylinder part of this mapping cylinder to $X^{[n-1]}$, and $X^{[n]}$ by attaching the entire mapping cylinder. The maps in the diagram above are then clear. The levels of $X^{[n-1/2]}$  deformation retract onto $X^{[n-1]}$. Using the fact that external smash product preserves equivalences of $f$-cofibrant spaces, $X^{[n]} \to X^{(n)}$ is also a level equivalence. 
	
	Now apply $\Phi^H$ to the construction of $X^{[n]}$. We get the same equivalences as before, except possibly the one all the way on the right. Before $\Phi^H$, it is a map of pushouts of the form
	\[ \xymatrix{
		X^{[n-1/2]} \ar[d]^-\sim & \ar[l] \coprod F_{V_\alpha} K_\alpha' \ar[d]^-\sim \ar[r] & \coprod F_{V_\alpha} L_\alpha' \ar[d]^\sim \\
		X^{(n-1)} & \ar[l] \coprod F_{V_\alpha} K_\alpha \ar[r] & \coprod F_{V_\alpha} L_\alpha.
	} \]
	After $\Phi^H$, it is a map of pushouts of the form
	\[ \xymatrix{
		\Phi^H X^{[n-1/2]} \ar[d]^-\sim & \ar[l] \coprod F_{V_\alpha^H} (K_\alpha')^H \ar[d]^-\sim \ar[r] & \coprod F_{V_\alpha^H} (L_\alpha')^H \ar[d]^\sim \\
		\Phi^H X^{(n-1)} & \ar[l] \coprod F_{V_\alpha^H} K_\alpha^H \ar[r] & \coprod F_{V_\alpha^H} L_\alpha^H.
	} \]
	The left-hand vertical map is an equivalence by inductive hypothesis. The middle vertical map is an equivalence because $(K_\alpha')^H \to K_\alpha^H$ is an equivalence of equivariantly $f$-cofibrant $WH$-spaces, and similarly for the right-hand vertical map. The horizontal maps on the right-hand side are also $f$-cofibrations on each spectrum level, because $K_\alpha^H \to L_\alpha^H$ and $(K_\alpha')^H \to (L_\alpha')^H$ are both $WH$-equivariant $f$-cofibrations. Therefore the map of pushouts $\Phi^H X^{[n]} \to \Phi^H X^{(n)}$ is a level equivalence of $WH$-spectra, completing the induction.
\end{proof}

\begin{lem}\label{lem:external_smash_product_freely_f_cofibrant}
	$\barsmash$ preserves stable equivalences between freely $f$-cofibrant spectra.
\end{lem}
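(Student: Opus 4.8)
The plan is to reduce to the non-parametrized fact that, for a fixed finite group $K$, the smash product of orthogonal $K$-spectra preserves stable equivalences between freely cofibrant spectra, and then to feed this back one fiber at a time. By the definition of stable equivalence recalled before \cref{thm:stable_model_structure}, a map of parametrized $G$-spectra over $A\times B$ is a stable equivalence exactly when, for every point $(a,b)$ and every subgroup $H\leq\stab_{(a,b)}=\stab_a\cap\stab_b$, the induced map on $\pi^H_*$ of the fiber over $(a,b)$ is an isomorphism; so it is enough to understand $\barsmash$ fiberwise.

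First I would fix $(a,b)$, set $K=\stab_{(a,b)}$, and identify the fiber of $X\barsmash Y$ over $(a,b)$, as an orthogonal $K$-spectrum, with $\iota^*_K X_a\barsmash\iota^*_K Y_b$; this follows from the smbf isomorphism $f^*X\barsmash g^*Y\cong(f\times g)^*(X\barsmash Y)$ applied to the inclusions $\{a\}\hookrightarrow A$ and $\{b\}\hookrightarrow B$, together with the strong symmetric monoidality of $\iota^*_K$ established above. Next I would check that the fibers of a freely $f$-cofibrant spectrum are freely cofibrant: restriction to a fiber is a pullback functor, and by \cref{lem:pullback_cofibration} (and the fact that such pullbacks preserve colimits, free spectra, and $f$-cofibrations of retractive spaces) it carries the defining cell structure over $B$ to that of a freely cofibrant spectrum over a point, a property then preserved by the group-restriction functor $\iota^*_K$. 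Since $\iota^*_K$ leaves $\pi^H_*$ unchanged for $H\leq K$, this reduces the claim to: if $g$ and $h$ are stable equivalences of orthogonal $K$-spectra between freely cofibrant spectra, then $g\barsmash h$ is a stable equivalence of orthogonal $K$-spectra.

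That last statement is an instance of the pushout--product property for the stable model structure on orthogonal $K$-spectra \cite{mandell_may}: freely cofibrant spectra are cofibrant, the smash product is a left Quillen bifunctor, and Ken Brown's lemma yields preservation of weak equivalences between cofibrant objects. The step I expect to be the real work is not any one of these, but the bookkeeping needed to make the three reductions --- ``stable equivalence is fiberwise'', ``the fiber of $\barsmash$ is a smash of fibers'', ``a fiber of a freely $f$-cofibrant spectrum is freely cofibrant'' --- precise and mutually compatible as the stabilizer $K=\stab_{(a,b)}$ varies over $A\times B$, and to confirm that $\iota^*_K$ preserves the class of freely cofibrant spectra; all of this amounts to unwinding the definitions of $f$-cofibration, free parametrized $\mathscr J_G$-space, and the fiberwise description of $f^*$.
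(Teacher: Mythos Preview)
There is a genuine gap in your first reduction. Recall the definition of stable equivalence given just before \cref{thm:stable_model_structure}: a map is a stable equivalence if, \emph{after applying the level fibrant replacement $R^{lv}$}, the map of strict fibers induces an isomorphism on equivariant stable homotopy groups. In other words, stable equivalences are detected on \emph{homotopy} fibers, not on strict fibers. Your reduction implicitly replaces $(R^{lv}(X\barsmash Y))_{(a,b)}$ by $(X\barsmash Y)_{(a,b)}\cong X_a\barsmash Y_b$, but the levels of a freely $f$-cofibrant spectrum need not be quasifibrant over the base (the condition ``$B\to K$ is an $f$-cofibration'' says nothing about $K\to B$ being a quasifibration; for instance $K=B\amalg\{*\}$ with the extra point sent to some $b_0$ is $f$-cofibrant but not quasifibrant). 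So neither direction of your argument goes through: knowing $X\to X'$ is a stable equivalence does not give you that the strict-fiber map $X_a\to X'_a$ is one, and conversely, knowing all the $X_a\barsmash Y_b\to X'_a\barsmash Y_b$ are stable equivalences of non-parametrized spectra does not let you conclude that $X\barsmash Y\to X'\barsmash Y$ is a stable equivalence over $A\times B$.

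The paper sidesteps this issue by never passing to fibers. Instead, it reuses the skeleton-by-skeleton construction from the proof of \cref{lem:geo_fp_free_spectra} to build, for any freely $f$-cofibrant $X$, a $qf$-cofibrant spectrum $\colim_n X^{[n]}$ together with a \emph{level} equivalence to $X$, and then shows by an inductive pushout argument (using the pushout-product property of free $f$-cofibrations and the fact that $F_V K\barsmash Y\to F_V K'\barsmash Y$ is a level equivalence for $K\simeq K'$ $f$-cofibrant) that smashing with a fixed freely $f$-cofibrant $Y$ preserves this level equivalence. Level equivalences \emph{are} checked pointwise without any fibrant replacement, so this reduces the lemma to the $qf$-cofibrant case, where the left Quillen bifunctor property applies.
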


\begin{proof}
	The proof is essentially the same as the previous lemma. It suffices to take two freely $f$-cofibrant spectra $X$ and $Y$, build the spectra $X^{[n-1/2]}$, $X^{[n]}$ as in that argument, and to show that $\left(\colim_n X^{[n]}\right) \barsmash Y \to X \barsmash Y$ is a level equivalence. Then we could do the same with the roles of $X$ and $Y$ swapped, and conclude that $QX \barsmash QY \to X \barsmash Y$ is an equivalence.
	
	We first observe that pushout-products of spectra constructed with $\barsmash$ preserve free $f$-cofibrations; this follows from the same statement for spaces and the formal fact that smash products of free spectra are free. We already used in the previous proof that free $f$-cofibrations are level $f$-cofibrations. Using this, we can prove that if $K' \to K$ is an equivalence of $f$-cofibrant spaces then $F_{V_\alpha} K' \sma Y \to F_{V_\alpha} K \sma Y$ is an equivalence. We observe that $F_{V_\alpha} K \sma -$ turns the cell complex structure of $Y$ into a new cell complex structure in which the representations all have $V_\alpha$ added to them, and the spaces all have $K$ smashed into them. By the pushout-product property, the pushout squares are all along free $f$-cofibrations of spectra, which are $f$-cofibrations on each level. For each space $A$ occurring in the cell complex structure of $Y$, the equivalence of $f$-cofibrant spaces $K' \barsmash A \to K \barsmash A$ gives a level equivalence of free spectra. Hence every one of the pushout squares is changed by a level equivalence when we pass from $K'$ to $K$; hence
	\[ F_{V_\alpha} K' \sma Y \xto{\sim} F_{V_\alpha} K \sma Y \]
	is an equivalence.

	Since free $f$-cofibrations are level $f$-cofibrations, $\colim_n (X^{[n]} \barsmash Y) \cong \left(\colim_n X^{[n]}\right) \barsmash Y$ is a homotopy colimit. It therefore suffices to prove by induction that $X^{[n]} \barsmash Y \to X^{(n)} \barsmash Y$ is an equivalence. For the inductive step we have the diagram
	\[ \xymatrix{
		X^{[n-1]} \sma Y \ar[r]^-\sim \ar[d]^-\sim & X^{[n-1/2]} \sma Y \ar[r] \ar[d]^-\sim & X^{[n]} \sma Y \ar[d] \\
		X^{(n-1)} \sma Y \ar@{=}[r] & X^{(n-1)} \sma Y \ar[r] & X^{(n)} \sma Y
	} \]
	where the marked $\sim$ on the top is the deformation retract of $X^{[n-1/2]}$ onto $X^{[n-1]}$. We just need to see that the vertical on the right is an equivalence. Before smashing with $Y$, it is a map of pushouts of the form
	\[ \xymatrix{
		X^{[n-1/2]} \ar[d]^-\sim & \ar[l] \coprod F_{V_\alpha} K_\alpha' \ar[d]^-\sim \ar[r] & \coprod F_{V_\alpha} L_\alpha' \ar[d]^\sim \\
		X^{(n-1)} & \ar[l] \coprod F_{V_\alpha} K_\alpha \ar[r] & \coprod F_{V_\alpha} L_\alpha.
	} \]
	After $\barsmash Y$, the horizontal maps of the  right-hand square are level cofibrations, by the pushout-product property for free $f$-cofibrations. The vertical maps are equivalences by inductive hypothesis and the intermediate lemma we established earlier in the proof. Therefore the map of pushouts is an equivalence, and the induction is complete.
\end{proof}

Now we pass to the homotopy category by inverting all the stable equivalences in $G\mc S$. By \cref{thm:homotopy_category_of_fibration} the resulting category $\ho G\mc S$ is a fibration and op-fibration whose base category is the category $G\bS$ of $G$-spaces. By \cref{prop:cartesian_in_homotopy_category}, an arrow in $\ho G\mc S$ is homotopy cocartesian \tiff it is isomorphic to a cocartesian arrow $X \to f_!X$ in which $X$ is cofibrant. An arrow is homotopy cartesian \tiff it is isomorphic to a cartesian arrow $f^*Y \to Y$ with $Y$ fibrant.

\begin{rmk}
	This homotopy category is the homotopy category of the ``integral model structure'' of \cite{harpaz_prasma}, but taking the weak equivalences in the base category to be the isomorphisms, rather than the weak homotopy equivalences. In other words, it retains information about the base space up to homeomorphism, but the fiber spectra are remembered only up to stable equivalence.
\end{rmk}

Since every cell and acyclic cell in $G\mc S(A)$ pushes forward along $f$ to a cell or acyclic cell in $G \mc S(B)$, the cofibrant replacements $Q_A$ built using the small-object argument assemble into a single functor $Q\colon G \mc S \to G\mc S$. Using $Q$, we can left-derive the external smash product functor
\[ \barsmash\colon G\mc S \times G\mc S \to G\mc S. \]
This makes $\ho G\mc S$ into a symmetric monoidal category. More concretely, the tensor product is $\barsmash^{\bL} = (Q-) \barsmash (Q-)$, with associator, unitor, and symmetry isomorphism given by deleting all copies of $Q$ that are not applied to the inputs (for instance one that is applied to the output of $\barsmash$), applying the corresponding isomorphism for $\barsmash$, and then re-inserting the extra copies of $Q$. Since any two left-derivations of a functor are canonically isomorphic, we can be assured that if we had chosen a different model structure we would get an isomorphic symmetric monoidal category.

\begin{thm}\label{thm:spectra_smbf}
	This symmetric monoidal structure makes $\ho G\mc S$ into a symmetric monoidal bifibration, with Beck-Chevalley for every homotopy pullback square of $G$-spaces.
\end{thm}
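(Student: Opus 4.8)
The plan is to verify the conditions in the definition of a symmetric monoidal bifibration for $\ho G\mc S \to G\bS$ with the derived external smash product $\barsmash^{\bL}$: the underlying fibration/op-fibration structure, the strict monoidality of the projection, the compatibility of $\barsmash^{\bL}$ with that structure, and Beck--Chevalley for homotopy pullback squares. The first two are essentially in hand. With the stable $qf$-model structures of \cref{thm:stable_model_structure}, the category $G\mc S$ is a right-deformable fibration --- take $\sF^A$ to be the fibrant objects and $R_A$ a functorial fibrant replacement, and use that $f^*$ is right Quillen, hence preserves fibrant objects and weak equivalences between them --- and, dually, a left-deformable op-fibration via a functorial cofibrant replacement $Q$ and the fact that $f_!$ is left Quillen. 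So \cref{thm:homotopy_category_of_fibration} makes $\ho G\mc S \to G\bS$ a fibration and op-fibration, and \cref{prop:cartesian_in_homotopy_category} identifies its homotopy cartesian arrows (up to isomorphism, the point-set cartesian arrows $f^*Y\to Y$ with $Y$ fibrant) and homotopy cocartesian arrows ($X\to f_!X$ with $X$ cofibrant). The projection is strictly symmetric monoidal for $\barsmash^{\bL}$ and the cartesian product on $G\bS$ because $Q$ lies over $\id_{G\bS}$, so $\barsmash^{\bL}(X,Y)=QX\barsmash QY$ lies over $\pi(X)\times\pi(Y)$ and the associativity, unit and symmetry isomorphisms of $\barsmash^{\bL}$ (obtained from those of $\barsmash$ by inserting and deleting copies of $Q$) all lie over identities.

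Next I would show that $\barsmash^{\bL}$ is a map of fibrations and of op-fibrations. Since a tensor of two (homotopy) (co)cartesian arrows factors as a composite of tensors with identities, and composites of homotopy (co)cartesian arrows are homotopy (co)cartesian, it suffices to treat $\alpha\barsmash^{\bL}\id_Z$ for $\alpha$ homotopy (co)cartesian. By \cref{prop:cartesian_in_homotopy_category} and uniqueness of (co)cartesian lifts we may take $\alpha$ to be a point-set cartesian arrow $f^*Y\to Y$ with $Y$ bifibrant (respectively a point-set cocartesian arrow $X\to f_!X$ with $X$ bifibrant), and we may compute $\barsmash^{\bL}$ using a bifibrant $Z$. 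In the fibration case $f^*Y$ is freely $f$-cofibrant by \cref{lem:pullback_cofibration}, and \cref{lem:external_smash_product_freely_f_cofibrant} then lets us delete the cofibrant replacements, identifying $\alpha\barsmash^{\bL}\id_Z$ in $\ho G\mc S$ with the point-set arrow $f^*Y\barsmash Z\to Y\barsmash Z$; under the canonical isomorphism $f^*Y\barsmash Z\cong (f\times\id)^*(Y\barsmash Z)$ supplied by the smbf structure of $G\mc S$, together with the on-the-nose preservation of cartesian arrows by $\barsmash$, this is a cartesian arrow, hence homotopy cartesian (if $Y\barsmash Z$ is not already fibrant, replace it levelwise and compare via uniqueness of cartesian lifts). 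The op-fibration case is dual, using in addition that $\barsmash$ of cofibrant spectra is cofibrant (the pushout-product property) and that $f_!$ preserves cofibrant spectra. The only care needed is to work with models that are at once cofibrant enough for \cref{lem:external_smash_product_freely_f_cofibrant} and fibrant enough for \cref{prop:cartesian_in_homotopy_category}; this is routine since in the $qf$-model structure bifibrant spectra are in particular freely $f$-cofibrant.

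The main work --- and the step I expect to be the real obstacle --- is Beck--Chevalley for \emph{every} homotopy pullback square of $G$-spaces. The plan is to factor an arbitrary homotopy pullback square, functorially in $G\bS$, into a strict pullback square one of whose legs is a fibration, together with a levelwise weak equivalence from the original square; for the strict square the point-set Beck--Chevalley isomorphism holds by \cite[11.4.8]{ms}. One then checks that the total-derived Beck--Chevalley transformation \eqref{beck_chevalley} in $\ho G\mc S$ is modelled by the point-set transformation after applying the appropriate deformations: the right-derived pullback is $f^*R_B$ with $R_B$ a fibrant (hence levelwise quasifibrant) replacement, $f^*$ preserves stable equivalences between levelwise quasifibrant spectra, the left-derived pushforward is $f_!Q$, and $f_!$ preserves stable equivalences between cofibrant spectra --- so the derived transformation is the point-set isomorphism flanked by weak equivalences, hence an isomorphism. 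The genuinely delicate point is choosing functorial replacements simultaneously compatible with pullback along the fibration, with pushforward, and with $\barsmash$; the cleanest route is the $ex$-fibrant approximation machinery of \cite{ms}, and the full equivariant version is the sort of tedious but elementary diagram-chase best carried out once in \cite{spectra_notes}. Finally, since homotopy pullback squares are closed under isomorphism and under products with objects of $G\bS$, and since all of the specific squares demanded in the definition of an smbf in \cref{sec:bifi} are strict --- hence homotopy --- pullbacks, taking the class of Beck--Chevalley squares to be all homotopy pullback squares verifies the remaining requirements; combined with \cref{thm:indexed_bicat}, this supplies the shadowed $n$-Fuller structure and system of base-change objects on the associated bicategory that were used in Part~\ref{part:unwinding}.
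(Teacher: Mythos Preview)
Your overall strategy matches the paper's, and the pieces for strict monoidality, the bifibration structure, cocartesian arrows, and the Beck--Chevalley reduction to strict pullbacks with a fibration leg are all fine. The gap is in your treatment of homotopy \emph{cartesian} arrows. You correctly identify $\alpha\barsmash^{\bL}\id_Z$ with the point-set cartesian arrow $(f\times\id)^*(Y\barsmash Z)\to Y\barsmash Z$, but the claim ``hence homotopy cartesian (if $Y\barsmash Z$ is not already fibrant, replace it levelwise and compare via uniqueness of cartesian lifts)'' does not go through. To invoke \cref{prop:cartesian_in_homotopy_category} you must produce an isomorphism in $\ho G\mc S$ to a cartesian arrow with fibrant target, i.e.\ you need $(f\times\id)^*(Y\barsmash Z)\to (f\times\id)^*R(Y\barsmash Z)$ to be a stable equivalence. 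But $f^*$ only preserves stable equivalences between spectra whose levels are quasifibrations, and $Y\barsmash Z$ has no reason to be levelwise quasifibrant even when $Y$ and $Z$ are bifibrant. Your remark that ``bifibrant spectra are freely $f$-cofibrant'' addresses the wrong side of the problem: it controls $\barsmash$, not $f^*$.

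The paper bridges this gap with the functor $P$ from \cite{coassembly} (pull back along one evaluation $B^I\to B$ and push forward along the other). The point is that $P$ produces spectra whose levels are quasifibrations while simultaneously preserving the class of freely $f$-cofibrant spectra; this lets you build a zig-zag
\[
f^*X\barsmash g^*Y \;\xrightarrow{\sim}\; f^*PX\barsmash g^*PY \;\cong\; (f\times g)^*P(X\barsmash Y) \;\xrightarrow{\sim}\; (f\times g)^*PR(X\barsmash Y) \;\xleftarrow{\sim}\; (f\times g)^*R(X\barsmash Y)
\]
in which every application of $f^*$ lands on levelwise quasifibrant input and every $\barsmash$ lands on freely $f$-cofibrant input, so \cref{lem:external_smash_product_freely_f_cofibrant} and the hypothesis on $f^*$ both apply. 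You gesture at ``$ex$-fibrant approximation'' later, in the Beck--Chevalley paragraph, but this is exactly where it is needed; without it the cartesian case is incomplete.
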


\begin{proof}
	The projection to the base category $G\bS$ is still strict symmetric monoidal because the map $QX \xto{\sim} X$ lies over the identity of $G\bS$. The Beck-Chevalley property for pullback squares with one leg a fibration is \cite[9.9]{shulman2011comparing}, building on \cite[Thm 13.7.7]{ms}; see also \cite{spectra_notes}. 
	For a commuting square of spaces where two of the parallel sides are weak equivalences, we also get the Beck-Chevalley property because each component of the Beck-Chevalley map is an isomorphism as functors of homotopy categories. We then deduce the Beck-Chevalley property for an arbitrary homotopy pullback square using the usual pasting lemma.
	
	It remains to show that $\barsmash^\bL$ preserves cocartesian arrows and cartesian arrows. In principle, this should be citable away to \cite{ms}, but it is difficult to work directly with their construction of the symmetric monoidal structure on the pullback functors $f^*$. We instead start with the ``canonical'' one defined just above.\footnote{It is clear that this has the expected behavior on suspension spectra, which is all we need for the applications anyway.}
	
	Take any two homotopy cocartesian arrows in $\ho G\mc S$. Up to isomorphism, they are cocartesian arrows $X \to f_!X$ and $Y \to g_!Y$ in the point-set category $G\mc S$ with $X$ and $Y$ cofibrant. On these inputs we have an equivalence $(Q-)\barsmash(Q-) \simeq - \barsmash -$, so the derived product $\barsmash^\bL$ of these arrows in the homotopy category is isomorphic to their actual product $\barsmash$, which is cocartesian in $G\mc S$ and still has a cofibrant source, hence is homotopy cocartesian. Therefore $\barsmash^\bL$ preserves homotopy cocartesian arrows.
	
	Now take any two homotopy cartesian arrows in $\ho G\mc S$. Up to isomorphism, they are cartesian arrows in the point-set category $G\mc S$ whose targets are both cofibrant and fibrant. Let us call them $f^*X \to X$ and $g^*Y \to Y$. Form the following commuting diagram in $G\mc S$.
	\[ \xymatrix @R=2em @C=2em{
		Qf^*X \barsmash Qg^*Y \ar[r] \ar[d]_-\sim^-{(1)} & QX \barsmash QY \ar[d]^-\sim \\
		f^*X \barsmash g^*Y \ar[r] \ar[d]_-\sim^-{(5)} & X \barsmash Y \ar[d]^-\sim_-{(2)} \\
		f^*PX \barsmash g^*PY \ar[r] \ar@{<->}[d]_-\cong & PX \barsmash PY \ar@{<->}[d]^-\cong \\
		(f \times g)^*P(X \barsmash Y) \ar[r] \ar[d]_-\sim^-{(3)} & P(X \barsmash Y) \ar[d]^-\sim \\
		(f \times g)^*PR(X \barsmash Y) \ar[r] \ar@{<-}[d]_-\sim^-{(4)} & PR(X \barsmash Y) \ar@{<-}[d]^-\sim \\
		(f \times g)^*R(X \barsmash Y) \ar[r] & R(X \barsmash Y)
	} \]
	Here $P$ is the functor from \cite{coassembly}; it pulls back $X$ and then pushes it forward along the two evaluation maps $B^I \rightrightarrows B$. The maps $(1)$ and $(2)$ are equivalences by \cref{lem:external_smash_product_freely_f_cofibrant} and the fact that $f^*$ and $P$ preserve freely $f$-cofibrant spectra. The equivalences $(3)$, $(4)$ are because $f^*$ preserves the stable equivalences between spectra whose levels are quasifibrant, and this class includes both $RX$ and $PX$ when $X$ is cofibrant. The equivalence $(5)$ uses all of these facts together. Therefore $\barsmash^\bL$ preserves homotopy cartesian arrows.
\end{proof}

\begin{prop}\label{lem:forget_symmetric_monoidal}
	The functor $\iota_H^*\colon \ho G\mc S(B) \to \ho H\mc S(B)$ extends to a map of symmetric monoidal bifibrations $\ho G\mc S \to \ho H\mc S$ over the functor $\iota_H^*\colon G\bS \to H\bS$.
\end{prop}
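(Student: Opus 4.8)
The plan is to descend the point-set map of symmetric monoidal bifibrations $\iota_H^*\colon G\mc S \to H\mc S$ from the Proposition earlier in \cref{sec:spectra} to the homotopy categories. The first step is to check that $\iota_H^*$ is homotopical, i.e. preserves stable equivalences. This is essentially bookkeeping: a map of $G$-spectra over $B$ is a stable equivalence when, on each fiber over $b \in B$, it becomes a $\pi_*^K$-isomorphism after level fibrant replacement for every $K \leq \stab_b$ computed in $G$; for the restricted $H$-action the $H$-stabilizer of $b$ is $\stab_b \cap H$, whose subgroups form a subcollection of those appearing in the $G$-condition and whose fixed-point spaces $B^K$ are unchanged, so the $H$-stable-equivalence condition is implied. (One also uses that $\iota_H^*$ of a level fibrant $G$-spectrum is level fibrant as an $H$-spectrum, together with the independence of the condition from the choice of $R^{lv}$.) By the universal property of localization, $\iota_H^*$ then induces a functor $\ho G\mc S \to \ho H\mc S$ restricting to $\ho G\mc S(B) \to \ho H\mc S(B)$ on fibers, and since the point-set square over $\iota_H^*\colon G\bS \to H\bS$ commutes strictly, so does the induced one.

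Next I would check the induced functor is a map of bifibrations. The base functor $\iota_H^*\colon G\bS \to H\bS$ strictly preserves finite products and sends a homotopy pullback square of $G$-spaces to a homotopy pullback square of $H$-spaces (the latter being detectable on the same fixed-point spaces), so it preserves every designated Beck-Chevalley square of \cref{thm:spectra_smbf}; the explicit product squares in the bifibration axioms are visibly preserved as well. That the induced functor preserves homotopy cocartesian arrows follows from \cref{prop:cartesian_in_homotopy_category}: such an arrow is isomorphic to a cocartesian arrow $X \to f_!X$ in $G\mc S$ with $X$ cofibrant; the point-set $\iota_H^*$ preserves cocartesian arrows, and $\iota_H^* X$ is $qf$-cofibrant as an $H$-spectrum because under restriction each $G/K$-cell becomes a finite disjoint union of $H/L$-cells, so a $qf$-cell complex is carried to a $qf$-cell complex. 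Dually, a homotopy cartesian arrow is isomorphic to $f^*Y \to Y$ with $Y$ $qf$-fibrant; $\iota_H^*$ preserves cartesian arrows, and $\iota_H^* Y$ has level-quasifibrant levels over the fixed-point spaces $B^K$, $K \leq H$, so composing with a fibrant replacement $\iota_H^* Y \xto{\sim} Z$ — along which $(\iota_H^* f)^*$ is homotopical by the preservation property of $f^*$ on quasifibrant-level spectra — exhibits $\iota_H^*(f^*Y \to Y)$ as homotopy cartesian.

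Finally I would promote this to a map of symmetric monoidal bifibrations. On the point set, $\iota_H^*$ is strong symmetric monoidal with structure map the identity on underlying spectra; in particular it is homotopical and carries $qf$-cofibrant, hence freely $f$-cofibrant, $G$-spectra to freely $f$-cofibrant $H$-spectra. Consequently both $\iota_H^* \circ \barsmash^{\bL}$ and $\barsmash^{\bL} \circ (\iota_H^* \times \iota_H^*)$ are left derived functors of $\barsmash \circ (\iota_H^* \times \iota_H^*) = \iota_H^* \circ \barsmash$; here one uses \cref{lem:external_smash_product_freely_f_cofibrant} to see that $\barsmash^{\bL}$ on $H$-spectra may be computed using the freely $f$-cofibrant resolution $\iota_H^* Q_G X$ in place of $Q_H \iota_H^* X$. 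By uniqueness of left derived functors these two composites are canonically isomorphic, and the canonical isomorphism is symmetric monoidal, giving $\iota_H^*$ a strong symmetric monoidal structure compatible with its behavior as a map of bifibrations. Since $\iota_H^*$ strictly preserves products of spaces, this monoidal comparison lies over the identity $\iota_H^*(A) \times \iota_H^*(B) = \iota_H^*(A \times B)$, as required in the definition of a map of symmetric monoidal bifibrations.

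I expect the main obstacle to be one of care rather than of ideas: verifying that $\iota_H^*$ interacts correctly with every layer of structure simultaneously — stable equivalences, the cofibrant and fibrant replacement functors, the quasifibrancy hypotheses that keep $f^*$ homotopical, and the left-derived external smash — since the genuine homotopical content has already been isolated in \cref{lem:external_smash_product_freely_f_cofibrant} (and, for $\Phi^H$, in \cref{lem:geo_fp_free_spectra}), leaving here only the comparatively formal task of transporting the point-set map of smbfs across localization.
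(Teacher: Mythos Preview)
Your proposal is correct and follows essentially the same approach as the paper: descend the point-set smbf map to homotopy categories, verify preservation of homotopy (co)cartesian arrows via \cref{prop:cartesian_in_homotopy_category}, and build the derived monoidal structure by deleting and reinserting cofibrant replacements. The paper streamlines your treatment of homotopy cartesian arrows by observing that $\iota_H^*$ is right Quillen and hence preserves fibrant objects outright, which avoids your detour through level-quasifibrant spectra and a further fibrant replacement.
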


\begin{proof}
	The functor $\iota_H^*$ clearly makes sense on all of $G\mc S$ and preserves all equivalences, therefore directly passes to a functor $\ho G\mc S \to \ho H\mc S$. The point-set functor preserves cartesian arrows, cocartesian arrows, cofibrant objects, and fibrant objects (because it is right Quillen). Therefore when viewed as a derived functor, it preserves homotopy cartesian arrows and homotopy cocartesian arrows. Since $\iota_H^*$ strictly commutes with $\barsmash$, it commutes with $\barsmash^\bL$ up to isomorphism by deleting the $Q$s, applying the commutation, then re-inserting the $Q$s. (Notice we have to do this because cofibrant replacement for $H$-spectra may not be $\iota_H^*$ of the cofibrant replacement functor for $G$-spectra.) Again since $QX \xto{\sim} X$ is over the identity in the base, this isomorphism lies over the corresponding isomorphism of spaces $\iota_H^*(A) \times \iota_H^*(A') \cong \iota_H^*(A \times A')$ in the base category. This gives $\iota_H^*$ the structure of a symmetric monoidal functor of homotopy categories, whose coherences follow from the same coherences on the point-set level. 
\end{proof}

Although $\Phi^H$ is not a left adjoint, by \cref{lem:geo_fp_free_spectra} it preserves cofibrations and acyclic cofibrations. It therefore also has a left-derived functor $\bL\Phi^H = \Phi^H Q$.

\begin{prop}\label{lem:geo_fp_symmetric_monoidal}
	The functor $\bL\Phi^H\colon \ho G\mc S(B) \to \ho WH\mc S(B)$ extends to a map of symmetric monoidal bifibrations $\ho G\mc S \to \ho WH\mc S$ over the functor $(-)^H\colon G\bS \to WH\bS$.
\end{prop}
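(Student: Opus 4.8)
The plan is to follow the proof of \cref{lem:forget_symmetric_monoidal} essentially verbatim, with two modifications forced by the fact that $\Phi^H$ is not right Quillen: we work throughout with the left-derived functor $\bL\Phi^H = \Phi^H Q$, and we replace the ``preserves fibrant and cofibrant objects'' inputs used there for $\iota_H^*$ by \cref{lem:geo_fp_free_spectra} together with the clause established just above that the point-set $\Phi^H$ is strong symmetric monoidal on freely $f$-cofibrant spectra. First I would note that $\bL\Phi^H$ is a well-defined functor $\ho G\mc S \to \ho WH\mc S$: \cref{lem:geo_fp_free_spectra} gives that $\Phi^H$ preserves cofibrations and acyclic cofibrations and hence has a left-derived functor, and this functor lies over $(-)^H\colon G\bS \to WH\bS$ because the point-set $\Phi^H$ does and $QX \xto{\sim} X$ lies over the identity of $G\bS$. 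I would also record that $(-)^H$ preserves finite products on the nose, and preserves pullbacks and fibrations of $G$-spaces and hence homotopy pullback squares, so that the base functor satisfies the requirements on the base of a map of bifibrations, recalling from \cref{thm:spectra_smbf} that the Beck-Chevalley squares of $\ho WH\mc S$ are exactly the homotopy pullback squares of $WH$-spaces.

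Next I would dispatch the homotopy cocartesian arrows. Up to isomorphism in $\ho G\mc S$ such an arrow is a point-set cocartesian arrow $X \to f_!X$ with $X$ cofibrant; since cofibrant spectra are freely $f$-cofibrant and $f_!$ is left Quillen, both $X$ and $f_!X$ are cofibrant, so by \cref{lem:geo_fp_free_spectra} the comparison $\Phi^H Q(X \to f_!X) \to \Phi^H(X \to f_!X)$ is an isomorphism in $\ho WH\mc S$. The point-set $\Phi^H$ preserves cocartesian arrows and cofibrant objects (it preserves cofibrations and the initial object), so $\Phi^H(X \to f_!X)$ is a point-set cocartesian arrow with cofibrant source, hence homotopy cocartesian by \cref{prop:cartesian_in_homotopy_category}.

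I expect the preservation of homotopy cartesian arrows to be the main obstacle, precisely because applying $Q$ destroys the strict universal property and $\Phi^H$ is not right Quillen, so the short argument used for $\iota_H^*$ is unavailable. Here I would reuse the ladder argument from the proof of \cref{thm:spectra_smbf}: starting from a representative $f^*Y \to Y$ with $Y$ fibrant, build a finite tower of point-set maps connecting $\Phi^H Q f^*Y \to \Phi^H QY$ to a point-set cartesian arrow with fibrant target over the fixed-point bases, each rung of which is a stable equivalence. The inputs are those used there, transported through $\Phi^H$: that $\Phi^H$ commutes with $f^*$ on the point-set level, so it sends point-set cartesian arrows to point-set cartesian arrows, which follows as in the proof above that $\Phi^H$ is a map of bifibrations; that $\Phi^H$ and $f^*$ preserve stable equivalences between freely $f$-cofibrant and between level-quasifibrant spectra respectively (\cref{lem:geo_fp_free_spectra,lem:pullback_cofibration}); and that the functor $P$ of \cite{coassembly} and the fibrant replacements preserve the relevant subcategories. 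Then \cref{prop:cartesian_in_homotopy_category} identifies the outcome as homotopy cartesian.

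Finally, for the symmetric monoidal structure I would promote the lax monoidal structure on the point-set $\Phi^H$ to $\bL\Phi^H$ by the device already used for $\iota_H^*$ and for $\barsmash^\bL$: delete the copies of $Q$ not applied to the inputs, apply the point-set comparison on the freely $f$-cofibrant spectra $QX$ and $QY$ --- where, by the strength clause established above, it is an isomorphism --- and re-insert the $Q$'s. This lies over the canonical isomorphism $(-)^H(A) \times (-)^H(B) \cong (-)^H(A \times B)$ because $QX \xto{\sim} X$ lies over the identity in the base, and the coherence axioms descend from those on the point-set level, restricted to freely $f$-cofibrant spectra where the comparison is invertible. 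Since the comparison is an isomorphism on all freely $f$-cofibrant spectra, $\bL\Phi^H$ is in fact strong symmetric monoidal, which completes the verification that $\bL\Phi^H$ is a map of symmetric monoidal bifibrations over $(-)^H$.
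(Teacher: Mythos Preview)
Your proposal is correct and follows essentially the same route as the paper: the symmetric monoidal structure is obtained by the delete-$Q$/re-insert-$Q$ device, cocartesian arrows are handled by the point-set argument on cofibrant objects, and cartesian arrows by a ladder built from $P$ and $R$. One small point: for the ladder you should take the representative $f^*Y \to Y$ with $Y$ both cofibrant \emph{and} fibrant (as the paper does), since the first rung $\Phi^H Q f^*Y \to \Phi^H f^*Y$ needs $f^*Y$ to be freely $f$-cofibrant, which follows from \cref{lem:pullback_cofibration} once $Y$ is cofibrant.
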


\begin{proof}
	As above, the structure of $\bL\Phi^H$ as a symmetric monoidal functor is obtained by deleting all extraneous copies of $Q$, applying the same structure for $\Phi^H$, then re-inserting $Q$. Again, this gives a symmetric monoidal structure that lies over the canonical one on $(-)^H\colon G\bS \to WH\bS$, and its coherences follow from the same coherences in the point-set category for $\Phi^H$.
	
	The functor $\Phi^H$ preserves both cocartesian and cartesian arrows because the same is true for the smash products, fixed points, and colimits that make up its definition.\footnote{This is under the convention that parametrized spectra are built from compactly generated spaces ($k$-spaces) that are not necessarily weak Hausdorff. The argument still works if we work entirely in weak Hausdorff spaces, but it takes longer to argue that $f^*$ preserves the colimits that make up $\Phi^H X$ when $X$ is freely $f$-cofibrant.} Since it also preserves cofibrant objects, and $\bL\Phi^H \cong \Phi^H$ on the homotopy category of cofibrant objects, this implies $\bL\Phi^H$ preserves homotopy cocartesian arrows.
	
	However $\Phi^H$ does not preserve fibrant objects, so for homotopy cartesian arrows we have to work a little harder. Start with a point-set cartesian arrow $f^*X \to X$ in which $X$ is cofibrant and fibrant. 
	\[ \xymatrix @R=1.5em @C=2em{
		\Phi^H Qf^*X \ar[r] \ar[d]_-\sim & \Phi^H QX \ar[d]^-\sim \\
		\Phi^H f^*X \ar[r] \ar[d]_-\sim & \Phi^H X \ar[d]^-\sim \\
		\Phi^H f^*PX \ar[r] \ar@{<->}[d]_-\cong & \Phi^H PX \ar@{<->}[d]^-\cong \\
		f^* P\Phi^H X \ar[r] \ar[d]_-\sim & P\Phi^H X \ar[d]^-\sim \\
		f^* PR\Phi^H X \ar[r] \ar@{<-}[d]_-\sim & PR\Phi^H X \ar@{<-}[d]^-\sim \\
		f^* R\Phi^H X \ar[r] & R\Phi^H X
	} \]
	The weak equivalences in the top half follow from \cref{lem:geo_fp_free_spectra}, because the class of cofibrant spectra described in that lemma is preserved by pullback and by $P$. The isomorphisms in the middle follow because $P$ is a composition of a pullback and a pushforward. The arrow at the bottom is homotopy cartesian, hence so is the arrow at the top. Therefore $\bL\Phi^H$ preserves homotopy cartesian arrows.
\end{proof}

\section{Change of groups for the Reidemeister trace}\label{sec:grand_finale}
Combining \cref{thm:indexed_bicat,lem:forget_symmetric_monoidal,lem:geo_fp_symmetric_monoidal} gives the following result.  

\begin{thm}\label{thm:forget_fixed_strong_shadow} If $H$ is a subgroup of a finite group $G$, 
$\iota_H^*$ and $\Phi^H$ are strong shadow functors on $G\Ex$.
\end{thm}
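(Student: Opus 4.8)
The plan is to deduce \cref{thm:forget_fixed_strong_shadow} directly from the machinery already assembled, reducing everything to the last bullet point of \cref{thm:indexed_bicat}. By \cref{thm:spectra_smbf}, the homotopy category $\ho G\mc S$ is a symmetric monoidal bifibration over $G\bS$ whose Beck-Chevalley squares are the homotopy pullback squares; moreover the bicategory $G\Ex$ is, by construction, exactly $\calBi{\ho G\mc S}{G\bS}$ with the shadow, associator, unitor, $\odot$, $U_B$, and base-change objects $\bcr AfB$ produced from this smbf via the formulas in \cref{sec:bifi}. So to obtain strong shadow functors on $G\Ex$, it suffices by the final bullet of \cref{thm:indexed_bicat} to produce maps of symmetric monoidal bifibrations $\ho G\mc S \to \ho H\mc S$ and $\ho G\mc S \to \ho WH\mc S$ realizing $\iota_H^*$ and $\bL\Phi^H$.

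First I would invoke \cref{lem:forget_symmetric_monoidal}, which states precisely that $\iota_H^*$ extends to a map of symmetric monoidal bifibrations $\ho G\mc S \to \ho H\mc S$ lying over $\iota_H^*\colon G\bS \to H\bS$. Applying the last bullet of \cref{thm:indexed_bicat} to this map yields a strong shadow functor $\calBi{\ho G\mc S}{G\bS} \to \calBi{\ho H\mc S}{H\bS}$, i.e. a strong shadow functor $G\Ex \to H\Ex$, together with the compatibility isomorphism with base-change objects. Second, I would do the same with \cref{lem:geo_fp_symmetric_monoidal} for $\bL\Phi^H$, producing a strong shadow functor $G\Ex \to WH\Ex$ over $(-)^H\colon G\bS \to WH\bS$. (One small point worth spelling out: the smbf maps in \cref{lem:forget_symmetric_monoidal,lem:geo_fp_symmetric_monoidal} are required to preserve the designated Beck-Chevalley squares, i.e. send homotopy pullback squares of $G$-spaces to homotopy pullback squares in the target; for $\iota_H^*$ this is immediate since underlying homotopy pullbacks are homotopy pullbacks, and for $(-)^H$ it holds because fixed points of a homotopy pullback of $G$-spaces is a homotopy pullback of $WH$-spaces, which is what makes those propositions go through.)

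The only genuine content beyond citing the above is bookkeeping: verifying that the bicategory denoted $G\Ex$ in \cref{sec:traces_in_bicat} really is the one produced by the smbf construction of \cref{thm:indexed_bicat} applied to $\ho G\mc S$, so that the cited theorem applies. This is a matter of matching the formulas for $\odot$, $\sh{-}$, $U_B$, $\bcr AfB$ from \cref{sec:bifi} against the definitions in \cite{ms} and \cite{mp2}; it is handled in \cite{spectra_notes}, and I would simply point to that. I do not expect a real obstacle here — the heavy lifting (that $\barsmash^\bL$ preserves homotopy cartesian and cocartesian arrows, that $\Phi^H$ and $\iota_H^*$ do too, that the symmetric monoidal coherences survive derivation) has already been carried out in \cref{lem:external_smash_product_freely_f_cofibrant,lem:geo_fp_free_spectra,thm:spectra_smbf,lem:forget_symmetric_monoidal,lem:geo_fp_symmetric_monoidal}. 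If anything, the subtle point to be careful about is that one must use the derived functor $\bL\Phi^H$ rather than the point-set $\Phi^H$ — the latter does not preserve fibrant objects — but \cref{lem:geo_fp_symmetric_monoidal} has already packaged $\bL\Phi^H$ correctly as an smbf map, so the statement of the theorem, phrased in terms of $\Phi^H$, is understood to mean this derived functor.
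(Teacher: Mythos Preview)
Your proposal is correct and matches the paper's own proof exactly: the paper simply states that combining \cref{thm:indexed_bicat}, \cref{lem:forget_symmetric_monoidal}, and \cref{lem:geo_fp_symmetric_monoidal} gives the result. The additional bookkeeping you spell out (identification of $G\Ex$ with $\calBi{\ho G\mc S}{G\bS}$, preservation of Beck--Chevalley squares, the derived vs.\ point-set $\Phi^H$) is left implicit in the paper but is indeed the content being invoked.
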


\cref{bicategory_map_preserves_traces,thm:forget_fixed_strong_shadow}  imply that 	if $X$ is any finitely dominated $G$-CW complex and $f: X \to X$ any $G$-equivariant self-map, there are isomorphisms in the homotopy category
\begin{align*}
	\iota_H^* R_G(f) &\cong R_H(\iota_H^* f) \\
	\Phi^H R_G(f) &\cong R_{WH}(f^H).
\end{align*}
Tracing through the constructions shows that these come about through familiar isomorphisms on the source and target, for instance the isomorphism $\Phi^H \Sigma^\infty_+ \Lambda^f X \cong \Sigma^\infty_+ \Lambda^{f^H} X^H$.

\begin{cor}\label{cor:they_preserve_reidemeister_traces}
If $G$ is a finite group, $H$ is a normal subgroup of $G$, $Y$ is a $G$-space and $\phi\colon Y\to Y$ is a $G$-equivariant map, then 
\[\xymatrix@C=40pt@R=40pt{
		&&\Sph \ar@/_14pt/[dll]_-{R_G(\phi)^{G}} 
		\ar@/_7pt/[dl]_(.6){R_{WH}(\phi^H)^{G/H}} 
		\ar[d]^-{R(\phi^H)} 
		\\
		(\Sigma^\infty_+ \Lambda^{\phi} Y)^{G} \ar[r]^-{R} &
		(\Sigma^\infty_+ \Lambda^{\phi^H} (Y^H))^{G/H} 
		 \ar[r]^-{F}
		&	\Sigma^\infty_+ \Lambda^{\phi^H} (Y^H)
}\]
commutes up to homotopy.  
\end{cor}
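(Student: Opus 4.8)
The plan is to obtain the corollary as an immediate consequence of \cref{thm:forget_fixed_strong_shadow} and the functoriality of the bicategorical trace, \cref{bicategory_map_preserves_traces}; it is the general-group version of \cref{thm:main_second_half}, obtained by putting $G$ in place of $C_n$, the normal subgroup $H$ in place of $C_{n/k}$, the quotient $G/H=WH$ in place of $C_k$, $Y$ in place of $X^n$, and $\phi$ in place of $\ful fn$. Since $H$ is normal we have $WH=G/H$, and by \cref{thm:forget_fixed_strong_shadow} both the geometric fixed point functor $\Phi^H$ and the functor $\iota_e^*$ forgetting the $WH$-action to the trivial group are strong shadow functors on the relevant bicategories of parametrized equivariant spectra. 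Applying \cref{bicategory_map_preserves_traces} to the Reidemeister trace $R_G(\phi)$, and then to $R_{WH}(\phi^H)$, therefore produces isomorphisms in the respective homotopy categories
\[ \Phi^H R_G(\phi) \cong R_{WH}(\phi^H), \qquad \iota_e^* R_{WH}(\phi^H) \cong R(\phi^H), \]
in which the identifications of source and target are the canonical ones, most importantly $\Phi^H \Sigma^\infty_+ \Lambda^\phi Y \cong \Sigma^\infty_+ \Lambda^{\phi^H} Y^H$ and $\Phi^H \Sph \cong \Sph$ (those for $\iota_e^*$ being essentially the identity).

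The next step is to recognize the two horizontal maps in the diagram as the realizations of these functors on fixed-point spectra. Under the identifications above, $R$ is obtained by applying $(-)^{G/H}$ to the restriction map $r\colon (\Sigma^\infty_+\Lambda^\phi Y)^H \to \Phi^H\Sigma^\infty_+\Lambda^\phi Y$ of \cref{sec:spectra} and composing with the canonical identifications $(\Sigma^\infty_+\Lambda^\phi Y)^G \cong ((\Sigma^\infty_+\Lambda^\phi Y)^H)^{G/H}$ and $\Phi^H\Sigma^\infty_+\Lambda^\phi Y\cong\Sigma^\infty_+\Lambda^{\phi^H}Y^H$; and $F$ is obtained by applying $(-)^{G/H}$ to the natural ``forget the action'' map and composing with the corresponding identifications. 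Granting these descriptions, $R\circ R_G(\phi)^G$ is the $(G/H)$-fixed points of $\Phi^H R_G(\phi)\cong R_{WH}(\phi^H)$ --- that is, $R_{WH}(\phi^H)^{G/H}$, the left triangle; and $F\circ R_{WH}(\phi^H)^{G/H}$ is $\iota_e^* R_{WH}(\phi^H)\cong R(\phi^H)$, the right triangle. The outer triangle is then the composite. The one elementary point is that $r$ (resp.\ the forgetful map) sends the canonical unit of $\Sph$ in $\pi_0^G$ to the canonical unit in $\pi_0^{G/H}$ (resp.\ to the unit of $\pi_0\Sph$), which is immediate from the tom Dieck splitting of the sphere.

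The only real work I expect is this last identification of $R$ and $F$ with the two functorial operations, together with verifying the compatibility of the many canonical identifications involved --- the twisted-loop-space identification $\Phi^H\Sigma^\infty_+\Lambda^\phi Y\cong\Sigma^\infty_+\Lambda^{\phi^H}Y^H$, the restriction map $r$, the base-change objects, and their interaction with coherence for shadowed bicategories (\cref{thm:indexed_bicat}). This is precisely the ``tracing through the constructions'' alluded to just before the corollary: routine but somewhat tedious diagram-chasing, with no conceptual difficulty beyond what is already contained in \cref{thm:forget_fixed_strong_shadow}. (As usual one implicitly assumes $Y$ is equivariantly finitely dominated, so that the base-change objects entering the Reidemeister traces are dualizable and the traces in the statement are defined.)
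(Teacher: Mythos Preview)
Your proposal is correct and follows essentially the same route as the paper: both arguments invoke \cref{thm:forget_fixed_strong_shadow} together with \cref{bicategory_map_preserves_traces} to obtain $\Phi^H R_G(\phi)\cong R_{WH}(\phi^H)$ and $\iota_e^* R_{WH}(\phi^H)\cong R(\phi^H)$, identify $R$ with $(-)^{G/H}$ applied to the restriction map $r$, and then chase the resulting commutative diagram. The only notable difference is at the sphere level: where you appeal to the tom Dieck splitting to show the unit goes to the unit, the paper simply observes that $\Sph$ has a unique automorphism in the homotopy category, which makes the top of the diagram commute for free and is a bit cleaner.
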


\begin{proof}
We describe the argument for the left triangle.  The right triangle is similar and more straightforward.

Let $E$ and $E'$ be fibrant replacements of $\Sigma^\infty_+ \Lambda^{\phi} Y$ and $\Sigma^\infty_+ \Lambda^{\phi^H} Y^H$, respectively, in orthogonal $G$-spectra.   
The underived versions of $(-)^H$ and $\Phi^H$ define a diagram of orthogonal spectra
\[ \xymatrix@C=50pt{
	\Sph \ar[d]_-\cong \ar[rr]^-\cong && \Phi^{H} \Sph \ar[d]^-\cong 
		\\
	\Sph^{G} \ar[d]_-{\left(R_{G}(\phi)\right)^{G}} \ar@{=}[r] 
	& \left(\Sph^{H}\right)^{G/H} \ar[d]^-{\left(\left(R_{G}(\phi)\right)^{H}\right)^{G/H}} \ar[r]^-r_-\cong 
	& \left(\Phi^{H} \Sph\right)^{G/H} \ar[d]^-{\left(\Phi^{H} R_{G}(\phi)\right)^{G/H}} \ar[r]^-\cong
	& \left(\Sph\right)^{G/H} \ar[d]^-{\left(R_{WH}(\phi^H)\right)^{G/H}}
		\\
	E^{G} \ar@{=}[r] \ar@/_2em/[rrr]_-R
	& \left(E^{H}\right)^{G/H} \ar[r]^-r 
	& \left(\Phi^{H} E\right)^{G/H} \ar[r]^-\sim
	& \left(E'\right)^{G/H}
} \]
The unlabeled  $\cong$s exist and the top region commutes because $\Sph$ has a unique automorphism. The right-hand region is the agreement of $\Phi^{H} R_{G}(\phi)$ with $R_{WH}(\phi^H)$ along $\Phi^H E \simeq E'$ lying under $\Sigma^\infty_+ \Lambda^{\phi} Y \cong \Sigma^\infty_+ \Lambda^{\phi^H} Y^H$ in the homotopy category. The bottom region is the definition of $R$ for equivariant suspension spectra, cf. \cite[\S 6]{dotto2017comparing}, \cite[\S 2.5]{madsen_survey}.
\end{proof}

\cref{thm:main_second_half} follows by 
taking $G=C_n$, $H=C_k$, $Y=X^n$ and $\phi=\ful{f}n$.

\part{The fiberwise generalization}\label{part:fiberwise}
One of the primary strengths of our approach to \cref{thm:main_first_half} and \cref{thm:main_second_half} is that it applies in a range of categories. We will illustrate this by extending the results to the fiberwise setting.

\section{Spectra over fibrations over $B$}
Fix an unbased space $B$ and let $\bS_B$ be the category whose objects are Hurewicz fibrations $A \to B$ and whose maps are maps of spaces over $B$. This has a forgetful functor to spaces $\bS_B \to \bS$ that forgets the map to $B$.

We construct a new symmetric monoidal bifibration $\ho\mc S_{(B)}$ by pulling back $\ho\mc S$ along this functor $\bS_B \to \bS$. It is standard that this gives a bifibration, in which an arrow is (co)cartesian \tiff its image in $\ho\mc S$ is (co)cartesian. The symmetric monoidal structure is a little more subtle, but follows from the following lemma.

\begin{lem}
	\label{lem:get_rid_of_ex_b}
	Suppose $F\colon \bT \to \bS$ is a functor of cartesian monoidal categories, and that $\bS$ and $\bT$ are endowed with a class of Beck-Chevalley squares, preserved by $F$, such that for any two maps $A \to A'$, $B \to B'$ in $\bT$ the square
	\[ \xymatrix{
		F(A \times B) \ar[d] \ar[r] & F(A) \times F(B) \ar[d] \\
		F(A' \times B') \ar[r] & F(A') \times F(B')
	} \]
	is Beck-Chevalley in $\bS$. Then for any smbf $\mc A$ over $\bS$, the pullback category $F^*\mc A$ can be naturally given the structure of an smbf. 
\end{lem}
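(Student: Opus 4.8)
The plan is to build the symmetric monoidal structure on $F^*\mc A$ directly from the external product $\boxtimes$ on $\mc A$, using the hypothesis to transport $\boxtimes$ along $F$. Recall that an object of $F^*\mc A$ is a pair $(t, X)$ with $t \in \bT$ and $X \in \mc A$ lying over $F(t)$, and a morphism is a pair of compatible morphisms; the projection $F^*\mc A \to \bT$ sends $(t,X)$ to $t$. First I would define the monoidal product on $F^*\mc A$ by $(t,X) \boxtimes (t',X') := (t \times t', \, \psi^* (X \boxtimes_{\mc A} X'))$, where $\psi\colon F(t \times t') \to F(t) \times F(t')$ is the canonical comparison map and $\psi^*$ is the pullback functor in $\mc A$; note $X \boxtimes_{\mc A} X'$ lives over $F(t) \times F(t')$, so $\psi^*$ lands it over $F(t \times t')$ as required. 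The unit is $(I_{\bT}, \eta^* I_{\mc A})$ where $\eta\colon F(I_{\bT}) \to I_{\bS}$ is the comparison map for the terminal objects (which is an isomorphism, so this is harmless). The associator, unitor, and symmetry isomorphisms are assembled from those of $\boxtimes_{\mc A}$ together with the canonical isomorphisms relating iterated applications of $\psi^*$, exactly as in the ``delete the $Q$'s'' bookkeeping used elsewhere in the paper.

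The heart of the argument is checking that this makes $F^*\mc A \to \bT$ into a bifibration \emph{compatibly} with $\boxtimes$, i.e.\ that $\boxtimes$ preserves cartesian and cocartesian arrows. A cartesian arrow in $F^*\mc A$ over $g\colon t \to s$ in $\bT$ is (up to isomorphism) the pair $(g, \, F(g)^* Y \to Y)$ for $Y$ over $F(s)$, and the cocartesian arrows are described dually. So I need to show that for cartesian arrows $(g, F(g)^*Y \to Y)$ and $(g', F(g')^*Y' \to Y')$, the tensor
\[ (g \times g', \; \psi_{t,t'}^*(F(g)^*Y \boxtimes_{\mc A} F(g')^*Y') \to \psi_{s,s'}^*(Y \boxtimes_{\mc A} Y')) \]
is again cartesian over $g \times g'$. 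Using that $\boxtimes_{\mc A}$ preserves cartesian arrows one identifies $F(g)^*Y \boxtimes_{\mc A} F(g')^*Y'$ with $(F(g)\times F(g'))^*(Y \boxtimes_{\mc A} Y')$, and then the required statement becomes exactly the Beck--Chevalley property in $\mc A$ of the naturality square
\[ \xymatrix{
	F(t \times t') \ar[d] \ar[r]^-{\psi_{t,t'}} & F(t) \times F(t') \ar[d]^{F(g) \times F(g')} \\
	F(s \times s') \ar[r]^-{\psi_{s,s'}} & F(s) \times F(s'),
} \]
which is precisely the hypothesis of the lemma. The cocartesian case is dual, using that $\boxtimes_{\mc A}$ preserves cocartesian arrows and the same Beck--Chevalley square (read the other way). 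One also checks the projection $F^*\mc A \to \bT$ is strict symmetric monoidal, which is immediate since on the nose $(t,X)\boxtimes(t',X')$ projects to $t \times t'$.

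For naturality, given a map of smbfs $\Psi\colon \mc A \to \mc B$ over $\bS$, I would define $F^*\Psi\colon F^*\mc A \to F^*\mc B$ on objects by $(t,X) \mapsto (t, \Psi(X))$ and similarly on morphisms, which makes sense since $\Psi$ covers the identity of $\bS$. It preserves cartesian and cocartesian arrows because $\Psi$ does and the (co)cartesian arrows in the pullback are detected in $\mc A$, $\mc B$. The lax (or strong) symmetric monoidal structure map $\Psi(X)\boxtimes_{\mc B}\Psi(X') \to \Psi(X\boxtimes_{\mc A}X')$ combines with naturality of $\Psi$ with respect to the pullback functors $\psi^*$ to give the structure map of $F^*\Psi$, and lies over the correct comparison isomorphism of $\bT$-products by construction; the coherence diagrams reduce to those for $\Psi$. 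The main obstacle I anticipate is purely organizational: threading the comparison isomorphisms $\psi$ through the pentagon and hexagon coherence diagrams without error, since each use of $\boxtimes$ introduces a $\psi^*$ that must be matched against the corresponding canonical isomorphisms. But there is no genuinely new content beyond the Beck--Chevalley hypothesis, so the verification is a (lengthy but routine) diagram chase of the kind already invoked repeatedly in this paper; the full bookkeeping is deferred to \cite{spectra_notes}.
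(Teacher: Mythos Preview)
Your approach is essentially identical to the paper's: define the product on $F^*\mc A$ as $\psi^*(-\boxtimes_{\mc A}-)$, lift the coherence isomorphisms via the universal property of cartesian arrows, and verify preservation of (co)cartesian arrows; the paper likewise cites \cite{s:framed} and defers the full bookkeeping to \cite{spectra_notes}.

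One small correction: you have the roles of the cartesian and cocartesian cases reversed. In the cartesian case you need only that $\psi_{t,t'}^*\circ (F(g)\times F(g'))^* \cong F(g\times g')^*\circ\psi_{s,s'}^*$, which is just pseudofunctoriality of pullback (composites of cartesian arrows are cartesian) applied to the commuting square---no Beck--Chevalley is required, and indeed the condition $f_! h^* \cong g^* k_!$ does not even appear since no $(-)_!$ is involved. It is the \emph{cocartesian} case that genuinely requires the Beck--Chevalley hypothesis: there you must identify $\psi_{s,s'}^*\,(F(g)\times F(g'))_!\,(X\boxtimes X')$ with $F(g\times g')_!\,\psi_{t,t'}^*(X\boxtimes X')$, which is exactly the Beck--Chevalley isomorphism for the square in the hypothesis. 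The paper states this asymmetry explicitly (``the proof that this preserves cocartesian arrows reduces to the Beck--Chevalley condition \ldots\ the proof that it preserves cartesian arrows is easier''). Your phrase ``the same Beck--Chevalley square (read the other way)'' for the cocartesian case is on the right track, but the cartesian case does not need it at all.
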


\begin{proof}
	This is essentially a generalization of the proof of \cite[12.8]{s:framed}: the product $\otimes$ in $F^*\mc A$ is defined as a pullback of the product $\boxtimes$ in $\mc A$ along the canonical map
	\[ F(A \times B) \to F(A) \times F(B). \]
	The proof that this preserves cocartesian arrows reduces to the Beck-Chevalley condition in the statement of the lemma, and the proof that it preserves cartesian arrows is easier. We produce the rest of the symmetric monoidal structure for $\otimes$ 
	by lifting the same structure from $\boxtimes$, using the universal property of cartesian arrows. A more explicit treatment appears in \cite{spectra_notes}.
\end{proof}

\begin{example}
	The functor $\bS_B \to \bS$ satisfies the statement of the lemma because for any two maps $A \to A'$ and $E \to E'$ of fibrations over $B$, the following square is homotopy pullback.
	\[ \xymatrix{
		A \times_B E \ar[d] \ar[r] & A \times E \ar[d] \\
		A' \times_B E' \ar[r] & A' \times E'
	} \]
	\begin{itemize}
		\item Pulling back $\ho\mc U$ along $\bS_B \to \bS$ gives an smbf $\ho\mc U_{(B)}$ whose objects are pairs of maps $X \to A \to B$ where $A \to B$ is a fibration, and morphisms are maps over $B$. The product with $Y \to A' \to B$ is the fiber product $X \times_B Y \to A \times_B A' \to B$.
		\item Pulling back $\ho\mc S$ along $\bS_B \to \bS$ gives an smbf $\ho\mc S_{(B)}$ whose  objects are pairs of a fibrations $A \to B$ and a spectrum $X$ over $A$.  Morphisms are a map $A \to A'$ over $B$ and  a map of spectra $X \to Y$ over $A \to A'$. The pullback and pushforward are defined as in $\ho\mc S$, and the smash product is the relative external smash product, given by pulling back $X \barsmash^\bL Y$ from $A \times A'$ to the fiber product $A \times_B A'$.
		\item Both of these generalize to $G$-spaces and $G$-spectra, giving $\ho G\mc U_{(B)}$ and $\ho G\mc S_{(B)}$. We always assume that $A \to B$ is a Hurewicz fibration whose path-lifting function is $G$-equivariant.
	\end{itemize}
\end{example}

The bicategory $\Ex_B^{\fibra}$ of spectra over fibrations over $B$, is the bicategory associated to $\ho\mc S_{(B)}$, compare \cite[19.2.6, 19.3.4]{ms}. Performing the same operation for $G$-equivariant Hurewicz fibrations $A \to B$ and $G$-equivariant spectra gives another bicategory $G\Ex_B^{\fibra}$.
The following is a corollary of \cref{lem:forget_symmetric_monoidal,lem:geo_fp_symmetric_monoidal,lem:get_rid_of_ex_b,thm:indexed_bicat}.
\begin{cor}\label{thm:fiberwise_forget_fixed_strong_shadow} If $H$ is a subgroup of a finite group $G$, 
$\iota_H^*$ and $\Phi^H$ are strong shadow functors on $G\Ex_B^{\fibra}$.  
\end{cor}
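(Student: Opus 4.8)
The plan is to deduce this precisely as indicated in the citation, by realizing $\iota_H^*$ and $\Phi^H$ as maps of symmetric monoidal bifibrations and then invoking the last bullet of \cref{thm:indexed_bicat}. Recall that, by construction, $G\Ex_B^{\fibra}$ is the bicategory associated to the smbf $\ho G\mc S_{(B)}$, and that $\ho G\mc S_{(B)} = F^*(\ho G\mc S)$ is the pullback of the smbf $\ho G\mc S$ of \cref{thm:spectra_smbf} along the forgetful functor $F\colon G\bS_B \to G\bS$. The hypotheses of \cref{lem:get_rid_of_ex_b} hold for this $F$ because a fiber product over $B$ of two $G$-equivariant Hurewicz fibrations is a homotopy pullback of $G$-spaces, so the relevant comparison squares are Beck--Chevalley; this is exactly the equivariant version of the Example following that lemma. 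Thus $\ho G\mc S_{(B)}$ is genuinely an smbf over $G\bS_B$, and \cref{thm:indexed_bicat} applies to it.

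First I would record the two squares of base functors that need to commute. For the forgetful functor, the square with horizontal arrows $G\bS_B \to G\bS$ and $H\bS_B \to H\bS$ and both verticals $\iota_H^*$ commutes strictly, since forgetting the $G$-action commutes with forgetting the map to $B$. For categorical and geometric fixed points, defining $(-)^H$ on $G\bS_B$ by $(A \to B) \mapsto (A^H \to B^H)$, the square with horizontal arrows $G\bS_B \to G\bS$ and $WH\bS_{B^H} \to WH\bS$ and both verticals $(-)^H$ also commutes strictly; here one uses that $A^H \to B^H$ is again a Hurewicz fibration, that $(-)^H$ carries fiber products over $B$ of $G$-fibrations to fiber products over $B^H$, and that it preserves the homotopy-pullback squares used as the Beck--Chevalley class. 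For $\iota_H^*$ this preservation is trivial.

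Next I would upgrade these squares to maps of smbfs. By \cref{lem:forget_symmetric_monoidal} and \cref{lem:geo_fp_symmetric_monoidal}, $\iota_H^*\colon \ho G\mc S \to \ho H\mc S$ and $\bL\Phi^H\colon \ho G\mc S \to \ho WH\mc S$ are maps of symmetric monoidal bifibrations, over $\iota_H^*\colon G\bS \to H\bS$ and $(-)^H\colon G\bS \to WH\bS$ respectively. The smbf structure on a pullback $F^*\mc A$ produced by \cref{lem:get_rid_of_ex_b} is built entirely from the universal properties of cartesian and cocartesian arrows and from the comparison maps $F(A\times A')\to F(A)\times F(A')$; hence that construction is natural not only in maps of smbfs over a fixed base, as stated, but in the evident commuting squares of base functors recorded above. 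Applying this naturality to $\iota_H^*$ and $\bL\Phi^H$ produces maps of smbfs
\[ \ho G\mc S_{(B)} \to \ho H\mc S_{(B)}, \qquad \ho G\mc S_{(B)} \to \ho WH\mc S_{(B^H)} \]
over $\iota_H^*\colon G\bS_B \to H\bS_B$ and $(-)^H\colon G\bS_B \to WH\bS_{B^H}$. The last bullet of \cref{thm:indexed_bicat} then turns these into strong shadow functors on the associated bicategories, which are $G\Ex_B^{\fibra}$, $H\Ex_B^{\fibra}$, and $WH\Ex_{B^H}^{\fibra}$; this is the claim (with $\Phi^H$ understood as its left derived functor, as usual).

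The main obstacle is the third step: \cref{lem:get_rid_of_ex_b} is phrased only for maps of smbfs over a \emph{fixed} base $\bS$, while $\iota_H^*$ and $\Phi^H$ change both the group and the fiberwise base category. So the one genuine task is to check, by a routine but slightly fussy diagram-chase, that the pullback construction of \cref{lem:get_rid_of_ex_b} is functorial in the pair consisting of a base functor $F$ together with a map of smbfs lying over a compatible square of base functors — equivalently, that the comparison maps and Beck--Chevalley squares defining $\otimes$ on $F^*\mc A$ are compatible with the two commuting base-functor squares above. Everything else is bookkeeping with the already-established results, exactly parallel to the proof of \cref{thm:forget_fixed_strong_shadow} in the non-fiberwise case.
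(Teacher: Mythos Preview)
Your proposal is correct and follows exactly the approach the paper intends: combine \cref{lem:forget_symmetric_monoidal,lem:geo_fp_symmetric_monoidal} with the naturality statement in \cref{lem:get_rid_of_ex_b} and then apply the last bullet of \cref{thm:indexed_bicat}. You have in fact been more careful than the paper's one-line citation, correctly flagging that \cref{lem:get_rid_of_ex_b} as literally stated only gives naturality for smbf maps over a fixed base $\bS$, whereas here the base functors change as well; the extension you describe is indeed routine and is what the paper is implicitly using.
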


The bicategory $\Ex_B^{\fibra}$ has an $n$-Fuller structure and a system of base-change objects by \cref{thm:spectra_smbf,lem:get_rid_of_ex_b,thm:indexed_bicat}. In particular there is a pseudofunctor $[]_B\colon {\bS_B}\to  \bicat {U_B}{\bS_B}$
and 
coherent isomorphisms
\begin{align}\label{eq:compositions_of_fiberwise_base_change_in_spectra}
      m_{[]}\colon \bcr{Y}{g}{Z}_B \odot \bcr{X}{f}{Y}_B &\xto{\sim} \bcr{X}{g \circ f}{Z}_B, \qquad
      i_{[]}\colon U_X \xto{\sim} \bcr{X}{\id}{X}_B.
\end{align}
The same applies with $G$-equivariant spaces as well.

If $p\colon E\to B$ is a perfect fibration, i.e. a (Hurewicz) fibration with finitely dominated fibers, 
$\bcr EpB_B$ is right dualizable as a 1-cell in $\Ex_B^{\fibra}$. The same is true equivariantly if $B$ has a trivial action and the fibers of $p$ are equivariantly finitely dominated. Therefore for each commuting square
\[\xymatrix{
	E\ar[d]_p\ar[r]^f
	&E\ar[d]^p
	\\
	B\ar@{=}[r]
	&B
}\]
we can define fiberwise versions of the traces from \cref{sec:traces_in_bicat}.
\begin{itemize}
	\item The {\bf fiberwise Lefschetz number} $L_B(f)$ is the trace of $f$ as a map in the symmetric monoidal category of spectra over $B$, in other words $\Ex_B^{\fibra}(B,B)$.	It is a self-map of the fiberwise sphere spectrum $\Sph_B = \Sigma^\infty_{+B} B$ in the homotopy category of spectra over $B$.
	\item The \textbf{pretransfer} is the trace of $f \times_B \id\colon E \to E \times_B E$, which is a slight refinement of $L_B(f)$. This gives a map $\Sph_B \to \Sigma^\infty_{+B} E$. When $f = \id$, this is the Becker-Gottlieb pretransfer \cite[\S 5]{becker1976transfer}.
	
	\item The {\bf fiberwise Reidemeister trace} $R_B(f)$ is the trace of the canonical isomorphism in $\Ex_B^{\fibra}$
	\begin{align}\label{fiberwise_foverBE}
		\bcr EpB_B&\overset\sim\to \bcr EpB_B\odot \bcr EfE_B.
	\end{align}
	It gives a map in the homotopy category of spectra over $B$,
		\[ R_B(f)\colon \Sph_B \simeq \Sigma^\infty_{+B} \Bigsh{\bcr {B}{=}{B}_B} \to \Sigma^\infty_{+B} \Bigsh{\bcr {E}{f}{E}_B} \simeq \Sigma^\infty_{+B} \Lambda^{f}_B E \]
	which is $R(f_b)$ on each fiber.

The fiberwise Reidemeister trace $R_B(f)$ is a complete obstruction to the removal of fixed points from a family of maps $f$, provided $B$ is a cell complex of dimension $d$, and $p$ is a fiber bundle whose fibers $X$ are compact manifolds of dimension at least $d+3$ \cite{kw}. 
	
	\item The {\bf fiberwise $n$th Fuller trace} $R_{B, C_n}(\fful fBn)$ is the trace of the map 
		\[ \bcr {\fib EBn}{\fib pBn}{B}_B \xto{\sim} \bcr {\fib EBn}{\fib pBn}{B}_B \odot \bcr {\fib EBn}{\fful fBn}{\fib EBn}_B \] in $C_n\Ex_B^{\fibra}$
	arising from the commuting square
	 \[\xymatrix{
	 	\fib EBn\ar[r]^{\fful fBn}\ar[d]^{\fib pBn}
	 	&\fib EBn\ar[d]^{\fib pBn}
	 	\\
	 	B\ar@{=}[r]
	 	& B
	 }\]
	It is a map in the homotopy category of $C_n$-equivariant spectra over $B$
		\[ R_{B, C_n}(\fful fBn)\colon \Sph_B \simeq \Sigma^\infty_{+B} \Bigsh{\bcr {B}{=}{B}_B} \to \Sigma^\infty_{+B} \Bigsh{\bcr {E^n}{\fful fBn}{E^n}_B} \simeq \Sigma^\infty_{+B} \Lambda_B^{\fful fBn} \fib EBn \]
	which is $R_{C_n}(\ful {f_b}n)$ on each fiber.
\end{itemize}

We can now state the promised fiberwise version of \cref{thm:simple_main_thm}.
\begin{thm}[Fiberwise version of \cref{thm:simple_main_thm}]\label{thm:fiberwise_simple_main_thm}  
	The following diagram commutes up to fiberwise homotopy.
\[\xymatrix@C=25pt@R=40pt{
	&&\Sph_B\ar[dll]_-{R_B(\fful{f}{B}n)^{C_n}} 
		\ar[dl]^-{R_B(\fful{f}{B}k)^{C_k}} 
		\ar[d]^(.6){R_B(\fful{f}{B}k)} 
		 \ar[rd]^{R_B(f^k)} 
	\\
	(\Sigma^\infty_{+B} \Lambda_B^{\fful{f}{B}n} \fib EBn)^{C_n} \ar[r]^-R
	&(\Sigma^\infty_{+B} \Lambda_B^{\fful{f}{B}k} \fib EBk)^{C_k} \ar[r]^-{F} 
	&\Sigma^\infty_{+B} \Lambda_B^{\fful{f}{B}k} \fib EBk \ar[r]^-{\simeq} 
	&\Sigma^\infty_{+B} \Lambda_B^{f^k} E
}\]
\end{thm}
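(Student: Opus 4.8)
The plan is to run the proofs of \cref{thm:main_first_half} and \cref{thm:main_second_half} essentially verbatim, but inside the bicategory $C_n\Ex_B^{\fibra}$ of $C_n$-equivariant parametrized spectra over equivariant Hurewicz fibrations over $B$ (and its subcategories $C_k\Ex_B^{\fibra}$, $\Ex_B^{\fibra}$). All of the structure those two proofs use is available here: by \cref{thm:spectra_smbf,lem:get_rid_of_ex_b,thm:indexed_bicat}, $G\Ex_B^{\fibra}$ is a shadowed $n$-Fuller bicategory with a system of base-change objects indexed by the cartesian monoidal category $G\bS_B$ of equivariant fibrations over $B$; and by \cref{thm:fiberwise_forget_fixed_strong_shadow}, for $H \le G$ the functors $\iota_H^*$ and $\Phi^H$ are strong shadow functors on $G\Ex_B^{\fibra}$. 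Throughout, $p\colon E \to B$ is a perfect fibration, so $\bcr EpB_B$ is right dualizable in $\Ex_B^{\fibra}$; and since $B$ carries the trivial action while $C_n$ rotates the factors of $\fib EBn$, the fiber $(E_b)^n$ is equivariantly finitely dominated whenever $E_b$ is (by the same argument used in the non-parametrized case, cf. \cref{ex:base_change}), so $\bcr{\fib EBn}{\fib pBn}{B}_B$ is right dualizable in $C_n\Ex_B^{\fibra}$ and all the fiberwise Fuller traces in the statement are defined.

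For the right-hand triangle, I would apply \cref{cor:Fuller_trace_is_trace_of_composite} with $\sB = \Ex_B^{\fibra}$, $\bS = \bS_B$, and the $k$-tuple of composable maps given by $f\colon E \to E$ in $\bS_B$ (taking the identity fibration $\id_B\colon B \to B$, the terminal object of $\bS_B$, in the role of the base point). This identifies the fiberwise Reidemeister trace $R_B(\fful fBk)$ of the fiberwise Fuller construction with the fiberwise Reidemeister trace $R_B(f^k)$ of the composite, along the canonical equivalence $\Sigma^\infty_{+B}\Lambda_B^{\fful fBk}\fib EBk \simeq \Sigma^\infty_{+B}\Lambda_B^{f^k} E$; this is exactly the right-hand triangle, and is the fiberwise analog of \cref{thm:main_first_half}.

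For the two left triangles, I would repeat \cref{cor:they_preserve_reidemeister_traces} and the deduction in \cref{sec:grand_finale} inside $C_n\Ex_B^{\fibra}$. Concretely, take $G = C_n$, let $H$ be the subgroup with $G/H \cong C_k$, take $Y = \fib EBn$ with the cyclic rotation action and $\phi = \fful fBn$; then $Y^H = \fib EBk$ and $\phi^H = \fful fBk$. Applying \cref{bicategory_map_preserves_traces} to the strong shadow functors $\iota_H^*$ and $\Phi^H$ of \cref{thm:fiberwise_forget_fixed_strong_shadow} — together with the identifications of source and target spectra (for instance $\Phi^H \Sigma^\infty_{+B}\Lambda_B^{\phi} Y \cong \Sigma^\infty_{+B}\Lambda_B^{\phi^H} Y^H$ over $B^H = B$, obtained by tracing through the constructions exactly as in \cref{sec:grand_finale}, and $(-)^{C_n}$, $(-)^{C_k}$ on the fiberwise sphere) — produces the fiberwise version of the triangle of \cref{cor:they_preserve_reidemeister_traces}, with the maps $R$ and $F$ defined fiberwise in the same way. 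Assembling this with the previous step gives commutativity of the whole diagram of \cref{thm:fiberwise_simple_main_thm} in the homotopy category of spectra over $B$, i.e. up to fiberwise homotopy.

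I expect no serious obstacle: the theorem is engineered to fall out of the machinery of Parts \ref{part:unwinding} and \ref{part:varying} once the fiberwise symmetric monoidal bifibration is in place, and that is supplied by \cref{lem:get_rid_of_ex_b} and \cref{thm:spectra_smbf}. The two points that merit a line of care are (i) the equivariant finite domination of $\fib EBn$ over $B$, needed for dualizability of $\bcr{\fib EBn}{\fib pBn}{B}_B$ in $C_n\Ex_B^{\fibra}$, which reduces fiberwise to the non-parametrized equivariant statement; and (ii) checking that the fiberwise fixed-point, orbit, and restriction functors identify the parametrized twisted loop spaces $\Sigma^\infty_{+B}\Lambda_B^{\fful fBn}\fib EBn$ exactly as claimed, which is the same bookkeeping carried out in \cref{sec:grand_finale}, now over a nontrivial base with trivial group action.
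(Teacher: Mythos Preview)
Your proposal is correct and follows essentially the same approach as the paper: apply \cref{cor:Fuller_trace_is_trace_of_composite} in $\Ex_B^{\fibra}$ for the right-hand triangle, and rerun the proof of \cref{cor:they_preserve_reidemeister_traces} in $G$-spectra over $B$ with $G=C_n$, $Y=\fib EBn$, $\phi=\fful fBn$ for the remaining two triangles. Your identification of $H$ as the subgroup with $G/H\cong C_k$ (i.e.\ $H=C_{n/k}$) is the precise one needed so that $Y^H=\fib EBk$ and $WH\cong C_k$; the extra care you flag about equivariant dualizability and the identification of the parametrized twisted loop spaces is exactly the bookkeeping the paper leaves implicit.
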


Note these are all maps of fibrations over $B$ that on each fiber capture the simpler maps we constructed earlier.

\begin{proof}
The right-hand triangle is just \cref{cor:Fuller_trace_is_trace_of_composite} applied to the bicategory $\Ex_B^{\fibra}$. The remaining two triangles are proven by restating the proof of \cref{cor:they_preserve_reidemeister_traces} in the category of $G$-spectra over $B$, and then taking $G=C_n$, $H=C_k$, $Y=X^{\times_Bn}$ and $\phi=\fful{f}{B}n$.
\end{proof}

Our list of fixed point invariants that can be identified using this approach is far from exhaustive. We leave the adaptation of this theorem to the remaining generalizations of $L(f)$ and $R(f)$ to the interested reader.

\bibliographystyle{amsalpha2}
\bibliography{references}
\newpage

\end{document}